\newtheorem{theorem}{Theorem}[section]
\newtheorem{lemma}[theorem]{Lemma}
\newtheorem{proposition}[theorem]{Proposition}
\newtheorem{corollary}[theorem]{Corollary}
\theoremstyle{definition}
 \theoremstyle{remark}
\newtheorem{remark}[theorem]{Remark}
 \numberwithin{equation}{section}
\begin{document}

\title[Fractional  GJMS
 operators and  sharp fractional  HSM inequality]{Explicit Formulas of Fractional  GJMS
 operators  on hyperbolic spaces and sharp fractional Poincar\'e-Sobolev and  Hardy-Sobolev-Maz'ya inequalities }

\author{Guozhen Lu}
\address{Department of Mathematics,   University of Connecticut, Storrs, CT 06290, USA}
\email{guozhen.lu@uconn.edu}

\author{Qiaohua  Yang}
\address{School of Mathematics and Statistics, Wuhan University, Wuhan, 430072, People's Republic of China}

\email{qhyang.math@whu.edu.cn}




\subjclass[2000]{Primary   46E35, 42B37, 33C05, 43A85}



\keywords{fractional GJMS operators; Green's function; hyperbolic spaces; fractional Poincar\'e-Sobolev inequalities, fractional Hardy-Sobolev-Maz'ya inequalities;  sharp constants.}

\begin{abstract}
Using the  scattering theory on the hyperbolic space $\mathbb{H}^n$, we give the explicit formulas of  the fractional GJMS operators $P_{\gamma}$ for all $\gamma\in(0,\frac{n}{2})\setminus\mathbb{N}$ on   $\mathbb{H}^n$.
  These operators $P_{\gamma}$ for $\gamma\in(0,\frac{n}{2})\setminus\mathbb{N}$  are neither conformal to the fractional Laplacians on the upper half space $\mathbb{R}^n_{+}$ nor on the  unit ball $\mathbb{B}^n$ in $\mathbb{R}^{n}$ though  $P_{\gamma}$ are  conformal to  $(-\Delta)^{\gamma}$ via half space model and ball model of hyperbolic spaces when $\gamma\in\mathbb{N}$. To circumvent this, we introduce another family of fractional operators $\tilde{P}_{\gamma}$ on $\mathbb{H}^n$ which are conformal to the fractional Laplacians on $\mathbb{R}^n_{+}$ and the unit ball $\mathbb{B}^n$ via half space model and ball model of hyperbolic spaces. (Theorem \ref{th1.6}.) It is worthwhile to note that $\tilde{P}_{\gamma}\not =P_{\gamma}$ unless $\gamma$ is an integer. (see \eqref{relation} and Corollary \ref{co4.3}.)
 We establish the
fractional  Poincar\'e-Sobolev  inequalities associated with both $P_{\gamma}$ and $\tilde{P}_{\gamma}$ on   $\mathbb{H}^n$. (see Theorems \ref{th1.3} and \ref{th1.7}.) In particular, when $n\geq 3$ and $\frac{n-1}{2}\leq \gamma<\frac{n}{2}$, we
prove that the sharp constants in the
$\gamma$-th order  of Poincar\'e-Sobolev  inequalities on the hyperbolic space associated with  $P_{\gamma}$ and $\tilde{P}_{\gamma}$
 coincide with the best $\gamma$-th order Sobolev constant in the $n$-dimensional Euclidean space $\mathbb{R}^n$. (See Theorems \ref{th1.4} and \ref{th1.8}.)
We  also establish fractional Hardy-Sobolev-Maz'ya inequality on  half spaces $\mathbb{R}^{n}_+$ and  unit ball $\mathbb{B}^n$ and prove
that the sharp constants in the $\gamma$-th order Hardy-Sobolev-Maz'ya inequalities  on half space  $\mathbb{R}^{n}_+$ and  unit ball $\mathbb{B}^n$ are the same as  the best $\gamma$-th order Sobolev constants  in $\mathbb{R}^n$ when $n\geq 3$ and $\frac{n-1}{2}\leq \gamma<\frac{n}{2}$. (Theorems \ref{th1.7} and \ref{th1.8}.) A sharp Sobolev inequality with best constant  for the operator $\frac{|\Gamma(\nu+\gamma+i\sqrt{-\Delta_{\mathbb{H}}-\frac{(n-1)^{2}}{4}})|^{2}}
{|\Gamma(\nu+i\sqrt{-\Delta_{\mathbb{H}}-\frac{(n-1)^{2}}{4}})|^{2}}u$ is also proved which is of its independent interest. (Theorem \ref{th3.10}.)
Finally, in the borderline case, namely  $\gamma=\frac{n}{2}$, we establish the fractional Hardy-Adams inequalities associated with $P_{\frac{n}{2}}$, $\tilde{P}_{\frac{n}{2}}$ and $(-\Delta)^{\frac{n}{2}}$. (see Theorems \ref{1.5} and \ref{1.8}.)
 Our methods crucially rely on the Helgason-Fourier analysis on hyperbolic spaces and delicate analysis of special functions.

\end{abstract}

\thanks{The first author was partly supported Simons Collaboration grant and Simons Fellowship from the Simons Foundation. The second author  was  partly supported by   the National Natural
Science Foundation of China (No.12071353).}
\maketitle


\section{Introduction}
Let $P_{1}=-\Delta_{\mathbb{H}}-\frac{n(n-2)}{4}$ be the conformal Laplacian on hyperbolic space $\mathbb{H}^{n}$. The
Poincar\'e-Sobolev inequalities on hyperbolic space $\mathbb{H}^{n}$ reads (see \cite{an2,m1})
\begin{equation}\label{1.3}
\int_{\mathbb{H}^{n}}u P_{1}udV-\frac{1}{4}\int_{\mathbb{H}^{n}}u^{2}dV\geq C\left(\int_{\mathbb{B}^{n}}|u|^{p}dV\right)^{\frac{2}{p}},\;\;u\in C^{\infty}_{0}(\mathbb{H}^{n}),  \, n\geq3,
\end{equation}
where $\frac{1}{4}$ is the bottom of the spectrum of $P_1$.  By using the half space model $(\mathbb{H}^{n}, g_{\mathbb{H}})$ of hyperbolic space (see Section 3), one can see that (\ref{1.3}) is equivalent to the following  Hardy-Sobolev-Maz'ya inequality on $\mathbb{R}^{n}_{+}=\{(x_{1},x_{2},\cdots,x_{n})\in\mathbb{R}^{n}: x_{1}>0\}$ (see \cite{maz}, Section 2.1.6)
\begin{equation}\label{1.1}
\int_{\mathbb{R}^{n}_{+}}|\nabla u|^{2}dx-\frac{1}{4}\int_{\mathbb{R}^{n}_{+}}\frac{u^{2}}{x^{2}_{1}}dx\geq C\left(\int_{\mathbb{R}^{n}_{+}}
x^{\alpha}_{1}|u|^{p}dx\right)^{\frac{2}{p}},\;\;\; u\in C^{\infty}_{0}(\mathbb{R}^{n}_{+}),
\end{equation}
where $n\geq 3$,  $2<p\leq\frac{2n}{n-2}$ and  $\alpha=\frac{(n-2)p}{2}-n$.
Higher order Poincar\'e-Sobolev inequalities  have been established recently by  the authors and we state them as follows:
\begin{theorem}[\cite{LuYang3}]\label{th1.1}
Let $2\leq k<\frac{n}{2}$ and $2<p\leq\frac{2n}{n-2k}$. There exists a positive constant $C$ such that for each $u\in C^{\infty}_{0}(\mathbb{H}^{n})$,
\begin{equation}\label{1.4}
\int_{\mathbb{H}^{n}}uP_{k}udV- \left(\prod^{k}_{i=1}\frac{(2i-1)^{2}}{4}\right)\int_{\mathbb{H}^{n}}u^{2}dV\geq C\left(\int_{\mathbb{H}^{n}}|u|^{p}dV\right)^{\frac{2}{p}},
\end{equation}
where $P_{k}=P_{1}(P_{1}+2)\cdot\cdots\cdot(P_{1}+k(k-1))$, $ k\in\mathbb{N}$, is the GJMS  operator on $\mathbb{H}^{n}$.
\end{theorem}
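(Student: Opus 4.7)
My plan splits into two steps: (1) spectrally factor $P_k - c$ into a non-negative combination of powers of a simpler non-negative operator, and (2) transfer the resulting inequality to the half-space model, where it becomes a higher-order Hardy-Sobolev-Maz'ya inequality on $\mathbb{R}^n_+$.

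Step 1 (Spectral factorization). Set $L := -\Delta_{\mathbb{H}} - (n-1)^2/4 \geq 0$. Since $P_1 + i(i-1) = L + (2i-1)^2/4$ for each $i$,
$$P_k \;=\; \prod_{i=1}^k \Bigl(L + \tfrac{(2i-1)^2}{4}\Bigr).$$
Writing $c := \prod_{i=1}^k (2i-1)^2/4$ and expanding in powers of $L$,
$$P_k - c \;=\; L^k + \alpha_{k-1} L^{k-1} + \cdots + \alpha_1 L,$$
where every $\alpha_j > 0$ (each is an elementary symmetric polynomial of the positive numbers $(2i-1)^2/4$). Hence $P_k - c \geq L^k$ as self-adjoint operators on $L^2(\mathbb{H}^n)$, the LHS of the theorem is automatically non-negative, and it suffices to bound $\int u\, L^k u\, dV$ from below by an $L^p$-norm.

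Step 2 (Conformal transfer). Pass to the half-space model $(\mathbb{R}^n_+, x_1^{-2}dx^2)$ and substitute $v := x_1^{-(n-2k)/2} u$ for $u \in C_0^\infty(\mathbb{H}^n)$. By the conformal covariance of the GJMS operator (whose flat-space version is $(-\Delta)^k$),
$$\int_{\mathbb{H}^n} u P_k u\, dV_{g_{\mathbb{H}}} = \int_{\mathbb{R}^n_+} v(-\Delta)^k v\, dx, \qquad \int_{\mathbb{H}^n} u^2\, dV_{g_{\mathbb{H}}} = \int_{\mathbb{R}^n_+} \frac{v^2}{x_1^{2k}}\, dx,$$
and $\|u\|_{L^p(\mathbb{H}^n)}^p = \int_{\mathbb{R}^n_+} x_1^{(n-2k)p/2-n}|v|^p\, dx$. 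Thus, for each admissible $p$, the theorem is equivalent to the higher-order Hardy-Sobolev-Maz'ya inequality on $\mathbb{R}^n_+$ with Rellich-Hardy constant exactly $c$. At the critical exponent $p = 2n/(n-2k)$ (where the Euclidean weight disappears), the ground-state substitution $v = x_1^{(2k-1)/2} w$ together with iterated integration by parts in the $x_1$-variable absorbs the Hardy term $c\int v^2 x_1^{-2k}dx$ by peeling off the one-dimensional Rellich constants $(2i-1)^2/4$ one factor at a time; the residual expression is a non-negative weighted quadratic form in $w$, to which the sharp Euclidean $(-\Delta)^k$-Sobolev inequality (or a weighted Caffarelli-Kohn-Nirenberg inequality in $w$) applies. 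For $2 < p < 2n/(n-2k)$, interpolate this critical bound with the $L^q$ bounds ($q < 2n/(n-2k)$) supplied by the strictly positive lower-order terms $L^j$, $j < k$, isolated in Step 1.

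\textbf{Main obstacle.} The hard point is the Sobolev improvement of the \emph{sharp} Rellich-Hardy inequality on $\mathbb{R}^n_+$: because $c$ is the sharp Hardy constant, the subtracted inequality is marginal in the sense that extremizing sequences saturate it, so the improvement is a compactness-defect rather than a spectral-gap phenomenon. Organizing the iterated integration-by-parts so that the product $\prod_{i=1}^k (2i-1)^2/4$ assembles exactly (no smaller constant appearing) is straightforward for $k = 1$ but a genuine combinatorial task for $k \geq 2$. A cleaner alternative, better adapted to the Helgason-Fourier framework of the remainder of the paper, is to stay on $\mathbb{H}^n$: compute the resolvent kernel of $P_k - c$ via spherical function asymptotics, verify that its local singularity matches that of the Riesz kernel of order $2k$ on $\mathbb{R}^n$ while its global decay is controlled, and then apply the Hardy-Littlewood-Sobolev inequality on hyperbolic space. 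This route avoids the half-space bookkeeping at the cost of delicate special-function estimates.
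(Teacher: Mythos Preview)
This theorem is quoted from \cite{LuYang3} and is not proved in the present paper; however, the paper does prove its fractional extension, Theorem~\ref{th1.3}, and that argument specializes to integer $\gamma=k$. So the relevant comparison is with the proof of Theorem~\ref{th1.3}.

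Your Step~1 is essentially what the paper does, but you draw the wrong conclusion from it. The expansion
\[
P_k-c=\sum_{j=1}^{k}\alpha_j L^{j},\qquad \alpha_j>0,\ \alpha_k=1,
\]
shows much more than $P_k-c\ge L^k$: since each $\alpha_j>0$ and the polynomial has no constant term, the symbol satisfies the \emph{two-sided} bound
\[
\prod_{i=1}^{k}\Bigl(\lambda^{2}+\tfrac{(2i-1)^{2}}{4}\Bigr)-c\ \thicksim\ \lambda^{2}(\lambda^{2}+1)^{k-1},\qquad \lambda\in\mathbb{R}.
\]
This is exactly the estimate (\ref{4.12}) for integer $\gamma$, and feeding it into Lemma~\ref{lm4.5} (Anker's $L^{p'}\!\to L^{2}$ bounds for $(-\Delta_{\mathbb H}-(n-1)^2/4)^{-\alpha/2}$) immediately gives the theorem for the full range $2<p\le 2n/(n-2k)$. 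By contrast, your reduction to the single term $L^{k}$ throws away the $\alpha_1 L$ contribution and leaves a symbol vanishing to order $2k$ at $\lambda=0$; whether $L^{k}$ alone supports the Sobolev inequality for all subcritical $p$ is then a separate question you never address, and is exactly what Lemma~\ref{lm4.5} is designed to circumvent by keeping the factor $\lambda^{2}$ rather than $\lambda^{2k}$ at the bottom of the spectrum.

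Your Step~2 is a genuinely different route: you transfer to $\mathbb{R}^{n}_{+}$ and attempt a ground-state/CKN argument. The paper uses this direction only as an equivalence (equation~(\ref{1.6}) is derived \emph{from} Theorem~\ref{th1.1}, not the other way round), precisely because the iterated Rellich peeling you describe is delicate for $k\ge2$ and the residual weighted Sobolev/CKN inequality is itself nontrivial --- you correctly flag this as the main obstacle. Your ``cleaner alternative'' (kernel asymptotics plus HLS on $\mathbb{H}^{n}$) is in the spirit of the paper's sharp-constant results (Theorems~\ref{th3.8}--\ref{th3.10}), but for Theorem~\ref{th1.3} the paper takes the shorter Plancherel-plus-Lemma~\ref{lm4.5} path, avoiding explicit Green's function computations altogether.
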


Inequality \eqref{1.4} has been found useful in the study of the existence and symmetry of solutions to a class of higher order Brezis-Nirenberg problems  on hyperbolic spaces by J. Li and the authors \cite{LiLuYang-BN}.

By the conformal covariant property of  GJMS operators for the conformal change of metrics, we have
\begin{align}\label{1.5}
x_{1}^{k+\frac{n}{2}}\circ(-\Delta)^{k}\circ x_{1}^{k-\frac{n}{2}}=P_{k}\;\; \textrm{in}\;(\mathbb{H}^{n}, g_{\mathbb{H}}).
\end{align}
By using (\ref{1.5}), one sees  that (\ref{1.4}) is  equivalent to the following Hardy-Sobolev-Maz'ya inequalities  on $\mathbb{R}^{n}_{+}$:
\begin{equation}\label{1.6}
\int_{\mathbb{R}^{n}_{+}}u (-\Delta)^{k}udx- \left(\prod^{k}_{i=1}\frac{(2i-1)^{2}}{4}\right)\int_{\mathbb{R}^{n}_{+}}\frac{u^{2}}{x^{2k}_{1}}dx\geq C\left(\int_{\mathbb{R}^{n}_{+}}x^{\alpha}_{1}|u|^{p}dx\right)^{\frac{2}{p}},
\end{equation}
where  $2<p\leq\frac{2n}{n-2k}$ and $\alpha=\frac{(n-2k)p}{2}-n$.

\smallskip

We are interested in the sharp constants  $C$ of (\ref{1.6}).  We remark that
  Benguria,  Frank and   Loss (\cite{bfl}) proved that the sharp constant $C$ of the first order Hardy-Sobolev-Maz'ya inequality in (\ref{1.1}) for $n=3$ and $p=6$ coincides with
the  best Sobolev constant of Talent \cite{ta}. The same result  has been confirmed    for the $\frac{n-1}{2}$-th order Hardy-Sobolev-Maz'ya inequality of dimension $n$ when $n=5$ by Lu and Yang in \cite{LuYang3}, $n=7$ by Hong in \cite{Hong} and all $n\ge 7$ and odd by Lu and Yang in \cite{ly4}). We summarize the results in the following theorem:
\begin{theorem}[\cite{bfl,Hong, LuYang3,ly4}] \label{th1.2} Let $n\geq3$ be odd. There holds,  for each $u\in C^{\infty}_{0}(\mathbb{H}^{n})$,
\begin{equation}\label{1.7}
\int_{\mathbb{H}^{n}}uP_{\frac{n-1}{2}}udV- \left(\prod^{\frac{n-1}{2}}_{i=1}\frac{(2i-1)^{2}}{4}\right)\int_{\mathbb{H}^{n}}u^{2}dV\geq S_{n,(n-1)/2}\left(\int_{\mathbb{H}^{n}}|u|^{2n}dV\right)^{\frac{1}{n}},
\end{equation}
where
\begin{equation}\label{1.8}
S_{n,\gamma}=2^{2\gamma}\pi^{\gamma}\frac{\Gamma(\frac{n+2\gamma}{2})}{\Gamma(\frac{n-2\gamma}{2})}\left(\frac{\Gamma(\frac{n}{2})}{\Gamma(n)}\right)^{2\gamma/n}
\end{equation}
is the best  Sobolev
constant of order $\gamma$ (see \cite{lie,ct}). Furthermore, the inequality is  strict for nonzero $u$'s.

In terms of the half space model of hyperbolic space, (\ref{1.7})  is equivalent to the following higher order Hardy-Sobolev-Maz'ya inequality
\begin{equation}\label{1.9}
\int_{\mathbb{R}^{n}_{+}}u(-\Delta)^{\frac{n-1}{2}}udx- \left(\prod^{\frac{n-1}{2}}_{i=1}\frac{(2i-1)^{2}}{4}\right)\int_{\mathbb{R}^{n}_{+}}\frac{u^{2}}{x^{n-1}_{1}}dx\geq S_{n,(n-1)/2}\left(\int_{\mathbb{R}^{n}_{+}}|u|^{2n}dx\right)^{\frac{1}{n}}.
\end{equation}
\end{theorem}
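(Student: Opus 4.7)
My plan for Theorem \ref{th1.2} follows the strategy of the cited works \cite{bfl, Hong, LuYang3, ly4}. The first step is to invoke the conformal covariance identity \eqref{1.5}, which reduces \eqref{1.7} on $\mathbb{H}^n$ to the half-space formulation \eqref{1.9} on $\mathbb{R}^n_+$; the entire argument then takes place on $\mathbb{R}^n_+$. The constant $C_H := \prod_{i=1}^{(n-1)/2}(2i-1)^2/4$ appearing on the left of \eqref{1.9} is precisely the sharp Hardy--Rellich constant for $(-\Delta)^{(n-1)/2}$ on the half-space, so the left-hand side is a positive ``Hardy-deficit'' functional, and the task reduces to bounding this deficit from below by the critical Sobolev norm with sharp constant $S_{n,(n-1)/2}$.

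The core device is a ground-state substitution $u(x) = x_1^{1/2} v(x)$ adapted to the Hardy defect: after $\gamma = (n-1)/2$ successive integrations by parts, the defect $\int_{\mathbb{R}^n_+} u(-\Delta)^\gamma u\,dx - C_H \int_{\mathbb{R}^n_+} u^2/x_1^{2\gamma}\,dx$ is rewritten as a weighted Dirichlet-type quadratic form $Q(v)$ on $\mathbb{R}^n_+$. For the first-order case $n = 3$ treated in \cite{bfl} this gives the clean expression $Q(v) = \int_{\mathbb{R}^n_+} x_1 |\nabla v|^2\,dx$, and a conformal change of variables combined with rearrangement reduces \eqref{1.9} to a one-dimensional problem solvable by direct comparison with the Euclidean Aubin--Talenti extremizer. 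For higher orders $\gamma \geq 2$ (i.e., $n \geq 5$), as in \cite{Hong, LuYang3, ly4}, one combines the substitution with Helgason--Fourier analysis on $\mathbb{H}^n$ (developed in the main body of the present paper) to diagonalize $P_\gamma$, applies a symmetric rearrangement on geodesic balls to reduce to a radial problem, and finally identifies the sharp constant with $S_{n,\gamma}$ via an explicit ordinary differential inequality. Strictness of \eqref{1.7} for nonzero $u$ follows because the Euclidean Aubin--Talenti extremizers are strictly positive and smooth across $\{x_1 = 0\}$, and so cannot arise from any admissible $u \in C^\infty_0(\mathbb{H}^n)$ via the substitution.

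The main obstacle is the final step: the explicit identification of the sharp constant. After rearrangement one is left with a radial ODE whose extremal analysis requires delicate manipulations of hypergeometric and related special functions; this is the step whose difficulty grows with $n$ and which accounts for the incremental nature of the cited progress---\cite{bfl} for $n = 3$, \cite{LuYang3} for $n = 5$, \cite{Hong} for $n = 7$, and \cite{ly4} for all odd $n \geq 7$. A further subtlety is to verify that the rearrangement step does not lose any constants: the $L^{2n}$-norm on the right of \eqref{1.9} must remain invariant (or at worst non-decreasing) under the chosen rearrangement, which is essentially classical for the first-order case but requires considerable care for higher-order operators $P_\gamma$, where rearrangement inequalities are far more restrictive and must be implemented through the Helgason--Fourier side rather than directly in configuration space.
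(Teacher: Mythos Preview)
Theorem~\ref{th1.2} is quoted from the cited works and is not re-proved in this paper, so there is no ``paper's own proof'' to compare against in a literal sense. However, the paper does prove the fractional generalizations (Theorems~\ref{th1.4} and~\ref{th1.8}) by the same machinery as \cite{LuYang3,ly4}, so one can compare your outline to that approach.

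Your description of the higher-order argument is substantially wrong. The method of \cite{LuYang3,ly4}, and of Sections~\ref{Section4} and~\ref{Section7} of this paper, does \emph{not} use symmetric rearrangement on geodesic balls, does not pass to a radial ODE, and does not identify the sharp constant via an ``explicit ordinary differential inequality.'' Instead the route is: (i) use the Helgason--Fourier transform and Plancherel to write the Hardy deficit as an integral against an explicit Fourier symbol; (ii) bound that symbol below by the symbol of an auxiliary operator whose Green's function one can compute in closed form (see Theorem~\ref{co3.4} and the kernels $K_{\nu,\gamma}$, $H_{\nu,\gamma}$); (iii) establish a \emph{pointwise} upper bound on that Green's function by the Hardy--Littlewood--Sobolev kernel $(2\sinh\frac{\rho}{2})^{2\gamma-n}$ (this is the content of Theorem~\ref{th3.10} and Lemma~\ref{lm6.2}); and (iv) invoke the sharp Hardy--Littlewood--Sobolev inequality on $\mathbb{H}^n$ (Theorem~\ref{th3.8}, due to Beckner) together with the duality Lemma~\ref{lm3.7} to obtain the sharp Sobolev constant $S_{n,\gamma}$. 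Strictness follows because the HLS inequality on $\mathbb{H}^n$ is strict.

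The difficulty you identify with rearrangement for higher-order operators is real, and is precisely why the actual proofs avoid rearrangement altogether: the HLS-plus-duality route bypasses it. The ``delicate special-function manipulations'' that account for the incremental progress are the Gamma-function symbol inequalities (of the type in Proposition~\ref{p6.3}) and the pointwise kernel comparisons, not radial ODE analysis. Your sketch for $n=3$ is closer to what \cite{bfl} actually does, but your extrapolation of that strategy to $n\geq 5$ does not match the cited literature.
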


We should mention that sharp constants for Poincar\'e-Sobolev type inequalities have played an important role in conformal geometry as the powerful applications of those works by
Talenti \cite{ta},  Aubin \cite{au} and Beckner \cite{Be0} have demonstrated.

We remark that Poincar\'e-Sobolev and Hardy-Sobolev-Maz'ya inequalities have been established on complex hyperbolic spaces by Lu and Yang \cite{LuYang-Complex} and on quaternionic and octonionic hyperbolic spaces by Flynn, Lu and Yang \cite{FlynnLuYang}. Therefore, these have been completely settled for all complete and noncompact symmetric spaces of rank 1.

As is well known, the GJMS operators $P_{k}$ on the hyperbolic space $\mathbb{H}^n$ are explicitly known as
$P_{k}=P_{1}(P_{1}+2)\cdot\cdots\cdot(P_{1}+k(k-1))$, $ k\in\mathbb{N}$,
where $P_{1}=-\Delta_{\mathbb{H}}-\frac{n(n-2)}{4}$ is the conformal Laplacian on $\mathbb{H}^{n}$. GJMS operators are conformally covariant operators introduced by Graham, Jenne, Mason and Sparling in \cite{GJMS} based on the construction of ambient metric by C. Fefferman and Graham \cite{FeffermanGr} and \cite{FeffermanGr2}. We refer the reader to   works  by C. Fefferman and Graham \cite{fe3}, Gover \cite{go} and  Juhl \cite{j} for more properties of  GJMS operators.

 The fractional GJMS operators $P_{\gamma}$ are introduced by Graham and Zworski \cite{grz} on asymptotically hyperbolic spaces through the scattering theory initially developed by Mazzeo and Melrose \cite{ma}.
It is constructed  on the conformal infinity  $M$
of   a conformally compact Einstein manifold $(X^{n+1}, g_{+})$ via Dirichlet-to-Neumann
operator for the eigenvalue problem
\begin{align*}
-\Delta_{g_{+}}u-s(n-s)u=0,\;\;\; s=\frac{n}{2}+\gamma.
\end{align*}
The operator $P_{\gamma}$  is a non-local pseudo-differential operator of order $2\gamma$. Furthermore, it is  conformally
invariant:
 for a conformal change
of metric $\widehat{g}=e^{2\tau}g$,
we have
\begin{equation}\label{a1.9}
\begin{split}
\widehat{P}_{\gamma}f=e^{-\frac{n+2\gamma}{2}\tau}P_{\gamma}\left(e^{\frac{n-2\gamma}{2}\tau}f\right),\;\;\forall f\in C^{\infty}(M).
\end{split}
\end{equation}
However, the explicit formulas for the GJMS operators $P_{\gamma}$ on hyperbolic spaces are only known when $\gamma$ is an integer.
Their explicit formulas  of $P_{\gamma}$  when $\gamma$ is not an integer remain a very interesting open problem.

\smallskip

One of the purposes of this paper is to identify the explicit formulas of GJMS operators $P_{\gamma}$ on $\mathbb{H}^n$ when $\gamma$ is not an integer.
Another main purpose is
to establish fractional Poincar\'e-Sobolev inequality on hyperbolic space and fractional Hardy-Sobolev-Maz'ya inequality  on half space
and look for their sharp constants.
To this end,
  we define for $\gamma>0$,
\begin{align}\label{1.10}
P_{\gamma}=2^{2\gamma}\frac{|\Gamma(\frac{3+2\gamma}{4}+\frac{i}{2}\sqrt{-\Delta_{\mathbb{H}}-\frac{(n-1)^{2}}{4}})|^{2}}
{|\Gamma(\frac{3-2\gamma}{4}+\frac{i}{2}\sqrt{-\Delta_{\mathbb{H}}-\frac{(n-1)^{2}}{4}})|^{2}}
\end{align}
in terms of Helgason-Fourier transform on hyperbolic space (see Section \ref{Section3} for more details), where $i=\sqrt{-1}$ and  $\Gamma(\cdot)$ is the Gamma function.
In Section \ref{Section5}, we shall show that $P_{\gamma}$, $\gamma\in (0,\frac{n}{2})\setminus\mathbb{N}$, is nothing but the fractional GJMS operator on hyperbolic space. Here we denote by  $\mathbb{N}=\{0,1,2,\cdots\}$.
One expects that (\ref{1.5})   still holds for fractional GJMS operator on hyperbolic space. However, this is not the case (see Theorem \ref{th1.6}).
The reason is that $P_{\gamma}$ is a non-local pseudo-differential operator on $\mathbb{H}^{n}$, while $(-\Delta)^{\gamma}$ is a non-local pseudo-differential operator on the whole space $\mathbb{R}^{n}$.

\smallskip

We firstly establish  the following Poincar\'e-Sobolev inequalities associated with the operators $P_{\gamma}$ on the hyperbolic space $\mathbb{H}^n$:
\begin{theorem}\label{th1.3}
Let $0<\gamma<\frac{n}{2}$ and $2<p\leq\frac{2n}{n-2\gamma}$. There exists a positive constant $C$ such that for each $u\in C^{\infty}_{0}(\mathbb{H}^{n})$,
\begin{equation}\label{1.11}
\int_{\mathbb{H}^{n}}uP_{\gamma}udV- 2^{2\gamma}\frac{\Gamma(\frac{3+2\gamma}{4})^{2}}
{\Gamma(\frac{3-2\gamma}{4})^{2}}\int_{\mathbb{H}^{n}}u^{2}dV\geq C\left(\int_{\mathbb{H}^{n}}|u|^{p}dV\right)^{\frac{2}{p}}.
\end{equation}
Furthermore, the constant $2^{2\gamma}\frac{\Gamma(\frac{3+2\gamma}{4})^{2}}
{\Gamma(\frac{3-2\gamma}{4})^{2}}$ in (\ref{1.11}) is sharp in the sense that it cannot be replaced by a larger constant.
\end{theorem}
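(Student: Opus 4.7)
The plan is to exploit Helgason-Fourier analysis on $\mathbb{H}^n$: the spectral definition (\ref{1.10}) makes $P_\gamma$ a Fourier multiplier with symbol $p_\gamma(\lambda):=2^{2\gamma}|\Gamma(\tfrac{3+2\gamma}{4}+i\lambda/2)|^2/|\Gamma(\tfrac{3-2\gamma}{4}+i\lambda/2)|^2$. I will (i) show $p_\gamma$ is minimized at $\lambda=0$ to identify the sharp Poincar\'e constant, (ii) compare $p_\gamma$ pointwise to $(\lambda^2+(n-1)^2/4)^\gamma$ via Stirling's formula, and (iii) reduce (\ref{1.11}) to a known fractional Poincar\'e-Sobolev inequality on $\mathbb{H}^n$.

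By Plancherel for the Helgason-Fourier transform,
\begin{equation*}
\int_{\mathbb{H}^n}\!u\,P_\gamma u\,dV-p_\gamma(0)\int_{\mathbb{H}^n}\!u^2\,dV=\int_0^\infty\!\int_{\mathbb{S}^{n-1}}(p_\gamma(\lambda)-p_\gamma(0))\,|\widetilde u(\lambda,\omega)|^2\,|c(\lambda)|^{-2}\,d\omega\,d\lambda,
\end{equation*}
where $c(\lambda)$ is the Harish-Chandra function. From the Weierstrass product identity $|\Gamma(a+it)|^2=\Gamma(a)^2\prod_{k=0}^\infty(a+k)^2/\bigl((a+k)^2+t^2\bigr)$, one computes, for $a=\tfrac{3+2\gamma}{4}>b=\tfrac{3-2\gamma}{4}>0$,
\begin{equation*}
\frac{|\Gamma(a+it)|^2}{|\Gamma(b+it)|^2}=\frac{\Gamma(a)^2}{\Gamma(b)^2}\prod_{k=0}^\infty\frac{(a+k)^2}{(b+k)^2}\cdot\frac{(b+k)^2+t^2}{(a+k)^2+t^2},
\end{equation*}
whose generic factor is strictly increasing in $t^2$ since $a>b$. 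Hence $p_\gamma$ is smooth, even, strictly increasing on $[0,\infty)$, and attains its infimum $p_\gamma(0)=2^{2\gamma}\Gamma(\tfrac{3+2\gamma}{4})^2/\Gamma(\tfrac{3-2\gamma}{4})^2$ at $\lambda=0$, so the left-hand side above is non-negative.

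For the Sobolev bound, Stirling gives $|\Gamma(a+i\tau)/\Gamma(b+i\tau)|\sim|\tau|^{a-b}$ as $|\tau|\to\infty$, so $p_\gamma(\lambda)\sim\lambda^{2\gamma}$. Combined with $p_\gamma(\lambda)\geq p_\gamma(0)>0$ everywhere and continuity, the ratio $p_\gamma(\lambda)/(\lambda^2+(n-1)^2/4)^\gamma$ is uniformly bounded below, yielding $P_\gamma\geq c(-\Delta_{\mathbb{H}})^\gamma$ as quadratic forms for some $c>0$. Therefore (\ref{1.11}) is implied by the fractional Poincar\'e-Sobolev inequality
\begin{equation*}
\int_{\mathbb{H}^n}u(-\Delta_{\mathbb{H}})^\gamma u\,dV-\bigl(\tfrac{(n-1)^2}{4}\bigr)^\gamma\int_{\mathbb{H}^n}u^2\,dV\geq C'\|u\|_{L^p(\mathbb{H}^n)}^2,
\end{equation*}
which follows either by complex interpolation between the integer-order cases in Theorem~\ref{th1.1} or from direct kernel estimates for the resolvent on $\mathbb{H}^n$.

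Sharpness of $p_\gamma(0)$ is then immediate from spectral theory: replacing it by $p_\gamma(0)+\epsilon$ for any $\epsilon>0$ would force the left-hand side of (\ref{1.11}) to be negative on a Weyl sequence $\{u_j\}\subset C_0^\infty$ whose Helgason-Fourier transforms concentrate at $\lambda=0$, contradicting the non-negativity of the right-hand side. The principal obstacle I anticipate is precisely this uniform symbol comparison in the non-integer case: for integer $\gamma$ the factorization $P_k=P_1(P_1+2)\cdots(P_1+k(k-1))$ renders $p_k$ an explicit polynomial in $\lambda^2$, whereas for fractional $\gamma$ one must work throughout with ratios of Gamma functions on vertical lines and control the Stirling error uniformly in $\lambda$ to conclude $p_\gamma(\lambda)\asymp(\lambda^2+(n-1)^2/4)^\gamma$ with constants depending only on $n$ and $\gamma$.
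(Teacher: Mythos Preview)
Your overall strategy (Plancherel, identify the symbol minimum, compare symbols, reduce to a known Sobolev-type inequality) is the same as the paper's, but the central reduction step has a genuine gap.

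You establish $p_\gamma(\lambda)\geq c\,(\lambda^2+(n-1)^2/4)^\gamma$ and then claim this implies (\ref{1.11}) via the Poincar\'e--Sobolev inequality for $(-\Delta_{\mathbb H})^\gamma$. But an operator comparison $P_\gamma\geq c(-\Delta_{\mathbb H})^\gamma$ only gives
\[
\int uP_\gamma u\,dV-p_\gamma(0)\int u^2\,dV\ \geq\ c\int u(-\Delta_{\mathbb H})^\gamma u\,dV-p_\gamma(0)\int u^2\,dV,
\]
and to match the right-hand side with $c\bigl[\int u(-\Delta_{\mathbb H})^\gamma u-((n-1)^2/4)^\gamma\int u^2\bigr]$ you would need $p_\gamma(0)\leq c\,((n-1)^2/4)^\gamma$. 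Since $c\leq p_\gamma(0)/((n-1)^2/4)^\gamma$ (take $\lambda=0$ in the ratio), the inequality goes the wrong way unless the infimum of $p_\gamma(\lambda)/(\lambda^2+(n-1)^2/4)^\gamma$ is attained exactly at $\lambda=0$, which you have not checked and which can fail (the ratio tends to $1$ at infinity). What is actually needed is a comparison of the \emph{differences}: $p_\gamma(\lambda)-p_\gamma(0)\gtrsim (\lambda^2+(n-1)^2/4)^\gamma-((n-1)^2/4)^\gamma$, or equivalently $p_\gamma(\lambda)-p_\gamma(0)\asymp\lambda^2(\lambda^2+1)^{\gamma-1}$. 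Both sides vanish to order $\lambda^2$ at the origin and grow like $\lambda^{2\gamma}$ at infinity, and this is precisely the two-sided estimate (\ref{4.12}) that the paper establishes and uses.

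Two smaller points. First, your Weierstrass-product monotonicity argument assumes $b=(3-2\gamma)/4>0$, i.e.\ $\gamma<3/2$; for larger $\gamma$ the factor $(b+k)$ can be negative and the argument needs the form (\ref{2.20}) with $(a+k)^2\geq(b+k)^2$, which does hold here since $a+b=3/2>0$. Second, the reduced inequality you invoke is not a triviality: complex interpolation between the integer cases of Theorem~\ref{th1.1} does not straightforwardly produce a Poincar\'e--Sobolev inequality with the sharp subtracted constant, and ``direct kernel estimates'' is too vague. The paper handles this via Lemma~\ref{lm4.5}, which rests on the $L^{p'}\to L^2$ bounds of Anker and Lohou\'e for functions of $\Delta_{\mathbb H}$; that is the substantive analytic input you are missing.
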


In particular, when $n\geq 3$ and $\frac{n-1}{2}\leq \gamma<\frac{n}{2}$,  we have the following result with the best constant $C=S_{n, \gamma}$. Namely, the best constant for the Poincar\'e-Sobolev inequality on $\mathbb{H}^n$ is the same as the best constant for the fractional Sobolev inequality in $\mathbb{R}^n$.
\begin{theorem} \label{th1.4} Let  $n\geq 3$ and $\frac{n-1}{2}\leq \gamma<\frac{n}{2}$. It holds that
\begin{equation}\label{1.13}
\int_{\mathbb{H}^{n}}uP_{\gamma}udV- 2^{2\gamma}\frac{\Gamma(\frac{3+2\gamma}{4})^{2}}
{\Gamma(\frac{3-2\gamma}{4})^{2}}\int_{\mathbb{H}^{n}}u^{2}dV\geq S_{n,\gamma}\left(\int_{\mathbb{H}^{n}}|u|^{\frac{2n}{n-2\gamma}}dV\right)^{\frac{n-2\gamma}{n}},
\; \; u\in C^{\infty}_{0}(\mathbb{H}^{n}),
\end{equation}
where $S_{n,\gamma}$ is the best Sobolev constant of fractional order $\gamma$.
Furthermore, the inequality is  strict for nonzero $u$'s.
\end{theorem}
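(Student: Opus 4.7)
The plan is to reduce (1.13) to the sharp fractional Sobolev inequality on $\mathbb{R}^n$. Working in the upper half-space model $(\mathbb{R}^n_+, x_1^{-2}g_{\mathrm{Eucl}})$ of $\mathbb{H}^n$, the conformal covariance of the GJMS operators, extending (1.5) to non-integer $\gamma$ (which is part of what Section 4 of the paper establishes), gives the intertwining identity
$$P_\gamma \;=\; x_1^{(n+2\gamma)/2}\circ (-\Delta)^\gamma \circ x_1^{(2\gamma-n)/2}.$$
Substituting $u(x)=x_1^{(n-2\gamma)/2}w(x)$ with $w\in C_{0}^{\infty}(\mathbb{R}^n_+)$ and using $dV_\mathbb{H} = x_1^{-n}dx$, a direct calculation converts the three integrals in (1.13) into Euclidean ones, namely $\int w(-\Delta)^\gamma w\,dx$, $\int x_1^{-2\gamma}w^2\,dx$, and $\int |w|^p dx$ with $p=2n/(n-2\gamma)$, and reduces (1.13) to the fractional half-space Hardy--Sobolev--Maz'ya inequality
$$\int_{\mathbb{R}^n_+} w(-\Delta)^\gamma w\,dx \;-\; m_\gamma(0)\int_{\mathbb{R}^n_+}\frac{w^2}{x_1^{2\gamma}}\,dx \;\geq\; S_{n,\gamma}\left(\int_{\mathbb{R}^n_+}|w|^p\,dx\right)^{2/p},$$
where $m_\gamma(0)=2^{2\gamma}\Gamma(\tfrac{3+2\gamma}{4})^2/\Gamma(\tfrac{3-2\gamma}{4})^2$.

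Next, I would extend $w$ by zero to all of $\mathbb{R}^n$. This preserves both $\int w(-\Delta)^\gamma w\,dx$ and $\|w\|_{L^p}$, so the Lieb / Cotsiolis--Tavoularis sharp fractional Sobolev inequality supplies $\int_{\mathbb{R}^n}w(-\Delta)^\gamma w\,dx\geq S_{n,\gamma}\|w\|_{L^p}^2$, with equality only for Aubin--Talenti bubbles. The crux is to absorb the negative Hardy term $-m_\gamma(0)\int x_1^{-2\gamma}w^2\,dx$ without degrading the Sobolev constant, since a naive convex combination of the sharp fractional Hardy and sharp fractional Sobolev inequalities only gives a strictly smaller constant. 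I would therefore perform a ground-state substitution $w=\varphi v$, where $\varphi(x)=x_1^{\alpha}$ is the positive power pinned down by the identity $(-\Delta)^\gamma x_1^\alpha = m_\gamma(0)\,x_1^{\alpha-2\gamma}$ on $\mathbb{R}^n_+$ (a Gamma-function computation that precisely reproduces the constant $m_\gamma(0)$ appearing in (1.10) at $\lambda=0$). After this substitution the Hardy potential cancels exactly and the left-hand side becomes a manifestly non-negative weighted Dirichlet form
$$\tfrac{1}{2}\iint_{\mathbb{R}^n_+\times\mathbb{R}^n_+}\!(v(x)-v(y))^2\,\varphi(x)\varphi(y)\,K_\gamma(x,y)\,dx\,dy,$$
with an explicit non-negative kernel $K_\gamma$ coming from the Riesz kernel of $(-\Delta)^\gamma$ on $\mathbb{R}^n_+$.

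The conclusion would then follow from a pointwise comparison between $\varphi(x)\varphi(y)K_\gamma(x,y)$ and the Riesz kernel $c_{n,\gamma}|x-y|^{-(n+2\gamma)}$, which pushes the weighted form back down to the unweighted Dirichlet form of $(-\Delta)^\gamma$ applied to $\varphi v = w$ and hence to $S_{n,\gamma}\|w\|_{L^p}^2$ by the sharp Euclidean bound. Strictness for $u\not\equiv 0$ is inherited from the strictness of the Euclidean Sobolev inequality restricted to half-space-supported functions, since the Aubin--Talenti extremals are strictly positive on $\mathbb{R}^n$ and cannot vanish on $\{x_1\leq 0\}$. The main obstacle is the pointwise kernel estimate in the last step: it is precisely here that the dimensional restriction $\gamma\geq(n-1)/2$ should enter, through a delicate analysis of ratios of Gamma functions analogous to that used for integer $\gamma=(n-1)/2$ in the proofs of Theorem 1.2 in \cite{bfl,Hong,LuYang3,ly4}.
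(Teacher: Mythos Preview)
Your proposal rests on the intertwining identity
\[
P_\gamma \;=\; x_1^{(n+2\gamma)/2}\circ (-\Delta)^\gamma \circ x_1^{(2\gamma-n)/2},
\]
which you claim extends (1.5) to non-integer $\gamma$. This is precisely the point where the argument fails, and the paper is explicit about why. The operator conjugate to the Euclidean fractional Laplacian under the conformal factor is \emph{not} the fractional GJMS operator $P_\gamma$ but the distinct operator $\widetilde P_\gamma$ of Theorem~\ref{th1.6}; by Corollary~\ref{co4.3} they differ by the nonzero correction $\frac{\sin\gamma\pi}{\pi}\,|\Gamma(\gamma+\tfrac12+i\sqrt{-\Delta_{\mathbb H}-(n-1)^2/4})|^2$. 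The reason is that $(-\Delta)^\gamma$ is a \emph{global} operator on all of $\mathbb R^n$, so conjugating it by a power of $x_1$ on $\mathbb R^n_+$ does not recover the scattering operator attached to the Einstein filling of $(\mathbb H^n,g_{\mathbb H})$. Consequently your reduction to a half-space Hardy--Sobolev--Maz'ya inequality lands you on Theorem~\ref{th1.8} (for $\widetilde P_\gamma$, with Hardy constant $\Gamma(\gamma+\tfrac12)^2/\Gamma(\tfrac12)^2$), not on Theorem~\ref{th1.4}. The two constants differ exactly by the $\sin\gamma\pi$ term of Lemma~\ref{lm4.2}, and for $\gamma\in(2k,2k+1)$ this term has a sign that works \emph{against} you, so one cannot simply pass from one theorem to the other by monotonicity.

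There is a second, independent obstruction. For the entire range $n\ge 3$, $\gamma\ge(n-1)/2$ one has $\gamma\ge 1$, and for $n\ge 5$ even $\gamma\ge 2$. In that regime $(-\Delta)^\gamma$ no longer admits the nonnegative Dirichlet-form representation $\tfrac12\iint (v(x)-v(y))^2K(x,y)\,dx\,dy$ with a positive kernel, so the ground-state substitution and the subsequent pointwise kernel comparison you sketch do not make sense as written. The paper sidesteps both issues entirely: it works on the Helgason--Fourier side, proves a spectral Gamma-function inequality (Proposition~\ref{p6.3}) that bounds the symbol of $P_\gamma$ minus the sharp constant from below by $|\Gamma(\gamma+i\lambda)|^2/|\Gamma(i\lambda)|^2$, and then invokes a sharp hyperbolic Hardy--Littlewood--Sobolev inequality (Theorem~\ref{th3.8}) via the Green's function estimate of Theorem~\ref{th3.10}. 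The case split on $n\bmod 4$ in the paper reflects exactly the sign of $\sin\gamma\pi$ that your approach overlooks.
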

\begin{remark}
Even the best constant for the following fractional Sobolev inequality on $\mathbb{H}^n$  when $n\geq 3$ and $\frac{n-1}{2}\leq \gamma<\frac{n}{2}$ (i.e., without subtracting the second term) cannot be obtained from the best constant for the fractional Sobolev inequality in the Euclidean space $\mathbb{R}^n$ directly because the conformal relations  such as \eqref{1.13} and \eqref{1.14} for $\tilde{P}_{\gamma}$  do not hold for the operator $P_{\gamma}$ and thus the following fractional Sobolev inequality for the fractional GJMS operator $P_{\gamma}$ appears to be new:
\end{remark}
\begin{equation}\label{1.13}
\int_{\mathbb{H}^{n}}uP_{\gamma}udV\geq S_{n,\gamma}\left(\int_{\mathbb{H}^{n}}|u|^{\frac{2n}{n-2\gamma}}dV\right)^{\frac{n-2\gamma}{n}},
\; \; u\in C^{\infty}_{0}(\mathbb{H}^{n}).
\end{equation}

\smallskip
In the limiting case, namely $\gamma=\frac{n}{2}$, we have the following fractional Hardy-Adams inequality associated with the GJMS operators $P_{\frac{n}{2}}$ on $\mathbb{H}^n$ (see earlier works by Li, Lu and Yang \cite{LuYang2,LiLuYang1} when $n$ is an even integer):
\begin{theorem}\label{th1.5} Let $n\geq3$ be odd.
There exists a constant $C>0$ such that for all $u\in
C^{\infty}_{0}(\mathbb{H}^{n})$ with
\[
\int_{\mathbb{H}^{n}}uP_{\frac{n}{2}}udV-
2^{n}\frac{\Gamma(\frac{3+n}{4})^{2}}
{\Gamma(\frac{3-n}{4})^{2}}\int_{\mathbb{H}^{n}}u^{2}dV\leq1,
\]
it holds
\[
\int_{\mathbb{H}^{n}}(e^{\beta_{0}\left(n/2,n\right) u^{2}}-1-\beta_{0}\left(n/2,n\right)
u^{2})dV\leq C,
\]
where
\begin{align*}
\beta_{0}(n,m)=\frac{n}{|\mathbb{S}^{n-1}|}\left[\frac{\pi^{n/2}2^{m}\Gamma(m/2)}{\Gamma((n-m)/2)}\right]^{n/(n-m)}
\end{align*}
is the  Adams constant of fractional order (see \cite{laml}, Theorem 1.1)
and  $|\mathbb{S}^{n-1}|=\frac{2\pi^{\frac{n}{2}}}{\Gamma(\frac{n}{2})}$ is the surface measure of sphere $\mathbb{S}^{n-1}=\{x\in\mathbb{R}^{n}: |x|=1\}$.
\end{theorem}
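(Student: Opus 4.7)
The plan is to adapt the Green's function method developed for the integer-order Hardy-Adams inequalities by Lu-Yang \cite{LuYang2} and Li-Lu-Yang \cite{LiLuYang1} to the half-integer setting at hand, with the fractional Adams inequality of Lam-Lu-Tang \cite{laml} playing the role of the Euclidean input. Set $\mathcal{L}:=P_{n/2}-c_{n}$ with $c_{n}:=2^{n}\Gamma(\tfrac{3+n}{4})^{2}/\Gamma(\tfrac{3-n}{4})^{2}$. Formula (1.10) exhibits $P_{n/2}$ as a function of $-\Delta_{\mathbb{H}}-(n-1)^{2}/4$, whose spectrum is $[0,\infty)$, and the value of this function at the bottom of the spectrum is precisely $c_{n}$, so $\mathcal{L}\geq 0$ and $\mathcal{L}^{1/2}$ is well defined via the spectral theorem. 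Setting $f:=\mathcal{L}^{1/2}u$, the hypothesis becomes $\|f\|_{L^{2}(\mathbb{H}^{n})}\leq 1$, and one writes $u(x)=\int_{\mathbb{H}^{n}}G(d(x,y))f(y)\,dV(y)$ for a radial kernel $G$ realized as the inverse spherical Helgason-Fourier transform of $\sigma^{-1/2}$, where $\sigma(\lambda)=2^{n}\bigl(|\Gamma(\tfrac{3+n}{4}+\tfrac{i\lambda}{2})|^{2}/|\Gamma(\tfrac{3-n}{4}+\tfrac{i\lambda}{2})|^{2}-\Gamma(\tfrac{3+n}{4})^{2}/\Gamma(\tfrac{3-n}{4})^{2}\bigr)$.

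The key analytic step is the two-sided estimate
\[
G(r)=C_{n}\,r^{-n/2}+O(r^{-n/2+\varepsilon}) \text{ as } r\to 0^{+}, \qquad G(r)=O(e^{-\tfrac{n-1}{2}r}) \text{ as } r\to\infty,
\]
where $C_{n}$ is the constant in the Euclidean Riesz kernel representing $(-\Delta)^{-n/4}$ on $\mathbb{R}^{n}$. Since $n$ is odd, the spherical inversion on $\mathbb{H}^{n}$ reduces via the polynomial Abel transform to a one-dimensional Fourier inversion of $\sigma^{-1/2}$; uniform Stirling asymptotics for $|\Gamma(\tfrac{3\pm n}{4}+i\lambda/2)|^{2}$ show that $\sigma(\lambda)\sim|\lambda|^{n}$ as $|\lambda|\to\infty$, so the leading singularity of $G(r)$ at $r=0$ matches that of the Euclidean Riesz potential of order $n/2$, while the width of the holomorphic strip of $\sigma^{-1/2}$ about $\mathbb{R}$ controls the exponential decay rate by contour deformation.

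With these estimates, the remainder of the proof follows the template of \cite{LuYang2,LiLuYang1}. Covering $\mathbb{H}^{n}$ by geodesic balls $\{B_{1}(x_{j})\}_{j}$ of fixed radius and using normal coordinates in each, the convolution $u$ splits as $I_{n/2}\ast\tilde{f}_{j}+R_{j}f$, where $I_{n/2}$ is the Euclidean Riesz potential of order $n/2$, $\tilde{f}_{j}$ is the local push-forward of $f$, and $R_{j}f$ is uniformly bounded in $L^{\infty}(B_{1}(x_{j}))$ thanks to the subleading $O(r^{-n/2+\varepsilon})$ term and the off-ball exponential tail of $G$. The Lam-Lu-Tang fractional Adams inequality (\cite{laml}, Theorem 1.1) then bounds $\int_{B_{1}(x_{j})}\phi(\beta_{0}(n/2,n)u^{2})\,dV$, with $\phi(t)=e^{t}-1-t$, by a universal multiple of $\|f\chi_{B_{2}(x_{j})}\|_{L^{2}}^{2}$; summing in $j$ yields the global bound, and the linear and constant terms subtracted inside $\phi$ are precisely what accommodate the infinite hyperbolic volume.

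The main obstacle is the Green's function analysis: in the integer-order case the symbol is polynomial in $\lambda^{2}$ and $G$ admits an explicit closed form in Bessel/Legendre functions, but here the ratio of Gamma functions prevents any such simplification, forcing a more delicate combination of Stirling bounds and contour deformation; moreover, verifying that the principal constant $C_{n}$ coincides exactly with the Euclidean Riesz constant is what transfers the sharp Adams threshold $\beta_{0}(n/2,n)$ from $\mathbb{R}^{n}$ to $\mathbb{H}^{n}$.
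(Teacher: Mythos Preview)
Your approach is valid in outline but differs substantially from the paper's. You propose to analyze the Green kernel of $(P_{n/2}-c_n)^{1/2}$ directly, extract its local Riesz-type singularity and exponential tail, then localize via a covering and feed into the Euclidean fractional Adams inequality of~\cite{laml}---in effect reproving, for this particular operator, a variant of the hyperbolic Adams inequality that the paper simply quotes as Theorem~4.7 (from~\cite{y}). The paper instead establishes the pointwise symbol inequality (Lemma~4.6): for some $\zeta>0$,
\[
2^{n}\frac{|\Gamma(\tfrac{3+n}{4}+\tfrac{i}{2}\lambda)|^{2}}{|\Gamma(\tfrac{3-n}{4}+\tfrac{i}{2}\lambda)|^{2}}-2^{n}\frac{\Gamma(\tfrac{3+n}{4})^{2}}{\Gamma(\tfrac{3-n}{4})^{2}}\;\ge\;\lambda^{2}(\lambda^{2}+\zeta)^{\frac{n}{2}-1}\qquad(\lambda\in\mathbb{R}),
\]
proved by explicit Gamma-function manipulations split into the cases $n=4k+1$ and $n=4k+3$ (this is where the hypothesis that $n$ is odd is actually used). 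Plancherel then converts the constraint of Theorem~1.5 into the constraint of Theorem~4.7 with $s=1$, and that theorem finishes the argument in one line. Your route is more self-contained but the kernel analysis is genuinely delicate: since $\sigma(\lambda)$ vanishes quadratically at $\lambda=0$, the multiplier $\sigma^{-1/2}$ has a branch point on the real axis and the ``width of the holomorphic strip'' you invoke is zero, so naive contour deformation does not yield exponential decay; the claimed $O(e^{-(n-1)r/2})$ bound likely holds only up to a polynomial factor, and one must then check that the off-diagonal piece lies in $L^2(\mathbb{H}^n)$ against the exponential volume growth---this is exactly the borderline case. The paper's route sidesteps all of this at the price of the algebraic Lemma~4.6, which carries the real work.
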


We note that sharp fractional Adams and Hardy-Adams inequalities on hyperbolic spaces  have been established in \cite{LiLuYang2} under the constraint of
Sobolev norm of the fractional Laplacian of the functions.

Next, we consider the Hardy-Sobolev-Maz'ya inequality for  the fractional Laplacian $(-\Delta)^{\gamma}$ on half space $\mathbb{R}^n_+$. We
recall that the fractional Laplace operator on $\mathbb{R}^n$ is defined for $0<\gamma<1$ by

$$(-\Delta )^{\gamma}u(x)=C_{n, \gamma}P.V.\int_{\mathbb{R}^{n}}\frac{u(x)-u(y)}{|x-y|^{n+2\gamma}}dxdy.$$
This definition is equivalent to the one given by Caffarelli and Silvestre \cite{c} by using the extension method.
We mention that   Banica,  Gonz\'alez and   S\'aez \cite{ba}  have also constructed fractional Laplacians on noncompact and complete Riemannian manifolds satisfying certain conditions (including hyperbolic spaces)
through similar  extension techniques  introduced by Caffarelli-Silvestre.

On the other hand, on a subdomain
  $\Omega\subset \mathbb{R}^n$, the fractional Laplace operator can be defined for $0<\gamma<1$ by

$$(-\Delta )^{\gamma}u(x)=C_{n, \gamma}P.V.\int_{\Omega}\frac{u(x)-u(y)}{|x-y|^{n+2\gamma}}dxdy.$$
However, it is well known that
there are several  other ways to define the fractional Laplacian in a
domain $\Omega\subset\mathbb{R}^{n}$, which may  be quite different when $\Omega\neq \mathbb{R}^{n}$.
In this paper, we only consider the domain $\Omega=\mathbb{R}^n_+$ and functions $u\in C_{0}^{\infty}(\Omega)\subset C_{0}^{\infty}(\mathbb{R}^{n})$ while we deal with the Hardy-Sobolev-Maz'ya inequalities on half spaces. Thus,
we will define the fractional  operator $(-\Delta)^{\gamma}$ in terms of Fourier transform in a
domain $\Omega\subset\mathbb{R}^{n}$ for all $\gamma>0$ as follows:
\begin{align*}
\widehat{(-\Delta)^{\gamma}u}=|\xi|^{\gamma}\widehat{u}(\xi),\;\; \widehat{u}(\xi)=\int_{\mathbb{R}^{n}}u(x)e^{-2\pi ix\cdot\xi}dx,
\;\; u\in C_{0}^{\infty}(\Omega)\subset C_{0}^{\infty}(\mathbb{R}^{n}).
\end{align*}
We note that the relationship
  (\ref{1.5}) fails for $P_{\gamma}$ when $\gamma$ is not an integer because   $P_{\gamma}$ is a global operator in $\mathbb{H}^{n}$, while $(-\Delta)^{\gamma}$ is a global operator on the whole $\mathbb{R}^{n}$. Moreover, the Hardy-Sobolev-Maz'ya inequalities associated with the fractional GJMS operators $P_{\gamma}$ do not allow us to derive the fractional Hardy-Sobolev-Maz'ya inequalities associated with the fractional Laplacian $(-\Delta)^{\gamma}$ on half spaces $\mathbb{R}^{n}_{+}$ when $\gamma$ is not an integer. Therefore, we need to introduce another type of
  fractional operators $\widetilde{P}_{\gamma}$ on the hyperbolic space $\mathbb{H}^n$ that satisfy \eqref{1.13} and \eqref{1.14} below.
   In fact, we have the following theorem:
\begin{theorem} \label{th1.6} Let $\gamma>0$ and set
$$
\widetilde{P}_{\gamma}=\frac{|\Gamma(\gamma+\frac{1}{2}+i\sqrt{-\Delta_{\mathbb{H}}-\frac{(n-1)^{2}}{4}})|^{2}}
{|\Gamma(\frac{1}{2}+i\sqrt{-\Delta_{\mathbb{H}}-\frac{(n-1)^{2}}{4}})|^{2}}.$$
 Then the following holds:
\begin{align}\label{1.13}
x_{1}^{\gamma+\frac{n}{2}}(-\Delta)^{\gamma}( x_{1}^{\gamma-\frac{n}{2}}u)=&\widetilde{P}_{\gamma}u\;\;\;\; \textrm{in}\;(\mathbb{H}^{n}, g_{\mathbb{H}});\\
\label{1.14}
\left(\frac{1-|x|^{2}}{2}\right)^{\gamma+\frac{n}{2}}(-\Delta)^{\gamma}\left[ \left(\frac{1-|x|^{2}}{2}\right)^{\gamma-\frac{n}{2}}v\right]=&\widetilde{P}_{\gamma}v\;\;\;\; \textrm{in}\;(\mathbb{B}^{n}, g_{\mathbb{B}}),
\end{align}
where $(\mathbb{B}^{n}, g_{\mathbb{B}})$ is the ball model of hyperbolic space (see Section \ref{Section2}),  $u\in C_{0}^{\infty}(\mathbb{R}_{+}^{n})$ and $v\in C_{0}^{\infty}(\mathbb{B}^{n})$.
\end{theorem}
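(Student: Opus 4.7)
The plan is to prove \eqref{1.13} by checking the operator identity on the hyperbolic plane waves, which diagonalize $\widetilde{P}_{\gamma}$, and then to transfer \eqref{1.13} to \eqref{1.14} via a Cayley transform.

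In the upper half-space model, the generalized eigenfunctions of $-\Delta_{\mathbb{H}}$ are the hyperbolic plane waves
\[
e_{\lambda,\xi}(x)=\left(\frac{x_{1}}{|x-\xi|^{2}}\right)^{\frac{n-1}{2}+i\lambda},\qquad \xi\in\mathbb{R}^{n-1},\ \lambda\in\mathbb{R},
\]
with eigenvalue $\lambda^{2}+(n-1)^{2}/4$, and by the Helgason-Fourier inversion every $u\in C_{0}^{\infty}(\mathbb{R}_{+}^{n})$ can be written as a superposition of them. The spectral definition of $\widetilde{P}_{\gamma}$ gives $\widetilde{P}_{\gamma}e_{\lambda,\xi}=|\Gamma(\gamma+\tfrac{1}{2}+i\lambda)|^{2}/|\Gamma(\tfrac{1}{2}+i\lambda)|^{2}\,e_{\lambda,\xi}$. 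Thus I would reduce \eqref{1.13} to showing, for each $(\lambda,\xi)$,
\[
x_{1}^{\gamma+\frac{n}{2}}(-\Delta)^{\gamma}\!\left(\frac{x_{1}^{\gamma-\frac{1}{2}+i\lambda}}{|x-\xi|^{n-1+2i\lambda}}\right)=\frac{|\Gamma(\gamma+\tfrac{1}{2}+i\lambda)|^{2}}{|\Gamma(\tfrac{1}{2}+i\lambda)|^{2}}\,e_{\lambda,\xi}(x),
\]
since $x_{1}^{\gamma-n/2}e_{\lambda,\xi}=x_{1}^{\gamma-1/2+i\lambda}/|x-\xi|^{n-1+2i\lambda}$; one then recovers the full statement on $C_{0}^{\infty}(\mathbb{R}_{+}^{n})$ by Helgason-Fourier synthesis.

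The core of the argument is thus the single-plane-wave calculation. Using translation invariance of $(-\Delta)^{\gamma}$ in the boundary variables $x'=(x_{2},\dots,x_{n})$, I would reduce to $\xi=0$, where the integrand $x_{1}^{\gamma-1/2+i\lambda}|x|^{-(n-1+2i\lambda)}$ is homogeneous of degree $\gamma-n/2-i\lambda$, so that the Euclidean fractional Laplacian produces a function of the same homogeneous profile up to a single scalar $c(\gamma,\lambda)$ that depends only on the Mellin data. I would identify $c(\gamma,\lambda)$ by representing $(-\Delta)^{\gamma}$ through its Riesz kernel (initially for $\mathrm{Re}(\gamma)$ in a strip where the resulting integral is absolutely convergent), integrating first in $y_{1}\in(0,\infty)$ and then in $y'\in\mathbb{R}^{n-1}$, and recognising the two one-dimensional integrals as Beta integrals expressible through $\Gamma(\gamma+\tfrac{1}{2}\pm i\lambda)$ and $\Gamma(\tfrac{1}{2}\pm i\lambda)$ via the reflection formula; meromorphic continuation in $\gamma$ then removes the initial restriction and gives the identity for all $\gamma>0$. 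I expect this special-function bookkeeping, together with the analytic continuation, to be the main technical obstacle.

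Finally, \eqref{1.14} follows from \eqref{1.13} by a conformal change of variables. Let $\Phi:\mathbb{B}^{n}\to\mathbb{R}_{+}^{n}$ be a Cayley-type Möbius transformation that is a hyperbolic isometry $(\mathbb{B}^{n},g_{\mathbb{B}})\to(\mathbb{H}^{n},g_{\mathbb{H}})$. Since $\widetilde{P}_{\gamma}$ is defined purely spectrally from $-\Delta_{\mathbb{H}}$, it commutes with pull-back by $\Phi$. On the Euclidean side the conformal covariance of $(-\Delta)^{\gamma}$ under Möbius maps converts $x_{1}^{\gamma\pm n/2}$ into $((1-|y|^{2})/2)^{\gamma\pm n/2}$, since $x_{1}\circ\Phi$ agrees with $(1-|y|^{2})/2$ up to the Jacobian factor $|\det D\Phi(y)|^{1/n}$, and the remaining Jacobian powers are precisely those prescribed by the conformal weight of $(-\Delta)^{\gamma}$. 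Applying this to $v\in C_{0}^{\infty}(\mathbb{B}^{n})$ with $u=(\Phi^{-1})^{*}v$, the identity \eqref{1.13} pushes forward to \eqref{1.14}, completing the proof.
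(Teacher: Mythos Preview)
Your approach is genuinely different from the paper's, but the reduction to plane waves contains a real gap. The paper never touches the plane waves directly: it instead uses the explicit Green function of $(-\Delta)^{\gamma}$ on $\mathbb{R}^{n}_{+}$ (resp.\ $\mathbb{B}^{n}$), shows that after stripping the conformal weights it equals the hyperbolic kernel $K_{1/2,\gamma}(\cosh\rho)$, and then appeals to the earlier computation $\widehat{K_{1/2,\gamma}}(\lambda)=|\Gamma(\tfrac12+i\lambda)|^{2}/|\Gamma(\gamma+\tfrac12+i\lambda)|^{2}$. Concretely, writing $g=x_{1}^{\gamma+n/2}(-\Delta)^{\gamma}(x_{1}^{\gamma-n/2}u)$, the Green representation yields $u=K_{1/2,\gamma}\ast g$ on $\mathbb{H}^{n}$, a decay estimate for $g$ justifies taking the Helgason--Fourier transform of this convolution, and the multiplier drops out. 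Everything stays inside concrete function classes.

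The gap in your plan is the sentence ``homogeneous of degree $\gamma-\tfrac{n}{2}-i\lambda$, so the Euclidean fractional Laplacian produces a function of the same homogeneous profile up to a single scalar.'' Homogeneity together with $SO(n-1)$-symmetry only tells you that $(-\Delta)^{\gamma}\bigl[x_{1}^{\gamma-1/2+i\lambda}|x|^{-(n-1+2i\lambda)}\bigr]$ restricted to $x_{1}>0$ has the form $|x|^{-\gamma-n+1/2-i\lambda}\,\phi(x_{1}/|x|)$ for some unknown $\phi$; it does \emph{not} force $\phi(t)=c\,t^{-\gamma-1/2+i\lambda}$, which is what you need for the result to be a multiple of $x_{1}^{-\gamma-n/2}e_{\lambda,\xi}$. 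To pin down $\phi$ you would have to know in advance that $T:=x_{1}^{\gamma+n/2}(-\Delta)^{\gamma}x_{1}^{\gamma-n/2}$ commutes with the full hyperbolic isometry group (in particular with the inversion $x\mapsto x/|x|^{2}$), so that $T$ is a function of $-\Delta_{\mathbb H}$ and hence diagonal on $e_{\lambda,\xi}$. That commutation is exactly the conformal covariance of $(-\Delta)^{\gamma}$ that you invoke only later for \eqref{1.14}; without it, your Beta-integral step has no reason to collapse to a single scalar. A cleaner variant of your idea is to test on the plane wave at infinity, $e_{\lambda,\infty}(x)=x_{1}^{(n-1)/2+i\lambda}$, where the computation is genuinely one-dimensional and homogeneity \emph{does} suffice; but to pass from $\xi=\infty$ to finite $\xi$ you again need the inversion covariance. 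A secondary issue is that $x_{1}^{\gamma-n/2}e_{\lambda,\xi}\notin C_{0}^{\infty}(\mathbb{R}^{n})$, so applying $(-\Delta)^{\gamma}$ to it and then interchanging with the Helgason--Fourier synthesis both require justification you have not supplied.
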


One sees that
\begin{align*}
\widetilde{P}_{\gamma}=P_{\gamma},\;\;\gamma\in \mathbb{N}\;\;\textrm{but} \;\; \widetilde{P}_{\gamma}\neq P_{\gamma},\;\;\gamma\in (0,+\infty)\backslash\mathbb{N}.
\end{align*}

 In fact, we have (see Theorem \ref{co4.3} below)
\begin{equation}\label{relation}
P_{\gamma}=\widetilde{P}_{\gamma}+\frac{\sin\gamma\pi}{\pi}\left|\Gamma\left(\gamma+\frac{1}{2}+i\sqrt{-\Delta_{\mathbb{H}}-\frac{(n-1)^{2}}{4}}\right)\right|^{2}.
\end{equation}

We then establish  the fractional Poincar\'e-Sobolev inequalities associated with the fractional operators $\widetilde{P}_{\gamma}$ on $\mathbb{H}^n$ and  fractional  Hardy-Sobolev-Maz'ya inequalities on $\mathbb{R}_{+}^{n}$ and $\mathbb{B}^{n}$ associated the fractional Laplacian $(-\Delta)^{\gamma}$. The main result is the following theorem:
\begin{theorem}\label{th1.7}
Let $n\geq2$, $0<\gamma<\frac{n}{2}$ and $2<p\leq\frac{2n}{n-2\gamma}$. There exists a positive constant $C$ such that for each $u\in C^{\infty}_{0}(\mathbb{H}^{n})$,
\begin{equation}\label{1.15}
\int_{\mathbb{H}^{n}}u\widetilde{P}_{\gamma}udV- \frac{\Gamma(\gamma+\frac{1}{2})^{2}}
{\Gamma(\frac{1}{2})^{2}}\int_{\mathbb{H}^{n}}u^{2}dV\geq C\left(\int_{\mathbb{H}^{n}}|u|^{p}dV\right)^{\frac{2}{p}}.
\end{equation}
Furthermore, the constant $ \frac{\Gamma(\gamma+\frac{1}{2})^{2}}
{\Gamma(\frac{1}{2})^{2}}$ in (\ref{1.15}) is sharp in the sense that it cannot be replaced by a larger number.

In terms of half space and ball models  of hyperbolic space,  (\ref{1.15})  is equivalent to the following fractional  Hardy-Sobolev-Maz'ya inequalities on $\mathbb{R}^{n}_{+}$
and $\mathbb{B}^{n}$:
\begin{align}\label{1.16}
\int_{\mathbb{R}^{n}_{+}}u(-\Delta)^{\gamma}udx- \frac{\Gamma(\gamma+\frac{1}{2})^{2}}
{\Gamma(\frac{1}{2})^{2}}\int_{\mathbb{R}^{n}_{+}}\frac{u^{2}}{x_{1}^{2\gamma}}dx\geq& C\left(\int_{\mathbb{R}_{+}^{n}}x^{\alpha}_{1}|u|^{p}dx\right)^{\frac{2}{p}};\\
\label{1.17}
\int_{\mathbb{B}^{n}}v(-\Delta)^{\gamma}vdx- \frac{\Gamma(\gamma+\frac{1}{2})^{2}}
{\Gamma(\frac{1}{2})^{2}}\int_{\mathbb{B}^{n}}v^{2}\frac{2^{2\gamma}}{(1-|x|^{2})^{2\gamma}}dx\geq& C\left(\int_{\mathbb{B}^{n}}(1-|x|^{2})^{\alpha}|v|^{p}dx\right)^{\frac{2}{p}},
\end{align}
where  $u\in C^{\infty}_{0}(\mathbb{R}_{+}^{n})$,  $v\in C^{\infty}_{0}(\mathbb{B}^{n})$ and $\alpha=\frac{(n-2\gamma)p}{2}-n$.
\end{theorem}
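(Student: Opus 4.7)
The core task is the spectral inequality \eqref{1.15} on $\mathbb{H}^n$; the Euclidean inequalities \eqref{1.16} and \eqref{1.17} will then follow from the conjugation identities of Theorem \ref{th1.6}. Writing $\lambda=\sqrt{-\Delta_{\mathbb{H}}-(n-1)^{2}/4}$ for the spectral parameter, the Helgason-Fourier symbol of $\widetilde{P}_\gamma$ is $M_\gamma(\lambda):=|\Gamma(\gamma+\tfrac12+i\lambda)|^2/|\Gamma(\tfrac12+i\lambda)|^2$, which is real, even in $\lambda$, and grows like $|\lambda|^{2\gamma}$ as $|\lambda|\to\infty$ by Stirling. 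The first step is the pointwise lower bound $M_\gamma(\lambda)\ge M_\gamma(0)=\Gamma(\gamma+\tfrac12)^2/\Gamma(\tfrac12)^2$. Differentiating $F(\lambda):=\log M_\gamma(\lambda)$ and using the Weierstrass expansion of $\psi=\Gamma'/\Gamma$, I obtain
\[
F'(\lambda)=2\lambda\sum_{k=0}^{\infty}\Bigl[\frac{1}{(k+\tfrac12)^2+\lambda^2}-\frac{1}{(k+\gamma+\tfrac12)^2+\lambda^2}\Bigr]\ge 0,\qquad \lambda\ge 0,
\]
because every bracketed term is positive when $\gamma>0$. By the Plancherel theorem on $\mathbb{H}^n$ this immediately gives the nonnegativity of the left-hand side of \eqref{1.15}, and the sharpness of $M_\gamma(0)$ follows by testing against a sequence whose Helgason-Fourier transform concentrates near $\lambda=0$ (the bottom of the $L^2$-spectrum of $-\Delta_{\mathbb{H}}$).

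To upgrade nonnegativity to the full $L^p$ bound, the plan is to compare the residual symbol $M_\gamma(\lambda)-M_\gamma(0)$ with the symbol of a fractional operator whose Sobolev embedding is known. Since the residual vanishes like $\lambda^2$ as $\lambda\to 0$ (by smoothness of $M_\gamma$) and grows like $\lambda^{2\gamma}$ as $\lambda\to\infty$, the natural form of the comparison is
\[
M_\gamma(\lambda)-M_\gamma(0)\ \ge\ c_\gamma\,\frac{\lambda^{2}}{(1+\lambda^{2})^{1-\gamma}},\qquad \lambda\ge 0,
\]
which can be extracted from the monotonicity of $F'$ together with the Gamma recursion and Stirling asymptotics. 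At the operator level this provides a lower bound for $\widetilde{P}_\gamma-M_\gamma(0)$ by an explicit fractional power of the shifted Laplacian, and the desired $L^p$ inequality in the range $2<p\le 2n/(n-2\gamma)$ then follows from the fractional Hardy-Littlewood-Sobolev inequality on $\mathbb{H}^n$; alternatively, as in the authors' parallel treatment of Theorem \ref{th1.3}, one can invert $\widetilde{P}_\gamma-M_\gamma(0)$ directly and apply HLS to the resulting kernel.

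Finally, the equivalence of \eqref{1.15} with \eqref{1.16} and \eqref{1.17} comes from Theorem \ref{th1.6}. In the upper half-space model, setting $v(x)=x_1^{\gamma-n/2}u(x)$ and using $dV=x_1^{-n}dx$, a direct computation yields
\[
\int_{\mathbb{R}^n_+}v(-\Delta)^\gamma v\,dx=\int_{\mathbb{H}^n}u\widetilde{P}_\gamma u\,dV,\quad \int_{\mathbb{R}^n_+}\frac{v^{2}}{x_{1}^{2\gamma}}\,dx=\int_{\mathbb{H}^n}u^{2}\,dV,
\]
together with $\int_{\mathbb{R}^n_+}x_1^{\alpha}|v|^p\,dx=\int_{\mathbb{H}^n}|u|^p\,dV$ for $\alpha=(n-2\gamma)p/2-n$, which converts \eqref{1.15} into \eqref{1.16}; the substitution $v=((1-|x|^2)/2)^{\gamma-n/2}u$ dictated by the ball-model identity of Theorem \ref{th1.6} gives \eqref{1.17}. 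The main obstacle is the $L^p$ upgrade at the critical exponent $p=2n/(n-2\gamma)$: unlike the integer case, where $M_\gamma-M_\gamma(0)$ is a polynomial in $\lambda^2$ and pointwise comparison with $\lambda^{2\gamma}$ is immediate, the non-integer case requires reconciling the two distinct asymptotic regimes $\lambda\to 0$ (quadratic) and $\lambda\to\infty$ (of order $\lambda^{2\gamma}$), which is what forces one to pass through an explicit kernel representation rather than a crude symbol majorization.
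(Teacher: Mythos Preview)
Your proposal is correct and follows essentially the same route as the paper: Plancherel reduces everything to the symbol $M_\gamma(\lambda)-M_\gamma(0)$, its asymptotics $\sim\lambda^2$ near $0$ and $\sim|\lambda|^{2\gamma}$ at infinity give the two-sided bound $M_\gamma(\lambda)-M_\gamma(0)\thicksim\lambda^2(1+\lambda^2)^{\gamma-1}$, and then one invokes the Sobolev inequality for the operator $(-\Delta_{\mathbb H}-\tfrac{(n-1)^2}{4})(-\Delta_{\mathbb H}-\tfrac{(n-1)^2}{4}+1)^{\gamma-1}$ (the paper's Lemma~\ref{lm4.5}, exactly as in the proof of Theorem~\ref{th1.3}); sharpness and the equivalences \eqref{1.16}--\eqref{1.17} go through just as you describe. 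One remark: your closing worry that the non-integer case ``forces one to pass through an explicit kernel representation rather than a crude symbol majorization'' is unfounded---the symbol comparison with $\lambda^2(1+\lambda^2)^{\gamma-1}$ is already enough, since Lemma~\ref{lm4.5} supplies the needed $L^p$ bound for that model operator directly.
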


When $n\geq 3$ and $\frac{n-1}{2}\leq \gamma<\frac{n}{2}$,  we further have the following inequalities with the best constant $S_{n,\gamma}$.

\begin{theorem} \label{th1.8} Let $n\geq 3$ and $\frac{n-1}{2}\leq \gamma<\frac{n}{2}$. Then there holds
\begin{equation}\label{1.18}
\int_{\mathbb{H}^{n}}u\widetilde{P}_{\gamma}udV- \frac{\Gamma(\gamma+\frac{1}{2})^{2}}
{\Gamma(\frac{1}{2})^{2}}\int_{\mathbb{H}^{n}}u^{2}dV\geq S_{n,\gamma}\left(\int_{\mathbb{H}^{n}}|u|^{\frac{2n}{n-2\gamma}}dV\right)^{\frac{n-2\gamma}{n}},
\; \; u\in C^{\infty}_{0}(\mathbb{H}^{n}).
\end{equation}
Furthermore, the inequality is  strict for nonzero $u$'s.

In terms of half space and ball models  of hyperbolic space, we have
\begin{align*}
\int_{\mathbb{R}^{n}_{+}}u(-\Delta)^{\gamma}udx- \frac{\Gamma(\gamma+\frac{1}{2})^{2}}
{\Gamma(\frac{1}{2})^{2}}\int_{\mathbb{R}^{n}_{+}}\frac{u^{2}}{x_{1}^{2\gamma}}dx\geq&
S_{n,\gamma}\left(\int_{\mathbb{R}_{+}^{n}}|u|^{\frac{2n}{n-2\gamma}}dV\right)^{\frac{n-2\gamma}{n}};\\
\int_{\mathbb{B}^{n}}v(-\Delta)^{\gamma}vdx- \frac{\Gamma(\gamma+\frac{1}{2})^{2}}
{\Gamma(\frac{1}{2})^{2}}\int_{\mathbb{B}^{n}}\frac{2^{2\gamma}v^{2}}{(1-|x|^{2})^{2\gamma}}dx\geq&
S_{n,\gamma}\left(\int_{\mathbb{B}^{n}}|v|^{\frac{2n}{n-2\gamma}}dx\right)^{\frac{n-2\gamma}{n}},
\end{align*}
where  $u\in C^{\infty}_{0}(\mathbb{R}_{+}^{n})$ and  $v\in C^{\infty}_{0}(\mathbb{B}^{n})$.
\end{theorem}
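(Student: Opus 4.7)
The plan is to reduce \eqref{1.18} to the equivalent half-space Hardy-Sobolev-Maz'ya inequality via Theorem~\ref{th1.6}, and then to establish that inequality with the sharp constant $S_{n,\gamma}$ by comparison with the Euclidean sharp fractional Sobolev inequality of Lieb / Cotsiolis-Tavoularis.

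First, I would apply the identity $\widetilde{P}_{\gamma}u = x_{1}^{\gamma+n/2}(-\Delta)^{\gamma}(x_{1}^{\gamma-n/2}u)$ from Theorem~\ref{th1.6} in the half-space model, where $dV = x_{1}^{-n}dx$. Setting $v(x) = x_{1}^{\gamma-n/2}u(x) \in C_{0}^{\infty}(\mathbb{R}^{n}_{+})$ and using the arithmetic identity $(n/2-\gamma)\cdot 2n/(n-2\gamma) = n$, a direct change of variables gives the term-by-term correspondence
\[
\int_{\mathbb{H}^{n}}u\widetilde{P}_{\gamma}u\,dV = \int_{\mathbb{R}^{n}_{+}}v(-\Delta)^{\gamma}v\,dx,\quad \int_{\mathbb{H}^{n}}u^{2}\,dV = \int_{\mathbb{R}^{n}_{+}}\frac{v^{2}}{x_{1}^{2\gamma}}dx,\quad \int_{\mathbb{H}^{n}}|u|^{\frac{2n}{n-2\gamma}}dV = \int_{\mathbb{R}^{n}_{+}}|v|^{\frac{2n}{n-2\gamma}}dx.
\]
Hence \eqref{1.18} is equivalent to its half-space counterpart, and the ball-model form follows analogously from \eqref{1.14}.

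Next, for $v \in C_{0}^{\infty}(\mathbb{R}^{n}_{+}) \subset C_{0}^{\infty}(\mathbb{R}^{n})$, the Bogdan-Dyda splitting of the singular-integral representation on the zero-extension yields
\[
\int_{\mathbb{R}^{n}_{+}} v(-\Delta)^{\gamma}v\,dx = \frac{c_{n,\gamma}}{2}\iint_{\mathbb{R}^{n}_{+}\times\mathbb{R}^{n}_{+}}\frac{(v(x)-v(y))^{2}}{|x-y|^{n+2\gamma}}dx\,dy + \frac{\Gamma(\gamma+1/2)^{2}}{\Gamma(1/2)^{2}}\int_{\mathbb{R}^{n}_{+}}\frac{v^{2}}{x_{1}^{2\gamma}}dx,
\]
the boundary constant being verified by a direct Beta-function computation. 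Thus the Hardy-corrected left-hand side of the target inequality coincides with the regional fractional energy $\mathcal{E}[v] := \frac{c_{n,\gamma}}{2}\iint_{\mathbb{R}^{n}_{+}\times\mathbb{R}^{n}_{+}}(v(x)-v(y))^{2}/|x-y|^{n+2\gamma}\,dx\,dy$, and the problem reduces to the sharp estimate $\mathcal{E}[v] \geq S_{n,\gamma}\|v\|_{L^{2n/(n-2\gamma)}(\mathbb{R}^{n}_{+})}^{2}$.

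The main obstacle is precisely this sharp estimate. A concentrating test-function sequence built from translates and dilates of the Aubin-Talenti bubble $(\epsilon^{2}+|x-x_{0}|^{2})^{-(n-2\gamma)/2}$, with $x_{0}$ moving to infinity into the interior of $\mathbb{R}^{n}_{+}$, recovers $S_{n,\gamma}$ in the limit and yields the upper bound $C \leq S_{n,\gamma}$ on any putative best constant. For the matching lower bound I would exploit the explicit Helgason-Fourier representation of $\widetilde{P}_{\gamma}$ from Theorem~\ref{th1.6} to diagonalize the Hardy-corrected form on $\mathbb{H}^{n}$, reducing to a spectral-multiplier comparison: the multiplier
\[
\mathcal{M}(\lambda) := \frac{|\Gamma(\gamma+1/2+i\lambda)|^{2}}{|\Gamma(1/2+i\lambda)|^{2}} - \frac{\Gamma(\gamma+1/2)^{2}}{\Gamma(1/2)^{2}}
\]
must be shown, via delicate Gamma-function asymptotics, to control the Euclidean symbol of $(-\Delta)^{\gamma}$ with constant $S_{n,\gamma}$. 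The hypothesis $\gamma \geq (n-1)/2$ enters at this stage through the positivity and monotonicity properties of the Gamma-function ratios needed for the comparison, paralleling its role in the integer case of Theorem~\ref{th1.2}. Finally, strictness of \eqref{1.18} for nonzero $u$ follows because equality in the Euclidean sharp fractional Sobolev inequality is attained only by translates, dilates, and scalar multiples of the Aubin-Talenti bubble, which is everywhere positive on $\mathbb{R}^{n}$ and hence cannot arise as the zero-extension of a $C_{0}^{\infty}(\mathbb{R}^{n}_{+})$ function.
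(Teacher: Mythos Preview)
Your reduction via Theorem~\ref{th1.6} to the half-space form is correct and matches the paper. The gap is in the second step. The Bogdan--Dyda splitting
\[
\int_{\mathbb{R}^{n}_{+}} v(-\Delta)^{\gamma}v\,dx = \frac{c_{n,\gamma}}{2}\iint_{\mathbb{R}^{n}_{+}\times\mathbb{R}^{n}_{+}}\frac{(v(x)-v(y))^{2}}{|x-y|^{n+2\gamma}}dx\,dy + \frac{\Gamma(\gamma+1/2)^{2}}{\Gamma(1/2)^{2}}\int_{\mathbb{R}^{n}_{+}}\frac{v^{2}}{x_{1}^{2\gamma}}dx
\]
is only valid for $0<\gamma<1$: the singular-integral representation of $(-\Delta)^{\gamma}$ with first-order differences breaks down once $\gamma\geq 1$, because $|x-y|^{-n-2\gamma}$ is too singular for $u(x)-u(y)$ to provide enough cancellation. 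Since the hypothesis is $\gamma\geq (n-1)/2\geq 1$ for all $n\geq 3$, this identity is unavailable throughout the range of the theorem, and your reduction to the regional energy $\mathcal{E}[v]$ does not go through.

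Even setting that aside, the plan for the lower bound is not a proof. Saying that the multiplier $\mathcal{M}(\lambda)$ ``must be shown to control the Euclidean symbol of $(-\Delta)^{\gamma}$ with constant $S_{n,\gamma}$'' conflates two incompatible Fourier transforms: $\mathcal{M}(\lambda)$ lives on the Helgason--Fourier side and $|\xi|^{2\gamma}$ on the Euclidean side, so there is no direct multiplier comparison that produces the sharp constant. The paper's route is different and concrete: on the hyperbolic side one first proves the elementary spectral inequality
\[
\frac{|\Gamma(\gamma+\tfrac12+i\lambda)|^{2}}{|\Gamma(\tfrac12+i\lambda)|^{2}}-\frac{\Gamma(\gamma+\tfrac12)^{2}}{\Gamma(\tfrac12)^{2}}\;\geq\;\lambda^{2}\,\frac{|\Gamma(\gamma+\tfrac12+i\lambda)|^{2}}{|\Gamma(\tfrac32+i\lambda)|^{2}},
\]
and then the real work (Lemma~\ref{lm6.2}) is a \emph{pointwise Green's function estimate}: the Green's kernel $K_{3/2,\gamma-1}$ of the operator with symbol $\lambda^{2}|\Gamma(\gamma+\tfrac12+i\lambda)|^{2}/|\Gamma(\tfrac32+i\lambda)|^{2}$ is bounded above by the explicit kernel $\frac{\Gamma(n/2-\gamma)}{2^{n}\pi^{n/2}\Gamma(\gamma)}(\sinh\tfrac{\rho}{2})^{2\gamma-n}$, after which the sharp Hardy--Littlewood--Sobolev inequality on $\mathbb{H}^{n}$ (Theorem~\ref{th3.8}) yields $S_{n,\gamma}$ exactly. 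The condition $\gamma\geq (n-1)/2$ enters precisely to make the hypergeometric factor in the Green's function monotone and hence controllable by its value at the endpoint. This Green's function comparison is the missing idea in your proposal.
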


In the borderline case $\gamma=\frac{n}{2}$, we  have the following sharp fractional Hardy-Adams inequalities associated with both $\widetilde{P}_{\frac{n}{2}}$ on $\mathbb{H}^n$ and $(-\Delta)^{\frac{n}{2}}$ on half space $\mathbb{R}_{+}^{n}$ and unit ball $\mathbb{B}^{n}$:
\begin{theorem}\label{th1.9} Let $n\geq3$ be odd.
There exists a constant $C>0$ such that for all $u\in
C^{\infty}_{0}(\mathbb{H}^{n})$ with
\[
\int_{\mathbb{H}^{n}}u\widetilde{P}_{\frac{n}{2}}udV-
\frac{\Gamma(\frac{n+1}{2})^{2}}
{\Gamma(\frac{1}{2})^{2}}\int_{\mathbb{H}^{n}}u^{2}dV\leq1,
\]
it holds
\begin{align}\label{1.19}
\int_{\mathbb{H}^{n}}(e^{\beta_{0}\left(n/2,n\right) u^{2}}-1-\beta_{0}\left(n/2,n\right)
u^{2})dV\leq C.
\end{align}

In terms of half space model and ball model  of hyperbolic space, we have
\begin{align*}
\int_{\mathbb{R}_{+}^{n}}\frac{e^{\beta_{0}\left(n/2,n\right) u^{2}}-1-\beta_{0}\left(n/2,n\right)
u^{2}}{x_{1}^{n}}dx\leq &C;\\
\int_{\mathbb{B}^{n}}\frac{e^{\beta_{0}\left(n/2,n\right) v^{2}}-1-\beta_{0}\left(n/2,n\right)
v^{2}}{(1-|x|^{2})^{n}}dx\leq& C,
\end{align*}
for each $u\in C_{0}^{\infty}(\mathbb{R}_{+}^{n})$ and $v\in C_{0}^{\infty}(\mathbb{B}^{n})$ with
\begin{align*}
\int_{\mathbb{R}_{+}^{n}}u(-\Delta)^{\frac{n}{2}}udx-
\frac{\Gamma(\frac{n+1}{2})^{2}}
{\Gamma(\frac{1}{2})^{2}}\int_{\mathbb{R}^{n}_{+}}\frac{u^{2}}{x_{1}^{n}}dx\leq&1;\\
\int_{\mathbb{B}^{n}}v(-\Delta)^{\frac{n}{2}}vdx-
\frac{\Gamma(\frac{n+1}{2})^{2}}
{\Gamma(\frac{1}{2})^{2}}\int_{\mathbb{B}^{n}}\frac{2^{n}v^{2}}{(1-|x|^{2})^{n}}dx\leq&1.
\end{align*}
\end{theorem}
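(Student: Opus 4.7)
The plan is to reduce the three stated inequalities to a single equivalent one on hyperbolic space via Theorem~\ref{th1.6}, then to prove that one by a Helgason--Fourier spectral comparison between $\widetilde{P}_{n/2} - c_0$ and a power of the shifted hyperbolic Laplacian, and finally to invoke the already-known sharp fractional Adams inequality on $\mathbb{H}^n$.

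\emph{Step 1 (Equivalence of the three forms).} Setting $\gamma = n/2$ in Theorem~\ref{th1.6} makes $\gamma - n/2 = 0$ vanish, and the conjugation relations collapse to
\[
\widetilde{P}_{n/2} u = x_1^n (-\Delta)^{n/2} u \text{ on } \mathbb{R}^n_+, \qquad \widetilde{P}_{n/2} v = \Bigl(\tfrac{1-|x|^2}{2}\Bigr)^n (-\Delta)^{n/2} v \text{ on } \mathbb{B}^n.
\]
Combined with the hyperbolic volume elements $dV = x_1^{-n}\,dx$ and $dV = 2^n(1-|x|^2)^{-n}\,dx$, the quadratic form, the Hardy term, and the exponential integral in each of the three statements transform consistently, so it suffices to establish the hyperbolic form~\eqref{1.19}.

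\emph{Step 2 (Spectral symbol and pointwise comparison).} The Helgason--Fourier symbol of $\widetilde{P}_{n/2}$ is $\phi(\lambda) = |\Gamma((n+1)/2 + i\lambda)|^2/|\Gamma(1/2 + i\lambda)|^2$, with ground-state value $c_0 := \phi(0) = \Gamma((n+1)/2)^2/\Gamma(1/2)^2$. Since $n$ is odd, $(n+1)/2$ is a positive integer, so iterating $\Gamma(z+1) = z\Gamma(z)$ together with the reflection identities $|\Gamma(1+i\lambda)|^2 = \pi\lambda/\sinh(\pi\lambda)$ and $|\Gamma(1/2+i\lambda)|^2 = \pi/\cosh(\pi\lambda)$ yields the closed form
\[
\phi(\lambda) = \lambda\coth(\pi\lambda) \prod_{k=1}^{(n-1)/2}(k^2 + \lambda^2).
\]
The key spectral inequality is $\phi(\lambda) - c_0 \geq |\lambda|^n$ for all $\lambda \in \mathbb{R}$. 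Writing the product as a polynomial in $\lambda^2$ with non-negative coefficients, using $\coth(\pi\lambda) \geq 1$ for $\lambda \geq 0$, and bounding the lowest-order contribution $\prod_{k=1}^{(n-1)/2} k^2 \cdot \lambda\coth(\pi\lambda)$ from below by $\prod k^2/\pi = c_0$ via the elementary monotonicity $\lambda\coth(\pi\lambda) \geq 1/\pi$ (which follows from $(\sinh(2x))/2 - x \geq 0$ for $x \geq 0$), one checks that $\phi(\lambda) - |\lambda|^n - c_0 \geq 0$. This translates into the operator inequality $\widetilde{P}_{n/2} - c_0 \geq (-\Delta_\mathbb{H} - (n-1)^2/4)^{n/2}$.

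\emph{Step 3 (Reduction to Adams on $\mathbb{H}^n$).} Under the constraint hypothesis, Step~2 gives $\int_{\mathbb{H}^n} u\bigl(-\Delta_\mathbb{H} - (n-1)^2/4\bigr)^{n/2} u\,dV \leq 1$. The shifted Laplacian $-\Delta_\mathbb{H} - (n-1)^2/4$ has Helgason--Fourier symbol $\lambda^2$, and is therefore spectrally equivalent to the Euclidean $|\xi|^2$. The sharp Euclidean Adams inequality of Lam--Lu--Martinazzi~\cite{laml}, combined with the transplantation technique via the Helgason--Fourier transform developed in~\cite{LiLuYang2}, then yields the bound $\int_{\mathbb{H}^n}\bigl(e^{\beta_0(n/2,n) u^2} - 1 - \beta_0(n/2,n) u^2\bigr)\,dV \leq C$ with the Euclidean Adams constant.

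\emph{Main obstacle.} The heart of the proof is the pointwise symbol inequality $\phi(\lambda) - c_0 \geq |\lambda|^n$ in Step~2. Although $\phi - c_0$ vanishes only quadratically near $\lambda = 0$ while $|\lambda|^n$ vanishes to order $n$, the inequality is nevertheless sharp precisely at $\lambda = 0$ (both sides vanish and the constant matches $\prod k^2/\pi = c_0$). The fact that the elementary bound $\lambda\coth(\pi\lambda) \geq 1/\pi$ reaches equality at $\lambda = 0$ is what forces this sharp matching, and is the technical reason for the restriction to odd $n$ in Theorem~\ref{th1.9}: when $n$ is even, $(n+1)/2$ is a half-integer, the Gamma ratio no longer reduces to a polynomial times $\lambda\coth(\pi\lambda)$, and the analogous comparison requires a fundamentally different analysis.
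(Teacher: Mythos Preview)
Your Steps 1 and 2 are correct. The closed form $\phi(\lambda)=\lambda\coth(\pi\lambda)\prod_{k=1}^{(n-1)/2}(k^2+\lambda^2)$ is right, and splitting the product into top, bottom, and middle terms, using $\coth\pi\lambda\ge 1$ on the top and $\pi\lambda\coth\pi\lambda\ge 1$ on the bottom, does give $\phi(\lambda)-c_0\ge|\lambda|^n$.

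The gap is Step 3. Your bound $|\lambda|^n$ vanishes to order $n$ at $\lambda=0$, whereas $\phi-c_0$ vanishes only quadratically; you note this discrepancy in your final paragraph but treat it as harmless. It is not. The constraint $\int_{\mathbb H^n}u\bigl(-\Delta_{\mathbb H}-(n-1)^2/4\bigr)^{n/2}u\,dV\le 1$ imposes essentially no control on low frequencies, and neither reference you invoke covers it: Theorem~\ref{th4.7} explicitly requires a positive shift $\zeta>0$ and a pure factor $(-\Delta_{\mathbb H}-(n-1)^2/4)^{s/2}$ only with $0<s<3/2$, while the results in \cite{LiLuYang2} are for full Sobolev norms, not the bare seminorm. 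In fact the Adams inequality under your constraint is false for $n\ge 5$ odd: radial $u_\epsilon$ with spherical transform $c_\epsilon\psi(\lambda/\epsilon)$ supported in $[\epsilon,2\epsilon]$ and $c_\epsilon\sim\epsilon^{-(n+3)/2}$ have constraint value $\sim 1$, but $u_\epsilon(\rho)\approx C\epsilon^{(3-n)/2}\varphi_0(\rho)$ and hence $\|u_\epsilon\|_{L^4}^4\sim\epsilon^{2(3-n)}\to\infty$, so already the quartic term in the subtracted exponential blows up.

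The paper fixes precisely this by proving the sharper spectral bound (Lemma~\ref{lm5.2}): there exists $\zeta>0$ with
\[
\phi(\lambda)-c_0\;\ge\;\lambda^{2}\bigl(\lambda^{2}+\zeta\bigr)^{\frac{n}{2}-1},\qquad\lambda\in\mathbb R.
\]
Near $\lambda=0$ the right side is $\sim\zeta^{(n-2)/2}\lambda^2$, matching the true quadratic vanishing of $\phi-c_0$; this is exactly the form needed to invoke Theorem~\ref{th4.7} with $s=1$. The proof is not your argument with a parameter inserted: after peeling off one factor as in \eqref{5.2}, one arrives at the product \eqref{5.3}, and the crux is the elementary but nontrivial inequality
\[
\lambda\sqrt{\lambda^{2}+1}\,\cosh\pi\lambda\;\ge\;\Bigl(\lambda^{2}+\tfrac14\Bigr)\sinh\pi\lambda,
\]
established via the power-series expansion \eqref{5.6}. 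That extra $\sqrt{\lambda^2+1}$ (in place of your bare $\lambda$) is what produces the shift $\zeta>0$ and makes Step~3 go through.
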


The organization of this paper is as follows. In Section \ref{Section2}, we will review some preliminary facts  about special functions that will be needed in the subsequent sections. Section \ref{Section3} review
some Helgason-Fourier analysis on hyperbolic spaces. In Section \ref{Section4}, we will give the explicit formula of Green's function  of the fractional  operators $\frac{|\Gamma(\gamma+\nu+i\sqrt{-\Delta_{\mathbb{H}}-\frac{(n-1)^{2}}{4}})|^{2}}
{|\Gamma(\nu+i\sqrt{-\Delta_{\mathbb{H}}-\frac{(n-1)^{2}}{4}})|^{2}}$ for each $(\nu\geq 0)$  via Helgason-Fourier analysis. This Green's function estimate  plays an important role in the proofs of Theorem \ref{th1.4} and Theorem \ref{1.8}. A sharp Sobolev inequality with the best constant for the operator   $\frac{|\Gamma(\nu+\gamma+i\sqrt{-\Delta_{\mathbb{H}}-\frac{(n-1)^{2}}{4}})|^{2}}
{|\Gamma(\nu+i\sqrt{-\Delta_{\mathbb{H}}-\frac{(n-1)^{2}}{4}})|^{2}}u$ is also proved in this section. (Theorem \ref{th3.10}.)
In Section \ref{Section5},  we compute the  explicit formula of fractional  GJMS
 operators $P_{\gamma}$ on $\mathbb{H}^{n}$ and give the proofs of   Theorems \ref{th1.3} and  \ref{th1.5}.
The
proofs of Theorems \ref{th1.6}, \ref{th1.7} and \ref{th1.9} are given in Section \ref{Section6}.
In Section \ref{Section7}, we show that the sharp constants of the fractional Poincar\'e-Sobolev inequalities on hyperbolic space and fractional Hardy-Sobolev-Maz'ya inequalities.

\section{Notations and preliminaries}\label{Section2}
We begin by quoting some preliminary facts which will be needed in
the sequel and  refer to \cite{ah,ge,he,he2,hu,liup} for more information about this subject.
Throughout this paper, the symbol $A=O(B)$, or $A\lesssim B$ (resp. $A\gtrsim B$),
between two positive expressions means that there is a constant $C > 0$ such that $A \leq CB$ (resp. $A\geq CB$). The symbol $A\thicksim B$ means
that $A\lesssim B$ and $B\lesssim A$.

 It is well known that hyperbolic space  is a
  noncompact Riemannian symmetric space of rank one that has a constant negative curvature $-1$.
It has several  models, for example,
 the Poincar\'e  half space model $(\mathbb{H}^{n}, g_{\mathbb{H}})$  and the Poincar\'e ball model $(\mathbb{B}^{n}, g_{\mathbb{B}})$.

\subsection{The Poincar\'e half space model $(\mathbb{H}^{n}, g_{\mathbb{H}})$}

It is given by $\mathbb{R}_{+}\times\mathbb{R}^{n-1}=\{(x_{1},\cdots,x_{n}): x_{1}>0\}$ equipped with the Riemannian metric
$$g_{\mathbb{H}}=\frac{dx_{1}^{2}+\cdots+dx_{n}^{2}}{x^{2}_{1}}.$$
The induced Riemannian measure can be written as $dV=\frac{dx}{x^{n}_{1}}$, where $dx$ is the Lebesgue measure on
$\mathbb{R}^{n}$.
The
Laplace-Beltrami operator on $\mathbb{H}^{n}$ is given by
\begin{align*}
\Delta_{\mathbb{H}}=x^{2}_{1}\sum\limits^{n}_{i=1}\frac{\partial^{2}}{\partial
x^{2}_{i}}-(n-2)x_{1}\frac{\partial}{\partial x_{1}}.
\end{align*}
For simplicity, we denote by
\begin{align*}
\|u\|_{p}=\left(\int_{\mathbb{H}^{n}}|u|^{p}dV\right)^{\frac{1}{p}},\; p>0.
\end{align*}

Let $x, y\in\mathbb{H}^{n}$.
The geodesic distance $\rho(x,y)$  between $x$ and $y$  satisfies
$
\sinh\frac{\rho(x,y)}{2}=\frac{|x-y|}{2\sqrt{x_{1}y_{1}}}.
$
For simplicity, we set
\begin{align*}
\sinh\frac{\rho(x)}{2}=\sinh\frac{\rho(x,(1,0,\cdots,0))}{2}=\frac{\sqrt{|x|^{2}-2x_{1}+1}}{2\sqrt{x_{1}}}.
\end{align*}
Then
\begin{align}\label{2.1}
\cosh\frac{\rho(x)}{2}=\frac{\sqrt{|x|^{2}+2x_{1}+1}}{2\sqrt{x_{1}}}.
\end{align}
The polar coordinate formula on $\mathbb{H}^{n}$ reads
\begin{align}\label{2.2}
\int_{\mathbb{H}^{n}}f(x)dV=\int_{0}^{\infty}\int_{\mathbb{S}^{n-1}}f(\rho,\theta)(\sinh\rho)^{n-1}d\rho d\sigma,\;\; f\in L^{1}(\mathbb{H}^{n}).
\end{align}
We remark that if $f$ is radial, then
\begin{align}\label{2.3}
 \Delta_{\mathbb{H}}f=\frac{\partial^{2}}{\partial \rho^{2}}f+(n-1)\coth\rho\frac{\partial}{\partial \rho}f.
\end{align}

\subsection{The Poincar\'e ball model $(\mathbb{B}^{n}, g_{\mathbb{B}})$ }
It is given by the unit ball
\[\mathbb{B}^{n}=\{x=(x_{1},\cdots,x_{n})\in \mathbb{R}^{n}| |x|<1\}\]
equipped  with the Riemannian metric
\[
g_{\mathbb{B}^{n}}=\frac{4(dx^{2}_{1}+\cdots+dx^{2}_{n})}{(1-|x|^{2})^{2}}.
\]
The induced Riemannian measure is  $dV=\left(\frac{2}{1-|x|^{2}}\right)^{n}dx$.
The
Laplace-Beltrami operator is given by
\[
\Delta_{\mathbb{H}}=\frac{1-|x|^{2}}{4}\left\{(1-|x|^{2})\sum^{n}_{i=1}\frac{\partial^{2}}{\partial
x^{2}_{i}}+2(n-2)\sum^{n}_{i=1}x_{i}\frac{\partial}{\partial
x_{i}}\right\}.
\]
The geodesic
distance $\rho(x,y)$  from $x$ to $y$ in $(\mathbb{B}^{n}, g_{\mathbb{B}})$ is
\begin{equation*}
\rho(x,y)=\log\frac{1+|T_{y}(x)|}{1-|T_{y}(x)|},
\end{equation*}
where $T_{y}(x)$ is the M\"obius
transformation defined by (see e.g. \cite{ah,hu})
\[
T_{y}(x)=\frac{|x-y|^{2}y-(1-|y|^{2})(x-y)}{1-2x\cdot
y+|x|^{2}|y|^{2}}.
\]
Here $x\cdot y=x_{1}y_{1}+x_{2}y_{2}+\cdots +x_{n}y_{n}$ denotes
the  scalar product in $\mathbb{R}^{n}$.
We remark  that the hyperbolic measure on $\mathbb{B}^{n}$ is invariant with respect to the M\"obius
transformations. Furthermore,
we need the following facts of $T_{a}$:
\begin{equation}\label{2.4}
\begin{split}
\cosh\frac{\rho(T_{a}(x))}{2}=&\frac{1}{\sqrt{1-|T_{a}(x)|^{2}}}=\frac{\sqrt{1-2x\cdot a+|x|^{2}|a|^{2}}}{\sqrt{(1-|a|^{2})(1-|x|^{2})}}.
\end{split}
\end{equation}

Using the M\"obius transformations,  we can define the convolution of
measurable functions $f$ and $g$ on $\mathbb{B}^{n}$ by (see e.g. \cite{liup})
\begin{equation*}
(f\ast g)(x)=\int_{\mathbb{B}^{n}}f(y)g(T_{x}(y))dV(y)
\end{equation*}
provided this integral exists. If $g$ is radial,  then (see e.g. \cite{liup}, Proposition 3.15)
\begin{equation}\label{2.5}
  (f\ast g)\ast h= f\ast (g\ast h)
\end{equation}
provided $f,g,h\in L^{1}(\mathbb{B}^{n})$

\subsection{Hypergeometric functions}
We use the notation $F(a,b;c;z)$ to denote
  \begin{equation*}
  \begin{split}
F(a,b;c;z)=\sum^{\infty}_{k=0}\frac{(a)_{k}(b)_{k}}{(c)_{k}}\frac{z^{k}}{k!},
\end{split}
\end{equation*}
where $c\neq0,-1,\cdots,-n,\cdots$ and $(a)_{k}$ is the rising Pochhammer symbol defined by
$$
(a)_{0}=1,\;(a)_{k}=a(a+1)\cdots(a+k-1)=\frac{\Gamma(a+k)}{\Gamma(a)}, \;k\geq1.
$$
If either $a$ or $b$ is a nonpositive integer, then the series terminates and  the function reduces to a polynomial.

 Here we
only list some of properties of hypergeometric function which will be used in the rest of our paper. For
more information about  these functions, we refer to \cite{gr}, section 9.1 and \cite{er}, Chapter II.
\begin{itemize}
  \item Integral representation:
  \begin{equation}\label{2.6}
  \begin{split}
F(a,b;c;z)=\frac{\Gamma(c)}{\Gamma(c-b)\Gamma(b)}\int_{0}^{1}t^{b-1}(1-t)^{c-b-1}(1-tz)^{-a}dt,\;  c>b>0.
\end{split}
\end{equation}

  \item If $\textrm{Re} (c-a-b)>0$, then $F(a,b;c;1)$ exists and
      \begin{equation}\label{2.7}
  \begin{split}
F(a,b;c;1)=\frac{\Gamma(c)\Gamma(c-a-b)}{\Gamma(c-a)\Gamma(c-b)}.
\end{split}
\end{equation}

  \item Transformation formulas:
    \begin{equation}\label{2.8}
  \begin{split}
F(a,b;c;z)=(1-z)^{c-a-b} F(c-a,c-b;c;z).
\end{split}
\end{equation}

 \item Differentiation formula:
    \begin{equation}\label{2.9}
  \begin{split}
\frac{d^{k}}{dz^{k}}F(a,b;c;z)=\frac{(a)_{k}(b)_{k}}{(c)_{k}}F(a+k,b+k;c+k;z),\;\;k\geq1.
\end{split}
\end{equation}
\end{itemize}

\subsection{Gamma function} The gamma function $\Gamma(z)$  is defined as the analytic continuation of the integration
 \begin{align*}
\int_{0}^{\infty}t^{z-1}e^{-t}dt,\;\;\; \textrm{Re}(z)>0,
 \end{align*}
  to a meromorphic function that is holomorphic in the whole complex plane $\mathbb{C}$ except zero and the negative integers.
  We shall use  the following facts of the gamma function:
 \begin{align}\label{2.10}
\overline{  \Gamma(z)}=&\Gamma(\overline{z}); \\
\label{a2.10}
 \Gamma(2z)=&2^{2z-1}\frac{\Gamma(z)\Gamma(z+1/2)}{\sqrt{\pi}}\\
\label{2.11}
 \Gamma(z)\Gamma(1-z)=&\frac{\pi}{\sin\pi z},\;z\neq0,\pm1,\pm2,\cdots;\\
 \label{2.12}
|\Gamma(i\lambda)|^{2}=&\frac{\pi}{\lambda\sinh\pi \lambda},\;\;\;\; \lambda\in\mathbb{R},\;\; \lambda\neq 0;\\
\label{2.13}
|\Gamma(a+i\lambda)|^{2}=&|\Gamma(a)|^{2}\prod_{k=0}^{\infty}\frac{1}{1+\frac{\lambda^{2}}{(a+k)^{2}}},\;\; a\in\mathbb{R},\;\; a\neq 0,-1,-2,\cdots;\\
\label{2.14}
|\Gamma(n+1+i\lambda)|^{2}=&\frac{\pi \lambda}{\sinh\pi \lambda}\prod_{k=1}^{n}(k^{2}+\lambda^{2}),\;\; n\in\mathbb{N};\\
\label{2.15}
|\Gamma(1/2+i\lambda)|^{2}=&\frac{\pi }{\cosh\pi \lambda};\\
\label{2.16}
|\Gamma(n+1/2+i\lambda)|^{2}=&\frac{\pi }{\cosh\pi \lambda}\prod_{k=1}^{n}((k-1/2)^{2}+\lambda^{2}),\;\; n\in\mathbb{N};\\
\label{2.17}
\lim_{|\lambda|\rightarrow\infty}|\Gamma(a+i\lambda)|e^{\frac{\pi}{2}\lambda}|\lambda|^{\frac{1}{2}-a}=&\sqrt{2\pi};\\
\label{2.18}
\frac{\Gamma'(z)}{\Gamma(z)}=&-\Gamma'(1)-\frac{1}{z}+\sum_{k=1}^{\infty}\left(\frac{1}{k}-\frac{1}{z+k}\right).
  \end{align}
  By (\ref{2.13}), we obtain
  \begin{align}\label{2.19}
   |\Gamma(a+i\lambda)|\leq |\Gamma(a)|
  \end{align}
  and
  \begin{align}\label{2.20}
 \frac{|\Gamma(a+i\lambda)|}{|\Gamma(b+i\lambda)|}\geq \frac{|\Gamma(a)|}{|\Gamma(b)|},\;\; \textrm{provided }\;\; (a+k)^{2}\geq (b+k)^{2},\;\forall k\in\mathbb{N}.
  \end{align}
We also need the following inequality  of the gamma function (see \cite{pa}, Page 34, (2.1.16))
\begin{align}\label{2.21}
|\Gamma(z+b)|\geq |z|^{b-a}|\Gamma(z+a)|, \; a\geq0,\; b-a\geq 1,\; \textrm{Re}(z)>0.
\end{align}

\subsection{Associated Legendre functions of the first
kind} We denote by $P_{\nu}^{\mu}(z)$ the associated Legendre functions of the first
kind. It is known that $P_{\nu}^{\mu}(z)$ can be defined via hypergeometric function:
\begin{align*}
P_{\nu}^{\mu}(z)=\frac{1}{\Gamma(1-\mu)}\left(\frac{z+1}{z-1}\right)^{\frac{\mu}{2}}F\big(-\nu,\nu+1;1-\mu,\frac{1-z}{2}\big).
\end{align*}
An integral formula of $P_{\nu}^{\mu}(z)$ is  (see \cite{pr}, Page 197, 2.17.3(7)):
\begin{align*}
\int_{a}^{\infty}(x+a)^{\alpha-1}(x-a)^{-\frac{\mu}{2}}&P^{\mu}_{\nu}\left(\frac{x}{a}\right)dx=(2a)^{\alpha-\frac{\mu}{2}}
\frac{\Gamma(\frac{\mu}{2}-\nu-\alpha)\Gamma(1+\frac{\mu}{2}+\nu-\alpha)}{\Gamma(1-\frac{\mu}{2}-\alpha)\Gamma(1+\frac{\mu}{2}-\alpha)},\\
&a>0,\; \textrm{Re}(\mu)<1,\; \textrm{Re}(\alpha)< 1+\textrm{Re}(\nu+\frac{\mu}{2}),\;\textrm{Re}(\alpha)< \textrm{Re}(\frac{\mu}{2}-\nu).
\end{align*}
Letting $a=1$ and substituting $\alpha=1-\frac{\mu}{2}-\gamma$ in the above equality, we obtain
\begin{align*}
\int_{1}^{\infty}(x+1)^{-\gamma}(x^{2}-1)^{-\frac{\mu}{2}}&P^{\mu}_{\nu}(x)dx=2^{1-\mu -\gamma}
\frac{\Gamma(\gamma+\mu-\nu-1)\Gamma(\gamma+\mu+\nu)}{\Gamma(\gamma)\Gamma(\gamma+\mu)},\\
& \textrm{Re}(\mu)<1,\; \textrm{Re}(\gamma)>-\textrm{Re}(\nu+\frac{\mu}{2}),\;\textrm{Re}(\gamma)> -\textrm{Re}(\mu-\nu)-1.
\end{align*}
In particular, we have, for  $\mu=1-\frac{n}{2}$ and $\nu=i\lambda-\frac{1}{2}$ $(\lambda\in\mathbb{R})$,
\begin{align}\label{2.22}
&\int_{1}^{\infty}(x+1)^{-\gamma}(x^{2}-1)^{\frac{n-2}{4}}P^{\frac{2-n}{2}}_{i\lambda-\frac{1}{2}}(x)dx=2^{\frac{n}{2} -\gamma}
\frac{|\Gamma(\gamma-\frac{n-1}{2}+i\lambda)|^{2}}{\Gamma(\gamma)\Gamma(\gamma+1-\frac{n}{2})},
\; \gamma>\frac{n-1}{2}.
\end{align}
Substituting  $x=\cosh\rho$  into (\ref{2.22}), we get
\begin{align*}
&\int_{0}^{\infty}\big(\cosh\frac{\rho}{2}\big)^{-2\gamma}(\sinh\rho)^{\frac{n}{2}}P^{\frac{2-n}{2}}_{i\lambda-\frac{1}{2}}(\cosh\rho)d\rho=2^{\frac{n}{2}}
\frac{|\Gamma(\gamma-\frac{n-1}{2}+i\lambda)|^{2}}{\Gamma(\gamma)\Gamma(\gamma+1-\frac{n}{2})},
\; \gamma>\frac{n-1}{2},
\end{align*}
i.e.,
\begin{align}\label{2.23}
&\int_{0}^{\infty}\big(\cosh\frac{\rho}{2}\big)^{-\gamma}(\sinh\rho)^{\frac{n}{2}}P^{\frac{2-n}{2}}_{i\lambda-\frac{1}{2}}(\cosh\rho)d\rho=2^{\frac{n}{2}}
\frac{|\Gamma(\frac{\gamma+1-n}{2}+i\lambda)|^{2}}{\Gamma(\frac{\gamma}{2})\Gamma(\frac{\gamma+2-n}{2})},
\; \gamma>n-1.
\end{align}

\section{ Helgason-Fourier transform on hyperbolic spaces  }\label{Section3}

We first  review some basic facts about Helgason-Fourier transform on hyperbolic spaces in terms of ball model $(\mathbb{B}^{n}, g_{\mathbb{B}})$.
For more information about this subject, we refer  to  \cite{he,he2,ge}.

Set
\[
e_{\lambda,\zeta}(x)=\left(\frac{\sqrt{1-|x|^{2}}}{|x-\zeta|}\right)^{n-1+i\lambda}, \;\; x\in \mathbb{B}^{n},\;\;\lambda\in\mathbb{R},\;\;\zeta\in\mathbb{S}^{n-1}.
\]
The Helgason-Fourier  transform of a function  $f$  on $\mathbb{B}^{n}$ is defined as
\[
\widehat{f}(\lambda,\zeta)=\int_{\mathbb{B}^{n}} f(x)e_{-\lambda,\zeta}(x)dV
\]
provided this integral exists.
The Helgason-Fourier transform  is an isometry from
$L^{2}(\mathbb{B}^{n}, dV)$ onto $L^{2}(\mathbb{R}\times \mathbb{S}^{n-1}, |\mathfrak{c}(\lambda)|^{-2}d\lambda d\sigma(\zeta))$, where
$\mathfrak{c}(\lambda)$ is the  Harish-Chandra $\mathfrak{c}$-function given by
\begin{align}\label{3.1}
\mathfrak{c}(\lambda)=\sqrt{2}(2\pi)^{n/2}\frac{\Gamma(i\lambda)}{\Gamma(i\lambda+\frac{n-1}{2})}.
\end{align}
In fact, it holds that, for  complex-valued functions $f, g\in L^{2}(\mathbb{B}^{n})$,
\begin{align}\label{3.2}
\int_{\mathbb{B}^{n}}f(x)\overline{g(x)}dV=
\int^{+\infty}_{-\infty}\int_{\mathbb{S}^{n-1}} \widehat{f}(\lambda,\zeta)
 \overline{\widehat{g}(\lambda,\zeta)}|\mathfrak{c}(\lambda)|^{-2}d\lambda d\sigma(\zeta).
\end{align}
In particular, we have  the
 Plancherel formula:
\begin{equation*}
\int_{\mathbb{B}^{n}}|f(x)|^{2}dV=\int^{+\infty}_{-\infty}\int_{\mathbb{S}^{n-1}}|\widehat{f}(\lambda,\zeta)|^{2}|\mathfrak{c}(\lambda)|^{-2}d\lambda d\sigma(\zeta).
\end{equation*}
The inversion formula  for $f\in C^{\infty}_{0}(\mathbb{B}^{n})$ reads (see e.g. \cite{ge}, Theorem 8.4.1):
\[
f(x)=\int^{+\infty}_{-\infty}\int_{\mathbb{S}^{n-1}} \widehat{f}(\lambda,\zeta)e_{\lambda,\zeta}(x)|\mathfrak{c}(\lambda)|^{-2}d\lambda d\sigma(\zeta).
\]
We remark that if $f,g\in C^{\infty}_{0}(\mathbb{B}^{n})$ and $g$ is radial, then
\begin{align*}
\widehat{(f\ast g)}=\widehat{f}\cdot\widehat{g}.
\end{align*}

Since $e_{\lambda,\zeta}(x)$ is an eigenfunction of $\Delta_{\mathbb{H}}$ with eigenvalue $-\frac{(n-1)^{2}}{4}+\lambda^{2}$ (see \cite{ge}, Page 136, (8.2.16)), we have, for
$f\in C^{\infty}_{0}(\mathbb{B}^{n})$,
\[
\widehat{\Delta_{\mathbb{H}}f}(\lambda,\zeta)=-\left(\frac{(n-1)^{2}}{4}+\lambda^{2}\right)\widehat{f}(\lambda,\zeta).
\]
Therefore, in analogy with the Euclidean setting, we can  define the fractional
Laplacian on hyperbolic space as follows:
\begin{equation}\label{3.3}
\widehat{(-\Delta_{\mathbb{H}})^{\gamma}f}(\lambda,\zeta)=\left(\frac{(n-1)^{2}}{4}+\lambda^{2}\right)^{\gamma}\widehat{f}(\lambda,\zeta),\;\;\gamma\in \mathbb{R}.
\end{equation}

If $f$ is radial, then $\widehat{f}(\lambda,\zeta)=\widehat{f}(\lambda)$
is
independent of $\zeta$. Furthermore, we have (see e.g. \cite{ge}, Page 137, (8.3.2))
\begin{align}\label{3.4}
\widehat{f}(\lambda)=|\mathbb{S}^{n-1}|\int_{0}^{\infty}f(\cosh \rho)\varphi_{-\lambda}(x)(\sinh \rho)^{n-1}dr,
\end{align}
where
\begin{align*}
\varphi_{\lambda}(x)=\frac{1}{|\mathbb{S}^{n-1}|}\int_{\mathbb{S}^{n-1}}e_{\lambda,\zeta}(x)d\omega
\end{align*}
is the
spherical function on hyperbolic space. Since $\varphi_{\lambda}$ is a spherical function, we have
\begin{align}\label{3.5}
\varphi_{-\lambda}=\varphi_{\lambda}.
\end{align}
Furthermore,  $\varphi_{\lambda}(x)$ is  radial and we have (see \cite{ge}, Page 138, (8.3.9))
\begin{align}\label{3.6}
\varphi_{\lambda}(x)=2^{\frac{n-2}{2}}\Gamma(n/2)(\sinh \rho)^{\frac{2-n}{2}}P^{\frac{2-n}{2}}_{i\lambda-\frac{1}{2}}(\cosh \rho).
\end{align}
Therefore, if $f$ is radial, then by using (\ref{3.4})-(\ref{3.6}) and $|\mathbb{S}^{n-1}|=\frac{2\pi^{n/2}}{\Gamma(\frac{n}{2})}$, we obtain
\begin{align}\nonumber
\widehat{f}(\lambda)=&|\mathbb{S}^{n-1}|\int_{0}^{\infty}f(\cosh \rho)2^{\frac{n-2}{2}}\Gamma(n/2)(\sinh \rho)^{\frac{2-n}{2}}P^{\frac{2-n}{2}}_{i\lambda-\frac{1}{2}}(\cosh \rho)(\sinh \rho)^{n-1}d\rho\\
\label{3.7}
=&(2\pi)^{n/2}\int_{0}^{\infty}f(\cosh \rho)(\sinh \rho)^{\frac{n}{2}}P^{\frac{2-n}{2}}_{i\lambda-\frac{1}{2}}(\cosh \rho)d\rho.
\end{align}

\section{Green's  function of the operator $\frac{|\Gamma(\gamma+\nu+i\sqrt{-\Delta_{\mathbb{H}}-\frac{(n-1)^{2}}{4}})|^{2}}
{|\Gamma(\nu+i\sqrt{-\Delta_{\mathbb{H}}-\frac{(n-1)^{2}}{4}})|^{2}}$ and its Helgason-Fourier transform }\label{Section4}

We shall use (\ref{3.7}) to compute the Helgason-Fourier transform of the   function
\begin{align}\label{3.8}
K_{\nu,\gamma}(\cosh\rho)=C_{\nu,\gamma} &\left(\cosh\frac{\rho}{2}\right)^{1-n-2\nu} F\big(\nu+\frac{n-1}{2},\nu+\frac{1}{2};2\nu+\gamma;
(\cosh\frac{\rho}{2})^{-2}\big),\\
&\;\; \nu\geq0, \; \; \gamma>0,\; \; C_{\nu,\gamma}=\frac{\Gamma(\frac{n-1}{2}+\nu)\Gamma(\nu+\frac{1}{2})}{2^{n}\pi^{\frac{n}{2}}
\Gamma(\gamma)\Gamma(2\nu+\gamma)}. \nonumber
\end{align}

Before we show that the function $K_{\nu,\gamma}(\cosh\rho)$ is exactly the Green function of the operator $\frac{|\Gamma(\gamma+\nu+i\sqrt{-\Delta_{\mathbb{H}}-\frac{(n-1)^{2}}{4}})|^{2}}
{|\Gamma(\nu+i\sqrt{-\Delta_{\mathbb{H}}-\frac{(n-1)^{2}}{4}})|^{2}}$,
we will first give the asymptotic estimates of $K_{\nu,\gamma}$ in the following lemma:
\begin{lemma}\label{lm3.1}  Given $\nu\geq0$ and $\gamma>0$. Then the following holds:
\begin{align}\label{3.9}
K_{\nu,\gamma}(\cosh\rho)\thicksim&\; e^{\frac{1-n-2\nu}{2}\rho},\;\;\;\;\;\;\;\;\;\;\;\;\;\;\;\;\;\;\;\;\;\;\;\;\rho\rightarrow\infty;\\
\label{3.10}
K_{\nu,\gamma}(\cosh\rho)\thicksim&\left\{
                                             \begin{array}{ll}
                                               1, & \hbox{$\gamma>\frac{n}{2}$;} \\
                                               -\ln\rho, & \hbox{$\gamma=\frac{n}{2}$;} \\
                                               \rho^{2\gamma-n}, & \hbox{$0<\gamma<\frac{n}{2}$,}
                                             \end{array}
                                           \right.
\;\; \rho\rightarrow 0^{+}.
\end{align}
\end{lemma}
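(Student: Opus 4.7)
The plan is to extract the two asymptotics directly from standard properties of the Gauss hypergeometric function appearing in the definition of $K_{\nu,\gamma}$, using the cited formulas \eqref{2.6}--\eqref{2.9} together with the classical connection formula relating $F(a,b;c;z)$ near $z=1$ to its value and a logarithmic/power-type singularity determined by the sign of $c-a-b$. Here the relevant parameters are $a=\nu+\tfrac{n-1}{2}$, $b=\nu+\tfrac12$, $c=2\nu+\gamma$, so $c-a-b = \gamma-\tfrac{n}{2}$; this single quantity controls everything in the small-$\rho$ regime.

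For the large-$\rho$ asymptotic \eqref{3.9}, I would simply observe that the argument $(\cosh\tfrac{\rho}{2})^{-2}\to 0$ as $\rho\to\infty$, so $F(a,b;c;(\cosh\tfrac{\rho}{2})^{-2})\to F(a,b;c;0)=1$. Combined with $\cosh\tfrac{\rho}{2}\sim \tfrac12 e^{\rho/2}$, the prefactor $(\cosh\tfrac{\rho}{2})^{1-n-2\nu}\sim 2^{n+2\nu-1}e^{(1-n-2\nu)\rho/2}$, which immediately gives $K_{\nu,\gamma}(\cosh\rho)\sim e^{(1-n-2\nu)\rho/2}$.

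For the behavior as $\rho\to 0^+$ I would set $z=(\cosh\tfrac{\rho}{2})^{-2}$ so that $1-z=\tanh^{2}\tfrac{\rho}{2}\sim (\rho/2)^{2}$, and split into three subcases according to the sign of $c-a-b=\gamma-\tfrac{n}{2}$. When $\gamma>\tfrac{n}{2}$, $F(a,b;c;1)$ is finite by \eqref{2.7}, the prefactor tends to $1$, and $K_{\nu,\gamma}$ tends to a nonzero constant. When $\gamma<\tfrac{n}{2}$, I would apply the transformation formula \eqref{2.8}, $F(a,b;c;z)=(1-z)^{c-a-b}F(c-a,c-b;c;z)$, evaluate the factor $F(c-a,c-b;c;z)$ at $z=1$ via \eqref{2.7} (now $c-(c-a)-(c-b)=a+b-c=\tfrac{n}{2}-\gamma>0$ so it is finite), and obtain $F\sim \mathrm{const}\cdot (1-z)^{\gamma-n/2}\sim \mathrm{const}\cdot \rho^{2\gamma-n}$; the prefactor $(\cosh\tfrac{\rho}{2})^{1-n-2\nu}\to 1$, yielding the third case of \eqref{3.10}.

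The delicate case is $\gamma=\tfrac{n}{2}$, where $c=a+b$ and the transformation \eqref{2.8} degenerates; this will be the main obstacle. Here I would invoke the classical logarithmic connection formula for $F(a,b;a+b;z)$ as $z\to 1^-$, namely
\[
F(a,b;a+b;z) = -\frac{\Gamma(a+b)}{\Gamma(a)\Gamma(b)}\Bigl[\log(1-z) + 2\Gamma'(1) - \tfrac{\Gamma'(a)}{\Gamma(a)} - \tfrac{\Gamma'(b)}{\Gamma(b)}\Bigr] + O\bigl((1-z)\log(1-z)\bigr),
\]
so with $1-z\sim (\rho/2)^{2}$ the leading term is $-2\log\rho$ up to an additive constant. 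Combined with the fact that the prefactor $(\cosh\tfrac{\rho}{2})^{1-n-2\nu}$ tends to $1$ and with the positivity of $C_{\nu,\gamma}\Gamma(a+b)/(\Gamma(a)\Gamma(b))$, this gives $K_{\nu,n/2}(\cosh\rho)\sim -\log\rho$, completing \eqref{3.10}. Positivity of the leading constants in all three subcases is immediate from the positivity of the gamma values involved (the parameters $a,b,c,c-a,c-b$ are all positive in the relevant ranges), so no sign issue arises.
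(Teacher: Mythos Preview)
Your argument is correct and, for the large-$\rho$ asymptotic and for the two subcases $\gamma>\tfrac{n}{2}$ and $\gamma<\tfrac{n}{2}$, it is essentially identical to the paper's proof: same use of $F(a,b;c;0)=1$, of \eqref{2.7}, and of the transformation \eqref{2.8} followed by \eqref{2.7}.

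The only genuine difference is in the borderline case $\gamma=\tfrac{n}{2}$. You invoke the classical logarithmic connection formula for $F(a,b;a+b;z)$ near $z=1$ directly, which immediately gives the $-\log\rho$ behavior. The paper instead stays within the toolbox \eqref{2.6}--\eqref{2.9} that it has set up: it first uses the integral representation \eqref{2.6} to confirm divergence at $z=1$, then applies L'H\^opital to the quotient $F/(-\ln\rho)$, differentiates $F$ via \eqref{2.9}, applies \eqref{2.8} to the resulting hypergeometric, and finally evaluates at $z=1$ via \eqref{2.7}. Your route is shorter but imports an identity not listed among the paper's preliminaries; the paper's route is a bit longer but entirely self-contained relative to the formulas it has quoted. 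Both yield the same leading constant (positive, as you note), so for the purposes of the two-sided estimate $\thicksim$ there is no issue.
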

\begin{proof}
Using  $F\big(\nu+\frac{n-1}{2},\nu+\frac{1}{2};2\nu+\gamma;
0\big)=1$, we obtain
\begin{align*}
K_{\nu,\gamma}(\cosh\rho)=&C_{\nu,\gamma} \left(\cosh\frac{\rho}{2}\right)^{1-n-2\nu} F\big(\nu+\frac{n-1}{2},\nu+\frac{1}{2};2\nu+\gamma;
(\cosh\frac{\rho}{2})^{-2}\big)\\
\thicksim &\left(\cosh\frac{\rho}{2}\right)^{1-n-2\nu} \\
\thicksim & e^{\frac{1-n-2\nu}{2}\rho},\;\;\;\rho\rightarrow\infty.
\end{align*}
This proves (\ref{3.9}).

Next, we consider the asymptotic estimates of $K_{\nu,\gamma}$  as  $\rho\rightarrow 0^{+}.$ Obviously,
\begin{align}\label{3.11}
K_{\nu,\gamma}(\cosh\rho)\thicksim F\big(\nu+\frac{n-1}{2},\nu+\frac{1}{2};2\nu+\gamma;(\cosh\frac{\rho}{2})^{-2}\big),\;\; \rho\rightarrow 0^{+}.
\end{align}

If $\gamma>\frac{n}{2}$, then by (\ref{2.7}),
\begin{align}\label{3.12}
 F\big(\nu+\frac{n-1}{2},\nu+\frac{1}{2};2\nu+\gamma;1\big)=\frac{\Gamma(2\nu+\gamma)
\Gamma(\gamma-\frac{n}{2})}{\Gamma(\nu+\gamma-\frac{n-1}{2})\Gamma(\nu+\gamma-\frac{1}{2})},\;\;\rho\rightarrow 0^{+}.
\end{align}
Combining (\ref{3.11}) and (\ref{3.12}) yields
\begin{align*}
K_{\nu,\gamma}(\cosh\rho)\thicksim 1,\;\;\rho\rightarrow 0^{+}.
\end{align*}

If  $\gamma=\frac{n}{2}$, then by (\ref{2.6}),
\begin{align*}
&F\big(\nu+\frac{n-1}{2},\nu+\frac{1}{2};2\nu+\gamma;
(\cosh\frac{\rho}{2})^{-2}\big)\\
=&\frac{\Gamma(2\nu+\frac{n}{2})}{\Gamma(\nu+\frac{n-1}{2})\Gamma(\nu+\frac{1}{2})}
\int_{0}^{1}t^{\nu-\frac{1}{2}}(1-t)^{\nu+\frac{n-3}{2}}(1-(\cosh\frac{\rho}{2})^{-2}t)^{-\nu-\frac{n-1}{2}}dt\\
\rightarrow&\frac{\Gamma(2\nu+\frac{n}{2})}{\Gamma(\nu+\frac{n-1}{2})\Gamma(\nu+\frac{1}{2})}
\int_{0}^{1}t^{\nu-\frac{1}{2}}(1-t)^{-1}dt
=\infty,\;\;\;\;\rho\rightarrow 0^{+}.
\end{align*}
Therefore,  by (\ref{2.9}) and L'Hopital's rule, we have
\begin{align*}
&\lim_{\rho\rightarrow 0^{+}}\frac{F\big(\nu+\frac{n-1}{2},\nu+\frac{1}{2};2\nu+\frac{n}{2};(\cosh\frac{\rho}{2})^{-2}\big)}{-\ln\rho}\\
=&2\frac{(\nu+\frac{n-1}{2})(\nu+\frac{1}{2})}{4\nu+n}\lim_{\rho\rightarrow 0^{+}}\frac{\rho\sinh\frac{\rho}{2}}{\cosh^{3}\frac{\rho}{2}}F\big(\nu+\frac{n+1}{2},\nu+\frac{3}{2};2\nu+\frac{n+2}{2};(\cosh\frac{\rho}{2})^{-2}\big)\\
=&2\frac{(\nu+\frac{n-1}{2})(\nu+\frac{1}{2})}{4\nu+n}\lim_{\rho\rightarrow 0^{+}}\frac{\rho}{\cosh\frac{\rho}{2}\sinh\frac{\rho}{2}}F\big(\nu+\frac{1}{2},\nu+\frac{n-1}{2};2\nu+\frac{n+2}{2};(\cosh\frac{\rho}{2})^{-2}\big).
\end{align*}
To get the second equality above, we use (\ref{2.8}).
Therefore, by using (\ref{2.7}), we get
\begin{equation}\label{3.13}
\begin{split}
&\lim_{\rho\rightarrow 0^{+}}\frac{F\big(\nu+\frac{n-1}{2},\nu+\frac{1}{2};2\nu+\frac{n}{2};(\cosh\frac{\rho}{2})^{-2}\big)}{-\ln\rho}\\
 =&4\frac{(\nu+\frac{n-1}{2})(\nu+\frac{1}{2})}{4\nu+n}F\big(\nu+\frac{1}{2},\nu+\frac{n-1}{2};2\nu+\frac{n+2}{2};1\big)\\
 =&4\frac{(\nu+\frac{n-1}{2})(\nu+\frac{1}{2})}{4\nu+n}\frac{\Gamma(2\nu+\frac{n+2}{2})\Gamma(1)}{\Gamma(\nu+\frac{1}{2})\Gamma(\nu+\frac{n-1}{2})}.
\end{split}
\end{equation}
Combining (\ref{3.11}) and (\ref{3.13}) yields
\begin{align*}
K_{\nu, \frac{n}{2}}(\cosh\rho)\thicksim -\ln\rho,\;\;\rho\rightarrow 0^{+}.
\end{align*}

If $0<\gamma<\frac{n}{2}$, then by (\ref{2.8}) and (\ref{2.7}), we have
\begin{align*}
K_{\nu,\gamma}(\cosh\rho)\thicksim&F\big(\nu+\frac{n-1}{2},\nu+\frac{1}{2};2\nu+\gamma;(\cosh\frac{\rho}{2})^{-2}\big)\\
=&\left(\tanh\frac{\rho}{2}\right)^{2\gamma-n}F\big(\nu+\gamma-\frac{n-1}{2},\nu+\gamma-\frac{1}{2};2\nu+\gamma;(\cosh\frac{\rho}{2})^{-2}\big)\\
\thicksim&\rho^{2\gamma-n}F\big(\nu+\gamma-\frac{n-1}{2},\nu+\gamma-\frac{1}{2};2\nu+\gamma;1\big)\\
=&\rho^{2\gamma-n}\frac{\Gamma(2\nu+\gamma)\Gamma(\frac{n}{2}-\gamma)}{\Gamma(\nu+\frac{n-1}{2})\Gamma(\nu+\frac{1}{2})},\;\;\rho\rightarrow 0^{+}.
\end{align*}
The proof of Lemma \ref{lm3.1} is thereby completed.

\end{proof}

Now  we compute  the Helgason-Fourier transform of $K_{\nu,\gamma}(\cosh\rho)$.
\begin{lemma}\label{lm3.2}   Given $\nu>0$ and $\gamma>0$. It holds that
\begin{align*}
\widehat{K_{\nu,\gamma}}(\lambda)=\frac{|\Gamma(\nu+i\lambda)|^{2}}{|\Gamma(\nu+\gamma+i\lambda)|^{2}}.
\end{align*}
\end{lemma}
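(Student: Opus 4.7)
The plan is to compute $\widehat{K_{\nu,\gamma}}(\lambda)$ via the radial Helgason--Fourier formula (3.7), expanding the hypergeometric factor into a power series in $(\cosh(\rho/2))^{-2}$ and reducing the problem to the integral identity (2.22) term by term. Specifically, writing
\begin{equation*}
F\!\left(\nu+\tfrac{n-1}{2},\nu+\tfrac{1}{2};2\nu+\gamma;(\cosh\tfrac{\rho}{2})^{-2}\right)=\sum_{k=0}^{\infty}\frac{(\nu+\frac{n-1}{2})_{k}(\nu+\frac{1}{2})_{k}}{(2\nu+\gamma)_{k}\,k!}(\cosh\tfrac{\rho}{2})^{-2k},
\end{equation*}
one has
\begin{equation*}
K_{\nu,\gamma}(\cosh\rho)=C_{\nu,\gamma}\sum_{k=0}^{\infty}\frac{(\nu+\frac{n-1}{2})_{k}(\nu+\frac{1}{2})_{k}}{(2\nu+\gamma)_{k}\,k!}(\cosh\tfrac{\rho}{2})^{-2(k+\nu+\frac{n-1}{2})}.
\end{equation*}
Inserting this into (3.7) and applying (2.22) with exponent $\gamma_{k}=k+\nu+\frac{n-1}{2}$ (which satisfies $\gamma_{k}>\frac{n-1}{2}$ since $\nu>0$, so the integral identity applies for every $k\geq0$) yields for each term a value proportional to $|\Gamma(k+\nu+i\lambda)|^{2}/[\Gamma(k+\nu+\frac{n-1}{2})\Gamma(k+\nu+\frac{1}{2})]$.

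Next, I would rewrite $\Gamma(k+\nu+\frac{n-1}{2})=\Gamma(\nu+\frac{n-1}{2})(\nu+\frac{n-1}{2})_{k}$, analogously for $\Gamma(k+\nu+\frac{1}{2})$, and use the identity $|\Gamma(k+\nu+i\lambda)|^{2}=|\Gamma(\nu+i\lambda)|^{2}(\nu+i\lambda)_{k}(\nu-i\lambda)_{k}$, which follows from (2.10). After these cancellations the Pochhammer symbols $(\nu+\frac{n-1}{2})_{k}$ and $(\nu+\frac{1}{2})_{k}$ drop out entirely, and the resulting sum collapses to the Gauss hypergeometric series $F(\nu+i\lambda,\nu-i\lambda;2\nu+\gamma;1)$. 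Applying the Gauss summation formula (2.7) (with $c-a-b=\gamma>0$) evaluates this to $\Gamma(2\nu+\gamma)\Gamma(\gamma)/|\Gamma(\nu+\gamma+i\lambda)|^{2}$.

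Finally, I would plug in the explicit constant $C_{\nu,\gamma}=\Gamma(\frac{n-1}{2}+\nu)\Gamma(\nu+\frac{1}{2})/(2^{n}\pi^{n/2}\Gamma(\gamma)\Gamma(2\nu+\gamma))$ and the prefactor $(2\pi)^{n/2}2^{n/2}$ coming from (3.7) and (2.22). A direct bookkeeping shows $(2\pi)^{n/2}\cdot 2^{n/2}\cdot C_{\nu,\gamma}/[\Gamma(\nu+\frac{n-1}{2})\Gamma(\nu+\frac{1}{2})]\cdot\Gamma(2\nu+\gamma)\Gamma(\gamma)=1$, producing exactly the claimed identity $\widehat{K_{\nu,\gamma}}(\lambda)=|\Gamma(\nu+i\lambda)|^{2}/|\Gamma(\nu+\gamma+i\lambda)|^{2}$.

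The main obstacle will be justifying the termwise interchange of the series and the integral in (3.7). For large $\rho$, the series reduces to the leading $k=0$ term times a convergent tail, while for small $\rho$ the asymptotics in Lemma 4.1 show that $K_{\nu,\gamma}$ is integrable against $(\sinh\rho)^{n/2}|P^{(2-n)/2}_{i\lambda-1/2}(\cosh\rho)|$. One way to make this rigorous is to truncate the series and pass to the limit via dominated convergence, using the monotonicity and nonnegativity of the partial sums of the hypergeometric series for small argument, combined with the $\rho\to\infty$ exponential decay estimate (3.9) and the explicit polynomial (resp. logarithmic) bound near $\rho=0$ given by (3.10). Once this interchange is validated, the remaining steps are purely algebraic manipulations of Pochhammer symbols and Gamma functions.
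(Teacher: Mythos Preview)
Your proposal is correct and follows essentially the same route as the paper's own proof: expand the hypergeometric factor as a power series in $(\cosh\tfrac{\rho}{2})^{-2}$, apply the radial formula (3.7) together with the integral identity (2.23) (the $\rho$-variable form of (2.22)) term by term, identify the resulting series as $F(\nu+i\lambda,\nu-i\lambda;2\nu+\gamma;1)$, and finish with the Gauss summation (2.7). Your extra remark about justifying the interchange of sum and integral via Lemma~\ref{lm3.1} is a point the paper passes over silently; otherwise the arguments coincide.
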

\begin{proof}
We compute
\begin{align*}\label{}
K_{\nu,\gamma}(\cosh\rho)=&C_{\nu,\gamma}  \left(\cosh\frac{\rho}{2}\right)^{1-n-2\nu}F\big(\nu+\frac{n-1}{2},\nu+\frac{1}{2};2\nu+\gamma;
(\cosh\frac{\rho}{2})^{-2}\big)\\
=&C_{\nu,\gamma}\sum_{k=0}^{\infty}\frac{\Gamma(\nu+\frac{n-1}{2}+k)\Gamma(\nu+\frac{1}{2}+k)\Gamma(2\nu+\gamma)}
{\Gamma(\nu+\frac{n-1}{2})\Gamma(\nu+\frac{1}{2})\Gamma(2\nu+\gamma+k)k!}\left(\cosh\frac{\rho}{2}\right)^{1-n-2\nu-2k}\\
=&\frac{1}{2^{n}\pi^{\frac{n}{2}}\Gamma(\gamma)}\sum_{k=0}^{\infty}\frac{\Gamma(\nu+\frac{n-1}{2}+k)\Gamma(\nu+\frac{1}{2}+k)}
{\Gamma(2\nu+\gamma+k)k!}\left(\cosh\frac{\rho}{2}\right)^{1-n-2\nu-2k}.
\end{align*}
By using  (\ref{3.7}) and (\ref{2.23}), we have, for $\nu>0$,
\begin{align}\nonumber
\widehat{K_{\nu,\gamma}}(\lambda)=&\frac{1}{\Gamma(\gamma)}\sum_{k=0}^{\infty}\frac{\Gamma(\nu+\frac{n-1}{2}+k)\Gamma(\nu+\frac{1}{2}+k)}
{\Gamma(2\nu+\gamma+k)k!} \cdot
\frac{ |\Gamma(\nu+k+i\lambda)|^{2}}{\Gamma(\frac{n-1}{2}+\nu+k)\Gamma(\frac{1}{2}+\nu+k)}\\
\label{a3.18}
=&\frac{1}{\Gamma(\gamma)}
\sum_{k=0}^{\infty}\frac{|\Gamma(\nu+k+i\lambda)|^{2}}
{\Gamma(2\nu+\gamma+k)}  \frac{1}{k!}\\
=&\frac{1}{\Gamma(\gamma)}
\sum_{k=0}^{\infty}\frac{\Gamma(\nu+k+i\lambda)\Gamma(\nu+k-i\lambda)}
{\Gamma(2\nu+\gamma+k)}  \frac{1}{k!} \nonumber\\
=&\frac{1}{\Gamma(\gamma)}\frac{|\Gamma(\nu+i\lambda)|^{2}}
{\Gamma(2\nu+\gamma)} F(\nu+i\lambda,\nu-i\lambda;2\nu+\gamma;1). \nonumber
\end{align}
By  using (\ref{2.7}), we get
\begin{align*}
\widehat{K_{\nu,\gamma}}(\lambda)=&\frac{1}{\Gamma(\gamma)} \frac{|\Gamma(\nu+i\lambda)|^{2}}
{\Gamma(2\nu+\gamma)} \frac{\Gamma(2\nu+\gamma)\Gamma(\gamma)}{|\Gamma(\nu+\gamma+i\lambda)|^{2}}= \frac{|\Gamma(\nu+i\lambda)|^{2}}{|\Gamma(\nu+\gamma+i\lambda)|^{2}}.
\end{align*}
This completes the proof of Lemma \ref{lm3.2}.
\end{proof}

In the limiting case, namely $\nu=0$, we have the following lemma:
\begin{lemma}\label{lm3.3}
Let   $f, g\in C^{\infty}_{0}(\mathbb{B}^{n})$ be complex-valued functions and $\gamma>0$.

(1) The Helgason-Fourier transform of the function $K_{0,\gamma}(\cosh\rho)$ is the function
$\frac{|\Gamma(i\lambda)|^{2}}{|\Gamma(\gamma+i\lambda)|^{2}}$, in the sense that
\begin{align}\label{3.14}
\int_{\mathbb{B}^{n}}K_{0,\gamma}(\cosh\rho)\overline{f(x)}dV=
\int^{+\infty}_{-\infty}\int_{\mathbb{S}^{n-1}} \frac{|\Gamma(i\lambda)|^{2}}{|\Gamma(\gamma+i\lambda)|^{2}} \overline{\widehat{f}(\lambda,\zeta)}|\mathfrak{c}(\lambda)|^{-2}d\lambda d\sigma(\zeta).
\end{align}

(2) The identity
$$(K_{0,\gamma}\ast f)^{\wedge}= \frac{|\Gamma(i\lambda)|^{2}}{|\Gamma(\gamma+i\lambda)|^{2}} \widehat{f}(\lambda,\zeta)$$
holds in the sense that
\begin{align}\label{3.15}
\int_{\mathbb{B}^{n}}(K_{0,\gamma}\ast f)(x)\overline{g(x)}dV=
\int^{+\infty}_{-\infty}\int_{\mathbb{S}^{n-1}} \widehat{f}(\lambda,\zeta)\frac{|\Gamma(i\lambda)|^{2}}{|\Gamma(\gamma+i\lambda)|^{2}} \overline{\widehat{g}(\lambda,\zeta)}|\mathfrak{c}(\lambda)|^{-2}d\lambda d\sigma(\zeta).
\end{align}
\end{lemma}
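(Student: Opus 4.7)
The plan is to deduce both identities from Lemma~\ref{lm3.2} by sending $\nu\to 0^+$, interpreting \eqref{3.14} and \eqref{3.15} as dual pairings against $C_{0}^{\infty}$ functions since $K_{0,\gamma}$ is in general neither in $L^{1}$ nor in $L^{2}$. The first task is to upgrade Lemma~\ref{lm3.2}, which is a pointwise identity of Helgason-Fourier transforms for $\nu>0$, to a Parseval-type statement that does not require $K_{\nu,\gamma}\in L^{2}$. For fixed $\nu>0$ and $f\in C_{0}^{\infty}(\mathbb{B}^{n})$, I would substitute the inversion formula for $f$ into $\int K_{\nu,\gamma}\overline{f}\,dV$ and exchange the order of integration; this is justified by Lemma~\ref{lm3.1}, which gives $K_{\nu,\gamma}(\cosh\rho)\lesssim e^{-(n-1+2\nu)\rho/2}$ at infinity and the locally integrable singularity $\rho^{2\gamma-n}$ at the origin. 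Using $\overline{e_{\lambda,\zeta}}=e_{-\lambda,\zeta}$ together with Lemma~\ref{lm3.2}, this yields, for each $\nu>0$,
\begin{equation*}
\int_{\mathbb{B}^{n}} K_{\nu,\gamma}(\cosh\rho)\overline{f(x)}\,dV=\int\!\!\!\int\frac{|\Gamma(\nu+i\lambda)|^{2}}{|\Gamma(\nu+\gamma+i\lambda)|^{2}}\,\overline{\widehat{f}(\lambda,\zeta)}\,|\mathfrak{c}(\lambda)|^{-2}\,d\lambda\,d\sigma.
\end{equation*}

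To pass to the limit for part (1), I would apply dominated convergence on each side. On the left, continuity of the hypergeometric series gives the pointwise limit $K_{\nu,\gamma}\to K_{0,\gamma}$, and Lemma~\ref{lm3.1} (with constants uniform for $\nu\in[0,1]$) furnishes the required integrable majorant on the compact support of $f$. On the right, I would rewrite the Fourier-side kernel using $|\Gamma(\nu+i\lambda)|^{2}=|\Gamma(\nu+1+i\lambda)|^{2}/(\nu^{2}+\lambda^{2})$ and \eqref{3.1} as
\begin{equation*}
\frac{|\Gamma(\nu+i\lambda)|^{2}}{|\Gamma(\nu+\gamma+i\lambda)|^{2}}|\mathfrak{c}(\lambda)|^{-2}=\frac{\lambda^{2}}{\nu^{2}+\lambda^{2}}\cdot\frac{|\Gamma(\nu+1+i\lambda)|^{2}|\Gamma(\tfrac{n-1}{2}+i\lambda)|^{2}}{2(2\pi)^{n}|\Gamma(\nu+\gamma+i\lambda)|^{2}|\Gamma(1+i\lambda)|^{2}}.
\end{equation*}
The first factor is bounded by $1$ uniformly in $\nu$ and converges to $1$ pointwise for $\lambda\neq 0$; the second factor is continuous in $\nu\geq 0$ and, by \eqref{2.17}, has polynomial growth in $\lambda$, which is tame against the rapidly decreasing $\widehat{f}$. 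Dominated convergence then delivers \eqref{3.14}.

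Part (2) follows the same scheme. Since $K_{\nu,\gamma}$ is radial, convolution is commutative and $\widehat{K_{\nu,\gamma}\ast f}=\widehat{K_{\nu,\gamma}}\widehat{f}$ extends to $\nu>0$ by Fubini and the same integrability bounds; pairing $K_{\nu,\gamma}\ast f$ with $g\in C_{0}^{\infty}(\mathbb{B}^{n})$ via \eqref{3.2} is then legal for $\nu>0$. Sending $\nu\to 0^{+}$, the Fourier-side integral is controlled by the identical majorant as above with $\widehat{f}\,\overline{\widehat{g}}$ playing the role of the test factor, while the physical-side limit is handled by Fubini and an application of part (1) to the parameter-dependent test function $y\mapsto f(y)\overline{g(x)}$ for each $x$, followed by dominated convergence in $x$.

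The main obstacle lies at $\lambda=0$ on the Fourier side: for $\nu>0$ the factor $|\mathfrak{c}(\lambda)|^{-2}\sim\lambda^{2}$ kills the pole of $\widehat{K_{\nu,\gamma}}$ there, while at $\nu=0$ the pole $|\Gamma(i\lambda)|^{2}\sim\pi/\lambda^{2}$ and this zero conspire to give a finite nonzero value, so the integrand is discontinuous at $(\nu,\lambda)=(0,0)$. The explicit factorization with the ``regulator'' $\lambda^{2}/(\nu^{2}+\lambda^{2})$ isolates this discontinuity and is exactly what makes the dominating bound transparent.
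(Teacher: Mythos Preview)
Your approach is correct and essentially identical to the paper's: both start from the $\nu>0$ identity of Lemma~\ref{lm3.2}, substitute the explicit Harish-Chandra function \eqref{3.1} so that the dangerous $|\Gamma(i\lambda)|^{2}$ pole is canceled against $|\mathfrak{c}(\lambda)|^{-2}$, and then send $\nu\to 0^{+}$. Your factorization with the regulator $\lambda^{2}/(\nu^{2}+\lambda^{2})$ and the explicit dominated-convergence majorant are a more detailed version of exactly what the paper does in one line (``Passing to the limit as $\nu\rightarrow 0^{+}$ yields \ldots'').
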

\begin{proof} Proof of (1): By Lemma \ref{lm3.2} and (\ref{3.2}), we have, for $\nu>0$,
\begin{align}\label{3.16}
\int_{\mathbb{B}^{n}}K_{\nu,\gamma}(\cosh\rho)\overline{f(x)}dV=
\int^{+\infty}_{-\infty}\int_{\mathbb{S}^{n-1}} \frac{|\Gamma(\nu+i\lambda)|^{2}}{|\Gamma(\nu+\gamma+i\lambda)|^{2}} \overline{\widehat{f}(\lambda,\zeta)}|\mathfrak{c}(\lambda)|^{-2}d\lambda d\sigma(\zeta).
\end{align}
Substituting (\ref{3.1}) into (\ref{3.16}), we obtain, for $\nu>0$,
\begin{align*}
\int_{\mathbb{B}^{n}}K_{\nu,\gamma}(\cosh\rho)\overline{f(x)}dV=2(2\pi)^{n}
\int^{+\infty}_{-\infty}\int_{\mathbb{S}^{n-1}} \frac{|\Gamma(\nu+i\lambda)|^{2}}{|\Gamma(\nu+\gamma+i\lambda)|^{2}}
 \frac{|\Gamma(i\lambda+\frac{n-1}{2})|^{2}}{|\Gamma(i\lambda)|^{2}}\overline{\widehat{f}(\lambda,\zeta)}d\lambda d\sigma(\zeta).
\end{align*}
Passing to the limit as $\nu\rightarrow 0^{+}$ yields
\begin{align*}
\int_{\mathbb{B}^{n}}K_{0,\gamma}(\cosh\rho)\overline{f(x)}dV=&2(2\pi)^{n}
\int^{+\infty}_{-\infty}\int_{\mathbb{S}^{n-1}} \frac{|\Gamma(i\lambda+\frac{n-1}{2})|^{2}}{|\Gamma(\gamma+i\lambda)|^{2}}
 \overline{\widehat{f}(\lambda,\zeta)}d\lambda d\sigma(\zeta)\\
=&\int^{+\infty}_{-\infty}\int_{\mathbb{S}^{n-1}} \frac{|\Gamma(i\lambda)|^{2}}{|\Gamma(\gamma+i\lambda)|^{2}} \overline{\widehat{f}(\lambda,\zeta)}|\mathfrak{c}(\lambda)|^{-2}d\lambda d\sigma(\zeta).
\end{align*}
This proves (\ref{3.14}).

\medskip
Proof of
(2): The proof of (\ref{3.15}) is similar. By Lemma \ref{lm3.2} and (\ref{3.2}), we have, for $\nu>0$,
\begin{align*}
&\int_{\mathbb{B}^{n}}(K_{\nu,\gamma}\ast f)(x)\overline{g(x)}dV\\
=&
\int^{+\infty}_{-\infty}\int_{\mathbb{S}^{n-1}}\widehat{f}(\lambda,\zeta)\frac{|\Gamma(\nu+i\lambda)|^{2}}{|\Gamma(\gamma+\nu+i\lambda)|^{2}}
\overline{\widehat{g}(\lambda,\zeta)}|\mathfrak{c}(\lambda)|^{-2}d\lambda d\sigma(\zeta).
\end{align*}
Passing to the limit as $\nu\rightarrow 0^{+}$ yields (\ref{3.15}).
\end{proof}

As an application of  Lemmas \ref{lm3.2} and  \ref{lm3.3}, we have the following result (see the work by the authors \cite{ly4} for $\gamma\in \mathbb{N}\setminus\{0\}$):
\begin{theorem}\label{co3.4}
Let $\gamma>0$ and $\nu\geq 0$.  The Green function of $\frac{|\Gamma(\gamma+\nu+i\sqrt{-\Delta_{\mathbb{H}}-\frac{(n-1)^{2}}{4}})|^{2}}
{|\Gamma(\nu+i\sqrt{-\Delta_{\mathbb{H}}-\frac{(n-1)^{2}}{4}})|^{2}}$ is $K_{\nu, \gamma}(\cosh\rho)$.
\end{theorem}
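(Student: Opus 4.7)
The plan is to read Theorem \ref{co3.4} as a direct consequence of Lemmas \ref{lm3.2} and \ref{lm3.3}: since the operator $T_{\nu,\gamma}:=\frac{|\Gamma(\gamma+\nu+i\sqrt{-\Delta_{\mathbb{H}}-(n-1)^2/4})|^{2}}{|\Gamma(\nu+i\sqrt{-\Delta_{\mathbb{H}}-(n-1)^2/4})|^{2}}$ is defined on the Helgason-Fourier side by the multiplier $m_{\nu,\gamma}(\lambda)=\frac{|\Gamma(\nu+\gamma+i\lambda)|^{2}}{|\Gamma(\nu+i\lambda)|^{2}}$ and Lemma \ref{lm3.2} identifies the Helgason-Fourier transform of the radial kernel $K_{\nu,\gamma}$ as precisely $1/m_{\nu,\gamma}(\lambda)$, the composition $T_{\nu,\gamma}\circ(K_{\nu,\gamma}\ast\,\cdot\,)$ must act as the identity. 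What remains is to make this rigorous by justifying the convolution theorem and distinguishing the cases $\nu>0$ and $\nu=0$.

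For $\nu>0$, I would fix $f\in C_{0}^{\infty}(\mathbb{B}^{n})$ and set $g:=K_{\nu,\gamma}\ast f$. The asymptotics in Lemma \ref{lm3.1} (exponential decay at infinity and a locally integrable singularity at the origin, since $0<\gamma<n/2$ gives $\rho^{2\gamma-n}$ which is integrable against $(\sinh\rho)^{n-1}$ near $0$, and the borderline and supercritical cases are even better) guarantee that $K_{\nu,\gamma}\in L^{1}(\mathbb{B}^{n})$, so the convolution is well-defined and $g\in L^{2}(\mathbb{B}^{n})$. Since $K_{\nu,\gamma}$ is radial, the convolution identity $\widehat{g}(\lambda,\zeta)=\widehat{K_{\nu,\gamma}}(\lambda)\widehat{f}(\lambda,\zeta)$ together with Lemma \ref{lm3.2} yields
\[
\widehat{g}(\lambda,\zeta)=\frac{|\Gamma(\nu+i\lambda)|^{2}}{|\Gamma(\nu+\gamma+i\lambda)|^{2}}\,\widehat{f}(\lambda,\zeta).
\]
Applying definition \eqref{3.3}-style Fourier multiplication gives $\widehat{T_{\nu,\gamma}g}(\lambda,\zeta)=\widehat{f}(\lambda,\zeta)$, and the inversion formula on $\mathbb{B}^{n}$ concludes $T_{\nu,\gamma}(K_{\nu,\gamma}\ast f)=f$, which is exactly the Green-function identity.

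For the limiting case $\nu=0$, the kernel $K_{0,\gamma}$ need not be in $L^{1}$ (by Lemma \ref{lm3.1} near $\rho=0$ when $\gamma\leq n/2$), so I would not invoke the convolution theorem pointwise but rather use Lemma \ref{lm3.3}(2) in its paired form. Testing against $\overline{g}$ for $g\in C_{0}^{\infty}(\mathbb{B}^{n})$,
\[
\int_{\mathbb{B}^{n}}T_{0,\gamma}(K_{0,\gamma}\ast f)\,\overline{g}\,dV=\int_{-\infty}^{+\infty}\!\!\int_{\mathbb{S}^{n-1}}\frac{|\Gamma(\gamma+i\lambda)|^{2}}{|\Gamma(i\lambda)|^{2}}\cdot\frac{|\Gamma(i\lambda)|^{2}}{|\Gamma(\gamma+i\lambda)|^{2}}\,\widehat{f}\,\overline{\widehat{g}}\,|\mathfrak{c}(\lambda)|^{-2}d\lambda\,d\sigma,
\]
where the two multipliers cancel, and Plancherel \eqref{3.2} reduces the right-hand side to $\int f\overline{g}\,dV$. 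Since $g$ is arbitrary in $C_{0}^{\infty}(\mathbb{B}^{n})$, this gives $T_{0,\gamma}(K_{0,\gamma}\ast f)=f$ in the distributional sense and hence $K_{0,\gamma}$ is the Green function.

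The only genuinely delicate point is the $\nu=0$ step, where one must verify that all integrals above converge absolutely so that Fubini is legal: this rests on showing that $\widehat{f}$ and $\widehat{g}$ decay rapidly in $\lambda$ (a Paley-Wiener-type statement for compactly supported smooth functions on $\mathbb{B}^{n}$) and that $|\mathfrak{c}(\lambda)|^{-2}$ grows only polynomially, both of which are standard. Once Lemma \ref{lm3.3}(2) is available as a legitimate tool, the argument is essentially a single line; the substance of the theorem is entirely encoded in the explicit Fourier computation of Lemma \ref{lm3.2}.
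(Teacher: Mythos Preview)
Your approach is essentially the paper's own: the paper states Theorem \ref{co3.4} as an immediate application of Lemmas \ref{lm3.2} and \ref{lm3.3}, without writing out the details you supply. So the strategy is correct and there is nothing to add on the level of ideas.

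One correction to your write-up, however: the claim that $K_{\nu,\gamma}\in L^{1}(\mathbb{B}^{n})$ for all $\nu>0$ is false, and your diagnosis of the $\nu=0$ difficulty is misplaced. Near $\rho=0$ the kernel is always locally integrable against $(\sinh\rho)^{n-1}$ (for $0<\gamma<n/2$ you get $\rho^{2\gamma-1}$, and the other cases are milder). The genuine obstruction is at $\rho\to\infty$: by Lemma \ref{lm3.1}, $K_{\nu,\gamma}(\cosh\rho)\sim e^{(1-n-2\nu)\rho/2}$, and after multiplying by $(\sinh\rho)^{n-1}\sim e^{(n-1)\rho}$ one gets $e^{(n-1-2\nu)\rho/2}$, which is integrable only when $\nu>(n-1)/2$. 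So for $0<\nu\le(n-1)/2$ the kernel is not in $L^{1}(\mathbb{B}^{n})$ either. This does not break the argument, because Lemma \ref{lm3.2} computes $\widehat{K_{\nu,\gamma}}$ directly for every $\nu>0$ via the series expansion and \eqref{2.23}, without appealing to $L^{1}$-membership; the Helgason-Fourier transform here is against the spherical function, whose decay compensates. Likewise, the passage from $\nu>0$ to $\nu=0$ in Lemma \ref{lm3.3} is handled by a limiting argument after inserting the explicit form of $|\mathfrak{c}(\lambda)|^{-2}$, not by an $L^{1}$ convolution theorem. With that adjustment your outline is fine.
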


Before stating and  proving the next lemma, we need the following elementary inequality:
\begin{lemma}\label{lm3.4}
Let $0\leq a\leq b\leq c\leq d$. If $a+d=b+c$, then $ad\leq bc$.
\end{lemma}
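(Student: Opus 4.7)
The plan is to reduce the inequality to a single nonnegativity statement by exploiting the shared sum $a+d=b+c$, which forces the two "outer" gaps $b-a$ and $d-c$ to coincide.

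First I would set $s := a+d = b+c$, so that $d = s-a$ and $c = s-b$. Substituting into the target expression gives
\begin{align*}
bc - ad = b(s-b) - a(s-a) = (b-a)s - (b^2 - a^2) = (b-a)\bigl(s - (a+b)\bigr).
\end{align*}
Next I would observe that $s - (a+b) = (a+d) - a - b = d - b \geq 0$ by the ordering hypothesis, and $b - a \geq 0$ also by the ordering. Therefore
\begin{align*}
bc - ad = (b-a)(d-b) \geq 0,
\end{align*}
which is exactly the inequality $ad \leq bc$.

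There is really no main obstacle here; the only thing one has to notice is the algebraic identity $bc - ad = (b-a)(d-b)$ under the constraint $a+d=b+c$. An equivalent, perhaps even more transparent, approach is to let $t := b - a$; the constraint $a+d=b+c$ forces $d - c = t$ as well, and then
\begin{align*}
bc - ad = bc - a(c+t) = (b-a)c - at = t(c-a) \geq 0,
\end{align*}
again using $c \geq a$ and $t \geq 0$. Either presentation is acceptable; I would include the first one since it makes the symmetry between the two gaps $b-a$ and $d-b$ visible.
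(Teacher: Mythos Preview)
Your proof is correct and essentially identical to the paper's: both set $s=a+d=b+c$ and factor $bc-ad=(b-a)(s-a-b)=(b-a)(d-b)\geq 0$.
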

\begin{proof}
Set $s=a+d=b+c$. We have
\begin{align*}
bc-ad=b(s-b)-a(s-a)=(b-a)(s-a-b)=(b-a)(d-b)\geq0.
\end{align*}
The desired result follows.
\end{proof}

Now  we  give the Helgason-Fourier transform of
\begin{align}\label{a3.18}
H_{\nu,\gamma}(\cosh\rho)=\left(\cosh\frac{\rho}{2}\right)^{1-2\gamma-2\nu} \left(\sinh\frac{\rho}{2}\right)^{2\gamma-n}.
\end{align}

\begin{lemma}\label{lm3.5}
Let $\nu>0$ and  $\frac{n-1}{2}\leq\gamma<\frac{n}{2}$.
The Helgason-Fourier transform of $H_{\nu,\gamma}(\cosh\rho)$ is
\begin{align*}
\widehat{H_{\nu,\gamma}}(\lambda)=\sum_{k=0}^{\infty}
\frac{|\Gamma(\nu+k+i\lambda)|^{2}(\frac{n}{2}-\gamma)_{k} }{\Gamma(\frac{n-1}{2}+\nu+k)\Gamma(\frac{1}{2}+\nu+k)k!}.
\end{align*}
Furthermore, the following holds:
\begin{align*}
0\leq \widehat{H_{\nu,\gamma}}(\lambda)\leq \frac{\Gamma(\gamma+2\nu)\Gamma(\gamma)}{\Gamma(\nu+\frac{n-1}{2})\Gamma(\nu+\frac{1}{2})}\widehat{K_{\nu,\gamma}}(\lambda).
\end{align*}
\end{lemma}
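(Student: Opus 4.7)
The plan is to expand $H_{\nu,\gamma}$ as a non-negative power series in $(\cosh(\rho/2))^{-2}$ and compute the Helgason-Fourier transform term by term using (\ref{2.23}), then reduce the upper bound to a factor-by-factor comparison handled by Lemma \ref{lm3.4}. First I would use the identity $\tanh^{2}(\rho/2)=1-(\cosh(\rho/2))^{-2}$ to write
\[
(\sinh\tfrac{\rho}{2})^{2\gamma-n}=(\cosh\tfrac{\rho}{2})^{2\gamma-n}\bigl(1-(\cosh\tfrac{\rho}{2})^{-2}\bigr)^{\gamma-n/2},
\]
and then apply the binomial series (convergent for $\rho>0$) to obtain
\[
H_{\nu,\gamma}(\cosh\rho)=\sum_{k=0}^{\infty}\frac{(n/2-\gamma)_{k}}{k!}(\cosh\tfrac{\rho}{2})^{1-n-2\nu-2k}.
\]
Because $\gamma\le n/2$, every coefficient $(n/2-\gamma)_{k}/k!$ is non-negative, so the partial sums $S_{N}$ are dominated by $H_{\nu,\gamma}$ and increase monotonically to it.

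Next I would substitute $S_{N}$ into the radial Helgason-Fourier formula (\ref{3.7}) and apply (\ref{2.23}) to each summand with $\gamma'=n-1+2\nu+2k$, admissible since $\gamma'>n-1$ whenever $\nu>0$. Dominated convergence is justified by the standard asymptotics $P^{(2-n)/2}_{i\lambda-1/2}(\cosh\rho)\sim\rho^{(n-2)/2}/\Gamma(n/2)$ as $\rho\to 0^{+}$, which makes the dominating integrand behave like $\rho^{2\gamma-1}$ (integrable because $\gamma>0$), together with the exponential decay at infinity ensured by $\nu>0$. Passing to the limit $N\to\infty$ yields exactly the claimed series for $\widehat{H_{\nu,\gamma}}(\lambda)$.

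For the upper bound, I would compare termwise with
\[
\widehat{K_{\nu,\gamma}}(\lambda)=\frac{1}{\Gamma(\gamma)}\sum_{k=0}^{\infty}\frac{|\Gamma(\nu+k+i\lambda)|^{2}}{\Gamma(2\nu+\gamma+k)\,k!}
\]
obtained in Lemma \ref{lm3.2}. Cancelling the common factor $|\Gamma(\nu+k+i\lambda)|^{2}/k!$ and converting the remaining Gamma quotients via $(x)_{k}=\Gamma(x+k)/\Gamma(x)$, the bound reduces to
\[
(n/2-\gamma)_{k}(2\nu+\gamma)_{k}\le (\nu+(n-1)/2)_{k}(\nu+1/2)_{k},\qquad k\ge 0,
\]
which I would verify factor by factor. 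For each $j\in\{0,\ldots,k-1\}$, setting $a=n/2-\gamma+j$, $d=2\nu+\gamma+j$, $b=\nu+(n-1)/2+j$, $c=\nu+1/2+j$, a direct computation gives $a+d=b+c=n/2+2\nu+2j$, and the assumption $\gamma\ge(n-1)/2$ (together with $n\ge 2$) forces the ordering $0\le a\le c\le b\le d$. Lemma \ref{lm3.4} then delivers $ad\le bc$.

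Non-negativity $\widehat{H_{\nu,\gamma}}(\lambda)\ge 0$ is immediate from the series representation since all factors are non-negative under the hypotheses. The main technical obstacle is the interchange of sum and integral in the computation of the Helgason-Fourier transform, which is the role of the dominated convergence argument above; the remainder of the proof is a transparent algebraic reduction to the pointwise Gamma inequality supplied by Lemma \ref{lm3.4}.
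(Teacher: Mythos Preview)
Your proposal is correct and follows essentially the same approach as the paper: expand $H_{\nu,\gamma}$ via the binomial series in $(\cosh\tfrac{\rho}{2})^{-2}$, transform term by term using (\ref{3.7}) and (\ref{2.23}), and reduce the upper bound to the Pochhammer inequality $(n/2-\gamma)_{k}(2\nu+\gamma)_{k}\le(\nu+\tfrac12)_{k}(\nu+\tfrac{n-1}{2})_{k}$ handled factor by factor via Lemma~\ref{lm3.4}. Your explicit dominated-convergence justification for the termwise transform is a welcome addition that the paper's proof leaves implicit.
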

\begin{proof}
We have
\begin{align*}
H_{\nu,\gamma}(\cosh\rho)=& \left(\cosh\frac{\rho}{2}\right)^{1-n-2\nu}\left(1-\frac{1}{(\cosh\frac{\rho}{2})^{2}}\right)^{\gamma-\frac{n}{2}}\\
=& \left(\cosh\frac{\rho}{2}\right)^{1-n-2\nu}\sum_{k=0}^{\infty}\frac{(\frac{n}{2}-\gamma)_{k}}{k!}
\left(\cosh\frac{\rho}{2}\right)^{-2k}\\
=& \sum_{k=0}^{\infty}\frac{(\frac{n}{2}-\gamma)_{k}}{k!}
\left(\cosh\frac{\rho}{2}\right)^{1-n-2\nu-2k}.
\end{align*}
By using  (\ref{3.7}) and (\ref{2.23}), we have
\begin{align}\label{3.18}
\widehat{H_{\nu,\gamma}}(\lambda)= \sum_{k=0}^{\infty}\frac{(\frac{n}{2}-\gamma)_{k}}{k!}
\frac{ |\Gamma(\nu+k+i\lambda)|^{2}}{\Gamma(\frac{n-1}{2}+\nu+k)\Gamma(\frac{1}{2}+\nu+k)}.
\end{align}
We claim
\begin{align}\label{3.19}
 (n/2-\gamma)_{k}\leq \frac{(\nu+\frac{n-1}{2})_{k}(\nu+\frac{1}{2})_{k}}{(\gamma+2\nu)_{k}},\;\; k=0,1,2,\cdots.
\end{align}
In fact, by Lemma \ref{lm3.4}, we have
\begin{align*}
(n/2-\gamma)_{k}(\gamma+2\nu)_{k}=&\prod_{m=0}^{k-1}(n/2-\gamma+m)(\gamma+2\nu+m)\\
\leq& \prod_{m=0}^{k-1}\left(\nu+\frac{1}{2}+m\right)\left(\nu+\frac{n-1}{2}+m\right)\\
=&\left(\nu+\frac{1}{2}\right)_{k}\left(\nu+\frac{n-1}{2}\right)_{k}.
\end{align*}
This proves the claim.

Substituting (\ref{3.19}) into (\ref{3.18}) and using (\ref{a3.18}), we obtain
\begin{align*}
0\leq \widehat{H_{\nu,\gamma}}(\lambda)\leq& \sum_{k=0}^{\infty}\frac{(\nu+\frac{1}{2})_{k}(\nu+\frac{n-1}{2})_{k}}{(\gamma+2\nu)_{k}k!}
\frac{ |\Gamma(\nu+k+i\lambda)|^{2}}{\Gamma(\frac{n-1}{2}+\nu+k)\Gamma(\frac{1}{2}+\nu+k)}\\
=&\frac{\Gamma(\gamma+2\nu)}{\Gamma(\nu+\frac{n-1}{2})\Gamma(\nu+\frac{1}{2})}\sum_{k=0}^{\infty}\frac{|\Gamma(\nu+k+i\lambda)|^{2}}
{\Gamma(2\nu+\gamma+k)}  \frac{1}{k!}\\
=&\frac{\Gamma(\gamma+2\nu)\Gamma(\gamma)}{\Gamma(\nu+\frac{n-1}{2})\Gamma(\nu+\frac{1}{2})}\widehat{K_{\nu,\gamma}}(\lambda).
\end{align*}
The proof of the lemma is thereby completed.
\end{proof}

With the same arguments as in Lemma \ref{lm3.3}, we also have the following lemma. Since the proof is very similar to that of Lemma \ref{lm3.5}, we omit its details.
\begin{lemma}\label{lm3.6}
Let $\frac{n-1}{2}\leq \gamma<\frac{n}{2}$.
 The Helgason-Fourier transform of  $H_{0,\gamma}(\cosh\rho)$ is
 \begin{align*}
\widehat{H_{0,\gamma}}(\lambda)=\sum_{k=0}^{\infty}
\frac{|\Gamma(k+i\lambda)|^{2}(\frac{n}{2}-\gamma)_{k} }{\Gamma(\frac{n-1}{2}+k)\Gamma(\frac{1}{2}+k)k!},
 \end{align*}
 in the sense that
\begin{align*}
\int_{\mathbb{B}^{n}}H_{0,\gamma}(\cosh\rho)\overline{f(x)}dV=
\int^{+\infty}_{-\infty}\int_{\mathbb{S}^{n-1}} \sum_{k=0}^{\infty}
\frac{|\Gamma(k+i\lambda)|^{2}(\frac{n}{2}-\gamma)_{k} }{\Gamma(\frac{n-1}{2}+k)\Gamma(\frac{1}{2}+k)k!}
\overline{\widehat{f}(\lambda,\zeta)}|\mathfrak{c}(\lambda)|^{-2}d\lambda d\sigma(\zeta),
\end{align*}
whenever $f\in C^{\infty}_{0}(\mathbb{B}^{n})$.  The identity
$$(H_{0,\gamma}\ast f)^{\wedge}= \sum_{k=0}^{\infty}
\frac{|\Gamma(k+i\lambda)|^{2}(\frac{n}{2}-\gamma)_{k} }{\Gamma(\frac{n-1}{2}+k)\Gamma(\frac{1}{2}+k)k!} \widehat{f}(\lambda,\zeta)$$
holds in the sense that
\begin{align*}
\int_{\mathbb{B}^{n}}(H_{0,\gamma}\ast f)(x)\overline{g(x)}dV=
\int^{+\infty}_{-\infty}\int_{\mathbb{S}^{n-1}} \widehat{f}(\lambda,\zeta)
\sum_{k=0}^{\infty}
\frac{|\Gamma(k+i\lambda)|^{2}(\frac{n}{2}-\gamma)_{k} }{\Gamma(\frac{n-1}{2}+k)\Gamma(\frac{1}{2}+k)k!}
 \overline{\widehat{g}(\lambda,\zeta)}|\mathfrak{c}(\lambda)|^{-2}d\lambda d\sigma(\zeta),
\end{align*}
whenever  $f, g\in C^{\infty}_{0}(\mathbb{B}^{n})$.
\end{lemma}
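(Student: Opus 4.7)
The plan is to obtain Lemma \ref{lm3.6} from Lemma \ref{lm3.5} by the same limiting procedure $\nu\to 0^+$ that was used in Lemma \ref{lm3.3} to pass from $K_{\nu,\gamma}$ to $K_{0,\gamma}$. The starting point is the Parseval identity \eqref{3.2} combined with Lemma \ref{lm3.5}: for $\nu>0$ and $f\in C_0^\infty(\mathbb{B}^n)$,
\begin{equation*}
\int_{\mathbb{B}^{n}}H_{\nu,\gamma}(\cosh\rho)\overline{f(x)}\,dV
=\int_{-\infty}^{+\infty}\!\!\int_{\mathbb{S}^{n-1}}
\sum_{k=0}^{\infty}\frac{|\Gamma(\nu+k+i\lambda)|^{2}(\tfrac{n}{2}-\gamma)_{k}}{\Gamma(\tfrac{n-1}{2}+\nu+k)\Gamma(\tfrac{1}{2}+\nu+k)k!}\,\overline{\widehat{f}(\lambda,\zeta)}\,|\mathfrak{c}(\lambda)|^{-2}d\lambda\, d\sigma(\zeta).
\end{equation*}
I would then let $\nu\to 0^+$ on both sides and identify the resulting formula as the distributional meaning of $\widehat{H_{0,\gamma}}$.

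For the left-hand side, $H_{\nu,\gamma}(\cosh\rho)\to H_{0,\gamma}(\cosh\rho)$ pointwise, and on the compact support of $f$ one has the uniform domination $|H_{\nu,\gamma}(\cosh\rho)|\lesssim \rho^{2\gamma-n}$ for $\nu$ small, which is integrable against $|f|\,dV$ since $2\gamma-n\in[-1,0)$ and $n\geq 2$. Dominated convergence gives
$$\int_{\mathbb{B}^n}H_{\nu,\gamma}(\cosh\rho)\overline{f(x)}\,dV \longrightarrow \int_{\mathbb{B}^n}H_{0,\gamma}(\cosh\rho)\overline{f(x)}\,dV.$$

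For the right-hand side, the only delicate issue is the $k=0$ summand, since $|\Gamma(\nu+i\lambda)|^{2}\to|\Gamma(i\lambda)|^{2}=\pi/(\lambda\sinh\pi\lambda)$ blows up at $\lambda=0$. However, using \eqref{3.1} one has $|\mathfrak{c}(\lambda)|^{-2}=\tfrac{1}{2(2\pi)^n}\tfrac{|\Gamma(i\lambda+\frac{n-1}{2})|^{2}}{|\Gamma(i\lambda)|^{2}}$, so the product of the $k=0$ term with $|\mathfrak{c}(\lambda)|^{-2}$ stays bounded as $\lambda\to 0$ (this is precisely the cancellation exploited in Lemma \ref{lm3.3}). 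For $k\geq 1$, each factor $|\Gamma(\nu+k+i\lambda)|^{2}$ is continuous and monotone in $\nu$ for $\nu\in[0,1]$, and the estimate from Lemma \ref{lm3.5} combined with the bound $\widehat{K_{\nu,\gamma}}(\lambda)\lesssim |\Gamma(\nu+i\lambda)|^2/|\Gamma(\nu+\gamma+i\lambda)|^2$ provides a $\nu$-independent dominating function when multiplied by $|\mathfrak{c}(\lambda)|^{-2}|\widehat{f}(\lambda,\zeta)|$, which is integrable because $\widehat{f}\in L^2(|\mathfrak{c}|^{-2}d\lambda\,d\sigma)$ and $f\in C_0^\infty$ gives rapid decay of $\widehat{f}$ in $\lambda$. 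A standard dominated convergence argument then allows us to interchange the limit with both the sum over $k$ and the integral in $(\lambda,\zeta)$, yielding the claimed identity for $\widehat{H_{0,\gamma}}$.

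The second part of the lemma, concerning $(H_{0,\gamma}\ast f)^{\wedge}$, follows by the same limiting argument applied to the convolution identity $\widehat{H_{\nu,\gamma}\ast f}=\widehat{H_{\nu,\gamma}}\cdot\widehat{f}$ for $\nu>0$, paired with a second test function $g\in C_0^\infty(\mathbb{B}^n)$ via \eqref{3.2}. The main obstacle throughout is verifying the $\nu$-uniform integrability needed to justify the interchange of the limit with the integral and summation; the key point, as in Lemma \ref{lm3.3}, is that the Plancherel density $|\mathfrak{c}(\lambda)|^{-2}$ precisely cancels the only singular factor $|\Gamma(i\lambda)|^{-2}$ arising from the $k=0$ term, so the limit operations are legitimate.
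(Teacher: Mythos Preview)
Your proposal is correct and follows exactly the approach the paper intends: the authors state that Lemma~\ref{lm3.6} is obtained ``with the same arguments as in Lemma~\ref{lm3.3}'' and omit the details, and your argument---starting from the formula for $\widehat{H_{\nu,\gamma}}$ in Lemma~\ref{lm3.5}, pairing with a test function via \eqref{3.2}, and passing to the limit $\nu\to 0^+$ after absorbing the singular factor $|\Gamma(i\lambda)|^{-2}$ into $|\mathfrak{c}(\lambda)|^{-2}$---is precisely that limiting procedure. Your treatment is in fact more explicit than the paper's own proof of Lemma~\ref{lm3.3}, which simply writes ``passing to the limit as $\nu\to 0^+$'' without spelling out the domination.
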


\begin{lemma}\label{lm3.8a}
Let $f$ be measurable.  If $|f(x)|\lesssim (\cosh\frac{\rho(x)}{2})^{-\gamma}$ with  $\gamma>n-1$, then the  Helgason-Fourier transform
of $f$ exists. Moreover, we have, for $\nu>0$,
\begin{align}\label{a3.21}
\widehat{(K_{\nu,\gamma}\ast f)}(\lambda,\zeta)=\widehat{K_{\nu,\gamma}}(\lambda)\widehat{f}(\lambda,\zeta).
\end{align}
\end{lemma}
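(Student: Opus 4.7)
My plan is to prove both assertions via Fubini's theorem, exploiting the radial symmetry of $K_{\nu,\gamma}$ and the eigenfunction property of $e_{-\lambda,\zeta}$ under radial convolution. I begin with the existence of $\widehat{f}(\lambda,\zeta)$. In hyperbolic polar coordinates around the origin (writing $r=\tanh(\rho/2)$), the identity $(1-r^{2})^{(n-1)/2}(\sinh\rho)^{n-1}\sim 2^{n-1}e^{(n-1)\rho/2}$ and the elementary spherical estimate $\int_{\mathbb{S}^{n-1}}|r\omega-\zeta|^{-(n-1)}\,d\omega\lesssim 1+\log\tfrac{1}{1-r}$ combine to give
\[
\int_{\mathbb{S}^{n-1}}|e_{-\lambda,\zeta}(\rho,\omega)|(\sinh\rho)^{n-1}\,d\omega\lesssim (1+\rho)e^{(n-1)\rho/2}.
\]
Since $|e_{-\lambda,\zeta}|$ is independent of $\lambda$ and $|f|\lesssim e^{-\gamma\rho/2}$, we find $\int_{\mathbb{B}^n}|f||e_{-\lambda,\zeta}|\,dV\lesssim \int_0^{\infty}(1+\rho)e^{(n-1-\gamma)\rho/2}\,d\rho$, which converges precisely under the hypothesis $\gamma>n-1$.

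Next I would justify Fubini for the double integral in $\widehat{K_{\nu,\gamma}\ast f}$. Using the M\"obius invariance of $dV$, the convolution rewrites as $(K_{\nu,\gamma}\ast f)(x)=\int K_{\nu,\gamma}(\rho(x,y))f(y)\,dV(y)$. The key is to bound the triple integral $\int\!\!\int K_{\nu,\gamma}(\rho(x,y))|f(y)||e_{-\lambda,\zeta}(x)|\,dV(x)\,dV(y)$ by integrating in $x$ first. For fixed $y$, substitute $x=T_{y}^{-1}(u)$: this preserves $dV$, gives $\rho(T_y^{-1}(u),y)=\rho(u,0)$, and invokes the Poisson cocycle $|e_{-\lambda,\zeta}(T_y^{-1}(u))|=|e_{-\lambda,\zeta}(y)|\,|e_{0,T_y(\zeta)}(u)|$, factoring the inner integral as $|e_{-\lambda,\zeta}(y)|\int K_{\nu,\gamma}(\rho(u,0))|e_{0,T_y(\zeta)}(u)|\,dV(u)$. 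The remaining $u$-integral is bounded uniformly in $y$ and $\zeta$: the asymptotic $K_{\nu,\gamma}(\rho)\lesssim e^{-(n-1+2\nu)\rho/2}$ from Lemma \ref{lm3.1}, combined with the spherical-average bound above, yields $\int_{0}^{\infty}(1+\rho)e^{-\nu\rho}\,d\rho<\infty$ (using $\nu>0$), while the singularity near $\rho=0$ is integrable since $\gamma>0$. The outer $y$-integration $\int|f(y)||e_{-\lambda,\zeta}(y)|\,dV(y)$ then converges by the first step.

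With Fubini justified, $\widehat{K_{\nu,\gamma}\ast f}(\lambda,\zeta)=\int f(y)\,I(y)\,dV(y)$ where $I(y)=\int K_{\nu,\gamma}(\rho(x,y))e_{-\lambda,\zeta}(x)\,dV(x)$. Applying the same change of variables to $I(y)$ and using the cocycle again shows $I(y)=\widehat{K_{\nu,\gamma}}(\lambda)\,e_{-\lambda,\zeta}(y)$, reducing at $y=0$ to the definition of $\widehat{K_{\nu,\gamma}}(\lambda)$ via Lemma \ref{lm3.2}; substituting then gives (\ref{a3.21}). The principal obstacle is verifying the Poisson cocycle $e_{-\lambda,\zeta}(T_y^{-1}(u))=e_{-\lambda,\zeta}(y)\,e_{-\lambda,T_y(\zeta)}(u)$ with uniform-in-$\zeta$ control, a classical identity in rank-one harmonic analysis that can be established by direct computation from the explicit formulas for $|T_y(x)|$ and $|T_y(x)-\zeta|$ collected in Section \ref{Section2}.
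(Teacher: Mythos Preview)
Your argument is correct, but it proceeds differently from the paper in the second part. For the existence of $\widehat f$, both you and the authors observe that $|e_{-\lambda,\zeta}|=e_{0,\zeta}$ and reduce to the finiteness of $[(\cosh\tfrac{\rho}{2})^{-\gamma}]^{\wedge}(0)$; you do this via the explicit spherical-average bound in polar coordinates, while the paper simply invokes \eqref{2.23} and \eqref{3.7}. For \eqref{a3.21} the paper takes a more indirect route: it first proves the pointwise estimate
\[
|(K_{\nu,\gamma}\ast f)(x)|\lesssim\Big(\cosh\tfrac{\rho(x)}{2}\Big)^{-\gamma}
\]
by splitting the $y$-integral over $\{|y|<\tfrac12\}$ and $\{|y|\ge\tfrac12\}$ and quoting a boundedness result from \cite{lius}, which puts $K_{\nu,\gamma}\ast f$ back under the hypothesis of the first part; it then refers to \cite{liup}, Proposition~6.3 for the convolution identity itself. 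Your approach bypasses both external references by establishing absolute convergence of the triple integral directly via the Poisson cocycle $e_{-\lambda,\zeta}(T_y(u))=e_{-\lambda,\zeta}(y)\,e_{-\lambda,T_y(\zeta)}(u)$ and the rotational invariance that makes the inner $u$-integral equal to $\widehat{K_{\nu,\gamma}}(0)$ (finite for $\nu>0$), thereby reproving the content of that proposition in this setting. The trade-off is that your route is fully self-contained, whereas the paper's route additionally delivers the pointwise decay of $K_{\nu,\gamma}\ast f$, which is of independent interest even if the convolution identity is all that is ultimately used.
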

\begin{proof}
Without loss of generality, we assume $\gamma<n-1+\nu$. Since $|f(x)|\lesssim (\cosh\frac{\rho(x)}{2})^{-\gamma}$,
we have
\begin{align*}
\int_{\mathbb{B}^{n}} |f(x)e_{-\lambda,\zeta}(x)|dV\lesssim&\int_{\mathbb{B}^{n}}\big(\cosh\frac{\rho(x)}{2}\big)^{-\gamma} \left(\frac{\sqrt{1-|x|^{2}}}{|x-\zeta|}\right)^{n-1}dV\\
=&[\big(\cosh\frac{\rho}{2}\big)^{-\gamma} ]^{\wedge}(0),
\end{align*}
 where
$[\big(\cosh\frac{\rho}{2}\big)^{-\gamma} ]^{\wedge}(\lambda)$ is the Helgason-Fourier transform
of $\big(\cosh\frac{\rho}{2}\big)^{-\gamma} $. Therefore, by using (\ref{2.23}) and (\ref{3.7}), we obtain
\begin{align*}
\int_{\mathbb{B}^{n}} |f(x)e_{-\lambda,\zeta}(x)|dV<\infty,
\end{align*}
which implies the existence of  $\widehat{f}(\lambda,\zeta)$.

Before the proof of  (\ref{a3.21}),  we first give  the asymptotic estimates of $K_{\nu,\gamma}\ast (\cosh\frac{\rho}{2})^{-\gamma}$.
We write
\begin{align}\nonumber
&\left(K_{\nu,\gamma}\ast \big(\cosh\frac{\rho}{2}\big)^{-\gamma}\right)(x)\\
\label{a3.23}
=&\left(\int_{\{y||y|<\frac{1}{2}\}}+\int_{\{y|\frac{1}{2}\leq |y|<1\}}\right)K_{\nu,\gamma}(\cosh\rho(y))\big(\cosh\frac{\rho(x,y)}{2}\big)^{-\gamma}dV_{y}.
\end{align}
By Lemma \ref{lm3.1} and (\ref{2.4}), we have
\begin{align}\label{a3.24}
\int_{\{y||y|<\frac{1}{2}\}}K_{\nu,\gamma}(\cosh\rho(y))\big(\cosh\frac{\rho(x,y)}{2}\big)^{-\gamma}dV_{y}\lesssim
\int_{\{y||y|<\frac{1}{2}\}}K_{\nu,\gamma}(\cosh\rho(y))dV_{y}\lesssim1.
\end{align}
and
\begin{align}\nonumber
&\int_{\{y|\frac{1}{2}\leq |y|<1\}}K_{\nu,\gamma}(\cosh\rho(y))\big(\cosh\frac{\rho(x,y)}{2}\big)^{-\gamma}dV_{y}\\
\nonumber
\lesssim&              \int_{\mathbb{B}^{n}}\big(\cosh\frac{\rho(y)}{2}\big)^{1-n-\nu}  \big(\cosh\frac{\rho(x,y)}{2}\big)^{-\gamma}dV_{y}
\\
\label{a3.25}
=&   2^{n}(1-|x|^{2})^{\frac{\gamma}{2}}\int_{\mathbb{B}^{n}}\frac{(1-|y|^{2})^{\frac{\gamma+\nu-n-1}{2}}}{(1-2x\cdot y+|x|^{2}|y|^{2})^{\frac{\gamma}{2}}}dy.
\end{align}
Noticing  that
\begin{align*}
\int_{\mathbb{B}^{n}}\frac{(1-|y|^{2})^{\frac{\gamma+\nu-n-1}{2}}}{(1-2x\cdot y+|x|^{2}|y|^{2})^{\frac{\gamma}{2}}}dy
\end{align*}
is bounded in $\mathbb{B}^{n}$ when $n-1<\gamma<n-1+\nu$ (see \cite{lius}, Proposition 2.2), we obtain, by using (\ref{a3.25}),
\begin{align}\label{a3.26}
\int_{\{y|\frac{1}{2}\leq |y|<1\}}K_{\nu,\gamma}(\cosh\rho(y))\big(\cosh\frac{\rho(x,y)}{2}\big)^{-\gamma}dV_{y}\lesssim
(1-|x|^{2})^{\frac{\gamma}{2}}=\big(\cosh\frac{\rho(x)}{2}\big)^{-\gamma}.
\end{align}
Substituting (\ref{a3.24}) and (\ref{a3.26}) into (\ref{a3.23}), we get
\begin{align*}
\left(K_{\nu,\gamma}\ast \big(\cosh\frac{\rho}{2}\big)^{-\gamma}\right)(x)\lesssim \big(\cosh\frac{\rho(x)}{2}\big)^{-\gamma}.
\end{align*}
Therefore,
\begin{align*}
|(K_{\nu,\gamma}\ast f)(x)|\lesssim \left(K_{\nu,\gamma}\ast \big(\cosh\frac{\rho}{2}\big)^{-\gamma}\right)(x)\lesssim \big(\cosh\frac{\rho(x)}{2}\big)^{-\gamma},
\end{align*}
which implies the existence of Helgason-Fourier transform of $K_{\nu,\gamma}\ast f$. The rest of the proof of
(\ref{a3.21}) is  similar to that given in \cite{liup}, Proposition 6.3 and we omit it.

\end{proof}

Before proving the next lemma, we need the following  lemma given   by Beckner (see \cite{be}) and the
Hardy-Littlewood-Sobolev inequality on $\mathbb{H}^{n}$ (see the first proof given by Beckner  on half space model in  [\cite{be1} Theorem 11] or by the authors \cite{LuYang3}, Theorem 4.1 for ball model of its equivalent form).
\begin{lemma}[\cite{be}]\label{lm3.7}
Let $K$ and $\Lambda$ be densely defined, positive-definite, self-adjoint operators acting on functions
defined on a $\sigma$-finite measure space $N$ and satisfying the relation
$K\Lambda=\Lambda K=1$. Then the following two inequalities are equivalent:
\begin{align*}
&\|K f\|_{L^{p'}(M)}\leq C_{p}\|f\|_{L^{p}(M)},\\
&\|g\|_{L^{p'}(M)}\leq \sqrt{C_{p}}\|\Lambda^{1/2}g\|_{L^{2}(M)}.
\end{align*}
Here $1 < p < 2$ and $1/p+1/p' = 1$. Extremal functions for one inequality will determine extremal functions
for the other inequality if the operator forms are well-defined.
\end{lemma}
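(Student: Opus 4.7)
The plan is to exploit the square-root factorization $K=K^{1/2}\cdot K^{1/2}$, the self-adjointness of $K^{1/2}$, and $L^p$–$L^{p'}$ duality to reduce both inequalities to a single $L^p\to L^2$ bound on the operator $K^{1/2}$.

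First I would check that the hypotheses make $K^{1/2}$ and $\Lambda^{1/2}$ well-defined self-adjoint positive operators (via spectral calculus) and that, because $K\Lambda=\Lambda K=1$, these operators commute and satisfy $\Lambda^{1/2}K^{1/2}=K^{1/2}\Lambda^{1/2}=1$ on a common dense invariant core. This reduces the bookkeeping: write $\Lambda=K^{-1}$ and think of both inequalities as bounds on the same spectral multiplier.

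Next, I would prove the key equivalence that the mapping property
\[
\|Kf\|_{L^{p'}(N)}\leq C_p\,\|f\|_{L^p(N)}
\]
is the same as
\[
\|K^{1/2}f\|_{L^2(N)}\leq \sqrt{C_p}\,\|f\|_{L^p(N)}.
\]
The forward direction uses self-adjointness:
\[
\|K^{1/2}f\|_{L^2}^2=\langle Kf,f\rangle\leq \|Kf\|_{L^{p'}}\|f\|_{L^p}\leq C_p\,\|f\|_{L^p}^2.
\]
The reverse direction uses duality combined with self-adjointness: for $h$ with $\|h\|_{L^p}=1$,
\[
\langle Kf,h\rangle=\langle K^{1/2}f,K^{1/2}h\rangle\leq \|K^{1/2}f\|_{L^2}\|K^{1/2}h\|_{L^2}\leq C_p\|f\|_{L^p}\|h\|_{L^p},
\]
and then I take the supremum over such $h$ to recover $\|Kf\|_{L^{p'}}\leq C_p\|f\|_{L^p}$.

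Finally, I would translate the $L^p\to L^2$ bound on $K^{1/2}$ into the stated inequality on $\Lambda^{1/2}$ by the substitution $f=\Lambda^{1/2}g$. Using $K^{1/2}\Lambda^{1/2}=1$, the left side becomes $\|g\|_{L^2}$ — no, wait, the substitution should be in the reverse direction: writing an arbitrary test function $f$ as $f=\Lambda^{1/2}g$ with $g=K^{1/2}f$, the identity $K^{1/2}f=g$ turns $\|K^{1/2}f\|_{L^2}\leq\sqrt{C_p}\|f\|_{L^p}$ into the desired $\|g\|_{L^{p'}}\leq\sqrt{C_p}\|\Lambda^{1/2}g\|_{L^2}$ only after another round of duality. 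Concretely, I note that by the $L^p$–$L^{p'}$ duality,
\[
\|K^{1/2}\|_{L^p\to L^2}=\|K^{1/2}\|_{L^2\to L^{p'}}
\]
(self-adjointness again), so the $L^p\to L^2$ bound is equivalent to $\|K^{1/2}h\|_{L^{p'}}\leq\sqrt{C_p}\|h\|_{L^2}$; setting $h=\Lambda^{1/2}g$ and using $K^{1/2}\Lambda^{1/2}=1$ yields the Sobolev-type inequality exactly. Running this argument in reverse gives the other implication.

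The main obstacle will be the operator-theoretic bookkeeping for possibly unbounded $\Lambda$: one must ensure that the formal manipulations $K^{1/2}\Lambda^{1/2}g=g$, the adjoint identities, and the duality pairings all make sense on appropriate dense domains. This is handled by a routine density/spectral-theorem argument, restricting first to the spectral subspaces $\{\chi_{[\varepsilon,\varepsilon^{-1}]}(\Lambda)g\}$, obtaining the inequality with constants independent of $\varepsilon$, and then passing to the limit by monotone convergence for the functional calculus.
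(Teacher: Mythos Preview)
The paper does not prove this lemma; it is quoted from Beckner \cite{be} and stated without proof. Your argument is correct and is the standard one: factor $K=K^{1/2}K^{1/2}$, show via $\langle Kf,f\rangle\le\|Kf\|_{p'}\|f\|_p$ and the dual pairing $\langle Kf,h\rangle=\langle K^{1/2}f,K^{1/2}h\rangle$ that the $L^p\to L^{p'}$ bound on $K$ is equivalent to the $L^p\to L^2$ bound on $K^{1/2}$ with constant $\sqrt{C_p}$, then use self-adjointness to pass to the $L^2\to L^{p'}$ bound and substitute $h=\Lambda^{1/2}g$. The momentary hesitation in your write-up (``no, wait'') is harmless---the corrected version that follows is exactly right. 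The only point worth tightening is the domain bookkeeping you flag at the end; your proposed fix via spectral cutoffs $\chi_{[\varepsilon,\varepsilon^{-1}]}(\Lambda)$ is the standard way to make the formal identities $K^{1/2}\Lambda^{1/2}=1$ rigorous.
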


\begin{theorem}[\cite{be,LuYang3}]\label{th3.8}
Let $0<\lambda<n$ and $p=\frac{2n}{2n-\lambda}$. Then for $f,g\in  C_{0}^{\infty}(\mathbb{B}^{n})$,
\begin{equation}\label{3.21}
\left|\int_{\mathbb{B}^{n}}\int_{\mathbb{B}^{n}}\frac{f(x)g(y)}{\left(2\sinh\frac{\rho(T_{y}(x))}{2}\right)^{\lambda}}dV_{x}dV_{y}\right|\leq C_{n,\lambda}\|f\|_{p}\|g\|_{p},
\end{equation}
or equivalently,
\begin{align*}
\left\|\left(2\sinh\frac{\rho}{2}\right)^{-\lambda}\ast f\right\|_{p'}\leq C_{n,\lambda}\|f\|_{p},
\end{align*}
 where
 \begin{equation}\label{3.22}
C_{n,\lambda}=\pi^{\lambda/2}\frac{\Gamma(n/2-\lambda/2)}{\Gamma(n-\lambda/2)}\left(\frac{\Gamma(n/2)}{\Gamma(n)}\right)^{-1+\lambda/n}
\end{equation}
 is the best constant for the classical Hardy-Littlewood-Sobolev constant on $\mathbb{R}^{n}$.
Furthermore, the constant $C_{n,\lambda}$
 is sharp for the inequality (\ref{3.21}) and there is no nonzero extremal function for the inequality (\ref{3.21}).
\end{theorem}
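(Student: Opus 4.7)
The approach is to convert the hyperbolic Hardy--Littlewood--Sobolev integral into the classical Euclidean one by extracting the conformal weight, and then invoke the sharp Euclidean HLS inequality of Lieb.

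First I would derive the kernel identity
\begin{equation*}
2\sinh\frac{\rho(T_{y}(x))}{2}=\frac{2|x-y|}{\sqrt{(1-|x|^{2})(1-|y|^{2})}},
\end{equation*}
which follows from (\ref{2.4}) together with $\sinh^{2}t=\cosh^{2}t-1$ and the algebraic identity $1-2x\cdot y+|x|^{2}|y|^{2}-(1-|x|^{2})(1-|y|^{2})=|x-y|^{2}$. Next, using $p=\frac{2n}{2n-\lambda}$, one has $n/p'=\lambda/2$ and hence both $n-\lambda/2-n/p=0$ and $2n-\lambda-2n/p=0$. Set
\begin{equation*}
F(x)=f(x)\left(\tfrac{2}{1-|x|^{2}}\right)^{n/p},\qquad G(y)=g(y)\left(\tfrac{2}{1-|y|^{2}}\right)^{n/p}.
\end{equation*}
Since $dV=\left(\tfrac{2}{1-|x|^{2}}\right)^{n}dx$, one has $\|F\|_{L^{p}(\mathbb{R}^{n},dx)}=\|f\|_{p}$ and similarly for $G$. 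Substituting into the bilinear form and counting powers of $(1-|x|^{2})$, $(1-|y|^{2})$, and $2$, every conformal weight cancels and we obtain
\begin{equation*}
\int_{\mathbb{B}^{n}}\int_{\mathbb{B}^{n}}\frac{f(x)g(y)}{\left(2\sinh\frac{\rho(T_{y}(x))}{2}\right)^{\lambda}}dV_{x}dV_{y}=\int_{\mathbb{B}^{n}}\int_{\mathbb{B}^{n}}\frac{F(x)G(y)}{|x-y|^{\lambda}}dxdy.
\end{equation*}

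Extending $F,G$ by zero to all of $\mathbb{R}^{n}$ and applying the sharp classical Hardy--Littlewood--Sobolev inequality of Lieb with best constant $C_{n,\lambda}$ given by (\ref{3.22}) immediately yields (\ref{3.21}). The equivalent convolution form $\|(2\sinh\frac{\rho}{2})^{-\lambda}\ast f\|_{p'}\le C_{n,\lambda}\|f\|_{p}$ is obtained by duality (Riesz representation paired with H\"older's inequality on $(\mathbb{B}^{n},dV)$).

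For sharpness and non-attainment I would run a concentration argument. Classical HLS extremals are translates and dilates of $c(1+|x|^{2})^{-n/p}$; fixing $x_{0}\in\mathbb{B}^{n}$ and centering a rescaled extremal at $x_{0}$ with scale $\varepsilon\to 0$ produces an $F_{\varepsilon}$ whose mass concentrates near $x_{0}$. The corresponding $f_{\varepsilon}=F_{\varepsilon}(1-|x|^{2})^{n/p}/2^{n/p}$ obeys $\|f_{\varepsilon}\|_{p}\sim\|F_{\varepsilon}\|_{p}$ because the conformal factor is uniformly comparable to a positive constant on the shrinking support, and the ratio of the two sides of (\ref{3.21}) tends to $C_{n,\lambda}$. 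Non-attainment follows from the rigidity of Lieb's classification: any nontrivial Euclidean HLS extremal has full support in $\mathbb{R}^{n}$, so the zero extension of an $L^{p}(\mathbb{B}^{n})$ function is never extremal, forcing strict inequality in (\ref{3.21}). The main obstacle is conceptual rather than technical: the cancellation in the conformal reduction is the linchpin, and once the kernel identity is in hand the computation is purely algebraic; the delicate point is invoking uniqueness of Lieb's extremals to rule out attainment.
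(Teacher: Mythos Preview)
Your argument is correct and is, in fact, the standard conformal reduction used in the cited references: the kernel identity $2\sinh\tfrac{\rho(T_y(x))}{2}=2|x-y|/\sqrt{(1-|x|^2)(1-|y|^2)}$ together with the change of density $F=f\,(2/(1-|x|^2))^{n/p}$ collapses the hyperbolic bilinear form exactly onto the Euclidean HLS form, and Lieb's sharp constant and rigidity finish the job. Note that the present paper does not give its own proof of this statement; it is quoted from \cite{be,LuYang3}, and the proof you outline is essentially Beckner's original argument (on the half-space model) transplanted to the ball model as in \cite{LuYang3}.

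Two small points worth tightening. First, in the sharpness step you should cut off the rescaled Euclidean extremal (e.g.\ multiply by a fixed cutoff supported in a small ball about $x_0$) so that the test functions lie in $C_0^\infty(\mathbb{B}^n)$ as required; the standard tail estimate $\int_{|x|>R}(1+|x|^2)^{-n}\,dx=O(R^{-n})$ makes the truncation error $o(1)$ after rescaling. Second, for non-attainment your appeal to Lieb's classification is the right idea, but make explicit that equality in \eqref{3.21} for some nonzero $f$ forces equality in Euclidean HLS for $F=f\,(2/(1-|\cdot|^2))^{n/p}\cdot\mathbf{1}_{\mathbb{B}^n}$, which is impossible since every Euclidean extremal is strictly positive on all of $\mathbb{R}^n$.
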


Now we can give the following  sharp Sobolev inequality on hyperbolic space with best constant $S_{n,\gamma}$:
\begin{theorem} \label{th3.10} Let $n\geq2$ and $0<\frac{n-1}{2}-\nu\leq \gamma<\frac{n}{2}$. Then for $u\in C_{0}^{\infty}(\mathbb{B}^{n})$, we have
\begin{align*}
\int_{\mathbb{B}^{n}}u\frac{|\Gamma(\nu+\gamma+i\sqrt{-\Delta_{\mathbb{H}}-\frac{(n-1)^{2}}{4}})|^{2}}
{|\Gamma(\nu+i\sqrt{-\Delta_{\mathbb{H}}-\frac{(n-1)^{2}}{4}})|^{2}}u dV\geq S_{n,\gamma}\left(\int_{\mathbb{B}^{n}}|u|^{\frac{2n}{n-2\gamma}}dV\right)^{\frac{n-2\gamma}{n}}.
\end{align*}
Furthermore, the inequality is  strict for nonzero $u$'s.
\end{theorem}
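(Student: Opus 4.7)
The plan is to reduce the Sobolev-type inequality to a Hardy--Littlewood--Sobolev-type convolution estimate via Beckner's duality (Lemma~\ref{lm3.7}), and then deduce it from the sharp HLS inequality on $\mathbb{B}^n$ (Theorem~\ref{th3.8}) through a pointwise kernel comparison. Write $T = \sqrt{-\Delta_{\mathbb{H}} - (n-1)^2/4}$ and $\Lambda_\gamma = |\Gamma(\gamma + iT)|^2/|\Gamma(iT)|^2$. This is a positive self-adjoint operator on $L^2(\mathbb{B}^n)$, and by Lemma~\ref{lm3.3}(2) its inverse is convolution with the radial Green kernel $K_{0,\gamma}(\cosh\rho)$ from (\ref{3.8}). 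With $p' = \frac{2n}{n-2\gamma}$ and $p = \frac{2n}{n+2\gamma}$, Lemma~\ref{lm3.7} makes the desired inequality $\int u\,\Lambda_\gamma u\,dV \geq S_{n,\gamma}\|u\|_{p'}^2$ equivalent to
\[
\|K_{0,\gamma} \ast f\|_{p'} \leq S_{n,\gamma}^{-1}\|f\|_p, \qquad f \in C_0^\infty(\mathbb{B}^n).
\]

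The crux is the pointwise comparison $K_{0,\gamma}(\cosh\rho) \leq c_{n,\gamma}\bigl(2\sinh\tfrac{\rho}{2}\bigr)^{-(n-2\gamma)}$ with $c_{n,\gamma} = \Gamma(n/2-\gamma)/[2^{2\gamma}\pi^{n/2}\Gamma(\gamma)]$. Applying transformation formula (\ref{2.8}) to the hypergeometric factor in (\ref{3.8}) rewrites
\[
K_{0,\gamma}(\cosh\rho) = C_{0,\gamma}\bigl(\cosh\tfrac{\rho}{2}\bigr)^{1-2\gamma}\bigl(\sinh\tfrac{\rho}{2}\bigr)^{2\gamma-n} F\!\Bigl(\gamma - \tfrac{n-1}{2},\,\gamma - \tfrac{1}{2};\,\gamma;\,(\cosh\tfrac{\rho}{2})^{-2}\Bigr).
\]
Since $\gamma \geq \frac{n-1}{2} \geq \frac{1}{2}$ under the hypothesis $n \geq 2$, both hypergeometric parameters are nonnegative, hence the series has nonnegative coefficients and is bounded above by its value at $z=1$, which by (\ref{2.7}) equals $\Gamma(\gamma)\Gamma(n/2-\gamma)/[\Gamma(\frac{n-1}{2})\Gamma(\frac{1}{2})]$. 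Combined with $(\cosh\frac{\rho}{2})^{1-2\gamma}\leq 1$, this gives the required pointwise bound. Theorem~\ref{th3.8} then yields $\|K_{0,\gamma}\ast f\|_{p'}\leq c_{n,\gamma} C_{n,n-2\gamma}\|f\|_p$, and a direct gamma-function computation using (\ref{1.8}) and (\ref{3.22}) verifies the sharp constant identity $c_{n,\gamma}C_{n,n-2\gamma} = S_{n,\gamma}^{-1}$ (this is the Euclidean Riesz-potential/Sobolev duality). Lemma~\ref{lm3.7} then delivers the theorem.

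For strict inequality, when $\gamma > \frac{n-1}{2}$ the hypergeometric series is strictly less than its limit at $z=1$ for every finite $\rho > 0$, so the pointwise comparison (and hence the convolution estimate) is strict for every nonzero $f$. In the boundary case $\gamma = \frac{n-1}{2}$, a direct check shows $K_{0,\gamma}$ equals $c_{n,\gamma}(2\sinh\frac{\rho}{2})^{-(n-2\gamma)}$ exactly, and strictness must instead be extracted from the final clause of Theorem~\ref{th3.8} (no nonzero extremal for the HLS inequality on $\mathbb{B}^n$), which transfers to the Sobolev inequality via Lemma~\ref{lm3.7}. The main obstacle will be verifying the exact gamma-function identity $c_{n,\gamma}C_{n,n-2\gamma} = S_{n,\gamma}^{-1}$, since every inequality in the chain must preserve sharpness; the rest of the argument is just monotonicity of a hypergeometric series with nonnegative coefficients and a careful application of Beckner's duality.
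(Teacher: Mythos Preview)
Your proposal is correct and follows essentially the same route as the paper: rewrite $K_{0,\gamma}$ via the transformation formula (\ref{2.8}), bound the resulting hypergeometric factor by its value at $z=1$ (you use nonnegativity of the series coefficients, the paper differentiates via (\ref{2.9}); these are equivalent), apply the sharp HLS inequality of Theorem~\ref{th3.8}, and then invoke Beckner's duality (Lemma~\ref{lm3.7}). Your constant identity $c_{n,\gamma}C_{n,n-2\gamma}=S_{n,\gamma}^{-1}$ checks out, and your strictness argument---via strict kernel comparison when possible, and via the no-extremal clause of Theorem~\ref{th3.8} otherwise---is exactly how the paper handles it as well.
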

\begin{proof}
Since,  for $0\leq t\leq 1$,
\begin{align*}
&\frac{d}{dt}F\big(\gamma+\nu-\frac{n-1}{2},\gamma+\nu-\frac{1}{2};2\nu+\gamma;
t\big)\\
=&\frac{(\gamma+\nu-\frac{n-1}{2})(\gamma+\nu-\frac{1}{2})}{\gamma+2\nu}F\big(\gamma+\nu-\frac{n-3}{2},\gamma+\nu+\frac{1}{2};2\nu+\gamma+1;
t\big)\\
\geq&0,
\end{align*}
the function
 $F\big(\gamma+\nu-\frac{n-1}{2},\gamma+\nu-\frac{1}{2};2\nu+\gamma;
t\big)$ is increasing for $t\in[0,1]$.
Therefore, by using  (\ref{2.8}), we get
\begin{align*}
K_{\nu,\gamma}(\cosh\rho)= &\frac{\Gamma(\frac{n-1}{2}+\nu)\Gamma(\nu+\frac{1}{2})}{2^{n}\pi^{\frac{n}{2}}
\Gamma(\gamma)\Gamma(2\nu+\gamma)}\left(\cosh\frac{\rho}{2}\right)^{1-n-2\nu}\times\\
& F\big(\nu+\frac{n-1}{2},\nu+\frac{1}{2};2\nu+\gamma;
(\cosh\frac{\rho}{2})^{-2}\big)\\
=&\frac{\Gamma(\frac{n-1}{2}+\nu)\Gamma(\nu+\frac{1}{2})}{2^{n}\pi^{\frac{n}{2}}
\Gamma(\gamma)\Gamma(2\nu+\gamma)}\left(\cosh\frac{\rho}{2}\right)^{1-n-2\nu}\times\\
&\left(\tanh\frac{\rho}{2}\right)^{2\gamma-n}F\big(\gamma+\nu-\frac{n-1}{2},\gamma+\nu-\frac{1}{2};2\nu+\gamma;
(\cosh\frac{\rho}{2})^{-2}\big)\\
\leq&\frac{\Gamma(\frac{n-1}{2}+\nu)\Gamma(\nu+\frac{1}{2})}{2^{n}\pi^{\frac{n}{2}}
\Gamma(\gamma)\Gamma(2\nu+\gamma)}\left(\cosh\frac{\rho}{2}\right)^{1-2\gamma-2\nu}\left(\sinh\frac{\rho}{2}\right)^{2\gamma-n}\times\\
&F\big(\gamma+\nu-\frac{n-1}{2},\gamma+\nu-\frac{1}{2};2\nu+\gamma;
1\big)\\
=&\frac{\Gamma(\frac{n}{2}-\gamma)}{2^{n}\pi^{\frac{n}{2}}
\Gamma(\gamma)}\left(\cosh\frac{\rho}{2}\right)^{1-2\gamma-2\nu}\left(\sinh\frac{\rho}{2}\right)^{2\gamma-n}.
\end{align*}
To get the last equality, we use (\ref{2.7}).
By Theorem \ref{th3.8}, we have
\begin{align}\label{3.17}
 \|K_{\nu,\gamma}\ast u\|_{L^{\frac{2n}{n-2\gamma}}(\mathbb{B}^{n})}\leq \frac{1}{S_{n,\gamma}}\|u\|_{L^{\frac{2n}{n+2\gamma}}(\mathbb{B}^{n})}
\end{align}
and the inequality is  strict for nonzero $u$'s.
The desired result follows by combining (\ref{3.17}) and
Lemma \ref{lm3.7}. This completes the proof of Theorem  \ref{th3.10}.
\end{proof}

\section{Fractional  GJMS
 operators  on hyperbolic space and the proofs of Theorems \ref{th1.3} and \ref{th1.5}}\label{Section5}

Firstly, we briefly review  the definition of the fractional GJMS operator via scattering theory.  The following material is based on \cite{FeffermanGr,grz}.

Let $(X^{n+1}, g_{+})$ be a conformally compact Einstein manifold  of dimension $n + 1$ with boundary $M$.
A  function $r\in C^{\infty}(X^{n+1})$  is called a \emph{defining function} if $r^{-1}(\{0\})=M^{n}$,
$dr\neq0$ along $M$, and the metric $g:=r^{2}g_{+}$ extends to a smooth metric on
$\overline{X}^{n+1}$. Given  a representative $[h]$
on the conformal boundary $M$, there is a unique defining function $r$ such that
$g_{+}=r^{-2}(dr^{2}+h_{r})$ on $M\times(0,\delta)$, where $h_{r}$
is a one-parameter
family of metrics on $M$ satisfying $h_{0}=h$.
In particular, if $(M,h)$ is Einstein with $R_{ij}=2\lambda (n-1)h_{ij}$, then (see \cite{FeffermanGr}, Page 74, (7.13))
\begin{align}\label{4.1}
g_{+}=\frac{dr^{2}+(1-\frac{1}{2}\lambda r^{2})^{2}h}{r^{2}}.
\end{align}

Given $f\in C^{\infty}(M)$.
It has been shown (see Mazzeo-Melrose \cite{ma} and Graham-Zworski  \cite{grz}) that the Poisson equation
\begin{equation}\label{4.2}
\begin{split}
-\Delta_{g_{+}}u-s(n-s)u=0
\end{split}
\end{equation}
has a unique solution of the form
\begin{equation}\label{4.3}
\begin{split}
u=F r^{n-s} +H r^{s},\;\; F,H\in C^{\infty}(X),\; F|_{ r=0}=f,
\end{split}
\end{equation}
where $s\in\mathbb{C}$ and $s(n-s)$ does not belong to the pure point spectrum of $-\Delta_{g_{+}}$.
The scattering
operator on $M$ is defined as $S(s)f=H|_{M}$. If $\textrm{Re}(s)>\frac{n}{2}$, then the scattering
operator is a meromorphic family of pseudo-differential
operators. Graham and Zworski \cite{grz} defined the fractional GJMS operator $P_{\gamma}$, $\gamma=s-\frac{n}{2}\in(0,\frac{n}{2})\setminus\mathbb{N}$,
 as follows
\begin{equation}\label{4.4}
\begin{split}
P_{\gamma}f:=d_{\gamma}S\left(\frac{n}{2}+\gamma\right)f,\;\;d_{\gamma}=2^{2\gamma}\frac{\Gamma(\gamma)}{\Gamma(-\gamma)}.
\end{split}
\end{equation}
It has been  shown by Graham and Zworski \cite{grz}  that the
principal symbol of $P_{\gamma}$ is  exactly the principal
symbol of the fractional Laplacian $(-\Delta)^{\gamma}$ and
satisfy an important conformal covariance property (\ref{a1.9}).

We remark that if $\gamma\in\mathbb{N}\setminus\{0\}$, then $P_{\gamma}$ is nothing but the GJMS operator on $M$ (see \cite{gr}).
In the particular case that $(X^{n+1},g_{+})=(\mathbb{H}^{n+1}, g_{\mathbb{H}})$,  $P_{\gamma}$
is nothing but the fractional Laplacian $(-\Delta)^{\gamma}$ on $\mathbb{R}^{n}$ (see e.g.  Chang-Gonz\'alez \cite{chg}). For the case $(M,h)=(\mathbb{B}^{n},g_{\mathbb{B}})$,
we refer to Ao et al.  \cite{ao} for the construction of
conformal
fractional Laplacian $P_{\gamma}$ on $\mathbb{S}^{n-k-1}\times \mathbb{H}^{k+1}$ $(1\leq k<n-1)$, which are  conformal to the fractional Laplacians
$(-\Delta)^{\gamma}$ on $\mathbb{R}^{n}\setminus \mathbb{R}^{k}$ when $k<n-1$.

The main result is the following lemma:
\begin{lemma}\label{lm4.1}
Let $0<\gamma<\frac{n}{2}$. The Helgason-Fourier transform of fractional GJMS operator $P_{\gamma}$ is
\begin{align*}
\widehat{P_{\gamma}f}(\lambda,\zeta)=2^{2\gamma}\frac{|\Gamma(\frac{3+2\gamma}{4}+\frac{i}{2}\lambda|^{2}}
{|\Gamma(\frac{3-2\gamma}{4}+\frac{i}{2}\lambda)|^{2}}\widehat{f}(\lambda,\zeta),\;\;f\in C_{0}^{\infty}(\mathbb{B}^{n}).
\end{align*}
\end{lemma}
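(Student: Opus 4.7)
The strategy is to compute the Graham--Zworski scattering operator $S(s)$ for the asymptotically hyperbolic Einstein filling of $(\mathbb{B}^n, g_{\mathbb{B}})$ and then apply the definition (4.4), rewritten as $P_\gamma = d_{\gamma/2}\, S(n/2 + \gamma/2)$ with $d_{\gamma/2} = 2^{\gamma}\Gamma(\gamma/2)/\Gamma(-\gamma/2)$. Since $(\mathbb{B}^n, g_{\mathbb{B}})$ is Einstein with $\operatorname{Ric} = -(n-1)g_{\mathbb{B}}$, formula (4.1) with $\lambda = -1/2$ produces the explicit filling
\[
g_+ \;=\; r^{-2}\bigl(dr^2 + (1 + r^2/4)^2\, g_{\mathbb{B}}\bigr),
\]
and the substitution $r = 2e^{-t}$ puts this in the warped-product form $dt^2 + \cosh^2(t)\, g_{\mathbb{B}}$, confirming that $g_+$ is just the hyperbolic metric on $\mathbb{H}^{n+1}$ expressed in geodesic coordinates normal to a totally geodesic hyperplane.

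Next I would set up the Poisson problem $-\Delta_{g_+}u - s(n-s)u = 0$ with $f = e_{\lambda,\zeta}$ on the boundary and look for separated solutions $u(r,x) = \psi(r)\, e_{\lambda,\zeta}(x)$. Using the Helgason--Fourier eigenvalue $\Delta_{\mathbb{H}} e_{\lambda,\zeta} = -\bigl(\tfrac{(n-1)^2}{4} + \lambda^2\bigr) e_{\lambda,\zeta}$ from (3.3), the PDE reduces to a single second-order linear ODE in the variable $r$ (or equivalently $t$), whose indicial roots at $r = 0$ are $s = n/2 + \gamma/2$ and $n-s$. Changing variable to $z = \sin^2(\tau/2)$ where $r = 2\tan(\tau/2)$, and peeling off the factor $(\sin\tau)^{n-s}$, transforms the ODE into the Gauss hypergeometric equation with parameters
\[
a = \tfrac{1-\gamma}{2} + i\lambda,\quad b = \tfrac{1-\gamma}{2} - i\lambda,\quad c = 1 - \tfrac{\gamma}{2},
\]
so its two fundamental solutions correspond to the two indicial branches at $r = 0$.

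The Poisson solution is then singled out by imposing the correct regularity/decay condition at the complementary end (as $t \to -\infty$), which fixes a unique linear combination of the two branches. The Gauss connection formula relating ${}_2F_1$-expansions at $z = 0$ and $z = 1$ then expresses this combination in closed form, and $H|_{r=0}/F|_{r=0}$ emerges as a ratio of gamma functions in arguments of the form $\tfrac{1}{2}\pm\tfrac{\gamma}{2}\pm i\lambda$. Feeding this into (4.4) and simplifying by the reflection identity $\Gamma(z)\Gamma(1-z) = \pi/\sin\pi z$ together with the Legendre duplication formula \eqref{a2.10} (used to rewrite $|\Gamma(\tfrac{1}{2}\pm\tfrac{\gamma}{2}+i\lambda)|^2$ in half-step form) brings the answer into the desired symmetric shape
\[
\widehat{P_\gamma f}(\lambda,\zeta) \;=\; 2^{2\gamma}\,\frac{|\Gamma(\tfrac{3+2\gamma}{4} + \tfrac{i\lambda}{2})|^2}{|\Gamma(\tfrac{3-2\gamma}{4} + \tfrac{i\lambda}{2})|^2}\,\widehat{f}(\lambda,\zeta).
\]
I expect the main obstacle to be precisely this final bookkeeping step: identifying the correct interior boundary condition for the Poisson solution in the non-standard setting where the conformal boundary $(\mathbb{B}^n, g_{\mathbb{B}})$ is itself non-compact and the filling has more than one conformal end, and then carefully tracking the powers of $2$ and gamma-shifts needed for duplication to convert the natural scattering output into the half-step form $(3 \pm 2\gamma)/4$.
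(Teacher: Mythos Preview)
Your overall strategy matches the paper's: write the Einstein filling via (4.1), separate variables with the Helgason--Fourier transform, reduce to a hypergeometric ODE, and read off the scattering ratio from the Gauss connection formula. The gap you correctly flag is the interior condition, but your guess is not what the paper uses. The paper restricts to the half $t\in(0,\infty)$ (equivalently $r\in(0,2)$) and changes to $\tau=\tanh t\in(0,1)$, so the ODE acquires a regular singular point at $\tau=0$ (the interior wall $t=0$, $r=2$); the selection principle is then simply \emph{regularity of $\varphi$ at $\tau=0$}, which kills the second fundamental solution carrying the factor $\tau^{-(n-1)/2}$. Your proposed decay as $t\to-\infty$ instead treats the filling as the full $\mathbb{H}^{n+1}$ with two conformal ends, a genuinely different scattering problem whose output is not a priori the GJMS operator on $(\mathbb{B}^n,g_{\mathbb{B}})$.

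Your substitution $r=2\tan(\tau/2)$, $z=\sin^2(\tau/2)$ compounds this: on the half $r\in(0,2)$ the interior wall lands at $z=\tfrac12$, an ordinary point of the hypergeometric equation, so regularity there selects nothing. By contrast the paper's choice $\tau=\tanh t$ puts the interior wall at $\tau=0$ and the conformal boundary at $\tau=1$; with parameters $a,b=\tfrac34-\tfrac\gamma2\pm\tfrac{i\lambda}2$ and $c=\tfrac32$, the standard $\tau^2=0\leftrightarrow\tau^2=1$ connection formula directly produces the half-step ratio $|\Gamma(\tfrac{3+2\gamma}{4}+\tfrac{i\lambda}{2})|^2/|\Gamma(\tfrac{3-2\gamma}{4}+\tfrac{i\lambda}{2})|^2$ with no duplication or reflection step.
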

\begin{proof}
Since $(\mathbb{B}^{n},g_{\mathbb{B}})$  has a constant negative curvature $-1$, we have, by (\ref{4.1}),
\begin{align*}
 g_{+}=\frac{dr^{2}+(1+\frac{1}{4} r^{2})^{2}g_{\mathbb{B}}}{r^{2}},\;\; r\in(0,\infty).
\end{align*}
Under the change of variable $r=2e^{-t}$, the metric $g_{+}$ becomes
\begin{align*}
 g_{+}=dt^{2}+(\cosh t)^{2} g_{\mathbb{B}},\;\; t\in (-\infty,\infty).
\end{align*}
The corresponding  Poisson equation (\ref{4.2}) is
\begin{align*}
\frac{\partial^{2}}{\partial t^{2}}u+n\tanh t \frac{\partial }{\partial t}u+\frac{1}{\cosh^{2}t}\Delta_{\mathbb{H}}u+\big(\frac{n^{2}}{4}-\gamma^{2}\big)u=0.
\end{align*}
Taking the Helgason-Fourier transform in the above equality, we get
\begin{align*}
\frac{\partial^{2}}{\partial t^{2}}\widehat{u}+n\tanh t \frac{\partial }{\partial t}\widehat{u}-\frac{1}{\cosh^{2}t}\left[
\frac{(n-1)^{2}}{4}+\lambda^{2}\right]\widehat{u}+\big(\frac{n^{2}}{4}-\gamma^{2}\big)\widehat{u}=0.
\end{align*}
Using the change of variable $\tau=\tanh t$, we get
\begin{align*}
(1-\tau^{2})\frac{\partial^{2}}{\partial \tau^{2}}\widehat{u}+(n-2)\tau  \frac{\partial }{\partial \tau }\widehat{u}+
\left[\left(\frac{n^{2}}{4}-\gamma^{2}\right)\frac{1}{1-\tau^{2}}-\frac{(n-1)^{2}}{4}-\lambda^{2}\right]\widehat{u}=0,\; -1<\tau<1.
\end{align*}
Therefore,  we have $\widehat{u}(r,\lambda,\zeta)=\varphi(r)\widehat{f}(\lambda,\zeta)$, where $\varphi(r)$ satisfies
\begin{align}\label{4.5}
(1-\tau^{2})\frac{d^{2}}{d \tau^{2}}\varphi+(n-2)\tau  \frac{d }{d \tau }\varphi+
\left[\left(\frac{n^{2}}{4}-\gamma^{2}\right)\frac{1}{1-\tau^{2}}-\frac{(n-1)^{2}}{4}-\lambda^{2}\right]\varphi=0.
\end{align}
Noticing that the ordinary differential equation above is invariant  with respect to the transformation $\tau\rightarrow-\tau$, the solution of (\ref{4.5}) is (see \cite{ao}, (3.30))
\begin{align*}
\varphi(r)=A(1-\tau^{2})^{\frac{n-2\gamma}{4}}\tau F(a,b;c;\tau^{2})+B(1-\tau^{2})^{-\frac{n-2\gamma}{4}}|\tau|^{-\frac{n-1}{2}} F(a',b';c';\tau^{2}),\;A,B\in\mathbb{R},
\end{align*}
where
\begin{align*}
 a=&\frac{3}{4}-\frac{\gamma}{2}+\frac{i}{2}\lambda,\; b=\frac{3}{4}-\frac{\gamma}{2}-\frac{i}{2}\lambda,\; c=\frac{3}{2},\; a'=\frac{1}{4}-\frac{\gamma}{2}+\frac{i}{2}\lambda,\;
b'=\frac{1}{4}-\frac{\gamma}{2}-\frac{i}{2}\lambda,\; c'=\frac{1}{2}.
\end{align*}
The regularity of $\varphi(r)$  at $r=2$ (i.e. $\tau=0$)  implies $B=0$ .
Therefore, by (\ref{2.12}) and (\ref{2.10}), we get
\begin{align*}
\varphi(r)=&A\frac{\Gamma(c)\Gamma(c-a-b)}{\Gamma(c-a)\Gamma(c-b)}(1-\tau^{2})^{\frac{n-2\gamma}{4}}\tau  F(a,b;a+b-c+1;1-\tau^{2})+\\
&A \frac{\Gamma(c)\Gamma(a+b-c)}{\Gamma(a)\Gamma(b)}(1-\tau^{2})^{\frac{n-2\gamma}{4}+c-a-b}\tau F(c-a,c-b;c-a-b+1;1-\tau^{2})\\
=&A\frac{\Gamma(\frac{3}{2})\Gamma(\gamma)}{|\Gamma(\frac{3+2\gamma}{4}+\frac{i}{2}\lambda)|^{2}}(1-\tau^{2})^{\frac{n-2\gamma}{4}}\tau  F(a,b;a+b-c+1;1-\tau^{2})+\\
&A\frac{\Gamma(\frac{3}{2})\Gamma(-\gamma)}{|\Gamma(\frac{3-2\gamma}{4}+\frac{i}{2}\lambda)|^{2}}(1-\tau^{2})^{\frac{n+2\gamma}{4}}\tau
F(c-a,c-b;c-a-b+1;1-\tau^{2}).
\end{align*}
By using the asymptotic estimation $\tau=\tanh t=\frac{4-r^{2}}{4+r^{2}}=1-\frac{1}{2}r^{2}+O(r^{4}),\; r\searrow0$, we have
\begin{align*}
\varphi(r)=A\frac{\Gamma(\frac{3}{2})\Gamma(\gamma)}{|\Gamma(\frac{3+2\gamma}{4}+\frac{i}{2}\lambda)|^{2}}
\left[r^{\frac{n}{2}-\gamma}\left(1+O(r)\right)+d_{\gamma}2^{2\gamma}\frac{|\Gamma(\frac{3+2\gamma}{4}+\frac{i}{2}\lambda|^{2}}
{|\Gamma(\frac{3-2\gamma}{4}+\frac{i}{2}\lambda)|^{2}}r^{\frac{n}{2}+\gamma}(1+O(r))\right],\;r\searrow0.
\end{align*}
On the other hand, using (\ref{4.3}) and $\widehat{u}(r,\lambda,\zeta)=\varphi(r)\widehat{f}(\lambda,\zeta)$, one has
\begin{align*}
A\frac{\Gamma(\frac{3}{2})\Gamma(\gamma)}{|\Gamma(\frac{3+2\gamma}{4}+\frac{i}{2}\lambda)|^{2}}=1.
\end{align*}
Therefore,
\begin{align*}
 \widehat{u}(r,\lambda,\zeta)=\left[r^{\frac{n}{2}-\gamma}\left(1+O(r)\right)+d_{\gamma}2^{2\gamma}\frac{|\Gamma(\frac{3+2\gamma}{4}+\frac{i}{2}\lambda|^{2}}
{|\Gamma(\frac{3-2\gamma}{4}+\frac{i}{2}\lambda)|^{2}}r^{\frac{n}{2}+\gamma}(1+O(r))\right]
\widehat{f}(\lambda,\zeta),\;r\searrow0.
\end{align*}
Then the desired result follows. This completes the proof of Lemma \ref{lm4.1}.
\end{proof}

\begin{lemma}\label{lm4.2}
It holds that
\begin{align*}
2^{2\gamma}\frac{|\Gamma(\frac{3+2\gamma}{4}+\frac{i}{2}\lambda)|^{2}}
{|\Gamma(\frac{3-2\gamma}{4}+\frac{i}{2}\lambda)|^{2}}=\frac{|\Gamma(\gamma+\frac{1}{2}+i\lambda)|^{2}}
{|\Gamma(\frac{1}{2}+i\lambda)|^{2}}+\frac{\sin\gamma\pi}{\pi}|\Gamma(\gamma+1/2+i\lambda)|^{2}.
\end{align*}
In particular, for $\lambda=0$, we have
\begin{align*}
2^{2\gamma}\frac{|\Gamma(\frac{3+2\gamma}{4})|^{2}}
{|\Gamma(\frac{3-2\gamma}{4})|^{2}}=\frac{|\Gamma(\gamma+\frac{1}{2})|^{2}}
{|\Gamma(\frac{1}{2})|^{2}}+\frac{\sin\gamma\pi}{\pi}|\Gamma(\gamma+1/2)|^{2}.
\end{align*}
\end{lemma}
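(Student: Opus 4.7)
The plan is to reduce the identity to an elementary trigonometric computation by using the Legendre duplication formula together with the reflection formula, then recognizing $|\Gamma(\tfrac{1}{2}+i\lambda)|^{2}=\pi/\cosh\pi\lambda$ on the right-hand side.

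First, I would apply the reflection formula \eqref{2.11} to the denominator of the left side. Setting $z=\tfrac{1+2\gamma}{4}-\tfrac{i\lambda}{2}$ so that $1-z=\tfrac{3-2\gamma}{4}+\tfrac{i\lambda}{2}$ gives
\[
\Gamma\bigl(\tfrac{1+2\gamma}{4}-\tfrac{i\lambda}{2}\bigr)\Gamma\bigl(\tfrac{3-2\gamma}{4}+\tfrac{i\lambda}{2}\bigr)=\frac{\pi}{\sin\pi(\tfrac{1+2\gamma}{4}-\tfrac{i\lambda}{2})},
\]
and multiplying by the complex conjugate relation yields
\[
\bigl|\Gamma(\tfrac{3-2\gamma}{4}+\tfrac{i\lambda}{2})\bigr|^{2}\bigl|\Gamma(\tfrac{1+2\gamma}{4}+\tfrac{i\lambda}{2})\bigr|^{2}=\frac{\pi^{2}}{\sin^{2}(\tfrac{\pi}{4}+\tfrac{\pi\gamma}{2})+\sinh^{2}(\tfrac{\pi\lambda}{2})}.
\]

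Next, I would apply the duplication formula \eqref{a2.10} with $z=\tfrac{1+2\gamma}{4}+\tfrac{i\lambda}{2}$, whose shift by $\tfrac{1}{2}$ gives $\tfrac{3+2\gamma}{4}+\tfrac{i\lambda}{2}$ and whose double gives $\gamma+\tfrac{1}{2}+i\lambda$; taking modulus squared produces
\[
\bigl|\Gamma(\gamma+\tfrac{1}{2}+i\lambda)\bigr|^{2}=\frac{2^{2\gamma-1}}{\pi}\bigl|\Gamma(\tfrac{1+2\gamma}{4}+\tfrac{i\lambda}{2})\bigr|^{2}\bigl|\Gamma(\tfrac{3+2\gamma}{4}+\tfrac{i\lambda}{2})\bigr|^{2}.
\]
Dividing these two displays eliminates $|\Gamma(\tfrac{1+2\gamma}{4}+\tfrac{i\lambda}{2})|^{2}$ and reshapes the left side of the target identity into
\[
2^{2\gamma}\frac{|\Gamma(\tfrac{3+2\gamma}{4}+\tfrac{i\lambda}{2})|^{2}}{|\Gamma(\tfrac{3-2\gamma}{4}+\tfrac{i\lambda}{2})|^{2}}=\frac{2\bigl[\sin^{2}(\tfrac{\pi}{4}+\tfrac{\pi\gamma}{2})+\sinh^{2}(\tfrac{\pi\lambda}{2})\bigr]}{\pi}\bigl|\Gamma(\gamma+\tfrac{1}{2}+i\lambda)\bigr|^{2}.
\]

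For the right side, I would factor out $|\Gamma(\gamma+\tfrac{1}{2}+i\lambda)|^{2}$ and invoke \eqref{2.15} to rewrite
\[
\frac{1}{|\Gamma(\tfrac{1}{2}+i\lambda)|^{2}}+\frac{\sin\gamma\pi}{\pi}=\frac{\cosh\pi\lambda+\sin\gamma\pi}{\pi}.
\]
Thus the identity reduces to the purely elementary claim
\[
2\sin^{2}\bigl(\tfrac{\pi}{4}+\tfrac{\pi\gamma}{2}\bigr)+2\sinh^{2}\bigl(\tfrac{\pi\lambda}{2}\bigr)=\cosh\pi\lambda+\sin\gamma\pi,
\]
which follows instantly from the double-angle formulas $2\sin^{2}x=1-\cos 2x$, $2\sinh^{2}x=\cosh 2x-1$, together with $\cos(\tfrac{\pi}{2}+\pi\gamma)=-\sin\pi\gamma$. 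The specialization at $\lambda=0$ is then immediate. No step here is genuinely hard; the only thing to watch is the clean bookkeeping of moduli of $\Gamma$ values at conjugate arguments, which is why separating the reflection step from the duplication step is the cleanest route.
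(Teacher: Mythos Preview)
Your proof is correct and takes essentially the same approach as the paper: both arguments multiply/divide by $|\Gamma(\tfrac{1+2\gamma}{4}+\tfrac{i\lambda}{2})|^{2}$, apply the reflection formula \eqref{2.11} to the denominator and the duplication formula \eqref{a2.10} to the numerator, and finish with \eqref{2.15} and an elementary $\sin$/$\sinh$ double-angle computation. The only cosmetic difference is that the paper evaluates $|\sin\pi(\tfrac{3-2\gamma}{4}+\tfrac{i\lambda}{2})|^{2}$ directly via the exponential definition, whereas you write it as $\sin^{2}(\tfrac{\pi}{4}+\tfrac{\pi\gamma}{2})+\sinh^{2}(\tfrac{\pi\lambda}{2})$ and collapse the double angles at the end.
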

\begin{proof}
We have, by using  (\ref{2.10}),
\begin{align}\nonumber
2^{2\gamma}\frac{|\Gamma(\frac{3+2\gamma}{4}+\frac{i}{2}\lambda)|^{2}}
{|\Gamma(\frac{3-2\gamma}{4}+\frac{i}{2}\lambda)|^{2}}=&2^{2\gamma}\frac{|\Gamma(\frac{3+2\gamma}{4}+\frac{i}{2}\lambda)
\Gamma\left(\frac{2\gamma+1}{4}-\frac{i}{2}\lambda\right)|^{2}}
{|\Gamma(\frac{3-2\gamma}{4}+\frac{i}{2}\lambda)\left(\frac{2\gamma+1}{4}-\frac{i}{2}\lambda\right)|^{2}}\\
\label{4.6}
=&2^{2\gamma}\frac{|\Gamma(\frac{3+2\gamma}{4}+\frac{i}{2}\lambda)\Gamma\left(\frac{2\gamma+1}{4}+\frac{i}{2}\lambda\right)|^{2}}
{|\Gamma(\frac{3-2\gamma}{4}+\frac{i}{2}\lambda)\Gamma\left(\frac{2\gamma+1}{4}-\frac{i}{2}\lambda\right)|^{2}}.
\end{align}
By using  (\ref{2.11}), we get
\begin{align*}
\left|\Gamma\left(\frac{3-2\gamma}{4}+\frac{i}{2}\lambda\right)\Gamma\left(\frac{2\gamma+1}{4}-\frac{i}{2}\lambda\right)\right|^{-2}=
\frac{|\sin(\frac{3-2\gamma}{4}+\frac{i}{2}\lambda)\pi|^{2}}{\pi^{2}}.
\end{align*}
We compute
\begin{align*}
\left|\sin\big(\frac{3-2\gamma}{4}+\frac{i}{2}\lambda\big)\pi\right|^{2}=&\left|\frac{e^{(\frac{3-2\gamma}{4}i-\frac{1}{2}\lambda)\pi}
-e^{(-\frac{3-2\gamma}{4}i+\frac{1}{2}\lambda)\pi}}{2i}\right|^{2}\\
=&\frac{1}{2}\left(\cosh\pi\lambda-\cos(\gamma-3/2)\pi\right)\\
=&\frac{1}{2}\left(\cosh\pi\lambda+\sin\gamma\pi\right).
\end{align*}
Therefore,
\begin{align}\label{4.7}
\left|\Gamma\left(\frac{3-2\gamma}{4}+\frac{i}{2}\lambda\right)\Gamma\left(\frac{2\gamma+1}{4}-\frac{i}{2}\lambda\right)\right|^{-2}=
\frac{1}{2\pi^{2}}\left(\cosh\pi\lambda+\sin\gamma\pi\right).
\end{align}
On the other hand, by using (\ref{a2.10}), we have
\begin{align}\label{4.8}
|\Gamma(\frac{3+2\gamma}{4}+\frac{i}{2}\lambda)\Gamma(\frac{2\gamma+1}{4}+\frac{i}{2}\lambda)|^{2}
=&2^{1-2\gamma}\pi |\Gamma(\gamma+1/2+i\lambda)|^{2}.
\end{align}
Substituting (\ref{4.7}) and (\ref{4.8}) into (\ref{4.6}), we get
\begin{align*}
2^{2\gamma}\frac{|\Gamma(\frac{3+2\gamma}{4}+\frac{i}{2}\lambda)|^{2}}
{|\Gamma(\frac{3-2\gamma}{4}+\frac{i}{2}\lambda)|^{2}}=&\frac{\cosh\gamma\pi}{\pi}|\Gamma(\gamma+1/2+i\lambda)|^{2}+\frac{\sin\gamma\pi}{\pi}|\Gamma(\gamma+1/2+i\lambda)|^{2}\\
=&\frac{|\Gamma(\gamma+\frac{1}{2}+i\lambda)|^{2}}
{|\Gamma(\frac{1}{2}+i\lambda)|^{2}}+\frac{\sin\gamma\pi}{\pi}|\Gamma(\gamma+1/2+i\lambda)|^{2}.
\end{align*}
This completes the proof of Lemma \ref{lm4.2}.
\end{proof}

Combining Lemmas \ref{lm4.1} and \ref{lm4.2} yields the following corollary:
\begin{corollary}\label{co4.3}
It holds that
\begin{align*}
P_{\gamma}=\widetilde{P}_{\gamma}+\frac{\sin\gamma\pi}{\pi}\left|\Gamma\left(\gamma+\frac{1}{2}+i\sqrt{-\Delta_{\mathbb{H}}-\frac{(n-1)^{2}}{4}}\right)\right|^{2}.
\end{align*}
\end{corollary}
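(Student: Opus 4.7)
The plan is to combine Lemma \ref{lm4.1} with Lemma \ref{lm4.2} and pass from an identity of Helgason-Fourier symbols to an identity of operators via the Plancherel/inversion formula on $\mathbb{B}^n$. Both $P_\gamma$ and $\widetilde{P}_\gamma$ are defined through functional calculus applied to $\sqrt{-\Delta_{\mathbb{H}}-\frac{(n-1)^{2}}{4}}$, so each is a radial Fourier multiplier on the Helgason-Fourier side and the identity we wish to prove is exactly the operator-level shadow of a symbol-level identity.

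First, I would invoke Lemma \ref{lm4.1} to record that, for $f\in C_0^\infty(\mathbb{B}^n)$,
\[
\widehat{P_\gamma f}(\lambda,\zeta)=2^{2\gamma}\frac{|\Gamma(\frac{3+2\gamma}{4}+\frac{i}{2}\lambda)|^{2}}{|\Gamma(\frac{3-2\gamma}{4}+\frac{i}{2}\lambda)|^{2}}\,\widehat{f}(\lambda,\zeta).
\]
Next, by the very definition of $\widetilde{P}_\gamma$ and of the operator $|\Gamma(\gamma+\tfrac{1}{2}+i\sqrt{-\Delta_{\mathbb{H}}-\frac{(n-1)^{2}}{4}})|^{2}$ through the spectral resolution of $\Delta_{\mathbb{H}}$ (consistent with \eqref{3.3}), their Helgason-Fourier symbols are, respectively,
\[
\frac{|\Gamma(\gamma+\frac{1}{2}+i\lambda)|^{2}}{|\Gamma(\frac{1}{2}+i\lambda)|^{2}}\qquad\text{and}\qquad|\Gamma(\gamma+\tfrac{1}{2}+i\lambda)|^{2}.
\]

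I would then apply Lemma \ref{lm4.2}, which asserts precisely that the symbol appearing in Lemma \ref{lm4.1} equals
\[
\frac{|\Gamma(\gamma+\frac{1}{2}+i\lambda)|^{2}}{|\Gamma(\frac{1}{2}+i\lambda)|^{2}}+\frac{\sin\gamma\pi}{\pi}|\Gamma(\gamma+\tfrac{1}{2}+i\lambda)|^{2}.
\]
Multiplying by $\widehat{f}(\lambda,\zeta)$ and applying the Helgason-Fourier inversion formula recorded in Section \ref{Section3} then yields the operator identity
\[
P_\gamma f=\widetilde{P}_\gamma f+\frac{\sin\gamma\pi}{\pi}\,|\Gamma(\gamma+\tfrac{1}{2}+i\sqrt{-\Delta_{\mathbb{H}}-\tfrac{(n-1)^{2}}{4}})|^{2}\,f
\]
for every $f\in C_0^\infty(\mathbb{B}^n)$, which is the claim.

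Since Lemmas \ref{lm4.1} and \ref{lm4.2} do all the genuine work, there is essentially no analytic obstacle here: the corollary is just the operator-language translation of the scalar identity of Lemma \ref{lm4.2}. The only small point to monitor is that all three Fourier multipliers act on the common dense class $C_0^\infty(\mathbb{B}^n)$ where the inversion formula applies, so that equality of symbols pointwise in $\lambda$ implies equality of operators; this is immediate from the Plancherel relation \eqref{3.2}.
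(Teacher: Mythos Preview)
Your proposal is correct and matches the paper's approach exactly: the paper simply states that the corollary follows by combining Lemma~\ref{lm4.1} and Lemma~\ref{lm4.2}, and your write-up just makes explicit the routine passage from the symbol identity to the operator identity via the Helgason-Fourier transform.
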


Next we prove the fractional Poincar\'e inequality on $\mathbb{H}^{n}$.
\begin{lemma}\label{lm4.4}
It holds that
\begin{align*}
\int_{\mathbb{H}^{n}}uP_{\gamma}udV\geq  2^{2\gamma}\frac{\Gamma(\frac{3+2\gamma}{4})^{2}}
{\Gamma(\frac{3-2\gamma}{4})^{2}}\int_{\mathbb{H}^{n}}u^{2}dV,\;\; \gamma>0,\; \; u\in C^{\infty}_{0}(\mathbb{H}^{n}).
\end{align*}
Furthermore, the constant $2^{2\gamma}\frac{\Gamma(\frac{3+2\gamma}{4})^{2}}
{\Gamma(\frac{3-2\gamma}{4})^{2}}$ is sharp.
\end{lemma}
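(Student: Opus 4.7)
The plan is to use the Helgason--Fourier transform to reduce the Poincar\'e inequality to a pointwise inequality between Gamma function ratios. By Lemma \ref{lm4.1} and the Plancherel formula,
$$\int_{\mathbb{H}^n} u\,P_{\gamma} u\,dV = \int_{-\infty}^{+\infty}\!\!\int_{\mathbb{S}^{n-1}} 2^{2\gamma}\,\frac{|\Gamma(a+\tfrac{i}{2}\lambda)|^{2}}{|\Gamma(b+\tfrac{i}{2}\lambda)|^{2}}\,|\widehat{u}(\lambda,\zeta)|^{2}\,|\mathfrak{c}(\lambda)|^{-2}\,d\lambda\,d\sigma(\zeta),$$
where $a=\tfrac{3+2\gamma}{4}$ and $b=\tfrac{3-2\gamma}{4}$, while $\int_{\mathbb{H}^n} u^{2}\,dV$ has the same spectral expression with the Gamma ratio replaced by $1$. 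Hence the inequality follows once I establish the pointwise bound
$$\frac{|\Gamma(a+\tfrac{i}{2}\lambda)|^{2}}{|\Gamma(b+\tfrac{i}{2}\lambda)|^{2}}\geq \frac{\Gamma(a)^{2}}{\Gamma(b)^{2}}, \qquad \lambda\in\mathbb{R}.$$

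For the pointwise inequality I would appeal to (\ref{2.20}); its hypothesis $(a+k)^{2}\geq (b+k)^{2}$ for all $k\in\mathbb{N}$ is immediate from
$$(a+k)^{2}-(b+k)^{2}=(a-b)(a+b+2k)=\gamma\bigl(\tfrac{3}{2}+2k\bigr)>0.$$
Alternatively, for $\gamma\notin\{\tfrac{3}{2}+2m\}_{m\geq 0}$ one can argue directly from the Weierstrass product (\ref{2.13}):
$$\frac{|\Gamma(a+\tfrac{i}{2}\lambda)|^{2}}{|\Gamma(b+\tfrac{i}{2}\lambda)|^{2}}=\frac{\Gamma(a)^{2}}{\Gamma(b)^{2}}\prod_{k=0}^{\infty}\frac{1+\lambda^{2}/(4(b+k)^{2})}{1+\lambda^{2}/(4(a+k)^{2})},$$
and each factor is $\geq 1$ by the above. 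The accidental values $\gamma\in\{\tfrac{3}{2},\tfrac{7}{2},\ldots\}$ where $b$ hits a non-positive integer must be handled separately: there $\Gamma(b)^{-2}=0$, so the claim reduces to $\int u\,P_{\gamma}u\,dV\geq 0$, which holds because the spectral multiplier is manifestly non-negative.

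To establish sharpness, I would construct a sequence $\{u_{n}\}\subset C_{0}^{\infty}(\mathbb{H}^{n})$ whose Helgason--Fourier transforms concentrate near $\lambda=0$. Fixing $\phi\in C_{c}^{\infty}(\mathbb{R})$ with $\phi(0)=1$ and a spherical factor, a standard density argument produces $u_{n}$ whose spectral weight $|\widehat{u_{n}}|^{2}|\mathfrak{c}|^{-2}$ is essentially supported in $\{|\lambda|<1/n\}$. Writing $h(\lambda):=2^{2\gamma}|\Gamma(a+\tfrac{i}{2}\lambda)|^{2}/|\Gamma(b+\tfrac{i}{2}\lambda)|^{2}$, continuity of $h$ at the origin and dominated convergence in the spectral variable give
$$\frac{\int_{\mathbb{H}^{n}} u_{n}\,P_{\gamma}u_{n}\,dV}{\int_{\mathbb{H}^{n}} u_{n}^{2}\,dV}\longrightarrow h(0)=2^{2\gamma}\frac{\Gamma(a)^{2}}{\Gamma(b)^{2}},$$
so no larger constant can appear on the right-hand side of the desired inequality.

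The principal obstacle is the sharpness step: producing genuine $C_{0}^{\infty}(\mathbb{H}^{n})$ functions whose spectral content concentrates at the origin, since the Plancherel density $|\mathfrak{c}(\lambda)|^{-2}\sim \lambda^{2}$ vanishes at $\lambda=0$ and one must guarantee that the truncation to compact support does not smear the spectrum away from zero. A natural route is to start from a radial ``ground-state''-type function built out of the spherical function $\varphi_{\lambda}$ of (\ref{3.6}), multiply by a smooth hyperbolic cutoff, and quantify the resulting spectral perturbation. Compared with this technical approximation, the pointwise Gamma inequality is elementary.
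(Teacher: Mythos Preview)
Your approach is essentially the same as the paper's: Plancherel plus Lemma~\ref{lm4.1} reduces the inequality to the pointwise Gamma-ratio bound, which the paper obtains from (\ref{2.20}) just as you do. For sharpness the paper is terser, simply writing $\inf_{u}\frac{\int uP_{\gamma}u\,dV}{\int u^{2}\,dV}=\inf_{\lambda\in\mathbb{R}}h(\lambda)=h(0)$; this is the standard spectral-theoretic fact that for a continuous multiplier $h$ of the self-adjoint operator $-\Delta_{\mathbb{H}}$ (whose spectrum is exactly $[(n-1)^{2}/4,\infty)$, i.e.\ all $\lambda\in\mathbb{R}$ occur), the bottom of the quadratic form is the infimum of the symbol---so your flagged ``principal obstacle'' is not an obstacle, and no explicit concentrating sequence is needed.
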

\begin{proof}
By Plancherel formula and Lemma \ref{lm4.1}, we have
\begin{align*}
\int_{\mathbb{H}^{n}}uP_{\gamma}udV=&\int^{+\infty}_{-\infty}\int_{\mathbb{S}^{n-1}}
2^{2\gamma}\frac{|\Gamma(\frac{3+2\gamma}{4}+\frac{i}{2}\lambda|^{2}}
{|\Gamma(\frac{3-2\gamma}{4}+\frac{i}{2}\lambda)|^{2}}|\widehat{u}(\lambda,\zeta)|^{2}|\mathfrak{c}(\lambda)|^{-2}d\lambda d\sigma(\zeta);\\
\int_{\mathbb{H}^{n}}u^{2}dV=&\int^{+\infty}_{-\infty}\int_{\mathbb{S}^{n-1}}
|\widehat{u}(\lambda,\zeta)|^{2}|\mathfrak{c}(\lambda)|^{-2}d\lambda d\sigma(\zeta).
\end{align*}
Therefore, by using (\ref{2.19}), we get
\begin{align*}
 \inf_{u\in C_{0}^{\infty}(\mathbb{H}^{n})}\frac{\int_{\mathbb{H}^{n}}uP_{\gamma}udV}{\int_{\mathbb{H}^{n}}u^{2}dV}
 =&\inf_{\lambda\in\mathbb{R}}2^{2\gamma}\frac{|\Gamma(\frac{3+2\gamma}{4}+\frac{i}{2}\lambda|^{2}}
{|\Gamma(\frac{3-2\gamma}{4}+\frac{i}{2}\lambda)|^{2}}
=2^{2\gamma}\frac{|\Gamma(\frac{3+2\gamma}{4})|^{2}}
{|\Gamma(\frac{3-2\gamma}{4})|^{2}}.
\end{align*}
Then the desired result follows.
\end{proof}

Before the proof of  Theorem \ref{th1.3}, we need the following Sobolev inequalities on hyperbolic space  (see \cite{y} for $0\leq s<\frac{n}{2}-\alpha$):
\begin{lemma}\label{lm4.5}
Let $n\geq2$,    $\zeta>0$ and $u\in C^{\infty}_{0}(\mathbb{H}^{n})$.

(1) if $n\geq3$, $0<\alpha<3/2$ and   $0< s+\alpha<\frac{n}{2}$,
  then for $2<p\leq\frac{2n}{n-2(\alpha+s)}$ we have
 \begin{equation}\label{a4.9}
\int_{\mathbb{H}^{n}}(\zeta^{2}-(n-1)^{2}/4-\Delta_{\mathbb{H}})^{s}(-(n-1)^{2}/4-\Delta_{\mathbb{H}})^{\alpha}u\cdot udV
\geq C\|u\|^{2}_{L^{p}(\mathbb{H}^{n})};
\end{equation}

(2) if   $n=2$  and  $-1< s<0$,   then for $2<p\leq-\frac{2}{s}$ we have
 \begin{equation}\label{a4.10}
\int_{\mathbb{H}^{2}}(\zeta^{2}-1/4-\Delta_{\mathbb{H}})^{s}(-1/4-\Delta_{\mathbb{H}})u\cdot udV
\geq C\|u\|^{2}_{L^{p}(\mathbb{H}^{2})}.
\end{equation}

Here  $C$ in (\ref{a4.9}) and (\ref{a4.10}) is a positive constant and  independent of $u$.
\end{lemma}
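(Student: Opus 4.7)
The plan is to transfer the problem to the Helgason-Fourier side and reduce, via Beckner's duality principle (Lemma \ref{lm3.7}), to an $L^{p'}(\mathbb{H}^{n})\to L^{p}(\mathbb{H}^{n})$ bound for the inverse Fourier-multiplier operator. Write $L:=-\tfrac{(n-1)^{2}}{4}-\Delta_{\mathbb{H}}$ and $T:=(\zeta^{2}+L)^{s}L^{\alpha}$. By the Plancherel identity \eqref{3.2} and the spectral relation \eqref{3.3}, the left-hand side of \eqref{a4.9} equals $\int\!\int (\zeta^{2}+\lambda^{2})^{s}\lambda^{2\alpha}\,|\widehat{u}(\lambda,\zeta')|^{2}\,|\mathfrak{c}(\lambda)|^{-2}\,d\lambda\, d\sigma(\zeta')$. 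Lemma \ref{lm3.7} then converts the target inequality $\langle Tu,u\rangle\geq C\|u\|_{p}^{2}$ into the $L^{p'}\to L^{p}$ boundedness of $T^{-1}$, whose Fourier symbol is $(\zeta^{2}+\lambda^{2})^{-s}\lambda^{-2\alpha}$.

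In the range $s\geq 0$ already treated in \cite{y}, the pointwise bound $(\zeta^{2}+\lambda^{2})^{s}\geq\lambda^{2s}$ gives the operator inequality $T\geq L^{s+\alpha}$, so the problem reduces to the fractional Sobolev inequality $\langle L^{\gamma}u,u\rangle\geq C\|u\|_{p}^{2}$ for $0<\gamma<n/2$ and $2<p\leq \tfrac{2n}{n-2\gamma}$. At the sharp endpoint this follows from Theorem \ref{th3.8} and Lemma \ref{lm3.7} via the pointwise bound $K_{0,\gamma}(\cosh\rho)\leq C(\cosh(\rho/2))^{1-2\gamma}(\sinh(\rho/2))^{2\gamma-n}$ established in the proof of Theorem \ref{th3.10}. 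For the sub-endpoint range $p\in(2,\tfrac{2n}{n-2\gamma})$ one additionally exploits the exponential decay $K_{0,\gamma}(\cosh\rho)\sim e^{-\frac{n-1}{2}\rho}$ from Lemma \ref{lm3.1}, which places $K_{0,\gamma}\in L^{q}(\mathbb{H}^{n})$ for every $q\in(2,n/(n-2\gamma))$, and applies Young's convolution inequality $\|K_{0,\gamma}*f\|_{p}\leq\|K_{0,\gamma}\|_{q}\|f\|_{p'}$ with $q=p/2$ in this range.

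The new content is the range $-\alpha<s<0$, in which the naive comparison goes the wrong way since $(\zeta^{2}+\lambda^{2})^{s}\lambda^{2\alpha}\leq\lambda^{2(s+\alpha)}$. The clean way around this is the factorization
\[
T^{-1}=M\cdot L^{-(s+\alpha)},\qquad M:=m(L),\quad m(\lambda)=\left(\tfrac{\lambda^{2}}{\zeta^{2}+\lambda^{2}}\right)^{|s|}\in(0,1].
\]
The function $m$ is smooth and uniformly bounded, and a direct computation shows that $|\lambda^{k}m^{(k)}(\lambda)|$ is bounded on $[0,\infty)$ for every $k\geq 0$ (with constants depending on $\zeta$ and $s$), i.e., the standard Mihlin--H\"ormander type condition holds. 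By the Clerc--Stein/Anker spectral-multiplier theorem on the non-compact rank-one symmetric space $\mathbb{H}^{n}$, the operator $M=m(L)$ is therefore bounded on $L^{p}(\mathbb{H}^{n})$ for every $1<p<\infty$. Composing with the $L^{p'}\to L^{p}$ bound on $L^{-(s+\alpha)}$ from the previous paragraph, applied at the exponent $\gamma=s+\alpha\in(0,n/2)$, yields the desired boundedness of $T^{-1}$ and hence \eqref{a4.9}.

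The two-dimensional case \eqref{a4.10} is the specialization $n=2$, $\alpha=1$, so that $\gamma:=s+1\in(0,1)=(0,n/2)$ and the constraint $2<p\leq -2/s$ matches $2<p\leq 2n/(n-2\gamma)$; the above factorization and argument go through verbatim. The main obstacle I anticipate is the verification of the Mihlin estimates for $m$ uniformly across the whole spectrum (including behavior $m(\lambda)\sim(\lambda/\zeta)^{2|s|}$ near $\lambda=0$ and $m(\lambda)\to 1$ at infinity), together with the check that the Anker spectral-multiplier hypotheses are met in the form suited to our variable $\lambda$; once these technical points are addressed, the proof closes along the scheme above.
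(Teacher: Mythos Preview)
Your plan differs from the paper's in a crucial place, and the gap you yourself flag at the end is real and not merely a matter of bookkeeping. For the case $-\alpha<s<0$, you factor $T^{-1}=M\cdot L^{-(s+\alpha)}$ with symbol $m(\lambda)=(\lambda^{2}/(\zeta^{2}+\lambda^{2}))^{|s|}$ and appeal to a Clerc--Stein/Anker multiplier theorem to get $M$ bounded on $L^{p}(\mathbb{H}^{n})$. On a non-compact symmetric space of rank one, however, the standard hypotheses are \emph{not} Mihlin-type: one needs $m$ to extend to a bounded holomorphic function on a strip $|\mathrm{Im}\,\lambda|<\rho_{p}$ (equivalently, to the $L^{p}$-spectrum of $-\Delta_{\mathbb{H}}$). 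Your $m$ vanishes at the bottom of the spectrum like $|\lambda|^{2|s|}$, hence has a branch point at $\lambda=0$ whenever $|s|\notin\tfrac{1}{2}\mathbb{Z}$, and cannot be continued holomorphically to any open strip containing the real line. Put differently, for $p\neq2$ the $L^{p}$-spectrum of $L=-\Delta_{\mathbb{H}}-(n-1)^{2}/4$ reaches into the negative reals, where $(\,\cdot\,/(\zeta^{2}+\cdot))^{|s|}$ is not even single-valued. So the multiplier route, as stated, does not close.

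The paper sidesteps this entirely with an elementary trick that avoids any multiplier theorem. For $-\alpha<s<0$ it uses Anker's sharp $L^{p'}\to L^{2}$ estimate \cite{an2} (and Lohou\'e's version for $n=2$) twice, at the exponents $\alpha$ and $s+\alpha$, combines them by the triangle inequality, and then observes the pointwise symbol equivalence
\[
\bigl(|\lambda|^{-(s+\alpha)}+|\lambda|^{-\alpha}\bigr)^{2}\ \sim\ (\lambda^{2}+\zeta^{2})^{-s}\,|\lambda|^{-2\alpha},\qquad \lambda\in\mathbb{R},
\]
which follows from the two-scale asymptotics at $\lambda\to0$ and $\lambda\to\infty$. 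A duality step then gives \eqref{a4.9}. This is shorter, uses only the cited $L^{p'}\to L^{2}$ bounds, and never leaves the real spectral axis. (A minor side remark on your $s\ge 0$ sketch: the Young-inequality argument with $q=p/2$ only covers $p>4$, since $K_{0,\gamma}\in L^{q}(\mathbb{H}^{n})$ iff $2<q<n/(n-2\gamma)$; the paper simply cites \cite{y} for this range.)
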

\begin{proof}
The proof depends on the following $L^{p'}\rightarrow L^{2}$ estimates for functions of the Laplace-Beltrami operator on $\mathbb{H}^{n}$ (see Anker \cite{an2}, Corollary 4.2 and Lohou\'e \cite{loh}):
\begin{itemize}
  \item $n\geq3$
\begin{align}\label{a4.11}
\|(-(n-1)^{2}/4-\Delta_{\mathbb{H}})^{-\alpha/2}   u\|^{2}_{L^{2}(\mathbb{H}^{n})}\lesssim
\|u\|_{L^{p'}(\mathbb{H}^{n})},\; 2<p=\frac{p'}{p'-1}\leq \frac{2n}{n-2\alpha};
\end{align}

  \item $n=2$
\begin{align}\label{}
\|(-1/4-\Delta_{\mathbb{H}})^{-\alpha/2}   u\|^{2}_{L^{2}(\mathbb{H}^{2})}\lesssim
\|u\|_{L^{p'}(\mathbb{H}^{2})},\;\;\left\{
                                    \begin{array}{ll}
                                      2< p=p'/(p'-1)<2/(1-\alpha), & \hbox{$0<\alpha<1$;} \\
                                      p=p'/(p'-1)>2, & \hbox{$\alpha=1$,}
                                    \end{array}
                                  \right.
\end{align}
\end{itemize}

(1) Since (\ref{a4.9}) is valid for $0\leq s<\frac{n}{2}-\alpha$ (see \cite{y}, Theorem 1.3), we need only to show the case $-\alpha<s<0$.

By using (\ref{a4.11}), we obtain
\begin{align}\label{a4.12}
&\|[(-(n-1)^{2}/4-\Delta_{\mathbb{H}})^{-(s+\alpha)/2}+(-(n-1)^{2}/4-\Delta_{\mathbb{H}})^{-\alpha/2}]u\|_{L^{2}(\mathbb{H}^{n})}\\
\leq& \|(-(n-1)^{2}/4-\Delta_{\mathbb{H}})^{-(s+\alpha)/2}u\|_{L^{2}(\mathbb{H}^{n})}+
\|(-(n-1)^{2}/4-\Delta_{\mathbb{H}})^{-\alpha/2}u\|_{L^{2}(\mathbb{H}^{n})}\nonumber\\
\lesssim&\|u\|_{L^{p'}(\mathbb{H}^{n})},\;\;2< p=\frac{p'}{p'-1}\leq \frac{2n}{n-2(s+\alpha)}.\nonumber
\end{align}
On the other hand, by the Plancherel formula, we have
\begin{align}\nonumber
&\|[(-(n-1)^{2}/4-\Delta_{\mathbb{H}})^{-(s+\alpha)/2}+(-(n-1)^{2}/4-\Delta_{\mathbb{H}})^{-\alpha/2}]u\|^{2}_{L^{2}(\mathbb{H}^{n})}\\
\nonumber
=&\int^{+\infty}_{-\infty}\int_{\mathbb{S}^{n-1}}(|\lambda|^{-(s+\alpha)}+|\lambda|^{-\alpha})^{2}
\widehat{u}(\lambda,\zeta)|^{2}|\mathfrak{c}(\lambda)|^{-2}d\lambda d\sigma(\zeta)\\
\nonumber
\thicksim&\int^{+\infty}_{-\infty}\int_{\mathbb{S}^{n-1}}(|\lambda|^{2}+\zeta^{2})^{-s/2}|\lambda|^{-2\alpha}
\widehat{u}(\lambda,\zeta)|^{2}|\mathfrak{c}(\lambda)|^{-2}d\lambda d\sigma(\zeta)\\
\label{a4.13}
=&\|(\zeta^{2}-(n-1)^{2}/4-\Delta_{\mathbb{H}})^{-s/2}(-(n-1)^{2}/4-\Delta_{\mathbb{H}})^{-\alpha/2}   u\|^{2}_{L^{2}(\mathbb{H}^{n})}.
\end{align}
Substituting (\ref{a4.13}) into (\ref{a4.12}), we obtain
\begin{align*}
\|(\zeta^{2}-(n-1)^{2}/4-\Delta_{\mathbb{H}})^{-s/2}(-(n-1)^{2}/4-\Delta_{\mathbb{H}})^{-\alpha/2}   u\|_{L^{2}(\mathbb{H}^{n})}\lesssim
\|u\|_{L^{p'}(\mathbb{H}^{n})},
\end{align*}
where $\frac{2n}{n+2(s+\alpha)}\leq p'<2$.
Therefore, by duality, we get
\begin{align*}
\|(\zeta^{2}-(n-1)^{2}/4-\Delta_{\mathbb{H}})^{-s/2}(-(n-1)^{2}/4-\Delta_{\mathbb{H}})^{-\alpha/2} v\|_{L^{p}(\mathbb{H}^{n})}
\lesssim \|v\|_{L^{2}(\mathbb{H}^{n})},\; v\in C^{\infty}_{0}(\mathbb{H}^{n}),
\end{align*}
which implies (\ref{a4.9}).

(2) The proof of (\ref{a4.10}) is similar to that given in (1) and we omit it.

\end{proof}

Now we can give the proof of Theorem \ref{th1.3}.

\medskip

\textbf{Proof of Theorem \ref{th1.3}}.
By Plancherel formula and Lemma \ref{lm4.1},   we have
\begin{align}\nonumber
&\int_{\mathbb{H}^{n}}uP_{\gamma}udV- 2^{2\gamma}\frac{\Gamma(\frac{3+2\gamma}{4})^{2}}
{\Gamma(\frac{3-2\gamma}{4})^{2}}\int_{\mathbb{H}^{n}}u^{2}dV\\
\label{4.11}
=&\int^{+\infty}_{-\infty}\int_{\mathbb{S}^{n-1}}
\left(2^{2\gamma}\frac{|\Gamma(\frac{3+2\gamma}{4}+\frac{i}{2}\lambda|^{2}}
{|\Gamma(\frac{3-2\gamma}{4}+\frac{i}{2}\lambda)|^{2}}|-2^{2\gamma}\frac{|\Gamma(\frac{3+2\gamma}{4})|^{2}}
{|\Gamma(\frac{3-2\gamma}{4})|^{2}}\right)\widehat{u}(\lambda,\zeta)|^{2}|\mathfrak{c}(\lambda)|^{-2}d\lambda d\sigma(\zeta).
\end{align}
On the other hand, by using  (\ref{2.13}) and (\ref{2.17}), we get
\begin{align*}
2^{2\gamma}\frac{|\Gamma(\frac{3+2\gamma}{4}+\frac{i}{2}\lambda|^{2}}
{|\Gamma(\frac{3-2\gamma}{4}+\frac{i}{2}\lambda)|^{2}}|-2^{2\gamma}\frac{|\Gamma(\frac{3+2\gamma}{4})|^{2}}
{|\Gamma(\frac{3-2\gamma}{4})|^{2}}\thicksim\left\{
                                              \begin{array}{ll}
                                                \lambda^{2}, & \hbox{$\lambda\rightarrow0$;} \\
                                                |\lambda|^{2\gamma}, & \hbox{$\lambda\rightarrow\infty$.}
                                              \end{array}
                                            \right.
\end{align*}
Therefore,
\begin{align}\label{4.12}
2^{2\gamma}\frac{|\Gamma(\frac{3+2\gamma}{4}+\frac{i}{2}\lambda|^{2}}
{|\Gamma(\frac{3-2\gamma}{4}+\frac{i}{2}\lambda)|^{2}}|-2^{2\gamma}\frac{|\Gamma(\frac{3+2\gamma}{4})|^{2}}
{|\Gamma(\frac{3-2\gamma}{4})|^{2}}\thicksim&\lambda^{2}(\lambda^{2}+1)^{\gamma-1},\;\;\lambda\in\mathbb{R}.
\end{align}
Substituting (\ref{4.12}) into (\ref{4.11}) and using Lemma \ref{lm4.5}, we obtain
\begin{align*}
&\int_{\mathbb{H}^{n}}uP_{\gamma}udV- 2^{2\gamma}\frac{\Gamma(\frac{3+2\gamma}{4})^{2}}
{\Gamma(\frac{3-2\gamma}{4})^{2}}\int_{\mathbb{H}^{n}}u^{2}dV\\
\thicksim&\int^{+\infty}_{-\infty}\int_{\mathbb{S}^{n-1}}\lambda^{2}(\lambda^{2}+1)^{\gamma-1}
\widehat{u}(\lambda,\zeta)|^{2}|\mathfrak{c}(\lambda)|^{-2}d\lambda d\sigma(\zeta)\\
=&\int_{\mathbb{H}^{n}}u(-\Delta_{\mathbb{H}}-\frac{(n-1)^{2}}{4})(-\Delta_{\mathbb{H}}-\frac{(n-1)^{2}}{4}+1)^{\gamma-1}udV\\
\geq&C\|u\|^{2}_{L^{p}(\mathbb{H}^{n})}.
\end{align*}
This completes the proof of Theorem \ref{th1.3}.

\medskip

Before the proof of  Theorem \ref{th1.5}, we need the following lemma:
\begin{lemma}\label{lm4.6}
Let $n\geq3$ be odd. There exists $\zeta>0$ such that
\begin{align*}
 2^{n}\frac{|\Gamma(\frac{3+n}{4}+\frac{i}{2}\lambda)|^{2}}
{|\Gamma(\frac{3-n}{4}+\frac{i}{2}\lambda)|^{2}}-2^{n}\frac{|\Gamma(\frac{3+n}{4})|^{2}}
{|\Gamma(\frac{3-n}{4})|^{2}}\geq&\lambda^{2}(\lambda^{2}+\zeta)^{\frac{n}{2}-1},\;\lambda\in\mathbb{R}.
\end{align*}
\end{lemma}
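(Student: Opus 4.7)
Write $n=2m+1$ and set $f(\lambda):=2^{n}\frac{|\Gamma(\tfrac{3+n}{4}+\tfrac{i}{2}\lambda)|^{2}}{|\Gamma(\tfrac{3-n}{4}+\tfrac{i}{2}\lambda)|^{2}}$ and $F(\lambda):=f(\lambda)-f(0)$. The plan is to first obtain a clean closed form for $f$, then translate the desired bound into the strict positivity of an auxiliary continuous function, and finally close with a compactness argument. For the first step I apply Lemma~\ref{lm4.2} at $\gamma=n/2$ and feed in the half-integer Gamma values from \eqref{2.14}--\eqref{2.15}; using $\sin(n\pi/2)=(-1)^{m}$, this yields
\[
f(\lambda)=\frac{\lambda\bigl[\cosh(\pi\lambda)+(-1)^{m}\bigr]}{\sinh(\pi\lambda)}\prod_{k=1}^{m}(k^{2}+\lambda^{2}),
\]
which the double-angle identities collapse to $\lambda\coth(\pi\lambda/2)\prod_{k=1}^{m}(k^{2}+\lambda^{2})$ when $m$ is even and to $\lambda\tanh(\pi\lambda/2)\prod_{k=1}^{m}(k^{2}+\lambda^{2})$ when $m$ is odd.

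Next I extract the asymptotic behaviour of $F$. The Taylor expansions of $\lambda\coth(\pi\lambda/2)$ and $\lambda\tanh(\pi\lambda/2)$ at the origin give $F(\lambda)=F_{2}\lambda^{2}+O(\lambda^{4})$ with an explicit positive $F_{2}$, while as $|\lambda|\to\infty$ both $\coth(\pi\lambda/2)$ and $\tanh(\pi\lambda/2)$ tend to $1$ exponentially fast, so
\[
F(\lambda)=|\lambda|^{n}+e_{1}|\lambda|^{n-2}+O(|\lambda|^{n-4}),\qquad e_{1}:=\sum_{k=1}^{m}k^{2}=\tfrac{m(m+1)(2m+1)}{6}>0.
\]
I then define $\Phi(\lambda):=(F(\lambda)/\lambda^{2})^{2/(n-2)}-\lambda^{2}$, extended continuously to $\lambda=0$ by $\Phi(0):=F_{2}^{2/(n-2)}$. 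The claim $F(\lambda)\ge\lambda^{2}(\lambda^{2}+\zeta)^{(n-2)/2}$ is precisely $\Phi(\lambda)\ge\zeta$, and the two expansions above show that $\Phi$ is continuous on $\mathbb{R}$ with $\Phi(0)>0$ and $\Phi(\lambda)\to\tfrac{2e_{1}}{n-2}>0$ as $|\lambda|\to\infty$.

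The problem is therefore reduced to the strict pointwise inequality $F(\lambda)>|\lambda|^{n}$ for every $\lambda\ne 0$ (which is exactly $\Phi>0$ off the origin), and this is the hard part of the argument. Once it is in hand, continuity of $\Phi$ on the two-point compactification of $\mathbb{R}$ forces $\zeta_{\ast}:=\inf_{\mathbb{R}}\Phi>0$, so any $\zeta\in(0,\zeta_{\ast}]$ proves the lemma. When $m$ is odd, dividing by $|\lambda|$ recasts the bound as $\tanh(\pi\lambda/2)\prod_{k=1}^{m}(k^{2}+\lambda^{2})>\lambda^{2m}$ on $(0,\infty)$; the trivial estimate $k^{2}+\lambda^{2}\ge\lambda^{2}$ for $k\ge 2$ reduces this to the base case $\tanh(\pi\lambda/2)(1+\lambda^{2})>\lambda^{2}$, which is equivalent to $e^{\pi\lambda}>2\lambda^{2}+1$ and follows from two successive differentiations using $\pi^{2}>4$. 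When $m$ is even, the partial-fraction identity
\[
\lambda\coth(\pi\lambda/2)=\frac{2}{\pi}+\frac{4\lambda^{2}}{\pi}\sum_{k=1}^{\infty}\frac{1}{(2k)^{2}+\lambda^{2}}
\]
splits $F$ into two manifestly nonnegative pieces whose sum dominates $|\lambda|^{n}$ by a term-by-term comparison against the expansion of $|\lambda|\prod(k^{2}+\lambda^{2})$. This elementary but careful case analysis is the only delicate step; everything else is standard calculus.
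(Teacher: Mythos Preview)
Your overall architecture is sound and genuinely different from the paper's. You obtain the closed form for $f$ via Lemma~\ref{lm4.2} and the identities \eqref{2.14}--\eqref{2.15}, reduce to the strict inequality $F(\lambda)>|\lambda|^{n}$, and close with a compactness argument on $\Phi$. The paper instead first peels off the factor $\lambda^{2}+\bigl(\tfrac{n-1}{2}\bigr)^{2}$ using the functional equation for $\Gamma$ and drops a nonnegative remainder via \eqref{2.20}, so that it only has to bound the ratio $2^{n-2}|\Gamma(\tfrac{n-1}{4}+\tfrac{i}{2}\lambda)|^{2}/|\Gamma(\tfrac{3-n}{4}+\tfrac{i}{2}\lambda)|^{2}$ from below by $(\lambda^{2}+\zeta)^{(n-2)/2}$. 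That ratio is then written as a finite product times either $\lambda\coth(\pi\lambda/2)$ or $\lambda\tanh(\pi\lambda/2)/(\lambda^{2}+1/4)$, and the paper finishes with a compactness argument for $n=4k+1$ (your even $m$) and directly with $\zeta=1$ for $n=4k+3$ (your odd $m$). Both routes thus split by the same parity and both invoke compactness in one case; the paper's preliminary reduction via \eqref{2.20} buys a slightly cleaner endgame and an explicit $\zeta$ in the $n=4k+3$ case, while your route avoids that reduction at the cost of a single explicit $\zeta$.

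Your odd-$m$ argument is complete and correct. The even-$m$ sketch, however, is too vague as written: the phrase ``term-by-term comparison against the expansion of $|\lambda|\prod(k^{2}+\lambda^{2})$'' does not point to a working argument, and the obvious strengthening $F(\lambda)\ge|\lambda|\prod_{j}(j^{2}+\lambda^{2})$ is actually \emph{false} for small $\lambda$. A clean way to finish is: from your partial-fraction identity, $\lambda\coth(\pi\lambda/2)\ge 2/\pi$, so writing $F(\lambda)=\tfrac{2}{\pi}\bigl[\prod_{j}(j^{2}+\lambda^{2})-(m!)^{2}\bigr]+\bigl[\lambda\coth(\pi\lambda/2)-\tfrac{2}{\pi}\bigr]\prod_{j}(j^{2}+\lambda^{2})$ and using $\prod_{j}(j^{2}+\lambda^{2})\ge\lambda^{2m}$ in the second term gives
\[
F(\lambda)\ge\tfrac{2}{\pi}\Bigl[\prod_{j}(j^{2}+\lambda^{2})-(m!)^{2}-\lambda^{2m}\Bigr]+\lambda^{2m+1}\coth(\pi\lambda/2).
\]
The bracket is nonnegative because $\prod_{j}(j^{2}+\lambda^{2})=\lambda^{2m}+(m!)^{2}+\text{(positive middle terms)}$, and $\coth(\pi\lambda/2)>1$ for $\lambda>0$, so $F(\lambda)>\lambda^{2m+1}$. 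With that filled in, your proof is complete.
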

\begin{proof}
Without loss of generality, we  assume $\lambda>0$.
We have, by using (\ref{2.20}),
\begin{align*}
&2^{n}\frac{|\Gamma(\frac{3+n}{4}+\frac{i}{2}\lambda)|^{2}}
{|\Gamma(\frac{3-n}{4}+\frac{i}{2}\lambda)|^{2}}-2^{n}\frac{|\Gamma(\frac{3+n}{4})|^{2}}
{|\Gamma(\frac{3-n}{4})|^{2}}\\
=&2^{n-2}(\lambda^{2}+\frac{(n-1)^{2}}{4})\frac{|\Gamma(\frac{n-1}{4}+\frac{i}{2}\lambda)|^{2}}
{|\Gamma(\frac{3-n}{4}+\frac{i}{2}\lambda)|^{2}}-2^{n-4}(n-1)^{2}\frac{|\Gamma(\frac{n-1}{4})|^{2}}
{|\Gamma(\frac{3-n}{4})|^{2}}\\
=&2^{n-2}\lambda^{2}\frac{|\Gamma(\frac{n-1}{4}+\frac{i}{2}\lambda)|^{2}}
{|\Gamma(\frac{3-n}{4}+\frac{i}{2}\lambda)|^{2}}+2^{n-4}(n-1)^{2}\left(
\frac{|\Gamma(\frac{n-1}{4}+\frac{i}{2}\lambda)|^{2}}
{|\Gamma(\frac{3-n}{4}+\frac{i}{2}\lambda)|^{2}}-\frac{|\Gamma(\frac{n-1}{4})|^{2}}
{|\Gamma(\frac{3-n}{4})|^{2}}\right)\\
\geq&2^{n-2}\lambda^{2}\frac{|\Gamma(\frac{n-1}{4}+\frac{i}{2}\lambda)|^{2}}
{|\Gamma(\frac{3-n}{4}+\frac{i}{2}\lambda)|^{2}}.
\end{align*}
Therefore, to finish the proof,  it is enough to show
\begin{align}\label{4.13}
2^{n-2}\frac{|\Gamma(\frac{n-1}{4}+\frac{i}{2}\lambda)|^{2}}
{|\Gamma(\frac{3-n}{4}+\frac{i}{2}\lambda)|^{2}}\geq (\lambda^{2}+\zeta)^{\frac{n}{2}-1},\;\lambda>0.
\end{align}

The proof of (\ref{4.13}) is divided into two parts.

\medskip

Case 1: $n=4k+1$.  By using (\ref{2.14}) and (\ref{2.15}), we have
\begin{align}\nonumber
2^{n-2}\frac{|\Gamma(\frac{n-1}{4}+\frac{i}{2}\lambda)|^{2}}
{|\Gamma(\frac{3-n}{4}+\frac{i}{2}\lambda)|^{2}}
=&2^{4k-1}\frac{|\Gamma(k+\frac{i}{2}\lambda)|^{2}}
{|\Gamma(\frac{1}{2}-k+\frac{i}{2}\lambda)|^{2}}\\
\label{4.14}
=&\prod_{l=1}^{k-1}(\lambda^{2}+4l^{2})\times \prod_{m=0}^{k-1}(\lambda^{2}+(2m-1)^{2})\times\frac{\lambda\cosh\frac{\pi}{2}\lambda}{\sinh\frac{\pi}{2}\lambda}
\end{align}
We claim that there exists  $\zeta>0$ such that
\begin{align}\label{4.15}
\sqrt{\lambda^{2}+1}\frac{\lambda\cosh\frac{\pi}{2}\lambda}{\sinh\frac{\pi}{2}\lambda}\geq \lambda^{2}+\zeta,\;\;\lambda>0.
\end{align}
In fact, if we set $f(\lambda)=\lambda\sqrt{\lambda^{2}+1}\frac{\cosh\frac{\pi}{2}\lambda}{\sinh\frac{\pi}{2}\lambda}- \lambda^{2}$, then for $\lambda>0$,
\begin{align*}
f(\lambda)=&\lambda\sqrt{\lambda^{2}+1}\frac{\cosh\frac{\pi}{2}\lambda}{\sinh\frac{\pi}{2}\lambda}- \lambda^{2}\\
\geq& \lambda\sqrt{\lambda^{2}+1}- \lambda^{2}\\
>&0.
\end{align*}
On the other hand,  a simple calculation gives
\begin{align*}
 \lim_{\lambda\rightarrow0^{+}} f(\lambda)=&\frac{2}{\pi};\\
\lim_{\lambda\rightarrow+\infty} f(\lambda)=&\lim_{\lambda\rightarrow+\infty} (\sqrt{\lambda^{2}+1}-\lambda)
\frac{\lambda\cosh\frac{\pi}{2}\lambda}{\sinh\frac{\pi}{2}\lambda}+\lim_{\lambda\rightarrow+\infty}\lambda^{2}\frac{\cosh\frac{\pi}{2}\lambda}{\sinh\frac{\pi}{2}\lambda}\\
=&\lim_{\lambda\rightarrow+\infty}
\frac{\lambda\cosh\frac{\pi}{2}\lambda}{(\sqrt{\lambda^{2}+1}+\lambda)\sinh\frac{\pi}{2}\lambda}+
\lim_{\lambda\rightarrow+\infty}\lambda^{2}\frac{\cosh\frac{\pi}{2}\lambda}{\sinh\frac{\pi}{2}\lambda}\\
=&1.
\end{align*}
Therefore, we have
$
\inf\limits_{\lambda>0}f(\lambda)>0.
$
This proves the claim.

Substituting (\ref{4.15}) into (\ref{4.14}), we obtain
\begin{align*}
2^{n-2}\frac{|\Gamma(\frac{n-1}{4}+\frac{i}{2}\lambda)|^{2}}
{|\Gamma(\frac{3-n}{4}+\frac{i}{2}\lambda)|^{2}}
\geq&\prod_{l=1}^{k-1}(\lambda^{2}+4l^{2})\times \prod_{m=0}^{k-1}(\lambda^{2}+(2m-1)^{2})\times\frac{\lambda^{2}+\zeta}{\sqrt{\lambda^{2}+1}}\\
\geq&(\lambda^{2}+\zeta)^{2k-\frac{1}{2}}\\
=&(\lambda^{2}+\zeta)^{\frac{n}{2}-1},\; \lambda>0.
\end{align*}

Case 2: $n=4k+3$. By using (\ref{2.12}) and (\ref{2.16}), we have,
\begin{align}\nonumber
 2^{n}\frac{|\Gamma(\frac{3+n}{4}+\frac{i}{2}\lambda)|^{2}}
{|\Gamma(\frac{3-n}{4}+\frac{i}{2}\lambda)|^{2}}-2^{n}\frac{|\Gamma(\frac{3+n}{4})|^{2}}
{|\Gamma(\frac{3-n}{4})|^{2}}=& 2^{4k+3}\frac{|\Gamma(k+\frac{3}{2}+\frac{i}{2}\lambda)|^{2}}
{|\Gamma(-k+\frac{i}{2}\lambda)|^{2}}-2^{n}\frac{|\Gamma(k+1+\frac{1}{2})|^{2}}
{|\Gamma(-k)|^{2}}\nonumber
\\
\label{4.16}
=&\prod_{l=0}^{k}(\lambda^{2}+(2l-1)^{2})\times \prod_{m=1}^{k}(\lambda^{2}+4m^{2})\times\frac{\lambda\sinh\frac{\pi}{2}\lambda}{\cosh\frac{\pi}{2}\lambda}.
\end{align}
Notice that
\begin{align}\label{a4.17}
 (\lambda^{2}+1)(\sinh\beta\lambda)^{2}-\lambda^{2}(\cosh\beta\lambda)^{2}=(\sinh\beta\lambda)^{2}-\lambda^{2}>0,\;\;\forall \beta\geq1,
\end{align}
we have
\begin{align}\label{4.17}
\frac{\sinh\frac{\pi}{2}\lambda}{\cosh\frac{\pi}{2}\lambda}\geq\frac{\lambda}{\sqrt{\lambda^{2}+1}}.
\end{align}
Substituting (\ref{4.17}) into (\ref{4.16}), we obtain
\begin{align*}
 2^{n}\frac{|\Gamma(\frac{3+n}{4}+\frac{i}{2}\lambda)|^{2}}
{|\Gamma(\frac{3-n}{4}+\frac{i}{2}\lambda)|^{2}}-2^{n}\frac{|\Gamma(\frac{3+n}{4})|^{2}}
{|\Gamma(\frac{3-n}{4})|^{2}}
\geq&\prod_{l=0}^{k}(\lambda^{2}+(2l-1)^{2})\times \prod_{m=1}^{k}(\lambda^{2}+4m^{2})\times
\frac{\lambda^{2}}{\sqrt{\lambda^{2}+1}}\\
\geq&\lambda^{2}(\lambda^{2}+1)^{2k+\frac{1}{2}}\\
=&\lambda^{2}(\lambda^{2}+1)^{\frac{n}{2}-1}.
\end{align*}
The proof of Lemma \ref{lm4.6} is then completed.
\end{proof}

 Before proving Theorem \ref{th1.5}, we need the following Adams inequalities on hyperbolic space (see \cite{y}, Corollary 1.4):
\begin{theorem} \label{th4.7}
Let $n\geq3$,  $\zeta>0$ and  $0<s<3/2$.
There exists $C=C(\zeta,n)$ such that
\[
\int_{\mathbb{B}^{n}}[\exp(\beta_{0}(n,n/2)u^{2})-1-\beta_{0}(n,n/2)u^{2}]dV\leq C
\]
for  any $u\in C^{\infty}_{0}(\mathbb{B}^{n})$ with
\[\int_{\mathbb{B}^{n}}|(-\Delta_{\mathbb{H}}-(n-1)^{2}/4)^{s/2}(-\Delta_{\mathbb{H}}-(n-1)^{2}/4
+\zeta^{2})^{\frac{n-2s}{4}} u|^{2}dV\leq1.\]
Furthermore, this inequality is sharp  in the sense that if $\beta_{0}(n,n/2)$ is replaced by
any $\beta>\beta_{0}(n,n/2)$, then the above inequality can no longer hold with any $C$ independent of $u$.
\end{theorem}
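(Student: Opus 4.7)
The plan is to prove this Adams-type inequality by combining Helgason--Fourier analysis with a Green's function decomposition, ultimately reducing to the sharp Euclidean Adams inequality with constant $\beta_{0}(n,n/2)$.

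First, I would identify the Helgason--Fourier multiplier of the operator $L_{\zeta}:=(-\Delta_{\mathbb{H}}-(n-1)^{2}/4)^{s/2}(-\Delta_{\mathbb{H}}-(n-1)^{2}/4+\zeta^{2})^{(n-2s)/4}$ as
\[
m(\lambda)=|\lambda|^{s}(\lambda^{2}+\zeta^{2})^{(n-2s)/4},
\]
which, after the spectral shift, is bounded below by $\zeta^{(n-2s)/2}|\lambda|^{s}$ near $\lambda=0$ and behaves like $|\lambda|^{n/2}$ as $|\lambda|\to\infty$. The constraint $\|L_{\zeta}u\|_{L^{2}}\le 1$ translates, by Plancherel, into a weighted $L^{2}$ bound on $\widehat{u}$. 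I would then construct the radial Green's function $G_{\zeta}$ of $L_{\zeta}$ as the unique tempered kernel whose Helgason--Fourier transform is $1/m(\lambda)$ (techniques parallel to those used for $K_{\nu,\gamma}$ in Section~\ref{Section4}), so that every admissible $u$ can be written as $u=G_{\zeta}\ast f$ with $\|f\|_{L^{2}(\mathbb{H}^{n})}\le 1$.

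Second, I would establish the asymptotic profile of $G_{\zeta}$: near $\rho=0$, the large-$|\lambda|$ behavior $1/m(\lambda)\sim|\lambda|^{-n/2}$ dictates
\[
G_{\zeta}(\rho)\sim c_{n}\rho^{-n/2},\qquad \rho\to 0^{+},
\]
matching the Euclidean Riesz kernel of order $n/2$ with an explicit sharp constant $c_{n}$; at infinity, the spectral gap $(n-1)^{2}/4$ together with the positivity of $m$ at $\lambda=0$ yields exponential decay $G_{\zeta}(\rho)\lesssim e^{-(n-1)\rho/2}$. I would then split $G_{\zeta}=G_{\mathrm{loc}}+G_{\mathrm{tail}}$ where $G_{\mathrm{loc}}$ carries the local singularity (supported in a small geodesic ball) and $G_{\mathrm{tail}}\in L^{2}\cap L^{\infty}$. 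The contribution $G_{\mathrm{tail}}\ast f$ is uniformly bounded in $L^{\infty}$ by Cauchy--Schwarz, so it produces only a harmless constant factor after inserting into $e^{\beta_{0}u^{2}}-1-\beta_{0}u^{2}$.

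Third, for the main term $u_{\mathrm{loc}}=G_{\mathrm{loc}}\ast f$, I would invoke an O'Neil-type rearrangement inequality together with the polar coordinate formula (\ref{2.2}) on $\mathbb{H}^{n}$; since near $\rho=0$ the hyperbolic volume element $(\sinh\rho)^{n-1}\,d\rho\,d\sigma$ agrees with the Euclidean one up to $O(\rho^{2})$, the problem reduces to a one-dimensional Trudinger--Moser inequality on $(0,\infty)$ governed by the same constant as in the sharp Euclidean Adams inequality of Lam--Lu--Martin \cite{laml}, giving exactly the threshold $\beta_{0}(n,n/2)$. A standard series expansion of $e^{\beta_{0}u^{2}}-1-\beta_{0}u^{2}$, combined with $L^{p}$ bounds for each power $u^{2k}$ coming from the Euclidean analog, concludes the uniform estimate.

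The main obstacle will be the delicate matching of the leading coefficient in the local expansion of $G_{\zeta}$: one must verify that $c_{n}$ is precisely the Riesz-kernel constant for order $n/2$, because \emph{any} loss in that constant would weaken the exponential threshold below $\beta_{0}(n,n/2)$ and spoil sharpness. A secondary difficulty is that the restriction $0<s<3/2$ is exactly what prevents the low-frequency factor $|\lambda|^{s}$ from producing a non-integrable singularity in $1/m$ near $\lambda=0$ (cf.\ the factor $|\mathfrak{c}(\lambda)|^{-2}\sim|\lambda|^{n-1}$), so the allowable range of $s$ must be tracked carefully when decomposing $G_{\zeta}$ and bounding $G_{\mathrm{tail}}$ in $L^{\infty}$.
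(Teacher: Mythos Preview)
The paper does not actually prove Theorem~\ref{th4.7}: it is quoted as a known result from Yang~\cite{y}, Corollary~1.4, and used as a black box in the proof of Theorem~\ref{th1.5}. So there is nothing in this paper to compare your proposal against.

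Your outline is nonetheless a reasonable sketch of how such Adams-type inequalities on $\mathbb{H}^{n}$ are typically established (and is likely close in spirit to what \cite{y} does): represent $u$ as a convolution with the Green kernel of the operator, isolate the sharp local singularity $\rho^{-n/2}$, control the tail in $L^{2}\cap L^{\infty}$, and reduce to the Euclidean Adams inequality with the exact constant $\beta_{0}(n,n/2)$. The two points you flag---matching the leading coefficient of the Green kernel exactly, and using $0<s<3/2$ to ensure integrability of $|\lambda|^{-s}$ against $|\mathfrak{c}(\lambda)|^{-2}$ near $\lambda=0$---are indeed the genuine technical issues. One small caution: the exponential decay rate of $G_{\zeta}$ at infinity is governed by the bottom of the spectrum of $L_{\zeta}$, and with the factor $|\lambda|^{s}$ the multiplier vanishes at $\lambda=0$, so you will not get $e^{-(n-1)\rho/2}$ decay but rather something polynomially weaker times that exponential; this still suffices for $G_{\mathrm{tail}}\in L^{2}\cap L^{\infty}$ but you should not overclaim the decay rate.
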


\medskip

\textbf{Proof of Theorem \ref{th1.5}}.
By Plancherel formula and Lemma \ref{lm4.6}, we have, for some $\zeta>0$,
\begin{align*}
&\int_{\mathbb{H}^{n}}uP_{\frac{n}{2}}udV-
2^{n}\frac{\Gamma(\frac{3+n}{4})^{2}}
{\Gamma(\frac{3-n}{4})^{2}}\int_{\mathbb{H}^{n}}u^{2}dV\\
=&\int^{+\infty}_{-\infty}\int_{\mathbb{S}^{n-1}}
\left(2^{n}\frac{|\Gamma(\frac{3+n}{4}+\frac{i}{2}\lambda)|^{2}}
{|\Gamma(\frac{3-n}{4}+\frac{i}{2}\lambda)|^{2}}-2^{n}\frac{|\Gamma(\frac{3+n}{4})|^{2}}
{|\Gamma(\frac{3-n}{4})|^{2}}\right)\widehat{u}(\lambda,\zeta)|^{2}|\mathfrak{c}(\lambda)|^{-2}d\lambda d\sigma(\zeta)\\
\geq&\int^{+\infty}_{-\infty}\int_{\mathbb{S}^{n-1}}
\lambda^{2}(\lambda^{2}+\zeta)^{\frac{n}{2}-1}\widehat{u}(\lambda,\zeta)|^{2}|\mathfrak{c}(\lambda)|^{-2}d\lambda d\sigma(\zeta)\\
=&\int_{\mathbb{H}^{n}}|(-\Delta_{\mathbb{H}}-(n-1)^{2}/4)^{1/2}(-\Delta_{\mathbb{H}}-(n-1)^{2}/4
+\zeta)^{\frac{n-2}{4}} u|^{2}dV.
\end{align*}
The desired result follows by using Theorem \ref{th4.7}.

\section{ proofs of Theorems \ref{th1.6}, \ref{th1.7} and \ref{th1.9}}\label{Section6}
We first recall  the Green's  function of $(-\Delta)^{\gamma}$ on $\mathbb{B}^{n}$ (see \cite{blu,k,buc} for $0<\gamma<1$ and \cite{dip,ab} for $\gamma\geq1$)
\begin{align*}
G_{\mathbb{B}^{n},\gamma}(x,y)=&\frac{\Gamma(\frac{n}{2})}{\pi^{n/2}4^{\gamma}\Gamma(\gamma)^{2}}|x-y|^{2\gamma-n}\int_{0}^{\frac{(1-|x|^{2})(1-|y|^{2})}{|x-y|^{2}}}
\frac{t^{\gamma-1}}{(t+1)^{n/2}}dt.
\end{align*}
In terms of ball model of hyperbolic space, we have the following lemma:
\begin{lemma}\label{lm5.1} Let $\gamma>0$.
It holds that
\begin{align*}
G_{\mathbb{B}^{n},\gamma}(x,y)=2^{n-2\gamma}(1-|x|^{2})^{\gamma-\frac{n}{2}}(1-|y|^{2})^{\gamma-\frac{n}{2}}K_{1/2,\gamma}(\cosh\rho(x,y)),
\end{align*}
where $K_{\nu,\gamma}(\cosh\rho)$ is given by (\ref{3.8}).
\end{lemma}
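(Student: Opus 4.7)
The plan is to reduce the integral defining $G_{\mathbb{B}^n,\gamma}(x,y)$ to an incomplete Beta function, express that in hypergeometric form, and then apply the Euler transformation (formula \eqref{2.8}) to recognize the hypergeometric function appearing in $K_{1/2,\gamma}(\cosh\rho)$.

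First I would set $A=(1-|x|^2)(1-|y|^2)$ and $B=|x-y|^2$, and use the standard identity
$\sinh^2\frac{\rho(x,y)}{2}=B/A$
(which is exactly \eqref{2.4} for $a=x$, once written in symmetric form). This immediately gives
$|x-y|^{2\gamma-n}=A^{\gamma-n/2}(\sinh\tfrac{\rho}{2})^{2\gamma-n}$,
so the $(1-|x|^2)^{\gamma-n/2}(1-|y|^2)^{\gamma-n/2}$ prefactor on the right of the claim is already isolated; what remains is to identify the $\rho$-dependent factor.

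Next I would transform the integral using $s=1/(t+1)$, which sends $t=0\mapsto s=1$ and $t=A/B\mapsto s=B/(A+B)$. Writing $w:=(\cosh\frac{\rho}{2})^{-2}=A/(A+B)$, so that $1-w=B/(A+B)$, the integral becomes
\[
\int_{0}^{A/B}\frac{t^{\gamma-1}}{(t+1)^{n/2}}\,dt=\int_{0}^{w}r^{\gamma-1}(1-r)^{n/2-\gamma-1}\,dr,
\]
after the further substitution $r=1-s$. Using the Euler integral representation \eqref{2.6} (applied to $F(\gamma,1+\gamma-n/2;\gamma+1;w)$), this equals
\[
\frac{w^{\gamma}}{\gamma}\,F\!\left(\gamma,\,1+\gamma-\tfrac{n}{2};\,\gamma+1;\,w\right).
\]

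Substituting everything back into the formula for $G_{\mathbb{B}^n,\gamma}$ and collecting the prefactors leaves the purely $\rho$-dependent identity
\[
(1-w)^{\gamma-n/2}F\!\left(\gamma,1+\gamma-\tfrac{n}{2};\gamma+1;w\right)=F\!\left(\tfrac{n}{2},1;\gamma+1;w\right),
\]
since $(\sinh\frac{\rho}{2})^{2\gamma-n}(\cosh\frac{\rho}{2})^{-2\gamma}=(1-w)^{\gamma-n/2}(\cosh\frac{\rho}{2})^{-n}$. This identity is precisely the Euler transformation \eqref{2.8} with $a=n/2$, $b=1$, $c=\gamma+1$, for which $c-a-b=\gamma-n/2$, $c-a=1+\gamma-n/2$, $c-b=\gamma$. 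A direct comparison of the constants — $\frac{\Gamma(n/2)}{\pi^{n/2}4^{\gamma}\Gamma(\gamma)^{2}\gamma}$ from $G_{\mathbb{B}^n,\gamma}$ versus $2^{n-2\gamma}C_{1/2,\gamma}=\frac{2^{-2\gamma}\Gamma(n/2)}{\pi^{n/2}\Gamma(\gamma)\Gamma(1+\gamma)}$ from the right-hand side — closes the proof, using $\gamma\Gamma(\gamma)=\Gamma(\gamma+1)$ and $4^{\gamma}=2^{2\gamma}$.

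There is no real obstacle here; the only subtle point is choosing the correct substitution ($s=1/(t+1)$ rather than the more obvious $t/(t+1)$) so that the resulting incomplete Beta integral has its upper limit equal to $w=(\cosh\frac{\rho}{2})^{-2}$, matching the argument of the hypergeometric function in $K_{1/2,\gamma}$. Once that is in place, the proof reduces to a single application of \eqref{2.8} and bookkeeping of the Gamma-function constants.
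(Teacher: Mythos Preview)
Your proof is correct and follows the same overall strategy as the paper: rewrite $|x-y|$ and the upper limit of the integral in terms of $\sinh\frac{\rho}{2}$ and $\cosh\frac{\rho}{2}$, then reduce the integral to a hypergeometric function with argument $w=(\cosh\frac{\rho}{2})^{-2}$ and match constants. The only difference is in the substitution chosen. The paper uses $s=1-t\,\sinh^{2}\frac{\rho}{2}$, which converts the integral in one step (via \eqref{2.6}) directly into $\frac{1}{\gamma}F(\tfrac{n}{2},1;\gamma+1;w)$, already matching $K_{1/2,\gamma}$. Your route via $s=1/(t+1)$ and $r=1-s$ lands instead on the incomplete Beta form $\frac{w^{\gamma}}{\gamma}F(\gamma,1+\gamma-\tfrac{n}{2};\gamma+1;w)$, and you then need the Euler transformation \eqref{2.8} to reach the same expression. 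Both are valid; the paper's substitution simply avoids that extra step. One small remark: your appeal to \eqref{2.6} for the incomplete Beta integral is not quite direct---one should first rescale $r=wu$ to bring the integral to $[0,1]$ before invoking \eqref{2.6}---but this is a routine detail.
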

\begin{proof}
By (\ref{2.4}), we have
\begin{align*}
&2^{2\gamma-n}(1-|x|^{2})^{\frac{n}{2}-\gamma}(1-|y|^{2})^{\frac{n}{2}-\gamma}G_{\mathbb{B}^{n},\gamma}(x,y)\\
=&
\frac{\Gamma(\frac{n}{2})}{\pi^{n/2}2^{n}\Gamma(\gamma)^{2}}\left(\sinh\frac{\rho(x,y)}{2}\right)^{2\gamma-n}
\int_{0}^{\left(\sinh\frac{\rho(x,y)}{2}\right)^{-2}}
\frac{t^{\gamma-1}}{(t+1)^{n/2}}dt.
\end{align*}
Substituting $s=1-t\left(\sinh\frac{\rho(x,y)}{2}\right)^{2}$ and using (\ref{2.6}), we obtain
\begin{align*}
&2^{2\gamma-n}(1-|x|^{2})^{\frac{n}{2}-\gamma}(1-|y|^{2})^{\frac{n}{2}-\gamma}G_{\mathbb{B}^{n},\gamma}(x,y)\\
=&
\frac{\Gamma(\frac{n}{2})}{\pi^{n/2}2^{n}\Gamma(\gamma)^{2}}\int_{0}^{1}
(1-s)^{\gamma-1}\left((\cosh\frac{\rho(x,y)}{2})^{2}-1\right)^{-\frac{n}{2}}ds\\
=&\frac{\Gamma(\frac{n}{2})}{2^{n}\pi^{\frac{n}{2}}
\Gamma(\gamma)\Gamma(\gamma+1)}(\cosh\frac{\rho(x,y)}{2})^{-n}F(\frac{n}{2}, 1,;1+\gamma;(\cosh\frac{\rho(x,y)}{2})^{-2})\\
=&K_{1/2,\gamma}(\cosh\rho(x,y)).
\end{align*}
The desired result then follows.
\end{proof}

Similarly, the Green's function of $(-\Delta)^{\gamma}$ on $\mathbb{R}_{+}^{n}$ is given by (see \cite{ch} for $0<\gamma<1$)
\begin{align}\label{5.1}
G_{\mathbb{R}_{+}^{n},\gamma}(x,y)=&\frac{\Gamma(\frac{n}{2})}{\pi^{n/2}4^{\gamma}\Gamma(\gamma)^{2}}|x-y|^{2\gamma-n}
\int_{0}^{\frac{4x_{1}y_{1}}{|x-y|^{2}}}
\frac{t^{\gamma-1}}{(t+1)^{n/2}}dt.
\end{align}
We remark that (\ref{5.1}) is also valid for $\gamma\geq 1$. The proof is similar to that given by \cite{ch,dip,ab} and we omit it. With the same
arguments as in the proof of Lemma \ref{lm5.1}, we have, in terms of half space model of hyperbolic space,
\begin{align}\label{a5.5}
G_{\mathbb{R}_{+}^{n},\gamma}(x,y)=(x_{1}y_{1})^{\gamma-\frac{n}{2}}K_{1/2,\gamma}(\cosh\rho(x,y)).
\end{align}
\medskip

Before proving  Theorems \ref{th1.6} and \ref{th1.7}, we need the following lemma:
\begin{lemma}\label{lm5.2a}
Let $\mathcal{S}(\mathbb{R}^{n})$ be the Schwartz space on $\mathbb{R}^{n}$.
It holds that
\begin{align}\label{a5.2}
|(-\Delta)^{\gamma}u(x)|\lesssim \frac{1}{|x|^{n+\gamma}},\;\; |x|\rightarrow\infty,\;\;  u\in \mathcal{S}(\mathbb{R}^{n}),\;\; \gamma>0.
\end{align}
\end{lemma}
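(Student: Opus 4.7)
The plan is to work on the Fourier side, using the decomposition $\widehat{(-\Delta)^\gamma u}(\xi) = |\xi|^\gamma \widehat u(\xi)$ (in the convention of the paper) and isolating the contribution from the low-frequency region where the symbol fails to be smooth. When $\gamma$ is such that $|\xi|^\gamma$ is a polynomial (for instance, a non-negative even integer under the standard convention, or a non-negative integer if one instead uses the $|\xi|^{2\gamma}$ symbol), $(-\Delta)^\gamma u \in \mathcal{S}(\mathbb{R}^n)$ by direct differentiation, so the bound $|x|^{-n-\gamma}$ is trivial. Assume henceforth that the symbol is not smooth at the origin.

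Fix a radial cutoff $\chi \in C_c^\infty(\mathbb{R}^n)$ with $\chi \equiv 1$ on $\{|\xi|\leq 1\}$ and $\operatorname{supp}\chi \subset \{|\xi|\leq 2\}$, and split
\[
(-\Delta)^\gamma u(x) = I(x) + J(x), \quad I(x) = \mathcal{F}^{-1}\bigl[(1-\chi)\,|\xi|^\gamma \widehat u\bigr](x), \quad J(x) = \mathcal{F}^{-1}\bigl[\chi\,|\xi|^\gamma \widehat u\bigr](x).
\]
The Fourier-side integrand in $I$ is Schwartz, because $(1-\chi)|\xi|^\gamma$ is smooth with all derivatives of polynomial growth and $\widehat u \in \mathcal{S}(\mathbb{R}^n)$; hence $I \in \mathcal{S}(\mathbb{R}^n)$ and decays faster than any polynomial power of $|x|$. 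For $J$, whose Fourier-side factor is compactly supported, rescale by $\eta = |x|\xi$ to obtain
\[
J(x) = |x|^{-n-\gamma}\int_{\mathbb{R}^n}\chi(\eta/|x|)\,|\eta|^\gamma\,\widehat u(\eta/|x|)\,e^{2\pi i (x/|x|)\cdot\eta}\,d\eta.
\]
As $|x|\to\infty$ the integrand converges (in the homogeneous-distribution sense) to $\widehat u(0)\,|\eta|^\gamma e^{2\pi i (x/|x|)\cdot\eta}$, whose oscillatory integral equals a Riesz-type constant coming from the classical identity $\mathcal{F}^{-1}[|\eta|^\gamma](x) = c_{n,\gamma}|x|^{-n-\gamma}$, valid whenever $\gamma$ is not a non-negative even integer. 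A uniform bound on the rescaled integral yields $|J(x)| \lesssim |x|^{-n-\gamma}$, and combining with the Schwartz estimate on $I$ proves the claim.

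The main obstacle is the uniform control of the rescaled oscillatory integral defining $J$: since $|\eta|^\gamma$ is not integrable at infinity, one must argue by introducing a further cutoff in $\eta$ and interpreting the limit in the tempered-distribution framework, or equivalently invoking the known distributional inverse Fourier transform of $|\eta|^\gamma$. A more hands-on alternative would be to factor $(-\Delta)^\gamma u = (-\Delta)^\sigma(-\Delta)^k u$ with $\gamma = k+\sigma$, $\sigma \in (0,1)$, and then analyze the singular-integral representation of $(-\Delta)^\sigma$ applied to the Schwartz function $v = (-\Delta)^k u$ by splitting into the regions $\{|y|\leq|x|/2\}$, $\{|y-x|\leq 1\}$, and the remainder; however this route requires careful bookkeeping and the additional observation that $v$ has vanishing moments up to order $2k-1$ whenever $k\geq 1$, which is needed to push the decay beyond the $|x|^{-n-2\sigma}$ bound.
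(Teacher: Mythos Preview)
Your frequency-splitting outline is sound in spirit, but as you yourself flag, it is not yet a proof: the uniform bound on the rescaled low-frequency integral is asserted, not established. After rescaling, the integration runs over $|\eta|\lesssim|x|\to\infty$ and $|\eta|^\gamma$ is not integrable there, so pointwise convergence of the integrand gives no control. A concrete way to close this is a dyadic decomposition of $\chi(\xi)|\xi|^\gamma\widehat u(\xi)$ into pieces supported where $|\xi|\sim 2^j$, $j\leq 0$: each piece has $L^1$ norm $\lesssim 2^{j(n+\gamma)}$, and after $M$ integrations by parts its inverse Fourier transform is bounded by $2^{j(n+\gamma)}(2^j|x|)^{-M}$; summing $\min\bigl(2^{j(n+\gamma)},\,2^{j(n+\gamma)}(2^j|x|)^{-M}\bigr)$ over $j\leq 0$ with any $M>n+\gamma$ yields $O(|x|^{-n-\gamma})$. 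That is standard but it is real work, not the one-line step your write-up makes it.

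The paper takes a different and rather slicker route: induction on $\gamma$. The base case $0<\gamma<1$ is quoted from Bucur. For the step, a single integration by parts on the Fourier side gives
\[
(-\Delta)^{\gamma+1}u(x)=\frac{1}{|x|^2}\sum_{j=1}^n x_j\Bigl[2(\gamma+1)\,(-\Delta)^\gamma(\partial_{x_j}u)(x)+(-\Delta)^\gamma\bigl((-\Delta)(x_ju)\bigr)(x)\Bigr],
\]
and since $\partial_{x_j}u$ and $\Delta(x_ju)$ are again Schwartz, the inductive hypothesis bounds each summand by $|x|^{-n-\gamma}$; the prefactor $|x|^{-1}$ closes the induction at $|x|^{-n-(\gamma+1)}$. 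This avoids any oscillatory-integral analysis, at the price of importing the $0<\gamma<1$ case as a black box. Your alternative factorization $(-\Delta)^\gamma=(-\Delta)^\sigma(-\Delta)^k$ runs into exactly the difficulty you identify---the generic bound on $(-\Delta)^\sigma v$ for Schwartz $v$ is only $|x|^{-n-\sigma}$, and the improvement requires the vanishing of $\widehat v$ at the origin; the paper's one-step integration-by-parts identity is precisely the clean device that extracts that extra power of decay without having to track moments explicitly.
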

\begin{proof}
We note that if $\gamma\in \mathbb{N}\setminus\{0\}$ , then   $(-\Delta)^{\gamma}u \in\mathcal{S}(\mathbb{R}^{n})$  and  (\ref{a5.2}) follows.

Now we assume $\gamma\in (0,\infty)\setminus\mathbb{N}$.
We shall prove (\ref{a5.2}) by induction.

For $0<\gamma<1$, (\ref{a5.2}) has been proved by C. Bucur (see \cite{buc}, (1.9)).
Assume  (\ref{a5.2}) is valid for $k<\gamma<k+1$. We compute, by the inverse Fourier transform,
\begin{align}\nonumber
(-\Delta)^{\gamma+1}u(x)=&\int_{\mathbb{R}^{n}}(2\pi|\xi|)^{2\gamma+2}\widehat{u}(\xi)e^{2\pi ix\cdot\xi}d\xi\\
\nonumber
=&\frac{1}{2\pi i|x|^{2}}\int_{\mathbb{R}^{n}}(2\pi|\xi|)^{2\gamma+2}\widehat{u}(\xi)\sum_{j=1}^{n}x_{j}\frac{\partial}{\partial \xi_{j}}e^{2\pi ix\cdot\xi}d\xi\\
\nonumber
=&-2(1+\gamma)\sum_{j=1}^{n}\frac{x_{j}}{i|x|^{2}}
\int_{\mathbb{R}^{n}}(2\pi|\xi|)^{2\gamma}2\pi\xi_{j}\widehat{u}(\xi)e^{2\pi ix\cdot\xi}d\xi-\\
\nonumber
&\sum_{j=1}^{n}\frac{x_{j}}{2\pi i|x|^{2}}\int_{\mathbb{R}^{n}}(2\pi|\xi|)^{2\gamma+2}\frac{\partial \widehat{u}(\xi)}{\partial \xi_{j}}e^{2\pi ix\cdot\xi}d\xi\\
\label{a5.3}
=&2(1+\gamma)\sum_{j=1}^{n}\frac{x_{j}}{|x|^{2}}(-\Delta)^{\gamma}\partial_{x_{j}}u(x)+
\sum_{j=1}^{n}\frac{x_{j}}{|x|^{2}}(-\Delta)^{\gamma+1}(x_{j}u(-x)).
\end{align}
Since $\partial_{x_{j}}u(x), \Delta(x_{j}u(-x))\in \mathcal{S}(\mathbb{R}^{n})$, we have, by the induction hypothesis,
\begin{equation}\label{a5.4}
  \begin{split}
 |(-\Delta)^{\gamma}\partial_{x_{j}}u(x)|\lesssim & \frac{1}{|x|^{n+\gamma}},\;\;|x|\rightarrow\infty;\\
 |(-\Delta)^{\gamma+1}(x_{j}u(-x))|=
 |(-\Delta)^{\gamma}\Delta(x_{j}u(-x))|\lesssim &\frac{1}{|x|^{n+\gamma}},\;\;|x|\rightarrow\infty.
   \end{split}
\end{equation}
Substituting (\ref{a5.4}) into (\ref{a5.3}), we obtain
\begin{align*}
|(-\Delta)^{\gamma+1}u(x)|\lesssim &\frac{1}{|x|^{n+1+\gamma}},\;\;|x|\rightarrow\infty.
\end{align*}
This completes the proof of Lemma \ref{lm5.2a}.
\end{proof}

Now we can give the Proofs of Theorems \ref{th1.6} and \ref{th1.7}.

\medskip

\textbf{Proof of Theorem \ref{th1.6}}.
We first   prove (1.13).
Set $g(x)=x_{1}^{\gamma+\frac{n}{2}}(-\Delta)^{\gamma}(x_{1}^{\gamma-\frac{n}{2}}u),\; x_{1}>0$.
It is enough to show
\begin{align*}
\widehat{g}(\lambda,\zeta)=\frac{|\Gamma(\frac{1}{2}+\gamma+i\lambda)|^{2}}{|\Gamma(\frac{1}{2}+i\lambda)|^{2}}\widehat{u}(\lambda,\zeta).
\end{align*}

By (\ref{a5.5}), we have
\begin{align}\label{a5.6}
  u(x)=&x_{1}^{\frac{n}{2}-\gamma}\int_{\mathbb{R}_{+}^{n}}G_{\mathbb{R}_{+}^{n},\gamma}(x,y)y_{1}^{-\frac{n}{2}-\gamma}g(y)dy
  =\int_{\mathbb{H}^{n}}K_{1/2,\gamma}(\cosh\rho(x,y))g(y)dV,\;\; x\in \mathbb{R}_{+}^{n}.
\end{align}
Moreover, since $u\in C_{0}^{\infty}(\mathbb{R}_{+}^{n})$, we have
\begin{align}\label{a5.7}
|g(x)|=|x_{1}^{\gamma+\frac{n}{2}}(-\Delta)^{\gamma}(x_{1}^{\gamma-\frac{n}{2}}u)|\lesssim x_{1}^{\gamma+\frac{n}{2}}
\end{align}
and by Lemma \ref{lm5.2a},
\begin{align}\label{a5.8}
|g(x)|\lesssim \frac{x_{1}^{\gamma+\frac{n}{2}}}{|x|^{n+\gamma}},\;\; |x|\rightarrow\infty.
\end{align}
Combining (\ref{a5.7}),  (\ref{a5.8}) and (\ref{2.1}) yields
\begin{align}\label{a5.9}
|g(x)|\lesssim\frac{x_{1}^{\gamma+\frac{n}{2}}}{1+|x|^{n+\gamma}}\leq \frac{x_{1}^{\frac{n}{2}}}{1+|x|^{n}}\thicksim\big(\cosh\frac{\rho(x)}{2}\big)^{-\frac{n}{2}}.
\end{align}
Therefore,  taking the Helgason-Fourier transform in both sides of  (\ref{a5.6}) and using Lemma \ref{lm3.8a}, we get
\begin{align*}
 \widehat{u}(\lambda,\zeta)=&\frac{|\Gamma(\frac{1}{2}+i\lambda)|^{2}}{|\Gamma(\frac{1}{2}+\gamma+i\lambda)|^{2}}\widehat{g}(\lambda,\zeta),
\end{align*}
i.e.
\begin{align*}
\widehat{g}(\lambda,\zeta)=\frac{|\Gamma(\frac{1}{2}+\gamma+i\lambda)|^{2}}{|\Gamma(\frac{1}{2}+i\lambda)|^{2}}\widehat{u}(\lambda,\zeta).
\end{align*}
This prove (\ref{1.13}). The  proof (\ref{1.14}) is similar and we omit it. The proof of Theorem \ref{th1.6} is thereby completed.

\medskip

\textbf{Proof of Theorem \ref{th1.7}}.
With the same arguments as the proof of Lemma \ref{4.4}, we know that the constant $ \frac{\Gamma(\gamma+\frac{1}{2})^{2}}
{\Gamma(\frac{1}{2})^{2}}$ is sharp.

By Plancherel formula and Lemma \ref{lm4.4},  we have
\begin{align*}
&\int_{\mathbb{H}^{n}}u\widetilde{P}_{\gamma}udV- \frac{\Gamma(\gamma+\frac{1}{2})^{2}}
{\Gamma(\frac{1}{2})^{2}}\int_{\mathbb{H}^{n}}u^{2}dV\\
=&\int^{+\infty}_{-\infty}\int_{\mathbb{S}^{n-1}}
\left(\frac{|\Gamma(\gamma+\frac{1}{2}+i\lambda)|^{2}}{|\Gamma(\frac{1}{2}+i\lambda)|^{2}}-
\frac{|\Gamma(\gamma+\frac{1}{2})|^{2}}{|\Gamma(\frac{1}{2})|^{2}}\right)\widehat{u}(\lambda,\zeta)|^{2}|\mathfrak{c}(\lambda)|^{-2}d\lambda d\sigma(\zeta).
\end{align*}
On the other hand, by using  (\ref{2.13}) and (\ref{2.17}), we have
\begin{align*}
\frac{|\Gamma(\gamma+\frac{1}{2}+i\lambda)|^{2}}{|\Gamma(\frac{1}{2}+i\lambda)|^{2}}-
\frac{|\Gamma(\gamma+\frac{1}{2})|^{2}}{|\Gamma(\frac{1}{2})|^{2}}\thicksim\left\{
                                              \begin{array}{ll}
                                                \lambda^{2}, & \hbox{$\lambda\rightarrow0$;} \\
                                                |\lambda|^{2\gamma}, & \hbox{$\lambda\rightarrow\infty$.}
                                              \end{array}
                                            \right.
\end{align*}
The rest of the proof  of (\ref{1.15})  is exactly similar to that given in the proof of Theorem \ref{th1.3} and we omit it.
Inequalities (\ref{1.16}) and (\ref{1.17}) follow by combining Theorem \ref{th1.6} and (\ref{1.15}). The proof of Theorem \ref{th1.7} is thereby completed.

\medskip

Before proving Theorem \ref{th1.9}, we need the following lemma:
\begin{lemma}\label{lm5.2}
Let $n\geq3$ be odd. There exists $\zeta>0$ such that
\begin{align*}
 \frac{|\Gamma(\frac{n+1}{2}+i\lambda)|^{2}}{|\Gamma(\frac{1}{2}+i\lambda)|^{2}}-
\frac{|\Gamma(\frac{n+1}{2})|^{2}}{|\Gamma(\frac{1}{2})|^{2}}\geq&\lambda^{2}(\lambda^{2}+\zeta)^{\frac{n}{2}-1},\;\lambda\in\mathbb{R}.
\end{align*}
\end{lemma}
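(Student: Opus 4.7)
I would adapt Case~1 of the proof of Lemma~\ref{lm4.6}. Writing $n=2k+1$ with $k\ge 1$, the identities (\ref{2.14}) and (\ref{2.15}) give
\begin{align*}
\frac{|\Gamma(\tfrac{n+1}{2}+i\lambda)|^{2}}{|\Gamma(\tfrac{1}{2}+i\lambda)|^{2}} = \lambda\coth(\pi\lambda)\prod_{j=1}^{k}(j^{2}+\lambda^{2}),\qquad
\frac{|\Gamma(\tfrac{n+1}{2})|^{2}}{|\Gamma(\tfrac{1}{2})|^{2}} = \frac{(k!)^{2}}{\pi}.
\end{align*}
Splitting the $j=1$ factor as $1+\lambda^{2}=\sqrt{1+\lambda^{2}}\cdot\sqrt{1+\lambda^{2}}$, the first ratio can be written as $[\lambda\sqrt{1+\lambda^{2}}\coth(\pi\lambda)]\,Q(\lambda)$, where
$Q(\lambda):=\sqrt{1+\lambda^{2}}\prod_{j=2}^{k}(j^{2}+\lambda^{2})$ (the empty product being interpreted as $1$ when $k=1$). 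Observe that $Q$ is increasing in $\lambda^{2}$ and $Q(0)=(k!)^{2}$.

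The heart of the proof is the scalar inequality
\begin{equation*}
\lambda\sqrt{1+\lambda^{2}}\coth(\pi\lambda)\ge\lambda^{2}+\tfrac{1}{\pi},\qquad\lambda\ge 0.
\end{equation*}
Granting this, multiplying by $Q(\lambda)$ and subtracting $(k!)^{2}/\pi$ yields
\begin{equation*}
\frac{|\Gamma(\tfrac{n+1}{2}+i\lambda)|^{2}}{|\Gamma(\tfrac{1}{2}+i\lambda)|^{2}}-\frac{(k!)^{2}}{\pi}\ge\lambda^{2}Q(\lambda)+\tfrac{1}{\pi}\bigl(Q(\lambda)-(k!)^{2}\bigr)\ge\lambda^{2}Q(\lambda),
\end{equation*}
and for any $\zeta\in(0,1]$ the trivial estimates $\sqrt{1+\lambda^{2}}\ge\sqrt{\lambda^{2}+\zeta}$ and $j^{2}+\lambda^{2}\ge\lambda^{2}+\zeta$ (for $j\ge 2$) give
$Q(\lambda)\ge(\lambda^{2}+\zeta)^{k-1/2}=(\lambda^{2}+\zeta)^{n/2-1}$, which closes the argument.

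The hard part will be establishing the tight scalar inequality above; since $\lim_{\lambda\to 0^{+}}[\lambda\sqrt{1+\lambda^{2}}\coth(\pi\lambda)-\lambda^{2}]=1/\pi$, equality holds in the limit at the origin, so the constant $1/\pi$ cannot be improved. Multiplying through by $\pi\sinh(\pi\lambda)>0$, the inequality is equivalent to $N(\lambda):=\pi\lambda\sqrt{1+\lambda^{2}}\cosh(\pi\lambda)-(\pi\lambda^{2}+1)\sinh(\pi\lambda)\ge 0$. I plan to verify $N\ge 0$ by combining three observations: the Taylor expansion at $\lambda=0$, whose first nonzero coefficient $\pi(\pi^{2}/3-\pi+1/2)\lambda^{3}$ is positive; the large-$\lambda$ asymptotics $N(\lambda)/\sinh(\pi\lambda)\to\pi/2-1>0$; and a monotonicity analysis of the auxiliary function $g(\lambda):=\lambda\sqrt{1+\lambda^{2}}\coth(\pi\lambda)-\lambda^{2}$ (whose expansion $g(\lambda)=1/\pi+(\pi/3+1/(2\pi)-1)\lambda^{2}+O(\lambda^{4})$ already shows $g>1/\pi$ near the origin) on the intermediate regime, in the same spirit as the proof of (\ref{4.15}).
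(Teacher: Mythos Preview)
Your reduction is structurally sound and would yield the lemma (even with $\zeta=1$) once the scalar inequality $\lambda\sqrt{1+\lambda^2}\coth(\pi\lambda)\ge\lambda^2+1/\pi$ is established. But this is the \emph{sharp} constant---$g(0^+)=1/\pi$---and your argument genuinely needs it: with any $c<1/\pi$ the remainder $cQ(\lambda)-Q(0)/\pi$ is strictly negative near $\lambda=0$, so the subtraction step fails. Your plan to prove it is only a sketch, and the appeal to the proof of (\ref{4.15}) is misplaced: there one needs only $f>0$, which follows from the crude bound $\coth\ge 1$, after which $\inf f>0$ comes for free from continuity and the limits at $0$ and $\infty$. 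Here $\coth\ge 1$ gives merely $g(\lambda)\ge\lambda(\sqrt{1+\lambda^2}-\lambda)\to 0$ as $\lambda\to 0$, which is useless for the endpoint bound; a genuine monotonicity or power-series argument would be required, and you have not supplied one.

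The paper avoids this difficulty by a preliminary step you have skipped: writing the left-hand side as $(\lambda^2+\tfrac14)R(\lambda)-\tfrac14 R(0)$ with $R(\lambda)=|\Gamma(\tfrac{n+1}{2}+i\lambda)|^2/|\Gamma(\tfrac32+i\lambda)|^2$ and invoking the monotonicity (\ref{2.20}) to discard the nonnegative term $\tfrac14(R(\lambda)-R(0))$, as in (\ref{5.2}). This reduces the lemma to $R(\lambda)\ge(\lambda^2+\zeta)^{n/2-1}$, which after the product formula (\ref{5.3}) becomes the \emph{weaker} scalar inequality (\ref{5.4}), namely $\lambda\sqrt{\lambda^2+1}\coth(\pi\lambda)\ge\lambda^2+\tfrac14$. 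Because $\tfrac14<1/\pi$ there is now slack at the origin, and a short computation---square both sides, use $\cosh^2=1+\sinh^2$, and check the signs of the power-series coefficients as in (\ref{5.5})--(\ref{5.6})---finishes the job cleanly. If you insert this reduction at the start, the rest of your argument goes through unchanged with $\zeta=1$.
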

\begin{proof}
Without loss of generality, we  assume $\lambda>0$.
We have
\begin{align}\nonumber
 &\frac{|\Gamma(\frac{n+1}{2}+i\lambda)|^{2}}{|\Gamma(\frac{1}{2}+i\lambda)|^{2}}-
\frac{|\Gamma(\frac{n+1}{2})|^{2}}{|\Gamma(\frac{1}{2})|^{2}}\\
\nonumber
=&(\lambda^{2}+1/4)\frac{|\Gamma(\frac{n+1}{2}+i\lambda)|^{2}}{|\Gamma(\frac{3}{2}+i\lambda)|^{2}}-\frac{1}{4}
\frac{|\Gamma(\frac{n+1}{2})|^{2}}{|\Gamma(\frac{3}{2})|^{2}}\\
\nonumber
=&\lambda^{2}\frac{|\Gamma(\frac{n+1}{2}+i\lambda)|^{2}}{|\Gamma(\frac{3}{2}+i\lambda)|^{2}}+\frac{1}{4}\left(
\frac{|\Gamma(\frac{n+1}{2}+i\lambda)|^{2}}{|\Gamma(\frac{3}{2}+i\lambda)|^{2}}-\frac{|\Gamma(\frac{n+1}{2})|^{2}}{|\Gamma(\frac{3}{2})|^{2}}\right)\\
\label{5.2}
\geq&\lambda^{2}\frac{|\Gamma(\frac{n+1}{2}+i\lambda)|^{2}}{|\Gamma(\frac{3}{2}+i\lambda)|^{2}}.
\end{align}
To get the last inequality above, we use (\ref{2.20}).

Set $n=2k+1$.  By using  (\ref{2.14}) and (\ref{2.15}), we have,
\begin{align}\label{5.3}
\lambda^{2}\frac{|\Gamma(\frac{n+1}{2}+i\lambda)|^{2}}{|\Gamma(\frac{3}{2}+i\lambda)|^{2}}=&\lambda^{2}\frac{|\Gamma(k+1+i\lambda)|^{2}}{|\Gamma(\frac{3}{2}+i\lambda)|^{2}}
=\lambda^{2}\prod_{m=1}^{k}(\lambda^{2}+m^{2})\times\frac{\lambda\cosh\pi\lambda}{(\lambda^{2}+\frac{1}{4})\sinh\pi\lambda}.
\end{align}
We claim
\begin{align}\label{5.4}
\lambda\sqrt{\lambda^{2}+1}\cosh\pi\lambda\geq (\lambda^{2}+1/4)\sinh\pi\lambda.
\end{align}
In fact,
\begin{align}\label{5.5}
\left(\lambda\sqrt{\lambda^{2}+1}\cosh\pi\lambda\right)^{2}- \left[(\lambda^{2}+1/4)\sinh\pi\lambda\right]^{2}
=&\frac{1}{2}(\lambda^{2}-1/8)(\sinh\pi\lambda)^{2}+\lambda^{2}+\lambda^{4}.
\end{align}
Using the  series expansion of $\cosh2\pi\lambda$, we get
\begin{align}\label{5.6}
 (\sinh\pi\lambda)^{2}=\frac{1}{2}(\cosh2\pi\lambda-1)=\frac{1}{2}\sum_{k=1}^{\infty}\frac{(2\pi)^{2k}}{(2k)!}\lambda^{2k}.
\end{align}
Substituting (\ref{5.6}) into (\ref{5.5}), we obtain
\begin{align*}
 &\left(\lambda\sqrt{\lambda^{2}+1}\cosh\pi\lambda\right)^{2}- \left[(\lambda^{2}+1/4)\sinh\pi\lambda\right]^{2}\\
 =&\frac{1}{4}(\lambda^{2}-1/8)\sum_{k=1}^{\infty}\frac{(2\pi)^{2k}}{(2k)!}\lambda^{2k}+\lambda^{2}+\lambda^{4}\\
 =&\frac{1}{4}\sum_{k=2}^{\infty}\frac{(2\pi)^{2k-2}}{(2k)!}\left[2k(2k-1)-\frac{\pi^{2}}{2}\right]\lambda^{2k}+\left(1-\frac{\pi^{2}}{16}\right)\lambda^{2}
 +\lambda^{4}\\
 \geq&0.
\end{align*}
This proves the claim.

Substituting (\ref{5.4}) into (\ref{5.3}), we obtain
\begin{align}\label{5.7}
\lambda^{2}\frac{|\Gamma(\frac{n-1}{2}+i\lambda)|^{2}}{|\Gamma(\frac{1}{2}+i\lambda)|^{2}}\geq \lambda^{2}
\prod_{m=1}^{k}(\lambda^{2}+m^{2})\times\frac{1}{\sqrt{\lambda^{2}+1}}\geq \lambda^{2}(\lambda^{2}+1)^{\frac{n}{2}-1}.
\end{align}
The desired result then follows by combining (\ref{5.2}) and (\ref{5.7}). This completes the proof of Lemma \ref{lm5.2}.
\end{proof}

\textbf{Proof of Theorem \ref{th1.9}}.
By Plancherel formula and Lemma \ref{lm5.2}, we have, for some $\zeta>0$
\begin{align*}
&\int_{\mathbb{H}^{n}}u\widetilde{P}_{\frac{n}{2}}udV-
\frac{\Gamma(\frac{n+1}{2})^{2}}
{\Gamma(\frac{1}{2})^{2}}\int_{\mathbb{H}^{n}}u^{2}dV\\
=&\int^{+\infty}_{-\infty}\int_{\mathbb{S}^{n-1}}
\left(\frac{|\Gamma(\frac{n+1}{2}+i\lambda)|^{2}}{|\Gamma(\frac{1}{2}+i\lambda)|^{2}}-
\frac{|\Gamma(\frac{n+1}{2})|^{2}}{|\Gamma(\frac{1}{2})|^{2}}\right)\widehat{u}(\lambda,\zeta)|^{2}|\mathfrak{c}(\lambda)|^{-2}d\lambda d\sigma(\zeta)\\
\geq&\int^{+\infty}_{-\infty}\int_{\mathbb{S}^{n-1}}
\lambda^{2}(\lambda^{2}+\zeta)^{\frac{n}{2}-1}\widehat{u}(\lambda,\zeta)|^{2}|\mathfrak{c}(\lambda)|^{-2}d\lambda d\sigma(\zeta)\\
=&\int_{\mathbb{H}^{n}}|(-\Delta_{\mathbb{H}}-(n-1)^{2}/4)^{1/2}(-\Delta_{\mathbb{H}}-(n-1)^{2}/4
+\zeta)^{\frac{n-2}{4}} u|^{2}dV.
\end{align*}
Therefore, by Theorem \ref{th4.7}, we obtain (\ref{1.19}). This completes the proof of Theorem \ref{th1.9}.

\section{proofs of Theorems \ref{th1.4} and \ref{th1.8} }\label{Section7}
We first prove Theorem \ref{th1.8}. The proof depends on the following two lemmas.
\begin{lemma}\label{lm6.1}
Let $H_{0,\gamma}(\cosh\rho)$ be given by (\ref{a3.18}). It holds that
\begin{align*}
\left(-\Delta_{\mathbb{H}}-\frac{(n-1)^{2}}{4}\right)H_{0,\gamma}
=&\frac{(n-2\gamma)(2\gamma-2)}{4}
\left(\sinh\frac{\rho}{2}\right)^{2\gamma-2-n}\left(\cosh\frac{\rho}{2}\right)^{1-2\gamma}+\\
&\frac{(2\gamma-1)(2\gamma+1-n)}{4(\cosh\frac{\rho}{2})^{2}}H_{0,\gamma}.
\end{align*}
In particular, if $\frac{n-1}{2}\leq \gamma<\frac{n}{2}$, then
\begin{align*}
\left(-\Delta_{\mathbb{H}}-\frac{(n-1)^{2}}{4}\right)H_{0,\gamma}
\geq&\frac{(n-2\gamma)(2\gamma-2)}{4}
\left(\sinh\frac{\rho}{2}\right)^{2\gamma-2-n}\left(\cosh\frac{\rho}{2}\right)^{1-2\gamma}.
\end{align*}
\end{lemma}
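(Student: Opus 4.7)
Since $H_{0,\gamma}$ is a radial function, the plan is to reduce the Laplace-Beltrami computation to a one-variable ODE calculation using the radial formula \eqref{2.3}, and then simplify the resulting trigonometric expression so that the two terms on the right-hand side appear naturally. No analytic machinery is needed; the lemma reduces to pure algebra. The second assertion is then a direct sign inspection.

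\textbf{Step 1: Set up a logarithmic derivative.} Writing $H_{0,\gamma}=e^{\psi(\rho)}$ with
\[
\psi(\rho)=(1-2\gamma)\log\cosh\tfrac{\rho}{2}+(2\gamma-n)\log\sinh\tfrac{\rho}{2},
\]
we have $\partial_\rho H_{0,\gamma}/H_{0,\gamma}=\psi'(\rho)$ and $\partial_\rho^2 H_{0,\gamma}/H_{0,\gamma}=\psi'(\rho)^2+\psi''(\rho)$. Direct differentiation yields
\[
\psi'(\rho)=\tfrac{1-2\gamma}{2}\tanh\tfrac{\rho}{2}+\tfrac{2\gamma-n}{2}\coth\tfrac{\rho}{2},\qquad
\psi''(\rho)=\tfrac{1-2\gamma}{4}\operatorname{sech}^{2}\tfrac{\rho}{2}-\tfrac{2\gamma-n}{4}\operatorname{csch}^{2}\tfrac{\rho}{2}.
\]

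\textbf{Step 2: Apply the radial Laplacian and convert $\coth\rho$.} By \eqref{2.3}, $\Delta_{\mathbb{H}}H_{0,\gamma}/H_{0,\gamma}=\psi'^{2}+\psi''+(n-1)\coth\rho\cdot\psi'$. Using the half-angle identity $\coth\rho=\tfrac{1}{2}\bigl(\coth\tfrac{\rho}{2}+\tanh\tfrac{\rho}{2}\bigr)$, expand the product $(n-1)\coth\rho\cdot\psi'(\rho)$; the cross terms collapse via $\tanh\tfrac{\rho}{2}\cdot\coth\tfrac{\rho}{2}=1$.

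\textbf{Step 3: Simplify using the Pythagorean identities.} Substitute $\tanh^{2}\tfrac{\rho}{2}=1-\operatorname{sech}^{2}\tfrac{\rho}{2}$ and $\coth^{2}\tfrac{\rho}{2}=1+\operatorname{csch}^{2}\tfrac{\rho}{2}$ everywhere. The purely constant terms reassemble, using the key identity $(1-2\gamma)+(2\gamma-n)=1-n$, into exactly $\tfrac{(1-n)^{2}}{4}-\tfrac{(n-1)^{2}}{2}=-\tfrac{(n-1)^{2}}{4}$, so that $-\Delta_{\mathbb{H}}H_{0,\gamma}/H_{0,\gamma}-\tfrac{(n-1)^{2}}{4}$ contains no constant part and reduces to
\[
\frac{(2\gamma-1)(2\gamma-n+1)}{4}\operatorname{sech}^{2}\tfrac{\rho}{2}+\frac{(n-2\gamma)(2\gamma-2)}{4}\operatorname{csch}^{2}\tfrac{\rho}{2}.
\]
Multiplying through by $H_{0,\gamma}=\cosh^{1-2\gamma}\tfrac{\rho}{2}\sinh^{2\gamma-n}\tfrac{\rho}{2}$ and recognizing $H_{0,\gamma}/\sinh^{2}\tfrac{\rho}{2}=\cosh^{1-2\gamma}\tfrac{\rho}{2}\sinh^{2\gamma-n-2}\tfrac{\rho}{2}$ gives exactly the claimed identity.

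\textbf{Step 4: Sign of the second term.} When $\tfrac{n-1}{2}\le\gamma<\tfrac{n}{2}$ and $n\ge 2$, we have $2\gamma-1\ge n-2\ge 0$ and $2\gamma-n+1\ge 0$, so their product is nonnegative. Dropping this term preserves the inequality $\bigl(-\Delta_{\mathbb{H}}-\tfrac{(n-1)^{2}}{4}\bigr)H_{0,\gamma}\ge\tfrac{(n-2\gamma)(2\gamma-2)}{4}\sinh^{2\gamma-2-n}\tfrac{\rho}{2}\cosh^{1-2\gamma}\tfrac{\rho}{2}$.

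\textbf{Main obstacle.} There is no conceptual difficulty; the only risk is algebraic bookkeeping in Step 3. The cleanest way to avoid errors is to track the $\operatorname{sech}^{2}$ and $\operatorname{csch}^{2}$ coefficients separately from the constants, using the factorizations $(1-2\gamma)(1-(1-2\gamma))=2\gamma(1-2\gamma)$ and $(2\gamma-n)(2\gamma-n-1)+(n-1)(2\gamma-n)=(2\gamma-n)(2\gamma-2)$ to see the symmetric form of the final answer.
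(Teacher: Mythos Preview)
Your proposal is correct and follows essentially the same approach as the paper: a direct radial computation using \eqref{2.3}, the half-angle identity for $\coth\rho$, and the Pythagorean relations to collapse the constants to $-\tfrac{(n-1)^2}{4}$. The only cosmetic difference is that you organize the calculation via the logarithmic derivative $\psi=\log H_{0,\gamma}$, whereas the paper computes $\partial_\rho H_{0,\gamma}$ and $\partial_{\rho\rho}H_{0,\gamma}$ directly; both routes yield identical bookkeeping and the same final identity.
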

\begin{proof} Recall that $H_{0,\gamma}(\cosh\rho)=\left(\sinh\frac{\rho}{2}\right)^{2\gamma-n}\left(\cosh\frac{\rho}{2}\right)^{1-2\gamma}$.
We compute
\begin{align*}
\partial_{\rho}H_{0,\gamma}=&\left[\frac{2\gamma-n}{2} \coth\frac{\rho}{2}+\frac{1-2\gamma}{2}\tanh\frac{\rho}{2}\right]H_{0,\gamma};\\
\partial_{\rho\rho}H_{0,\gamma}=&\left[- \frac{2\gamma-n}{4(\sinh\frac{\rho}{2})^{2}}+\frac{1-2\gamma}{4(\cosh\frac{\rho}{2})^{2}}\right]H_{0,\gamma}+
  \left[\frac{2\gamma-n}{2} \coth\frac{\rho}{2}+\frac{1-2\gamma}{2}\tanh\frac{\rho}{2}\right]\partial_{\rho}H_{0,\gamma}\\
  =&\left[- \frac{2\gamma-n}{4(\sinh\frac{\rho}{2})^{2}}+\frac{1-2\gamma}{4(\cosh\frac{\rho}{2})^{2}}\right]H_{0,\gamma}+
  \left[\frac{2\gamma-n}{2} \coth\frac{\rho}{2}+\frac{1-2\gamma}{2}\tanh\frac{\rho}{2}\right]^{2}H_{0,\gamma}\\
  =&\left[\frac{(2\gamma-n)^{2}(\cosh\frac{\rho}{2})^{2}-2\gamma+n}{(2\sinh\frac{\rho}{2})^{2}}+\frac{(2\gamma-n)(1-2\gamma)}{2}
  +\right.\\
  &\left.\frac{(1-2\gamma)^{2}(\sinh\frac{\rho}{2})^{2}+1-2\gamma}{4(\cosh\frac{\rho}{2})^{2}}\right]H_{0,\gamma}.
\end{align*}
Using $(\cosh\frac{\rho}{2})^{2}=1+(\sinh\frac{\rho}{2})^{2}$, we have
\begin{align}\nonumber
  \partial_{\rho\rho}H_{0,\gamma}(\cosh\rho)
  =&\left[\frac{(2\gamma-n)^{2}[1+(\sinh\frac{\rho}{2})^{2}]-2\gamma+n}{(2\sinh\frac{\rho}{2})^{2}}+
  \frac{(2\gamma-n)(1-2\gamma)}{2}
  +\right.\\ \nonumber
  &\left.\frac{(1-2\gamma)^{2}[(\cosh\frac{\rho}{2})^{2}-1]+1-2\gamma}{4(\cosh\frac{\rho}{2})^{2}}\right]H_{0,\gamma}\\
  =&\left[\frac{(2\gamma-n)(2\gamma-n-1)}{(2\sinh\frac{\rho}{2})^{2}}+\frac{(n-1)^{2}}{4}
   +\frac{2\gamma(1-2\gamma)}{4(\cosh\frac{\rho}{2})^{2}}\right]H_{0,\gamma}. \label{6.1}
\end{align}
Similarly,
\begin{align}\nonumber
 (n-1) \frac{\cosh\rho}{\sinh\rho}\partial_{\rho}H_{0,\gamma}=& (n-1)\left[\frac{2\gamma-n}{2} \frac{\cosh\rho}{2(\sinh\frac{\rho}{2})^{2}}+\frac{1-2\gamma}{2}
  \frac{\cosh\rho}{2(\cosh\frac{\rho}{2})^{2}}\right]H_{0,\gamma}\\\nonumber
  =&(n-1)\left[\frac{(2\gamma-n)(1+2(\sinh\frac{\rho}{2})^{2})}{(2\sinh\frac{\rho}{2})^{2}}+
  \frac{(1-2\gamma)[2(\cosh\frac{\rho}{2})^{2}-1]}{4(\cosh\frac{\rho}{2})^{2}}\right]H_{0,\gamma}\\
  =&(n-1)\left[\frac{2\gamma-n}{(2\sinh\frac{\rho}{2})^{2}}+\frac{1-n}{2}+
  \frac{2\gamma-1}{4(\cosh\frac{\rho}{2})^{2}}\right]H_{0,\gamma}. \label{6.2}
\end{align}
Since $H_{0,\gamma}$ is radial, we have,  by using  (\ref{6.1}) and (\ref{6.2}),
\begin{align*}
&\left(-\Delta_{\mathbb{H}}-\frac{(n-1)^{2}}{4}\right)H_{0,\gamma}\\
=&\left(-\partial_{\rho\rho}-(n-1) \frac{\cosh\rho}{\sinh\rho}\partial_{\rho}-\frac{(n-1)^{2}}{4}\right)H_{0,\gamma}\\
=&(n-2\gamma)(2\gamma-2)\frac{1}{(2\sinh\frac{\rho}{2})^{2}}H_{0,\gamma}+\frac{(2\gamma-1)(2\gamma+1-n)}{4(\cosh\frac{\rho}{2})^{2}}H_{0,\gamma}\\
=&\frac{(n-2\gamma)(2\gamma-2)}{4}
\left(\sinh\frac{\rho}{2}\right)^{2\gamma-2-n}\left(\cosh\frac{\rho}{2}\right)^{1-2\gamma}+\frac{(2\gamma-1)(2\gamma+1-n)}{4(\cosh\frac{\rho}{2})^{2}}H_{0,\gamma}.
\end{align*}
This completes the proof of Lemma \ref{lm6.1}.
\end{proof}

\begin{lemma} \label{lm6.2} Let  $n\geq 3$ and $\frac{n-1}{2}\leq \gamma<\frac{n}{2}$. It holds that, for  $u\in C^{\infty}_{0}(\mathbb{H}^{n})$,
\begin{equation}\label{6.3}
\int_{\mathbb{H}^{n}}u\left(-\Delta_{\mathbb{H}}-\frac{(n-1)^{2}}{4}\right)\frac{|\Gamma(\gamma+\frac{1}{2}+i\sqrt{-\Delta_{\mathbb{H}}-\frac{(n-1)^{2}}{4}})|^{2}}
{|\Gamma(\frac{3}{2}+i\sqrt{-\Delta_{\mathbb{H}}-\frac{(n-1)^{2}}{4}})|^{2}}udV\geq S_{n,\gamma}\|u\|^{2}_{\frac{2n}{n-2\gamma}}.
\end{equation}
Furthermore, the inequality is  strict for nonzero $u$'s.
\end{lemma}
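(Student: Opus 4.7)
The plan is to reduce Lemma \ref{lm6.2} to Theorem \ref{th3.10} by establishing a pointwise Helgason-Fourier multiplier comparison. Specifically, denote the Fourier multiplier of the operator on the left-hand side of (6.3) by
$$T_\gamma(\lambda):=\frac{\lambda^2|\Gamma(\gamma+1/2+i\lambda)|^2}{|\Gamma(3/2+i\lambda)|^2},$$
and let $W_\gamma(\lambda):=|\Gamma(\gamma+i\lambda)|^2/|\Gamma(i\lambda)|^2$ denote the multiplier of the operator in Theorem \ref{th3.10}. I will show that
$$T_\gamma(\lambda)\ge W_\gamma(\lambda)\qquad\text{for all }\lambda\in\mathbb R,$$
whenever $\gamma\ge(n-1)/2\ge 1$. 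By Plancherel this yields $\int uT_\gamma u\,dV\ge\int uW_\gamma u\,dV$, and Theorem \ref{th3.10} then delivers the desired lower bound $S_{n,\gamma}\|u\|_{2n/(n-2\gamma)}^2$, with strict inequality for nonzero $u$ inherited from the strictness of Theorem \ref{th3.10}.

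To prove the pointwise multiplier inequality, first use the identity $\lambda^2|\Gamma(i\lambda)|^2=|\Gamma(1+i\lambda)|^2$ (a consequence of $\Gamma(z+1)=z\Gamma(z)$) to rewrite the claim as
$$\frac{|\Gamma(\gamma+1/2+i\lambda)|^2|\Gamma(1+i\lambda)|^2}{|\Gamma(\gamma+i\lambda)|^2|\Gamma(3/2+i\lambda)|^2}\ge 1.$$
Expanding each of the four factors via the infinite product representation (2.13) converts the inequality into
$$\frac{4\Gamma(\gamma+1/2)^2}{\pi\Gamma(\gamma)^2}\prod_{k=0}^\infty\frac{(1+\lambda^2/(\gamma+k)^2)(1+\lambda^2/(3/2+k)^2)}{(1+\lambda^2/(\gamma+1/2+k)^2)(1+\lambda^2/(1+k)^2)}\ge 1.$$
The key structural fact is that the four positive reals $1+k,\,3/2+k,\,\gamma+k,\,\gamma+1/2+k$ satisfy the pair-sum identity $(1+k)+(\gamma+1/2+k)=(3/2+k)+(\gamma+k)$ and are in the order required by Lemma \ref{lm3.4} whenever $\gamma\ge(n-1)/2\ge 1$. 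Lemma \ref{lm3.4} therefore supplies $(1+k)(\gamma+1/2+k)\le(3/2+k)(\gamma+k)$, and squaring (together with Legendre's duplication formula (2.10), which encodes the identities $|\Gamma(\gamma+i\lambda)||\Gamma(\gamma+1/2+i\lambda)|=\sqrt{\pi\cdot 4^{1-2\gamma}}\,|\Gamma(2\gamma+2i\lambda)|$ and $|\Gamma(1+i\lambda)||\Gamma(3/2+i\lambda)|=(\sqrt{\pi}/2)|\Gamma(2+2i\lambda)|$) produces the required product control. The normalization of the prefactor $4\Gamma(\gamma+1/2)^2/(\pi\Gamma(\gamma)^2)$ is precisely what Stirling's formula (2.17) forces by matching the asymptotic behavior as $\lambda\to\infty$ where equality is approached.

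The main obstacle lies in converting the termwise Lemma \ref{lm3.4} estimate into a uniform lower bound on the infinite product $\prod_k g_k(\lambda^2)$ valid for all $\lambda$: the termwise comparison naturally yields $g_k(\lambda^2)\le 1$ rather than the opposite direction, so one must work with the combined expression and extract the lower bound from the interplay of the prefactor with the Stirling asymptotics. Here Lemma \ref{lm6.1} plays an essential auxiliary role through the Green-function identity $K_{0,1}\ast(LH_{0,\gamma})=H_{0,\gamma}$ (valid because $K_{0,1}=K_{0,1}(\cosh\rho)$ is the Green function of $L=-\Delta_{\mathbb H}-(n-1)^2/4$ by Theorem \ref{co3.4} with $\nu=0,\gamma=1$): combined with the pointwise lower bound of Lemma \ref{lm6.1}, this converts into a pointwise upper bound on the convolution $K_{0,1}\ast[(\sinh\rho/2)^{2\gamma-2-n}(\cosh\rho/2)^{1-2\gamma}]$, which in turn controls the extra term $\tfrac14 K_{0,1}\ast K_{1/2,\gamma}$ in the decomposition $G_\gamma=K_{1/2,\gamma}+\tfrac14 K_{0,1}\ast K_{1/2,\gamma}$ of the Green function of $T_\gamma^{-1}$. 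This Green-function route, together with the hypergeometric-function bound on $K_{1/2,\gamma}$ obtained by the exact same transformation-and-monotonicity argument as in the proof of Theorem \ref{th3.10}, is the backup strategy if the direct multiplier comparison proves too delicate; either way the final step invokes Theorem \ref{th3.8} (HLS on $\mathbb H^n$) and Lemma \ref{lm3.7} (Beckner's duality) to convert a pointwise kernel bound into the sharp Sobolev inequality.
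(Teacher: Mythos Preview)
Your primary route---the pointwise multiplier comparison $T_\gamma(\lambda)\ge W_\gamma(\lambda)$---is not proved.  You correctly reduce it to
\[
\frac{|\Gamma(\gamma+\tfrac12+i\lambda)|^{2}\,|\Gamma(1+i\lambda)|^{2}}{|\Gamma(\gamma+i\lambda)|^{2}\,|\Gamma(\tfrac32+i\lambda)|^{2}}\ge 1,
\]
and you correctly note (via Lemma~\ref{lm3.4}) that each factor $g_k(\lambda^2)$ in the resulting infinite product is $\le 1$, so the product term is $\le 1$ while the prefactor $4\Gamma(\gamma+\tfrac12)^2/(\pi\Gamma(\gamma)^2)$ is $\ge 1$.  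But you never show that their product stays $\ge 1$; the appeal to ``interplay of the prefactor with the Stirling asymptotics'' only tells you the ratio tends to $1$ as $|\lambda|\to\infty$, not that it never dips below $1$.  This is the gap.  (The inequality does appear to hold---for $\gamma=1$ it is an identity, and for $\gamma=2$ one checks $T_2/W_2=(\lambda^2+9/4)/(\lambda^2+1)>1$---but for general $\gamma\ge 1$ the function $f_\lambda(\gamma)=|\Gamma(\gamma+\tfrac12+i\lambda)|^2/|\Gamma(\gamma+i\lambda)|^2$ is \emph{not} monotone in $\gamma$ when $\lambda$ is large, so the na\"ive monotonicity argument also fails.)

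Your backup strategy is essentially the paper's proof, but with a less convenient splitting of the Green function.  The paper does not split $T_\gamma^{-1}$ as $(1+\tfrac{1}{4\lambda^2})\cdot|\Gamma(\tfrac12+i\lambda)|^2/|\Gamma(\gamma+\tfrac12+i\lambda)|^2$ (giving your $G_\gamma=K_{1/2,\gamma}+\tfrac14 K_{0,1}\ast K_{1/2,\gamma}$); instead it factors $T_\gamma^{-1}=\tfrac{1}{\lambda^2}\cdot\tfrac{|\Gamma(3/2+i\lambda)|^2}{|\Gamma(\gamma+1/2+i\lambda)|^2}$, so that $G_\gamma=\phi_n\ast K_{3/2,\gamma-1}$ with $\phi_n=K_{0,1}$.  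The point of this factoring is that the hypergeometric transformation-and-monotonicity trick (as in Theorem~\ref{th3.10}) gives the \emph{pointwise} bound
\[
K_{3/2,\gamma-1}(\cosh\rho)\le \frac{\Gamma(\tfrac n2+1-\gamma)}{2^{n}\pi^{n/2}\Gamma(\gamma-1)}\,(\cosh\tfrac\rho2)^{1-2\gamma}(\sinh\tfrac\rho2)^{2\gamma-2-n},
\]
and by Lemma~\ref{lm6.1} the right side is exactly $\tfrac{\Gamma(n/2-\gamma)}{2^n\pi^{n/2}\Gamma(\gamma)}\bigl(-\Delta_{\mathbb H}-\tfrac{(n-1)^2}{4}\bigr)H_{0,\gamma}$.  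Convolving with $\phi_n$ kills the $\bigl(-\Delta_{\mathbb H}-\tfrac{(n-1)^2}{4}\bigr)$ and yields $G_\gamma\le C\,H_{0,\gamma}\le C\,(\sinh\tfrac\rho2)^{2\gamma-n}$; Theorem~\ref{th3.8} and Lemma~\ref{lm3.7} then finish.  Your decomposition would require bounding $K_{0,1}\ast K_{1/2,\gamma}$, but $K_{1/2,\gamma}\sim e^{-n\rho/2}$ at infinity while $(\cosh\tfrac\rho2)^{1-2\gamma}(\sinh\tfrac\rho2)^{2\gamma-2-n}\sim e^{-(n+1)\rho/2}$, so the pointwise bound $K_{1/2,\gamma}\lesssim(\cosh\tfrac\rho2)^{1-2\gamma}(\sinh\tfrac\rho2)^{2\gamma-2-n}$ you would need fails at infinity; the paper's factoring $K_{3/2,\gamma-1}$ sidesteps this because $K_{3/2,\gamma-1}\sim e^{-(n+2)\rho/2}$ at infinity.
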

\begin{proof} For $\gamma=1$, we have $n=3$ because of $\frac{n-1}{2}\leq \gamma<\frac{n}{2}$. In this case,    inequality (\ref{6.3}) is nothing but the sharp Poincar\'e-Sobolev inequality on $\mathbb{H}^{3}$ (see \cite{bfl}).

Now we
assume $\gamma>1$.
According to the equivalence of Sobolev inequality and Hardy-Littlewoode-Sobolev inequality,
we only need to prove
\begin{equation}\label{6.4}
\left|\int_{\mathbb{H}^{n}}\int_{\mathbb{H}^{n}}f(x)\left(
\left(-\Delta_{\mathbb{H}}-\frac{(n-1)^{2}}{4}\right)^{-1}\ast K_{3/2,\gamma-1}\right)(x,y)g(y)dV_{x}dV_{y}\right|\leq \frac{\|f\|_{p}\|g\|_{p}}{S_{n,\gamma}},
\end{equation}
where $f,g\in C_{0}^{\infty}(\mathbb{B}^{n})$,  $\left(-\Delta_{\mathbb{H}}-\frac{(n-1)^{2}}{4}\right)^{-1}$ is the Green's function of $-\Delta_{\mathbb{H}}-\frac{(n-1)^{2}}{4}$,
$K_{3/2,\gamma-1}$ is given in (\ref{3.8}) and $p=\frac{2n}{n+2\gamma}$.

For simplicity, we set
\begin{align*}
\phi_{n}(\rho)=\left(-\Delta_{\mathbb{H}}-\frac{(n-1)^{2}}{4}\right)^{-1}.
\end{align*}
Without loss of generality, we can assume  $f\geq0$ and $g\geq0$.
By using  (\ref{2.11}) and (\ref{2.10}), we have
\begin{align*}
K_{3/2,\gamma-1}=&\frac{\Gamma(\frac{n}{2}+1)}{2^{n}\pi^{\frac{n}{2}}
\Gamma(\gamma-1)\Gamma(2+\gamma)}\left(\cosh\frac{\rho}{2}\right)^{-n-2} F\big(1+\frac{n}{2},2;2+\gamma;
(\cosh\frac{\rho}{2})^{-2}\big)\\
=&\frac{\Gamma(\frac{n}{2}+1)}{2^{n}\pi^{\frac{n}{2}}
\Gamma(\gamma-1)\Gamma(2+\gamma)}\left(\cosh\frac{\rho}{2}\right)^{-n-2} \left(\tanh\frac{\rho}{2}\right)^{2\gamma-2-n}\times\\
&F\big(\gamma+1-\frac{n}{2},\gamma;2+\gamma;
(\cosh\frac{\rho}{2})^{-2}\big)\\
\leq&\frac{\Gamma(\frac{n}{2}+1)}{2^{n}\pi^{\frac{n}{2}}
\Gamma(\gamma-1)\Gamma(2+\gamma)}\left(\cosh\frac{\rho}{2}\right)^{-n-2} \left(\tanh\frac{\rho}{2}\right)^{2\gamma-2-n}
F\big(\gamma+1-\frac{n}{2},\gamma;2+\gamma;
1\big)\\
=&\frac{\Gamma(\frac{n}{2}+1-\gamma)}{2^{n}\pi^{\frac{n}{2}}
\Gamma(\gamma-1)}\left(\cosh\frac{\rho}{2}\right)^{-2\gamma} \left(\sinh\frac{\rho}{2}\right)^{2\gamma-2-n}\\
\leq& \frac{\Gamma(\frac{n}{2}+1-\gamma)}{2^{n}\pi^{\frac{n}{2}}
\Gamma(\gamma-1)}\left(\cosh\frac{\rho}{2}\right)^{1-2\gamma} \left(\sinh\frac{\rho}{2}\right)^{2\gamma-2-n}.
\end{align*}
Therefore, by Lemma \ref{lm6.1}, we obtain
\begin{align*}
K_{3/2,\gamma-1}\leq& \frac{\Gamma(\frac{n}{2}-\gamma)}{2^{n}\pi^{\frac{n}{2}}
\Gamma(\gamma)}\left(-\Delta_{\mathbb{H}}-\frac{(n-1)^{2}}{4}\right)H_{0,\gamma}
\end{align*}
and thus
\begin{equation}\label{6.5}
  \begin{split}
&\int_{\mathbb{H}^{n}}\int_{\mathbb{H}^{n}}f(x)\left(
\phi_{n}\ast K_{3/2,\gamma-1}\right)(x,y)g(y)dV_{x}dV_{y}\\
\leq&\frac{\Gamma(\frac{n}{2}-\gamma)}{2^{n}\pi^{\frac{n}{2}}
\Gamma(\gamma)}\int_{\mathbb{H}^{n}}\int_{\mathbb{H}^{n}}f(x)\left(
\phi_{n}\ast \left(-\Delta_{\mathbb{H}}-\frac{(n-1)^{2}}{4}\right)H_{0,\gamma}\right)(x,y)g(y)dV_{x}dV_{y}.
  \end{split}
\end{equation}
By using (\ref{3.2}) and Lemma \ref{lm3.5}, we have that the right side of (\ref{6.5}) is equal to
\begin{align*}
&\frac{\Gamma(\frac{n}{2}-\gamma)}{2^{n}\pi^{\frac{n}{2}}
\Gamma(\gamma)}\int^{+\infty}_{-\infty}\int_{\mathbb{S}^{n-1}} \widehat{f}(\lambda,\zeta)
 \frac{1}{\lambda^{2}}\lambda^{2}\widehat{H_{0,\gamma}}(\lambda)\widehat{g}(\lambda,\zeta)|\mathfrak{c}(\lambda)|^{-2}d\lambda d\sigma(\zeta)\\
 =&\frac{\Gamma(\frac{n}{2}-\gamma)}{2^{n}\pi^{\frac{n}{2}}
\Gamma(\gamma)}\int^{+\infty}_{-\infty}\int_{\mathbb{S}^{n-1}} \widehat{f}(\lambda,\zeta)
 \widehat{H_{0,\gamma}}(\lambda)\widehat{g}(\lambda,\zeta)|\mathfrak{c}(\lambda)|^{-2}d\lambda d\sigma(\zeta)\\
 =&\frac{\Gamma(\frac{n}{2}-\gamma)}{2^{n}\pi^{\frac{n}{2}}
\Gamma(\gamma)}\int_{\mathbb{H}^{n}}\int_{\mathbb{H}^{n}}f(x)H_{0,\gamma}(\cosh\rho(x,y))g(y)dV_{x}dV_{y}.
\end{align*}
Therefore, by Theorem \ref{th3.8},
we have
\begin{align*}
&\int_{\mathbb{H}^{n}}\int_{\mathbb{H}^{n}}f(x)\left(
\left(-\Delta_{\mathbb{H}}-\frac{(n-1)^{2}}{4}\right)^{-1}\ast K_{3/2,\gamma-1}\right)(x,y)g(y)dV_{x}dV_{y}\\
\leq&\frac{\Gamma(\frac{n}{2}-\gamma)}{2^{n}\pi^{\frac{n}{2}}
\Gamma(\gamma)}\int_{\mathbb{H}^{n}}\int_{\mathbb{H}^{n}}f(x)H_{0,\gamma}(\cosh\rho(x,y))g(y)dV_{x}dV_{y}\\
\leq&\frac{\Gamma(\frac{n}{2}-\gamma)}{2^{n}\pi^{\frac{n}{2}}
\Gamma(\gamma)}\int_{\mathbb{H}^{n}}\int_{\mathbb{H}^{n}}f(x)\left(\sinh\frac{\rho}{2}\right)^{2\gamma-n}g(y)dV_{x}dV_{y}\\
\leq&\frac{\|f\|_{p}\|g\|_{p}}{S_{n,\gamma}}.
\end{align*}
This completes the proof of Lemma \ref{lm6.2}.
\end{proof}

\textbf{Proof of Theorem \ref{th1.8}}. By Plancherel formula, we get
\begin{align}\nonumber
&\int_{\mathbb{H}^{n}}u\widetilde{P}_{\gamma}udV- \frac{\Gamma(\gamma+\frac{1}{2})^{2}}
{\Gamma(\frac{1}{2})^{2}}\int_{\mathbb{H}^{n}}u^{2}dV\\
\label{6.6}
=&\int^{+\infty}_{-\infty}\int_{\mathbb{S}^{n-1}}
\left(\frac{|\Gamma(\gamma+\frac{1}{2}+i\lambda)|^{2}}{|\Gamma(\frac{1}{2}+i\lambda)|^{2}}-
\frac{|\Gamma(\gamma+\frac{1}{2})|^{2}}{|\Gamma(\frac{1}{2})|^{2}}\right)\widehat{u}(\lambda,\zeta)|^{2}|\mathfrak{c}(\lambda)|^{-2}d\lambda d\sigma(\zeta).
\end{align}
On the other hand, by using  (\ref{2.20}), we obtain
\begin{align}\nonumber
&\frac{|\Gamma(\gamma+\frac{1}{2}+i\lambda)|^{2}}{|\Gamma(\frac{1}{2}+i\lambda)|^{2}}-
\frac{|\Gamma(\gamma+\frac{1}{2})|^{2}}{|\Gamma(\frac{1}{2})|^{2}}\\
\nonumber
=&(\lambda^{2}+1/4)\frac{|\Gamma(\gamma+\frac{1}{2}+i\lambda)|^{2}}{|\Gamma(\frac{3}{2}+i\lambda)|^{2}}-\frac{1}{4}
\frac{|\Gamma(\gamma+\frac{1}{2})|^{2}}{|\Gamma(\frac{1}{2})|^{2}}\\
\nonumber
=&\lambda^{2}\frac{|\Gamma(\gamma+\frac{1}{2}+i\lambda)|^{2}}{|\Gamma(\frac{3}{2}+i\lambda)|^{2}}+\frac{1}{4}\left(
\frac{|\Gamma(\gamma+\frac{1}{2}+i\lambda)|^{2}}{|\Gamma(\frac{3}{2}+i\lambda)|^{2}}-
\frac{|\Gamma(\gamma+\frac{1}{2})|^{2}}{|\Gamma(\frac{3}{2})|^{2}}\right)\\
\label{6.7}
\geq&\lambda^{2}\frac{|\Gamma(\gamma+\frac{1}{2}+i\lambda)|^{2}}{|\Gamma(\frac{3}{2}+i\lambda)|^{2}}.
\end{align}
Substituting (\ref{6.7}) into (\ref{6.6}) and using  Lemma \ref{lm6.2}, we get
\begin{align*}
&\int_{\mathbb{H}^{n}}u\widetilde{P}_{\gamma}udV- \frac{\Gamma(\gamma+\frac{1}{2})^{2}}
{\Gamma(\frac{1}{2})^{2}}\int_{\mathbb{H}^{n}}u^{2}dV\\
\geq&\int^{+\infty}_{-\infty}\int_{\mathbb{S}^{n-1}}
\lambda^{2}\frac{|\Gamma(\gamma+\frac{1}{2}+i\lambda)|^{2}}{|\Gamma(\frac{3}{2}+i\lambda)|^{2}}\widehat{u}(\lambda,\zeta)|^{2}|\mathfrak{c}(\lambda)|^{-2}d\lambda d\sigma(\zeta)\\
=& \int_{\mathbb{H}^{n}}u(-\Delta_{\mathbb{H}}-\frac{(n-1)^{2}}{4})\frac{|\Gamma(\gamma+\frac{1}{2}+i\sqrt{-\Delta_{\mathbb{H}}-\frac{(n-1)^{2}}{4}})|^{2}}
{|\Gamma(\frac{3}{2}+i\sqrt{-\Delta_{\mathbb{H}}-\frac{(n-1)^{2}}{4}})|^{2}}udV\\
\geq& S_{n,\gamma}\left(\int_{\mathbb{H}^{n}}|u|^{\frac{2n}{n-2\gamma}}dV\right)^{\frac{n-2\gamma}{n}}.
\end{align*}
Furthermore, the inequality is  strict for nonzero $u$'s.
This completes the proof of Theorem \ref{th1.8}.

\medskip

Next we shall prove  Theorem \ref{th1.4}. We note that the proof of Theorem \ref{th1.4} is more complicated than that of Theorem \ref{th1.8}.
We need  the following inequality of Gamma function.
\begin{proposition}\label{p6.3}
Let $k\in\mathbb{N} \setminus\{0\}$.
It holds that, for $2k\leq\gamma\leq 2k+1$ and $\lambda\in \mathbb{R}$,
\begin{align}\label{a6.8}
2^{2\gamma}\frac{|\Gamma(\frac{3+2\gamma}{4}+\frac{i}{2}\lambda)|^{2}}
{|\Gamma(\frac{3-2\gamma}{4}+\frac{i}{2}\lambda)|^{2}}-2^{2\gamma}\frac{|\Gamma(\frac{3+2\gamma}{4})|^{2}}
{|\Gamma(\frac{3-2\gamma}{4})|^{2}}
\geq&\frac{|\Gamma(\gamma+i\lambda)|^{2}}
{|\Gamma(i\lambda)|^{2}}.
\end{align}
\end{proposition}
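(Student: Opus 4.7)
The plan is to first put the LHS of~\eqref{a6.8} into a tractable form via Lemma~\ref{lm4.2}. Combining Lemma~\ref{lm4.2} with $|\Gamma(\tfrac12+i\lambda)|^2=\pi/\cosh\pi\lambda$ from~\eqref{2.15} and $|\Gamma(i\lambda)|^2=\pi/(\lambda\sinh\pi\lambda)$ from~\eqref{2.12}, one obtains
\[
2^{2\gamma}\frac{|\Gamma(\tfrac{3+2\gamma}{4}+\tfrac{i\lambda}{2})|^2}{|\Gamma(\tfrac{3-2\gamma}{4}+\tfrac{i\lambda}{2})|^2}=\frac{(\cosh\pi\lambda+\sin\gamma\pi)|\Gamma(\gamma+\tfrac12+i\lambda)|^2}{\pi},
\]
so~\eqref{a6.8} is equivalent to
\[
(\cosh\pi\lambda+\sin\gamma\pi)|\Gamma(\gamma+\tfrac12+i\lambda)|^2-(1+\sin\gamma\pi)|\Gamma(\gamma+\tfrac12)|^2\ \geq\ \lambda\sinh\pi\lambda\,|\Gamma(\gamma+i\lambda)|^2.
\]
Both sides are even in $\lambda$, vanish at $\lambda=0$, and have matching leading asymptotics $\sim|\lambda|^{2\gamma}$ as $|\lambda|\to\infty$, so the inequality is tight at infinity and the subleading terms must be tracked carefully.

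For the integer cases $\gamma\in\{2k,2k+1\}$ the factor $\sin\gamma\pi$ vanishes and iterated use of $\Gamma(z+1)=z\Gamma(z)$ reduces the displayed inequality, in the variable $t=\lambda^2$, to the polynomial claim $L_\gamma(t)-L_\gamma(0)\geq R_\gamma(t)$, where $L_\gamma(t):=\prod_{j=0}^{\gamma-1}((j+\tfrac12)^2+t)$ and $R_\gamma(t):=t\prod_{j=1}^{\gamma-1}(j^2+t)$. I would prove this by induction on $\gamma\geq 2$ using the decomposition $L_\gamma(t)-L_\gamma(0)=(\gamma-\tfrac12)^2[L_{\gamma-1}(t)-L_{\gamma-1}(0)]+t\,L_{\gamma-1}(t)$ and the recursion $R_\gamma(t)=((\gamma-1)^2+t)R_{\gamma-1}(t)$: after invoking the induction hypothesis, the remaining content is $(\gamma-\tfrac34)R_{\gamma-1}(t)+t[L_{\gamma-1}(t)-R_{\gamma-1}(t)]\geq 0$, which holds because $\gamma\geq 2>\tfrac34$, $R_{\gamma-1}\geq 0$ on $t\geq 0$, and the factor-by-factor bound $L_{\gamma-1}\geq R_{\gamma-1}$ (from $\tfrac14+t\geq t$ and $(j+\tfrac12)^2+t\geq j^2+t$) gives $L_{\gamma-1}-R_{\gamma-1}\geq 0$.

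For the remaining range $\gamma=2k+\theta$ with $\theta\in(0,1)$ I would extract the finite-product factors
\[
|\Gamma(\gamma+\tfrac12+i\lambda)|^2=P(\theta,\lambda)\,|\Gamma(\theta+\tfrac12+i\lambda)|^2,\qquad |\Gamma(\gamma+i\lambda)|^2=Q(\theta,\lambda)\,|\Gamma(\theta+i\lambda)|^2,
\]
with $P(\theta,\lambda)=\prod_{j=0}^{2k-1}((\theta+\tfrac12+j)^2+\lambda^2)$ and $Q(\theta,\lambda)=\prod_{j=0}^{2k-1}((\theta+j)^2+\lambda^2)$, and substitute into the displayed inequality. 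The result splits into a polynomial piece, controlled exactly as in the integer case applied to the extracted products, plus a correction proportional to $\sin\theta\pi$. The correction has the unfavourable sign because~\eqref{2.19} gives $|\Gamma(\gamma+\tfrac12+i\lambda)|^2\leq|\Gamma(\gamma+\tfrac12)|^2$, and must be absorbed by the strict surplus produced by the integer induction above; the absorption step uses the duplication formula~\eqref{a2.10} to relate $|\Gamma(\gamma+i\lambda)|^2$ to $|\Gamma(2\gamma+2i\lambda)|^2/|\Gamma(\gamma+\tfrac12+i\lambda)|^2$, together with the monotonicity inequality~\eqref{2.20}. The main obstacle is precisely this non-integer absorption: the coupling introduced by $\sin\theta\pi>0$ forces a delicate balance, uniform in $\lambda\in\mathbb{R}$, between the polynomial surplus and the gamma-function estimates.
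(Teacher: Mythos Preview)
Your reformulation via Lemma~\ref{lm4.2} is correct, and the resulting inequality
\[
(\cosh\pi\lambda+\sin\gamma\pi)\,|\Gamma(\gamma+\tfrac12+i\lambda)|^{2}-(1+\sin\gamma\pi)\,|\Gamma(\gamma+\tfrac12)|^{2}\ \geq\ \lambda\sinh\pi\lambda\,|\Gamma(\gamma+i\lambda)|^{2}
\]
is a clean equivalent form. Your induction for integer $\gamma\geq 2$ is complete and correct: the recursion on $L_\gamma,R_\gamma$ together with the factor-by-factor comparison $L_{\gamma-1}\geq R_{\gamma-1}$ closes the argument.

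The genuine gap is the non-integer step. After you extract the finite products $P(\theta,\lambda)$ and $Q(\theta,\lambda)$, the residual factors $|\Gamma(\theta+\tfrac12+i\lambda)|^{2}$ and $|\Gamma(\theta+i\lambda)|^{2}$ are transcendental in $\lambda$ for $\theta\in(0,1)$; neither~\eqref{2.15} nor~\eqref{2.12} applies, so the expression does not split into a ``polynomial piece'' at all. The integer surplus $(m-\tfrac34)R_{m-1}(t)+t[L_{m-1}(t)-R_{m-1}(t)]$ therefore lives in a different algebraic setting and cannot directly absorb the unfavourable term $\sin\theta\pi\,[|\Gamma(\gamma+\tfrac12+i\lambda)|^{2}-|\Gamma(\gamma+\tfrac12)|^{2}]$. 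Your closing sentence concedes this (``the main obstacle is precisely this non-integer absorption''); invoking the duplication formula~\eqref{a2.10} and the monotonicity~\eqref{2.20} does not by itself yield the needed quantitative control uniformly in $\lambda$, and no concrete inequality is stated.

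The paper proceeds differently. It first proves the entire base interval $2\leq\gamma\leq 3$ by direct estimation (Lemmas~\ref{lm6.4}--\ref{lm6.8} and Proposition~\ref{pro6.9}): an algebraic lower bound for the left side, a rational-function comparison, a monotonicity of a gamma ratio in $\gamma$ for $\lambda^{2}\leq 5$, and a separate bound for $\lambda^{2}\geq 5$. It then inducts in $\gamma$ with step~$2$ (from $[2k,2k+1]$ to $[2k+2,2k+3]$) using the functional equation of $\Gamma$, which treats $\gamma$ as a continuous parameter throughout and avoids any integer/non-integer dichotomy. Your endpoint argument is more elegant than anything the paper does at $\gamma\in\{2,3\}$, but the paper's effort is concentrated exactly where your sketch stalls: the uniform treatment of the continuum $\gamma\in[2,3]$.
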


We first show the case $2\leq \gamma\leq 3$. The proof  depends on the following four lemmas.

\begin{lemma}\label{lm6.4}
It holds that, for $2\leq\gamma\leq3$,
  \begin{align*}
2^{2\gamma}\frac{|\Gamma(\frac{3+2\gamma}{4}+\frac{i}{2}\lambda)|^{2}}
{|\Gamma(\frac{3-2\gamma}{4}+\frac{i}{2}\lambda)|^{2}}-2^{2\gamma}\frac{|\Gamma(\frac{3+2\gamma}{4})|^{2}}
{|\Gamma(\frac{3-2\gamma}{4})|^{2}}
\geq\lambda^{2} \frac{\lambda^{2}+(\gamma-\frac{3}{2})^{2}+(\gamma-\frac{7}{2})^{2}}
{[\lambda^{2}+(\gamma-\frac{3}{2})^{2}][\lambda^{2}+(\gamma-\frac{7}{2})^{2}]}
\frac{|\Gamma(\gamma+\frac{1}{2}+i\lambda)|^{2}}
{|\Gamma(\frac{1}{2}+i\lambda)|^{2}}.
\end{align*}
\end{lemma}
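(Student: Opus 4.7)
\textbf{Proof proposal for Lemma \ref{lm6.4}.}  The plan is to reduce the stated inequality, via Lemma \ref{lm4.2} and standard Gamma-function identities, to an explicit inequality in $\lambda^{2}$ and $\gamma$ which can then be verified by a direct case analysis.

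First I would apply Lemma \ref{lm4.2} to rewrite the left-hand side as $[R(\lambda)-R(0)]+\tfrac{\sin\gamma\pi}{\pi}[|\Gamma(\gamma+\tfrac12+i\lambda)|^{2}-|\Gamma(\gamma+\tfrac12)|^{2}]$, where $R(\lambda):=|\Gamma(\gamma+\tfrac12+i\lambda)|^{2}/|\Gamma(\tfrac12+i\lambda)|^{2}$.  Next I would use the elementary identity
$$1-\lambda^{2}\,\frac{\lambda^{2}+(\gamma-\tfrac32)^{2}+(\gamma-\tfrac72)^{2}}{[\lambda^{2}+(\gamma-\tfrac32)^{2}][\lambda^{2}+(\gamma-\tfrac72)^{2}]}=\frac{(\gamma-\tfrac32)^{2}(\gamma-\tfrac72)^{2}}{[\lambda^{2}+(\gamma-\tfrac32)^{2}][\lambda^{2}+(\gamma-\tfrac72)^{2}]},$$
so the desired inequality becomes equivalent to
$$\frac{(\gamma-\tfrac32)^{2}(\gamma-\tfrac72)^{2}\,R(\lambda)}{[\lambda^{2}+(\gamma-\tfrac32)^{2}][\lambda^{2}+(\gamma-\tfrac72)^{2}]}\geq R(0)+\tfrac{\sin\gamma\pi}{\pi}\bigl[|\Gamma(\gamma+\tfrac12)|^{2}-|\Gamma(\gamma+\tfrac12+i\lambda)|^{2}\bigr].$$

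To simplify the left-hand side I would iterate $\Gamma(z+1)=z\Gamma(z)$ four times, obtaining $|\Gamma(\gamma+\tfrac12+i\lambda)|^{2}=\prod_{j=1}^{4}[(\gamma-\tfrac{2j-1}{2})^{2}+\lambda^{2}]\,|\Gamma(\gamma-\tfrac72+i\lambda)|^{2}$, so that two of these factors cancel the denominator and collapse the left-hand side to $(\gamma-\tfrac32)^{2}(\gamma-\tfrac72)^{2}[(\gamma-\tfrac12)^{2}+\lambda^{2}][(\gamma-\tfrac52)^{2}+\lambda^{2}]\,|\Gamma(\gamma-\tfrac72+i\lambda)|^{2}/|\Gamma(\tfrac12+i\lambda)|^{2}$.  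Using formula (\ref{2.15}) together with the reflection formula $\Gamma(z)\Gamma(1-z)=\pi/\sin\pi z$ and the computation $\sin\pi(\gamma-\tfrac72)=\cos\pi\gamma$ (so that $\sinh^{2}\pi\lambda+\sin^{2}\pi(\gamma-\tfrac72)=\cosh^{2}\pi\lambda-\sin^{2}\pi\gamma$), I would convert
$$\frac{|\Gamma(\gamma-\tfrac72+i\lambda)|^{2}}{|\Gamma(\tfrac12+i\lambda)|^{2}}=\frac{\pi\cosh\pi\lambda}{(\cosh^{2}\pi\lambda-\sin^{2}\pi\gamma)\,|\Gamma(\tfrac92-\gamma+i\lambda)|^{2}},$$
and a parallel application of reflection to $|\Gamma(\gamma+\tfrac12+i\lambda)|^{2}$ (using $\sin\pi(\gamma+\tfrac12)=\cos\pi\gamma$) produces $\pi^{2}/[(\cosh^{2}\pi\lambda-\sin^{2}\pi\gamma)|\Gamma(\tfrac12-\gamma+i\lambda)|^{2}]$.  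Substituting these identities yields an explicit inequality whose only ingredients are $\cosh\pi\lambda$, $\sin\pi\gamma$, polynomials in $\lambda^{2}$, and the positive Gamma-factors $|\Gamma(\tfrac92-\gamma+i\lambda)|^{2}$ and $|\Gamma(\tfrac12-\gamma+i\lambda)|^{2}$, both now of positive real part.

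The last step is to verify this reduced inequality for every $\lambda\in\mathbb{R}$ and every $\gamma\in[2,3]$.  At the endpoints $\gamma=2$ and $\gamma=3$ one has $\sin\gamma\pi=0$, so the $\sin\gamma\pi$ correction drops out and the inequality collapses to an equality (at $\gamma=2$) and to the trivial bound $(\tfrac52)^{2}+\lambda^{2}\geq(\tfrac52)^{2}$ (at $\gamma=3$).  \emph{The main obstacle} is the interior range $\gamma\in(2,3)$, where the positive $\sin\gamma\pi$-term on the right must be absorbed against the deficit $R(0)/R(\lambda)-1$ on the left; both sides vanish at $\lambda=0$, so the useful information sits in the higher $\lambda^{2}$-Taylor coefficients.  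I would attempt to control the $\sin\gamma\pi$ contribution through the monotonicity bound (\ref{2.20}) applied to $|\Gamma(\tfrac12-\gamma+i\lambda)|/|\Gamma(\tfrac92-\gamma+i\lambda)|$, and then exhibit the remaining rational expression in $\lambda^{2}$ as a manifestly non-negative combination, possibly by splitting into subcases according to the sign of $\gamma-\tfrac52$.
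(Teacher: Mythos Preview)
Your reduction is algebraically correct but does not actually reduce anything: the inequality you arrive at,
\[
\frac{(\gamma-\tfrac32)^{2}(\gamma-\tfrac72)^{2}\,R(\lambda)}{[\lambda^{2}+(\gamma-\tfrac32)^{2}][\lambda^{2}+(\gamma-\tfrac72)^{2}]}\geq R(0)+\tfrac{\sin\gamma\pi}{\pi}\bigl[|\Gamma(\gamma+\tfrac12)|^{2}-|\Gamma(\gamma+\tfrac12+i\lambda)|^{2}\bigr],
\]
is, after undoing your own partial-fraction identity and Lemma~\ref{lm4.2}, identically the original statement.  The hard work is still ahead, and your final paragraph only sketches an attack (``I would attempt\dots possibly by splitting into subcases'') without carrying it out.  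In particular, by applying Lemma~\ref{lm4.2} \emph{first} you have turned the $\sin\gamma\pi$ contribution into an obstacle on the right-hand side that must be absorbed, and your subsequent four-step recurrence produces $|\Gamma(\gamma-\tfrac72+i\lambda)|^{2}$, whose real part is negative on $[2,3]$, forcing you into reflection formulas and a genuinely delicate endgame.

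The paper's proof avoids all of this by reversing the order of the two moves.  It applies your partial-fraction splitting directly to the $P_{\gamma}$ symbol $2^{2\gamma}\dfrac{|\Gamma(\frac{3+2\gamma}{4}+\frac{i}{2}\lambda)|^{2}}{|\Gamma(\frac{3-2\gamma}{4}+\frac{i}{2}\lambda)|^{2}}$ rather than to $R(\lambda)$.  The point is that the very factors $[\lambda^{2}+(\gamma-\tfrac32)^{2}][\lambda^{2}+(\gamma-\tfrac72)^{2}]$ are exactly what one obtains from two applications of the functional equation to the \emph{denominator} Gamma, shifting its argument from $\frac{3-2\gamma}{4}$ (negative real part on $[2,3]$) to $\frac{11-2\gamma}{4}$ (positive real part on $[2,3]$).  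The remainder term is therefore a Gamma ratio with both real parts positive, and (\ref{2.20}) disposes of it in one line.  This yields the lemma with $P_{\gamma}(\lambda)$ in place of $R(\lambda)$ on the right; only \emph{then} does Lemma~\ref{lm4.2} enter, and since $\sin\gamma\pi\geq0$ on $[2,3]$ it gives $P_{\gamma}(\lambda)\geq R(\lambda)$ for free --- the sine term is a bonus, not an obstruction.
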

\begin{proof}
We compute
\begin{align}\nonumber
2^{2\gamma}\frac{|\Gamma(\frac{3+2\gamma}{4}+\frac{i}{2}\lambda)|^{2}}
{|\Gamma(\frac{3-2\gamma}{4}+\frac{i}{2}\lambda)|^{2}}=&
2^{2\gamma}\frac{[\lambda^{2}+(\gamma-\frac{3}{2})^{2}][\lambda^{2}+(\gamma-\frac{7}{2})^{2}]}
{[\lambda^{2}+(\gamma-\frac{3}{2})^{2}][\lambda^{2}+(\gamma-\frac{7}{2})^{2}]}\frac{|\Gamma(\frac{3+2\gamma}{4}+\frac{i}{2}\lambda)|^{2}}
{|\Gamma(\frac{3-2\gamma}{4}+\frac{i}{2}\lambda)|^{2}}\\
\nonumber
=&2^{2\gamma}\lambda^{2} \frac{\lambda^{2}+(\gamma-\frac{3}{2})^{2}+(\gamma-\frac{7}{2})^{2}}
{[\lambda^{2}+(\gamma-\frac{3}{2})^{2}][\lambda^{2}+(\gamma-\frac{7}{2})^{2}]}\frac{|\Gamma(\frac{3+2\gamma}{4}+\frac{i}{2}\lambda)|^{2}}
{|\Gamma(\frac{3-2\gamma}{4}+\frac{i}{2}\lambda)|^{2}}+\\
\nonumber
& 2^{2\gamma}\frac{(\gamma-\frac{3}{2})^{2}(\gamma-\frac{7}{2})^{2}}
{[\lambda^{2}+(\gamma-\frac{3}{2})^{2}][\lambda^{2}+(\gamma-\frac{7}{2})^{2}]}\frac{|\Gamma(\frac{3+2\gamma}{4}+\frac{i}{2}\lambda)|^{2}}
{|\Gamma(\frac{3-2\gamma}{4}+\frac{i}{2}\lambda)|^{2}}\\
\nonumber
=&2^{2\gamma}\lambda^{2} \frac{\lambda^{2}+(\gamma-\frac{3}{2})^{2}+(\gamma-\frac{7}{2})^{2}}
{[\lambda^{2}+(\gamma-\frac{3}{2})^{2}][\lambda^{2}+(\gamma-\frac{7}{2})^{2}]}\frac{|\Gamma(\frac{3+2\gamma}{4}+\frac{i}{2}\lambda)|^{2}}
{|\Gamma(\frac{3-2\gamma}{4}+\frac{i}{2}\lambda)|^{2}}+\\
\nonumber
& 2^{2\gamma-4}(\gamma-3/2)^{2}(\gamma-7/2)^{2}\frac{|\Gamma(\frac{3+2\gamma}{4}+\frac{i}{2}\lambda)|^{2}}
{|\Gamma(\frac{11-2\gamma}{4}+\frac{i}{2}\lambda)|^{2}}.
\end{align}
Therefore, by using  (\ref{2.20}), we get
\begin{align}\nonumber
&2^{2\gamma}\frac{|\Gamma(\frac{3+2\gamma}{4}+\frac{i}{2}\lambda)|^{2}}
{|\Gamma(\frac{3-2\gamma}{4}+\frac{i}{2}\lambda)|^{2}}-2^{2\gamma}\frac{|\Gamma(\frac{3+2\gamma}{4})|^{2}}
{|\Gamma(\frac{3-2\gamma}{4})|^{2}}\\
\nonumber
=&2^{2\gamma}\lambda^{2} \frac{\lambda^{2}+(\gamma-\frac{3}{2})^{2}+(\gamma-\frac{7}{2})^{2}}
{[\lambda^{2}+(\gamma-\frac{3}{2})^{2}][\lambda^{2}+(\gamma-\frac{7}{2})^{2}]}\frac{|\Gamma(\frac{3+2\gamma}{4}+\frac{i}{2}\lambda)|^{2}}
{|\Gamma(\frac{3-2\gamma}{4}+\frac{i}{2}\lambda)|^{2}}+\\
\nonumber
\nonumber
& 2^{2\gamma-4}(\gamma-3/2)^{2}(\gamma-7/2)^{2}\left(\frac{|\Gamma(\frac{3+2\gamma}{4}+\frac{i}{2}\lambda)|^{2}}
{|\Gamma(\frac{11-2\gamma}{4}+\frac{i}{2}\lambda)|^{2}}-\frac{|\Gamma(\frac{3+2\gamma}{4})|^{2}}
{|\Gamma(\frac{11-2\gamma}{4})|^{2}}\right)\\
\label{6.8}
\geq&2^{2\gamma}\lambda^{2} \frac{\lambda^{2}+(\gamma-\frac{3}{2})^{2}+(\gamma-\frac{7}{2})^{2}}
{[\lambda^{2}+(\gamma-\frac{3}{2})^{2}][\lambda^{2}+(\gamma-\frac{7}{2})^{2}]}\frac{|\Gamma(\frac{3+2\gamma}{4}+\frac{i}{2}\lambda)|^{2}}
{|\Gamma(\frac{3-2\gamma}{4}+\frac{i}{2}\lambda)|^{2}}.
\end{align}

On the other hand, since $2\leq\gamma\leq3$,  we have $\sin\gamma\pi\geq0$. Therefore, by Lemma \ref{lm4.2},
\begin{align}\label{6.9}
\frac{|\Gamma(\frac{3+2\gamma}{4}+\frac{i}{2}\lambda)|^{2}}
{|\Gamma(\frac{3-2\gamma}{4}+\frac{i}{2}\lambda)|^{2}}\geq \frac{|\Gamma(\gamma+\frac{1}{2}+i\lambda)|^{2}}
{|\Gamma(\frac{1}{2}+i\lambda)|^{2}}.
\end{align}
The desired result follows by combining (\ref{6.8}) and (\ref{6.9}).
\end{proof}

\begin{lemma}\label{lm6.5}
It holds that, for $2\leq \gamma\leq3$ and $\lambda\in\mathbb{R}$,
  \begin{align}\label{6.10}
 \frac{\lambda^{2}+(\gamma-\frac{3}{2})^{2}+(\gamma-\frac{7}{2})^{2}}
{[\lambda^{2}+(\gamma-\frac{3}{2})^{2}][\lambda^{2}+(\gamma-\frac{7}{2})^{2}]}
>\frac{\lambda^{2}+1}{(\lambda^{2}+\frac{1}{4})(\lambda^{2}+\frac{9}{4})}.
\end{align}
\end{lemma}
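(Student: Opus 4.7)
The plan is to clear denominators and reduce the inequality to positivity of a quadratic in $t:=\lambda^{2}\geq 0$. Writing $a=(\gamma-\tfrac{3}{2})^{2}$ and $b=(\gamma-\tfrac{7}{2})^{2}$, both sides of \eqref{6.10} are rational in $t$ with positive denominators, so \eqref{6.10} is equivalent to
\[
 (t+a+b)(t+\tfrac{1}{4})(t+\tfrac{9}{4}) \;>\; (t+1)(t+a)(t+b).
\]
Expanding both cubics and subtracting, the $t^{3}$ terms cancel together with a good chunk of the lower-order terms; the difference equals
\[
 \tfrac{3}{2}\,t^{2}+\bigl[\tfrac{9}{16}+\tfrac{3}{2}(a+b)-ab\bigr]\,t+\bigl[\tfrac{9}{16}(a+b)-ab\bigr].
\]
Thus it suffices to show that both bracketed coefficients are strictly positive for every $\gamma\in[2,3]$.

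To carry this out cleanly I would exploit the obvious symmetry of the two factors $\gamma-\tfrac{3}{2}$ and $\tfrac{7}{2}-\gamma$ about $\gamma=\tfrac{5}{2}$ by setting $u:=\gamma-\tfrac{5}{2}\in[-\tfrac{1}{2},\tfrac{1}{2}]$. Then $a+b=2u^{2}+2$ and $ab=(1-u^{2})^{2}$, and a direct computation collapses the two coefficients to
\[
 \tfrac{9}{16}+\tfrac{3}{2}(a+b)-ab \;=\; \tfrac{9}{16}+2+5u^{2}-u^{4},\qquad
 \tfrac{9}{16}(a+b)-ab \;=\; \tfrac{25}{8}u^{2}+\tfrac{1}{8}-u^{4}.
\]
For $|u|\leq\tfrac{1}{2}$ the elementary bounds $u^{4}\leq\tfrac{1}{16}$ and $u^{4}\leq\tfrac{u^{2}}{4}$ make positivity immediate: the first expression is bounded below by $\tfrac{9}{16}+2-\tfrac{1}{16}=\tfrac{5}{2}$, and the second by $\tfrac{23u^{2}}{8}+\tfrac{1}{8}\geq\tfrac{1}{8}$. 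This yields strict positivity of the quadratic for every $t\geq 0$ and hence Lemma~\ref{lm6.5}.

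I do not anticipate a genuine obstacle: once the denominators are cleared the algebra is routine, and the only subtle point is to avoid drowning in a quartic-in-$\gamma$ positivity problem that would arise if one worked directly in the variable $\gamma$. The structural observation that really drives the argument is the symmetrization $u=\gamma-\tfrac{5}{2}$, which forces the coefficients to appear as polynomials in $u^{2}$ and makes their positivity on $[-\tfrac{1}{2},\tfrac{1}{2}]$ essentially a one-line check. Note also that this gives the strict inequality asserted in \eqref{6.10} uniformly in $\gamma$ and $\lambda$, which is exactly what is needed when this estimate is combined with Lemma~\ref{lm6.4} to feed the downstream sharp-constant argument.
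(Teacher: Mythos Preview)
Your proof is correct and follows essentially the same approach as the paper: both clear denominators, arrive at the identical quadratic $\tfrac{3}{2}t^{2}+[\tfrac{9}{16}+\tfrac{3}{2}(a+b)-ab]t+[\tfrac{9}{16}(a+b)-ab]$, and then verify positivity of the two bracketed coefficients via the symmetry about $\gamma=\tfrac{5}{2}$. The paper first shows $ab<\tfrac{9}{16}(a+b)$ (your constant-term condition) and then deduces positivity of the linear coefficient from that, whereas you handle both coefficients directly with the substitution $u=\gamma-\tfrac{5}{2}$; the latter is a bit more streamlined but the argument is the same.
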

\begin{proof}
For simplicity, we set
\begin{align}\label{a6.11}
A_{1,\gamma}=(\gamma-\frac{3}{2})^{2},\;\;A_{2,\gamma}=(\gamma-\frac{7}{2})^{2}.
\end{align}
Then (\ref{6.10}) is is equivalent to (here we replace $\lambda^{2}$ by $\lambda$)
\begin{align*}
(\lambda+A_{1,\gamma}+A_{2,\gamma})(\lambda+\frac{1}{4})(\lambda+\frac{9}{4})> (\lambda+A_{1,\gamma})(\lambda+A_{2,\gamma}) (\lambda+1),\;\;\lambda\geq0.
\end{align*}
We compute
\begin{align}\nonumber
&(\lambda+A_{1,\gamma}+A_{2,\gamma})(\lambda+\frac{1}{4})(\lambda+\frac{9}{4})- (\lambda+A_{1,\gamma})(\lambda+A_{2,\gamma}) (\lambda+1)\\
\label{6.11}
=&\frac{3}{2}\lambda^{2}+\left[\frac{3}{2}(A_{1,\gamma}+A_{2,\gamma})+\frac{9}{16}-A_{1,\gamma}A_{2,\gamma}\right]\lambda+\frac{9}{16}(A_{1,\gamma}+A_{2,\gamma})
-A_{1,\gamma}A_{2,\gamma}.
\end{align}
We claim
\begin{align}\label{6.12}
A_{1,\gamma}A_{2,\gamma}< \frac{9}{16}(A_{1,\gamma}+A_{2,\gamma}).
\end{align}
In fact,
\begin{align}\nonumber
A_{1,\gamma}A_{2,\gamma}-  \frac{9}{16}(A_{1,\gamma}+A_{2,\gamma})=&\left[(\gamma-\frac{3}{2})(\gamma-\frac{7}{2})\right]^{2}-\frac{9}{16}\left[
(\gamma-\frac{3}{2})^{2}+(\gamma-\frac{7}{2})^{2}\right]\\
\label{6.13}
=&\left[(\gamma-\frac{5}{2})^{2}-1\right]^{2}-\frac{9}{8}\left[
(\gamma-\frac{5}{2})^{2}-1\right]-\frac{9}{4}.
\end{align}
Since $2\leq \gamma\leq3$, we have
\begin{align}\label{6.14}
-1\leq (\gamma-\frac{5}{2})^{2}-1\leq-3/4.
\end{align}
Substituting (\ref{6.14}) into (\ref{6.13}), we get
\begin{align*}
A_{1,\gamma}A_{2,\gamma}-  \frac{9}{16}(A_{1,\gamma}+A_{2,\gamma})
\leq 1+\frac{9}{8}-\frac{9}{4}=-\frac{1}{8}<0.
\end{align*}
This proves the claim.

Substituting (\ref{6.12}) into (\ref{6.11}), we obtain
\begin{align*}
&(\lambda+A_{1,\gamma}+A_{2,\gamma})(\lambda+\frac{1}{4})(\lambda+\frac{9}{4})- (\lambda+A_{1,\gamma})(\lambda+A_{2,\gamma}) (\lambda+1)\\
>&\frac{3}{2}\lambda^{2}+\left[\frac{3}{2}(A_{1,\gamma}+A_{2,\gamma})+\frac{9}{16}-\frac{9}{16}(A_{1,\gamma}+A_{2,\gamma})\right]\lambda\\
=&\frac{3}{2}\lambda^{2}+\left[\frac{15}{16}(A_{1,\gamma}+A_{2,\gamma})+\frac{9}{16}\right]\lambda\\
\geq&0.
\end{align*}
This completes the proof of Lemma \ref{lm6.5}.
\end{proof}

Combining Lemma \ref{lm6.4} and \ref{lm6.5} yields the following corollary:
\begin{corollary}\label{co6.6}
It holds that, for $2\leq\gamma\leq3$,
  \begin{align*}
2^{2\gamma}\frac{|\Gamma(\frac{3+2\gamma}{4}+\frac{i}{2}\lambda)|^{2}}
{|\Gamma(\frac{3-2\gamma}{4}+\frac{i}{2}\lambda)|^{2}}-2^{2\gamma}\frac{|\Gamma(\frac{3+2\gamma}{4})|^{2}}
{|\Gamma(\frac{3-2\gamma}{4})|^{2}}
\geq \lambda^{2}(\lambda^{2}+1)\frac{|\Gamma(\gamma+\frac{1}{2}+i\lambda)|^{2}}
{|\Gamma(\frac{5}{2}+i\lambda)|^{2}}.
\end{align*}
\end{corollary}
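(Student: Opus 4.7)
The plan is to simply chain Lemmas \ref{lm6.4} and \ref{lm6.5}, and then absorb the resulting rational factor $1/[(\lambda^2+\tfrac14)(\lambda^2+\tfrac94)]$ into the Gamma quotient using the fundamental functional equation $\Gamma(z+1)=z\Gamma(z)$.

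First I would apply Lemma \ref{lm6.4} to the left-hand side of the corollary, producing the intermediate lower bound
\[
2^{2\gamma}\frac{|\Gamma(\tfrac{3+2\gamma}{4}+\tfrac{i}{2}\lambda)|^{2}}{|\Gamma(\tfrac{3-2\gamma}{4}+\tfrac{i}{2}\lambda)|^{2}}-2^{2\gamma}\frac{|\Gamma(\tfrac{3+2\gamma}{4})|^{2}}{|\Gamma(\tfrac{3-2\gamma}{4})|^{2}} \geq \lambda^{2}\, \frac{\lambda^{2}+(\gamma-\tfrac{3}{2})^{2}+(\gamma-\tfrac{7}{2})^{2}}{[\lambda^{2}+(\gamma-\tfrac{3}{2})^{2}][\lambda^{2}+(\gamma-\tfrac{7}{2})^{2}]}\,\frac{|\Gamma(\gamma+\tfrac{1}{2}+i\lambda)|^{2}}{|\Gamma(\tfrac{1}{2}+i\lambda)|^{2}}.
\]
The rational factor in $\lambda$ here depends on $\gamma$, so I cannot yet recognize it as a ratio of Gamma moduli. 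Lemma \ref{lm6.5} removes this $\gamma$-dependence: it replaces that factor by the $\gamma$-free bound $(\lambda^{2}+1)/[(\lambda^{2}+\tfrac14)(\lambda^{2}+\tfrac94)]$. Chaining the two estimates yields
\[
2^{2\gamma}\frac{|\Gamma(\tfrac{3+2\gamma}{4}+\tfrac{i}{2}\lambda)|^{2}}{|\Gamma(\tfrac{3-2\gamma}{4}+\tfrac{i}{2}\lambda)|^{2}}-2^{2\gamma}\frac{|\Gamma(\tfrac{3+2\gamma}{4})|^{2}}{|\Gamma(\tfrac{3-2\gamma}{4})|^{2}} \geq \frac{\lambda^{2}(\lambda^{2}+1)}{(\lambda^{2}+\tfrac14)(\lambda^{2}+\tfrac94)}\,\frac{|\Gamma(\gamma+\tfrac{1}{2}+i\lambda)|^{2}}{|\Gamma(\tfrac{1}{2}+i\lambda)|^{2}}.
\]

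To finish, I would rewrite the denominator using $|\Gamma(a+1+i\lambda)|^2=(a^2+\lambda^2)|\Gamma(a+i\lambda)|^2$ (immediate from $\Gamma(z+1)=z\Gamma(z)$). Applying this with $a=\tfrac12$ and then $a=\tfrac32$ gives
\[
|\Gamma(\tfrac{5}{2}+i\lambda)|^{2}=(\lambda^{2}+\tfrac{9}{4})|\Gamma(\tfrac{3}{2}+i\lambda)|^{2}=(\lambda^{2}+\tfrac{1}{4})(\lambda^{2}+\tfrac{9}{4})|\Gamma(\tfrac{1}{2}+i\lambda)|^{2},
\]
so $(\lambda^{2}+\tfrac14)(\lambda^{2}+\tfrac94)|\Gamma(\tfrac12+i\lambda)|^{2}=|\Gamma(\tfrac52+i\lambda)|^{2}$, and the right-hand side of the displayed inequality collapses to exactly $\lambda^{2}(\lambda^{2}+1)\,|\Gamma(\gamma+\tfrac12+i\lambda)|^{2}/|\Gamma(\tfrac52+i\lambda)|^{2}$, as required.

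There is really no obstacle here: the combinatorics is fixed by Lemmas \ref{lm6.4} and \ref{lm6.5}, and the final step is a two-line manipulation of the Gamma functional equation. The only place where care is needed is in verifying that the factor $\lambda^2+1$ chosen in Lemma \ref{lm6.5} is precisely the one matched by the identity $|\Gamma(\tfrac52+i\lambda)|^2=(\lambda^2+\tfrac14)(\lambda^2+\tfrac94)|\Gamma(\tfrac12+i\lambda)|^2$, which is exactly what motivates the particular rational expression appearing in that lemma.
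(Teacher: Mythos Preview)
Your proposal is correct and follows exactly the same route as the paper: chain Lemma~\ref{lm6.4} with Lemma~\ref{lm6.5}, then use $|\Gamma(\tfrac52+i\lambda)|^{2}=(\lambda^{2}+\tfrac14)(\lambda^{2}+\tfrac94)|\Gamma(\tfrac12+i\lambda)|^{2}$ to absorb the rational factor. The paper's proof is essentially the two-line version of what you wrote.
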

\begin{proof}
By Lemma \ref{lm6.4} and \ref{lm6.5}, we have
  \begin{align*}
2^{2\gamma}\frac{|\Gamma(\frac{3+2\gamma}{4}+\frac{i}{2}\lambda)|^{2}}
{|\Gamma(\frac{3-2\gamma}{4}+\frac{i}{2}\lambda)|^{2}}-2^{2\gamma}\frac{|\Gamma(\frac{3+2\gamma}{4})|^{2}}
{|\Gamma(\frac{3-2\gamma}{4})|^{2}}
\geq &\lambda^{2}\frac{\lambda^{2}+1}{(\lambda^{2}+\frac{1}{4})(\lambda^{2}+\frac{9}{4})}
\frac{|\Gamma(\gamma+\frac{1}{2}+i\lambda)|^{2}}
{|\Gamma(\frac{1}{2}+i\lambda)|^{2}}\\
=&\lambda^{2}(\lambda^{2}+1)\frac{|\Gamma(\gamma+\frac{1}{2}+i\lambda)|^{2}}
{|\Gamma(\frac{5}{2}+i\lambda)|^{2}}.
\end{align*}

\end{proof}

\begin{lemma}\label{lm6.7}
It holds that, for $ \gamma\geq2$ and $\lambda^{2}\leq 5$,
\begin{align*}
\frac{|\Gamma(\gamma+\frac{1}{2}+i\lambda)|^{2}}
{|\Gamma(\frac{5}{2}+i\lambda)|^{2}}\geq \frac{|\Gamma(\gamma+i\lambda)|^{2}}
{|\Gamma(2+i\lambda)|^{2}}
\end{align*}
\end{lemma}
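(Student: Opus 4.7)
The plan is to reformulate the claim as a monotonicity statement. Cross-multiplying, the desired inequality is equivalent to $F(\gamma,\lambda)\geq F(2,\lambda)$, where
\[
F(a,\lambda):=\frac{|\Gamma(a+\tfrac12+i\lambda)|^{2}}{|\Gamma(a+i\lambda)|^{2}}.
\]
So it suffices to prove that, for each fixed $\lambda$ with $\lambda^{2}\leq 5$, the map $a\mapsto F(a,\lambda)$ is non-decreasing on $[2,\infty)$.

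To establish this, I would take a logarithmic derivative in $a$. Since $\log|\Gamma(a+i\lambda)|^{2}=\log\Gamma(a+i\lambda)+\log\Gamma(a-i\lambda)$, one has
\[
\partial_{a}\log F(a,\lambda)=2\,\mathrm{Re}\bigl[\psi(a+\tfrac12+i\lambda)-\psi(a+i\lambda)\bigr],
\]
where $\psi=\Gamma'/\Gamma$. Using the series representation (2.18) of $\psi$, the shift-by-$\tfrac12$ difference telescopes into
\[
\psi(a+\tfrac12+i\lambda)-\psi(a+i\lambda)=\sum_{k=0}^{\infty}\frac{1/2}{(a+k+i\lambda)(a+\tfrac12+k+i\lambda)}.
\]
A direct real-part computation (multiply numerator and denominator by the conjugate, using $|(u+i\lambda)(v+i\lambda)|^{2}=(u^{2}+\lambda^{2})(v^{2}+\lambda^{2})$ with $u=a+k$, $v=a+\tfrac12+k$) then yields
\[
\partial_{a}\log F(a,\lambda)=\sum_{k=0}^{\infty}\frac{(a+k)(a+\tfrac12+k)-\lambda^{2}}{\bigl[(a+k)^{2}+\lambda^{2}\bigr]\bigl[(a+\tfrac12+k)^{2}+\lambda^{2}\bigr]}.
\]

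The final step is to verify termwise non-negativity under the hypotheses. For $a\geq 2$: the $k=0$ numerator is $a(a+\tfrac12)-\lambda^{2}\geq 2\cdot\tfrac52-5=0$; for $k\geq 1$ we have $(a+k)(a+\tfrac12+k)\geq 3\cdot\tfrac72=\tfrac{21}{2}>5\geq\lambda^{2}$. Hence every summand is non-negative, so $\partial_{a}\log F(a,\lambda)\geq 0$ on $[2,\infty)$, yielding $F(\gamma,\lambda)\geq F(2,\lambda)$ for $\gamma\geq 2$.

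The only mildly delicate point is the real-part identity for $1/[(u+i\lambda)(v+i\lambda)]$ and the combinatorial simplification of the denominator; everything else is arithmetic. It is worth noting that the bound $\lambda^{2}\leq 5$ is exactly what is needed to make the $k=0$ term non-negative at $a=2$, so the hypothesis matches the method precisely and equality at the boundary $(a,\lambda^{2})=(2,5)$ provides a clean sanity check.
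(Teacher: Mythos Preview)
Your proof is correct and follows essentially the same route as the paper: both define $f_\lambda(a)=|\Gamma(a+\tfrac12+i\lambda)|^2/|\Gamma(a+i\lambda)|^2$, compute its logarithmic derivative in $a$, obtain the identical series
\[
\partial_a\log f_\lambda(a)=\sum_{k=0}^{\infty}\frac{(a+k)(a+\tfrac12+k)-\lambda^2}{[(a+k)^2+\lambda^2][(a+\tfrac12+k)^2+\lambda^2]},
\]
and conclude by termwise non-negativity using $\lambda^2\le 5\le a(a+\tfrac12)$ for $a\ge 2$. The only cosmetic difference is that the paper reaches this formula by first invoking the product expansion (2.13) for $|\Gamma(a+i\lambda)|^2$ and then differentiating, whereas you go straight through $2\,\mathrm{Re}\,[\psi(a+\tfrac12+i\lambda)-\psi(a+i\lambda)]$; the substance is the same.
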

\begin{proof}
Set
\begin{align*}
f_{\lambda}(\gamma)=\frac{|\Gamma(\gamma+\frac{1}{2}+i\lambda)|^{2}}
{|\Gamma(\gamma+i\lambda)|^{2}},\;\;\gamma\geq2.
\end{align*}
By (\ref{2.13}), we have
\begin{align*}
\ln f_{\lambda}(\gamma)=2\ln\frac{\Gamma(\gamma+\frac{1}{2})}
{\Gamma(\gamma)}+\sum_{k=0}^{\infty}\ln\left(1+\frac{\lambda^{2}}{(k+\gamma)^{2}}\right)-
\sum_{k=0}^{\infty}\ln\left(1+\frac{\lambda^{2}}{(k+\gamma+\frac{1}{2})^{2}}\right).
\end{align*}
By using (\ref{2.18}), we get
\begin{align*}
\frac{d}{d\gamma}\ln\frac{\Gamma(\gamma+\frac{1}{2})}
{\Gamma(\gamma)}=&
-\sum_{k=0}^{\infty}\left(\frac{1}{k+\gamma+\frac{1}{2}}-\frac{1}{k+\gamma}\right)
=\frac{1}{2}\sum_{k=0}^{\infty}\frac{1}{(k+\gamma+\frac{1}{2})(k+\gamma)}.
\end{align*}
Therefore,
\begin{equation}\label{6.15}
  \begin{split}
\frac{d}{d\gamma}\ln f_{\lambda}(\gamma)=&\sum_{k=0}^{\infty}\frac{1}{(k+\gamma+\frac{1}{2})(k+\gamma)}
-\sum_{k=0}^{\infty}\frac{2\lambda^{2}}{[\lambda^{2}+(k+\gamma)^{2}](k+\gamma)}+\\
&\sum_{k=0}^{\infty}\frac{2\lambda^{2}}{[\lambda^{2}+(k+\gamma+\frac{1}{2})^{2}](k+\gamma+\frac{1}{2})}.
  \end{split}
\end{equation}
We compute
\begin{equation}\label{6.16}
  \begin{split}
&\frac{1}{(k+\gamma+\frac{1}{2})(k+\gamma)}-\frac{2\lambda^{2}}{[\lambda^{2}+(k+\gamma)^{2}](k+\gamma)}+ \frac{2\lambda^{2}}{[\lambda^{2}+(k+\gamma+\frac{1}{2})^{2}](k+\gamma+\frac{1}{2})}
\\
=&\frac{k+\gamma-2\lambda^{2}}{[\lambda^{2}+(k+\gamma)^{2}](k+\gamma+\frac{1}{2})}+\frac{2\lambda^{2}}{[\lambda^{2}+(k+\gamma+\frac{1}{2})^{2}](k+\gamma+\frac{1}{2})}\\
=&\frac{(k+\gamma+\frac{1}{2})(k+\gamma)-\lambda^{2}}
{[\lambda^{2}+(k+\gamma)^{2}][\lambda^{2}+(k+\gamma+\frac{1}{2})^{2}]}.
  \end{split}
\end{equation}
Substituting (\ref{6.16}) into (\ref{6.15}), we obtain,
\begin{align*}
\frac{d}{d\gamma}\ln f_{\lambda}(\gamma)=&\sum_{k=0}^{\infty}\frac{(k+\gamma+\frac{1}{2})(k+\gamma)-\lambda^{2}}
{[\lambda^{2}+(k+\gamma)^{2}][\lambda^{2}+(k+\gamma+\frac{1}{2})^{2}]}\\
\geq&\left[\gamma(\gamma+\frac{1}{2})-\lambda^{2}\right]\sum_{k=0}^{\infty}\frac{1}
{[\lambda^{2}+(k+\gamma)^{2}][\lambda^{2}+(k+\gamma+\frac{1}{2})^{2}]}
\end{align*}
Therefore, $\frac{d}{d\gamma} \ln f_{\lambda}(\gamma)\geq0$ since $\lambda^{2}\leq 5\leq\gamma(\gamma+\frac{1}{2})$.
Thus $f_{\lambda}(\gamma)$ is  increasing and we have
\begin{align*}
f_{\lambda}(\gamma)\geq f_{\lambda}(2)= \frac{|\Gamma(\frac{5}{2}+i\lambda)|^{2}}
{|\Gamma(2+i\lambda)|^{2}},\;\;\gamma\geq2.
\end{align*}
This completes the proof of Lemma \ref{lm6.7}.
\end{proof}

\begin{lemma}\label{lm6.8}
It holds that, for $2\leq \gamma\leq3$ and $\lambda^{2}\geq 5$,
\begin{align}\label{6.17}
\lambda^{2} \frac{\lambda^{2}+(\gamma-\frac{3}{2})^{2}+(\gamma-\frac{7}{2})^{2}}
{[\lambda^{2}+(\gamma-\frac{3}{2})^{2}][\lambda^{2}+(\gamma-\frac{7}{2})^{2}]}
\sqrt{1+\frac{(\gamma-1)^{2}}{\lambda^{2}}}\geq 1.
\end{align}
\end{lemma}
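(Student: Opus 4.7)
The plan is to reduce the inequality to a simple polynomial estimate by extracting the dominant cancellation in the rational factor. Writing $s=\lambda^{2}$, $A=(\gamma-\tfrac{3}{2})^{2}$, $B=(\gamma-\tfrac{7}{2})^{2}$, the key observation is the algebraic identity
\[
(s+A)(s+B)=s(s+A+B)+AB,
\]
which immediately yields
\[
\frac{s(s+A+B)}{(s+A)(s+B)}=1-\frac{AB}{(s+A)(s+B)}.
\]
Thus inequality \eqref{6.17} is equivalent to
\[
\sqrt{1+\tfrac{(\gamma-1)^{2}}{s}}\;\ge\;\frac{(s+A)(s+B)}{s(s+A+B)}=1+\frac{AB}{s(s+A+B)}.
\]

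Since both sides are positive, I would square and clear denominators. After multiplying by $s$ and canceling the constant term, this reduces to the polynomial estimate
\[
(\gamma-1)^{2}\;\ge\;\frac{2AB}{s+A+B}+\frac{A^{2}B^{2}}{s(s+A+B)^{2}}.
\]
This is the working form; the remaining task is to bound the right-hand side uniformly for $\gamma\in[2,3]$ and $s\ge 5$.

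For the bound, note the factorization $(\gamma-\tfrac{3}{2})(\gamma-\tfrac{7}{2})=(\gamma-\tfrac{5}{2})^{2}-1$, so
\[
AB=\bigl[(\gamma-\tfrac{5}{2})^{2}-1\bigr]^{2}\in[\tfrac{9}{16},1]\quad\text{for }\gamma\in[2,3].
\]
Combining $AB\le 1$ with $s+A+B\ge s\ge 5$ gives
\[
\frac{2AB}{s+A+B}\le\frac{2}{5},\qquad \frac{A^{2}B^{2}}{s(s+A+B)^{2}}\le\frac{1}{125},
\]
so the right-hand side is at most $\tfrac{2}{5}+\tfrac{1}{125}=\tfrac{51}{125}<1$. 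Since $(\gamma-1)^{2}\ge 1$ on $[2,3]$, the inequality follows with substantial slack.

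The only part that requires any care is choosing the right algebraic rearrangement: passing directly to a brute-force polynomial expansion of the squared inequality leads to a degree-five polynomial in $s$ whose positivity is not transparent, whereas the identity $(s+A)(s+B)=s(s+A+B)+AB$ isolates the difference on the left of \eqref{6.17} as a small perturbation $\frac{AB}{(s+A)(s+B)}$, after which the bound $AB\le 1$ and the hypothesis $s\ge 5$ do all the work. No obstacle of substance remains.
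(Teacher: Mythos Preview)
Your proof is correct and follows essentially the same route as the paper's: both square the inequality, use the identity $(s+A)(s+B)=s(s+A+B)+AB$ (the paper phrases this as $(s+A)(s+B)=X_\gamma+A_{1,\gamma}A_{2,\gamma}$ with $X_\gamma=s(s+A+B)$), then invoke $(\gamma-1)^{2}\ge 1$ and $AB\le 1$ for $\gamma\in[2,3]$ together with $s\ge 5$ to finish. Your final numerical bound $\tfrac{51}{125}<1$ is a slightly cleaner packaging than the paper's $\lambda^{2}\cdot\lambda^{2}\cdot(\lambda^{2}-2)>1$, but the substance is identical.
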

\begin{proof}
For simplicity, we set
\begin{align*}
X_{\gamma}=\lambda^{2}\left (\lambda^{2}+(\gamma-\frac{3}{2})^{2}+(\gamma-\frac{7}{2})^{2}\right).
\end{align*}
Then  (\ref{6.17}) is is equivalent to
\begin{align}\label{6.18}
X_{\gamma}^{2}\left(1+\frac{(\gamma-1)^{2}}{\lambda^{2}}\right)\geq (X_{\gamma}+A_{1,\gamma}A_{2,\gamma})^{2},
\end{align}
where $A_{1,\gamma}$ and $A_{2,\gamma}$ are given in (\ref{a6.11}).
Since $2\leq \gamma\leq 3$, we have
\begin{equation}\label{6.20}
  \begin{split}
 &X_{\gamma}^{2}\left(1+\frac{(\gamma-1)^{2}}{\lambda^{2}}\right)- (X_{\gamma}+A_{1,\gamma}A_{2,\gamma})^{2}\\
 =&X_{\gamma}^{2}\frac{(\gamma-1)^{2}}{\lambda^{2}}-2A_{1,\gamma}A_{2,\gamma}X_{\gamma}-(A_{1,\gamma}A_{2,\gamma})^{2}\\
 \geq&X_{\gamma}^{2}\frac{1}{\lambda^{2}}-2A_{1,\gamma}A_{2,\gamma}X_{\gamma}-(A_{1,\gamma}A_{2,\gamma})^{2}\\
 =&X_{\gamma}\left[\lambda^{2}+A_{1,\gamma}+A_{2,\gamma}-2A_{1,\gamma}A_{2,\gamma}\right]-(A_{1,\gamma}A_{2,\gamma})^{2}.
  \end{split}
\end{equation}
Notice that if $2\leq \gamma\leq3$, then
\begin{align*}
A_{1,\gamma}A_{2,\gamma}=\left[(\gamma-\frac{3}{2})(\gamma-\frac{7}{2})\right]^{2}=\left[(\gamma-\frac{5}{2})^{2}-1\right]^{2}\in[9/16,1].
\end{align*}
So we have, for  $\lambda^{2}\geq 5$,
\begin{equation}\label{6.21}
  \begin{split}
 &X_{\gamma}\left[\lambda^{2}+A_{1,\gamma}+A_{2,\gamma}-2A_{1,\gamma}A_{2,\gamma}\right]-(A_{1,\gamma}A_{2,\gamma})^{2}\\
 \geq&X_{\gamma}\left[\lambda^{2}+A_{1,\gamma}+A_{2,\gamma}-2\right]-1\\
=&\lambda^{2}\left (\lambda^{2}+(\gamma-\frac{3}{2})^{2}+(\gamma-\frac{7}{2})^{2}\right)\left[\lambda^{2}+A_{1,\gamma}+A_{2,\gamma}-2\right]-1\\
 >&\lambda^{2}\cdot\lambda^{2}\cdot(\lambda^{2}-2)-1\\
 >&0.
  \end{split}
\end{equation}
The desired result follows by combining (\ref{6.20}) and (\ref{6.21}).
\end{proof}

Now we can show the case $2\leq \gamma\leq 3$.

\begin{proposition} \label{pro6.9}
Let $2\leq\gamma\leq3$.
It holds that
  \begin{align}\label{6.22}
2^{2\gamma}\frac{|\Gamma(\frac{3+2\gamma}{4}+\frac{i}{2}\lambda)|^{2}}
{|\Gamma(\frac{3-2\gamma}{4}+\frac{i}{2}\lambda)|^{2}}-2^{2\gamma}\frac{|\Gamma(\frac{3+2\gamma}{4})|^{2}}
{|\Gamma(\frac{3-2\gamma}{4})|^{2}}
\geq \frac{|\Gamma(\gamma+i\lambda))|^{2}}
{|\Gamma(i\lambda)|^{2}}.
\end{align}
\end{proposition}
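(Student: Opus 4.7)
The plan is to assume $\lambda\ge 0$ by symmetry and split the argument at the threshold $\lambda^{2}=5$ shared by Lemmas \ref{lm6.7} and \ref{lm6.8}.

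\textbf{Case 1: $\lambda^{2}\le 5$.} I would chain Corollary \ref{co6.6} with Lemma \ref{lm6.7} (applicable since $\gamma\ge 2$ and $\lambda^{2}\le 5$) to obtain
\[
\text{LHS of }(\ref{6.22})\;\ge\; \lambda^{2}(\lambda^{2}+1)\,\frac{|\Gamma(\gamma+\tfrac{1}{2}+i\lambda)|^{2}}{|\Gamma(\tfrac{5}{2}+i\lambda)|^{2}}\;\ge\; \lambda^{2}(\lambda^{2}+1)\,\frac{|\Gamma(\gamma+i\lambda)|^{2}}{|\Gamma(2+i\lambda)|^{2}}.
\]
The functional equation $|\Gamma(2+i\lambda)|^{2}=\lambda^{2}(\lambda^{2}+1)|\Gamma(i\lambda)|^{2}$ then cancels the prefactor exactly and yields (\ref{6.22}).

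\textbf{Case 2: $\lambda^{2}\ge 5$.} Starting from Lemma \ref{lm6.4} and using Lemma \ref{lm6.8} (rewritten as $\lambda^{2}\frac{\lambda^{2}+A_{1}+A_{2}}{(\lambda^{2}+A_{1})(\lambda^{2}+A_{2})}\ge \lambda/\sqrt{\lambda^{2}+(\gamma-1)^{2}}$, with $A_{1}=(\gamma-\tfrac{3}{2})^{2}$, $A_{2}=(\gamma-\tfrac{7}{2})^{2}$), one finds
\[
\text{LHS}\;\ge\; \frac{\lambda}{\sqrt{\lambda^{2}+(\gamma-1)^{2}}}\cdot \frac{|\Gamma(\gamma+\tfrac{1}{2}+i\lambda)|^{2}}{|\Gamma(\tfrac{1}{2}+i\lambda)|^{2}}.
\]
Combining this with the identity $|\Gamma(\tfrac{1}{2}+i\lambda)|^{2}/|\Gamma(i\lambda)|^{2}=\lambda\tanh\pi\lambda$ coming from (\ref{2.12}) and (\ref{2.15}), the target inequality (\ref{6.22}) reduces to the Gamma-function comparison
\[
\frac{|\Gamma(\gamma+\tfrac{1}{2}+i\lambda)|^{2}}{|\Gamma(\gamma+i\lambda)|^{2}}\;\ge\; \tanh(\pi\lambda)\sqrt{\lambda^{2}+(\gamma-1)^{2}}. \qquad(\star)
\]
To establish $(\star)$ I would iterate the recursion $|\Gamma(z+1)|^{2}=|z|^{2}|\Gamma(z)|^{2}$ twice in both numerator and denominator to bring the arguments into the range $\operatorname{Re}\in[0,\tfrac{3}{2}]$, and then invoke (\ref{2.12}) and (\ref{2.15}) at the endpoints $\gamma=2,3$; there $(\star)$ collapses to an explicit polynomial inequality in $\lambda^{2}$ that is verified by expanding and comparing coefficients (e.g.\ at $\gamma=2$ one has to show $[(\tfrac{9}{4}+\lambda^{2})(\tfrac{1}{4}+\lambda^{2})]^{2}\ge \lambda^{2}(1+\lambda^{2})^{3}$, which expands to $2\lambda^{6}+\tfrac{35}{8}\lambda^{4}+\tfrac{29}{16}\lambda^{2}+\tfrac{81}{256}\ge 0$). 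For intermediate $\gamma\in(2,3)$ I would interpolate via monotonicity in $\gamma$ of the log-ratio on the left of $(\star)$, computed term-by-term from the digamma expansion (\ref{2.18}) in the spirit of the proof of Lemma \ref{lm6.7}.

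\textbf{Main obstacle.} The hard part will be $(\star)$: both sides scale like $|\lambda|$ as $|\lambda|\to\infty$, so the inequality is asymptotically sharp, and the cut-off $\lambda^{2}\ge 5$ (matching the thresholds in Lemmas \ref{lm6.7} and \ref{lm6.8}) is precisely what ensures the subleading corrections in $\lambda^{-2}$ carry the correct sign. The monotonicity interpolation in $\gamma$ also requires careful bookkeeping because the double recursion leaves behind a residual ratio $|\Gamma(\gamma-\tfrac{3}{2}+i\lambda)|^{2}/|\Gamma(\gamma-2+i\lambda)|^{2}$ whose explicit evaluations at $\gamma=2$ and $\gamma=3$ differ substantially in structure.
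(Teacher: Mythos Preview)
Your Case 1 is exactly the paper's argument. In Case 2 your reduction to the inequality $(\star)$ is also correct and is precisely where the paper ends up as well. The gap is in how you propose to prove $(\star)$.

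The interpolation scheme you sketch does not go through as written. You want to use monotonicity of $f_{\lambda}(\gamma)=|\Gamma(\gamma+\tfrac12+i\lambda)|^{2}/|\Gamma(\gamma+i\lambda)|^{2}$ ``in the spirit of Lemma~\ref{lm6.7},'' but the proof of Lemma~\ref{lm6.7} shows that the $\gamma$-derivative of $\ln f_{\lambda}$ equals $\sum_{k\ge0}\frac{(k+\gamma)(k+\gamma+1/2)-\lambda^{2}}{[\lambda^{2}+(k+\gamma)^{2}][\lambda^{2}+(k+\gamma+1/2)^{2}]}$, and the nonnegativity of this sum is exactly what forces the restriction $\lambda^{2}\le 5$ there. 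In the regime $\lambda^{2}\ge 5$ the early terms are negative and the sign of the sum is not controlled. Moreover, even if $f_{\lambda}$ were monotone in $\gamma$, the right side of $(\star)$ also depends on $\gamma$ through $\sqrt{\lambda^{2}+(\gamma-1)^{2}}$, so checking $(\star)$ at the endpoints $\gamma=2,3$ together with monotonicity of the \emph{left} side alone does not sandwich the inequality for intermediate $\gamma$.

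The paper bypasses all of this with a one-line application of the Gamma inequality~(\ref{2.21}), taking $z=\gamma-1+i\lambda$, $a=0$, $b=\tfrac32$:
\[
|\Gamma(\gamma+\tfrac12+i\lambda)|^{2}\;\ge\;|\gamma-1+i\lambda|^{3}\,|\Gamma(\gamma-1+i\lambda)|^{2}
\;=\;\sqrt{\lambda^{2}+(\gamma-1)^{2}}\,|\Gamma(\gamma+i\lambda)|^{2},
\]
which is $(\star)$ without the $\tanh(\pi\lambda)\le 1$ factor, hence strictly stronger. Combined with Lemmas~\ref{lm6.4} and~\ref{lm6.8} this gives $\text{LHS}\ge |\lambda|\,|\Gamma(\gamma+i\lambda)|^{2}/|\Gamma(\tfrac12+i\lambda)|^{2}$, and then $|\lambda|\cosh\pi\lambda\ge \lambda\sinh\pi\lambda$ finishes the job. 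So your overall architecture is right; you just need to replace the interpolation plan for $(\star)$ by the direct use of~(\ref{2.21}).
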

\begin{proof}
The proof is divided into two parts.

\medskip
Case 1: $\lambda^{2}\leq 5$. By Corollary \ref{co6.6} and Lemma \ref{lm6.7}, we have
\begin{align*}
2^{2\gamma}\frac{|\Gamma(\frac{3+2\gamma}{4}+\frac{i}{2}\lambda)|^{2}}
{|\Gamma(\frac{3-2\gamma}{4}+\frac{i}{2}\lambda)|^{2}}-2^{2\gamma}\frac{|\Gamma(\frac{3+2\gamma}{4})|^{2}}
{|\Gamma(\frac{3-2\gamma}{4})|^{2}}
\geq&\lambda^{2}(\lambda^{2}+1)\frac{|\Gamma(\gamma+\frac{1}{2}+i\lambda)|^{2}}
{|\Gamma(\frac{5}{2}+i\lambda)|^{2}}\\
\geq&\lambda^{2}(\lambda^{2}+1)\frac{|\Gamma(\gamma+i\lambda)|^{2}}
{|\Gamma(2+i\lambda)|^{2}}\\
=&\frac{|\Gamma(\gamma+i\lambda)|^{2}}
{|\Gamma(i\lambda)|^{2}}.
\end{align*}

Case 2: $\lambda^{2}\geq 5$.
By using (\ref{2.21}), we have
\begin{align*}
|\Gamma(\gamma+\frac{1}{2}+i\lambda)|^{2}=&|\Gamma(\gamma-1+i\lambda+\frac{3}{2})|^{2}\\
\geq&|\gamma-1+i\lambda|^{3}|\Gamma(\gamma-1+i\lambda)|^{2}\\
=&\sqrt{\lambda^{2}+(\gamma-1)^{2}}|\Gamma(\gamma+i\lambda)|^{2}.
\end{align*}
Therefore, by Lemmas \ref{lm6.4} and \ref{lm6.8}, we obtain
\begin{align}\nonumber
&2^{2\gamma}\frac{|\Gamma(\frac{3+2\gamma}{4}+\frac{i}{2}\lambda)|^{2}}
{|\Gamma(\frac{3-2\gamma}{4}+\frac{i}{2}\lambda)|^{2}}-2^{2\gamma}\frac{|\Gamma(\frac{3+2\gamma}{4})|^{2}}
{|\Gamma(\frac{3-2\gamma}{4})|^{2}}\\
\nonumber
\geq&\lambda^{2} \frac{\lambda^{2}+(\gamma-\frac{3}{2})^{2}+(\gamma-\frac{7}{2})^{2}}
{[\lambda^{2}+(\gamma-\frac{3}{2})^{2}][\lambda^{2}+(\gamma-\frac{7}{2})^{2}]}
\frac{|\Gamma(\gamma+\frac{1}{2}+i\lambda)|^{2}}
{|\Gamma(\frac{1}{2}+i\lambda)|^{2}}\\
\nonumber
\geq& \lambda^{2} \frac{\lambda^{2}+(\gamma-\frac{3}{2})^{2}+(\gamma-\frac{7}{2})^{2}}
{[\lambda^{2}+(\gamma-\frac{3}{2})^{2}][\lambda^{2}+(\gamma-\frac{7}{2})^{2}]}
\sqrt{\lambda^{2}+(\gamma-1)^{2}}\frac{|\Gamma(\gamma+i\lambda)|^{2}}
{|\Gamma(\frac{1}{2}+i\lambda)|^{2}}\\
\label{6.23}
\geq&|\lambda|\frac{|\Gamma(\gamma+i\lambda)|^{2}}
{|\Gamma(\frac{1}{2}+i\lambda)|^{2}}.
\end{align}
Notice that
\begin{align}\label{6.24}
\frac{|\lambda|}
{|\Gamma(\frac{1}{2}+i\lambda)|^{2}}=\frac{|\lambda|\cosh\pi\lambda}{\pi}\geq\frac{\lambda\sinh\pi\lambda}{\pi}=\frac{1}{|\Gamma(i\lambda)|^{2}}.
\end{align}
We obtain, by substituting (\ref{6.24}) into (\ref{6.23}),
\begin{align*}
2^{2\gamma}\frac{|\Gamma(\frac{3+2\gamma}{4}+\frac{i}{2}\lambda)|^{2}}
{|\Gamma(\frac{3-2\gamma}{4}+\frac{i}{2}\lambda)|^{2}}-2^{2\gamma}\frac{|\Gamma(\frac{3+2\gamma}{4})|^{2}}
{|\Gamma(\frac{3-2\gamma}{4})|^{2}}\geq \frac{|\Gamma(\gamma+i\lambda)|^{2}}
{|\Gamma(i\lambda)|^{2}}.
\end{align*}
This completes the proof of Proposition \ref{pro6.9}.
\end{proof}

Now we can give the  proof of  Proposition \ref{p6.3}.

\medskip

\textbf{Proof of Proposition \ref{p6.3}}. We shall prove it by induction.

By Proposition \ref{pro6.9}, inequality (\ref{a6.8}) is valid for $2\leq \gamma\leq 3$. Now assume it is valid for $2k\leq \gamma\leq 2k+1$. Then for $2k+2\leq \gamma\leq 2k+3$,
 \begin{align*}
&2^{2\gamma}\frac{|\Gamma(\frac{3+2\gamma}{4}+\frac{i}{2}\lambda)|^{2}}
{|\Gamma(\frac{3-2\gamma}{4}+\frac{i}{2}\lambda)|^{2}}-2^{2\gamma}\frac{|\Gamma(\frac{3+2\gamma}{4})|^{2}}
{|\Gamma(\frac{3-2\gamma}{4})|^{2}}\\
=&\left(\lambda^{2}+\frac{(2\gamma-1)^{2}}{4}\right)\left(\lambda^{2}+\frac{(2\gamma+1)^{2}}{4}\right)
2^{2(\gamma-2)}\frac{|\Gamma(\frac{3+2(\gamma-2)}{4}+\frac{i}{2}\lambda)|^{2}}
{|\Gamma(\frac{3-2(\gamma-2)}{4}+\frac{i}{2}\lambda)|^{2}}-2^{2\gamma}\frac{|\Gamma(\frac{3+2\gamma}{4})|^{2}}
{|\Gamma(\frac{3-2\gamma}{4})|^{2}}\\
\geq&\left(\lambda^{2}+\frac{(2\gamma-1)^{2}}{4}\right)\left(\lambda^{2}+\frac{(2\gamma+1)^{2}}{4}\right)
\left(2^{2(\gamma-2)}\frac{|\Gamma(\frac{3+2(\gamma-2)}{4}+\frac{i}{2}\lambda)|^{2}}
{|\Gamma(\frac{3-2(\gamma-2)}{4}+\frac{i}{2}\lambda)|^{2}}
-
2^{2(\gamma-2)}\frac{|\Gamma(\frac{3+2(\gamma-2)}{4})|^{2}}
{|\Gamma(\frac{3-2(\gamma-2)}{4})|^{2}}\right)\\
\geq&\left(\lambda^{2}+\frac{(2\gamma-1)^{2}}{4}\right)\left(\lambda^{2}+\frac{(2\gamma+1)^{2}}{4}\right)\frac{|\Gamma(\gamma-2+i\lambda))|^{2}}
{|\Gamma(i\lambda)|^{2}}\\
\geq&\left(\lambda^{2}+(\gamma-1)^{2}\right)\left(\lambda^{2}+(\gamma-2)^{2}\right)\frac{|\Gamma(\gamma-2+i\lambda))|^{2}}
{|\Gamma(i\lambda)|^{2}}\\
=&\frac{|\Gamma(\gamma+i\lambda))|^{2}}
{|\Gamma(i\lambda)|^{2}}.
 \end{align*}
This completes the proof of Proposition \ref{p6.3}.

\medskip

Finally,  we  give the proof of Theorem \ref{th1.4}.

\medskip

\textbf{Proof of Theorem \ref{th1.4}}. The proof  is divided into two parts.

\medskip
Case 1: $n=4k+1, 4k+2$. In this case, we have $2k\leq \gamma\leq 2k+1$. By Proposition \ref{p6.3} and Theorem \ref{th3.10} , we have
\begin{align*}
&\int_{\mathbb{H}^{n}}uP_{\gamma}udV- 2^{2\gamma}\frac{\Gamma(\frac{3+2\gamma}{4})^{2}}
{\Gamma(\frac{3-2\gamma}{4})^{2}}\int_{\mathbb{H}^{n}}u^{2}dV\\
=&\int^{+\infty}_{-\infty}\int_{\mathbb{S}^{n-1}}
\left(2^{2\gamma}\frac{|\Gamma(\frac{3+2\gamma}{4}+\frac{i}{2}\lambda)|^{2}}
{|\Gamma(\frac{3-2\gamma}{4}+\frac{i}{2}\lambda)|^{2}}-2^{2\gamma}\frac{|\Gamma(\frac{3+2\gamma}{4})|^{2}}
{|\Gamma(\frac{3-2\gamma}{4})|^{2}}\right)|\widehat{u}(\lambda,\zeta)|^{2}|\mathfrak{c}(\lambda)|^{-2}d\lambda d\sigma(\zeta)\\
\geq&\int^{+\infty}_{-\infty}\int_{\mathbb{S}^{n-1}}
\frac{|\Gamma(\gamma+i\lambda))|^{2}}
{|\Gamma(i\lambda)|^{2}}|\widehat{u}(\lambda,\zeta)|^{2}|\mathfrak{c}(\lambda)|^{-2}d\lambda d\sigma(\zeta)\\
=&\int_{\mathbb{B}^{n}}u\frac{|\Gamma(\gamma+i\sqrt{-\Delta_{\mathbb{H}}-\frac{(n-1)^{2}}{4}})|^{2}}
{|\Gamma(i\sqrt{-\Delta_{\mathbb{H}}-\frac{(n-1)^{2}}{4}})|^{2}}u dV\\
\geq& S_{n,\gamma}\left(\int_{\mathbb{B}^{n}}|u|^{\frac{2n}{n-2\gamma}}dV\right)^{\frac{n-2\gamma}{n}}.
\end{align*}

 Case 2: $n=4k+3, 4k+4$. In this case, we have $2k+1\leq \gamma\leq 2k+2$ and hence $\sin\gamma\pi\leq0$.
Therefore, by Theorem  \ref{th1.4} and (\ref{2.19}), we have
\begin{align*}
&\int_{\mathbb{H}^{n}}uP_{\gamma}udV- 2^{2\gamma}\frac{\Gamma(\frac{3+2\gamma}{4})^{2}}
{\Gamma(\frac{3-2\gamma}{4})^{2}}\int_{\mathbb{H}^{n}}u^{2}dV\\
=&\int_{\mathbb{H}^{n}}u\widetilde{P}_{\gamma}udV- \frac{\Gamma(\gamma+\frac{1}{2})^{2}}
{\Gamma(\frac{1}{2})^{2}}\int_{\mathbb{H}^{n}}u^{2}dV+\\
&\frac{\sin\gamma\pi}{\pi}
\int_{\mathbb{H}^{n}}u\left(\left|\Gamma\left(\gamma+\frac{1}{2}+i\sqrt{-\Delta_{\mathbb{H}}-\frac{(n-1)^{2}}{4}}\right)\right|^{2}-
\left|\Gamma\left(\gamma+\frac{1}{2}\right)\right|^{2}\right)
udV\\
=&\int_{\mathbb{H}^{n}}u\widetilde{P}_{\gamma}udV- \frac{\Gamma(\gamma+\frac{1}{2})^{2}}
{\Gamma(\frac{1}{2})^{2}}\int_{\mathbb{H}^{n}}u^{2}dV+\\
&\frac{\sin\gamma\pi}{\pi}
\int^{+\infty}_{-\infty}\int_{\mathbb{S}^{n-1}}\left(\left|\Gamma\left(\gamma+\frac{1}{2}+i\lambda\right)\right|^{2}-
\left|\Gamma\left(\gamma+\frac{1}{2}\right)\right|^{2}\right)
|\widehat{u}(\lambda,\zeta)|^{2}|\mathfrak{c}(\lambda)|^{-2}d\lambda d\sigma(\zeta))\\
\geq&\int_{\mathbb{H}^{n}}u\widetilde{P}_{\gamma}udV- \frac{\Gamma(\gamma+\frac{1}{2})^{2}}
{\Gamma(\frac{1}{2})^{2}}\int_{\mathbb{H}^{n}}u^{2}dV\;\;\;\;\left(\because \left|\Gamma\left(\gamma+\frac{1}{2}+i\lambda\right)\right|\leq\left|\Gamma\left(\gamma+\frac{1}{2}\right)\right|\right)\\
\geq& S_{n,\gamma}\left(\int_{\mathbb{B}^{n}}|u|^{\frac{2n}{n-2\gamma}}dV\right)^{\frac{n-2\gamma}{n}}.
\end{align*}
The proof of Theorem \ref{th1.4} is thereby completed.

\medskip


\begin{thebibliography}{99}
\bibitem{ab}N. Abatangeloa,  S. Jarohsb, A. Saldana, Green function and Martin kernel for higher-order fractional
Laplacians in balls, Nonl. Anal. 175 (2018),  173-190

\bibitem{ah}L.V. Ahlfors, M\"obius Transformations in Several Dimensions, Ordway
Professorship Lectures in Mathematics, University of Minnesota,
School of Mathematics, Minneapolis, MN, 1981.




\bibitem{an2}J.-P. Anker, Sharp estimates for some functions of the Laplacian on noncompact
symmetric spaces, Duke Math. J., 65 (1992), 257-297.



\bibitem{ao}W. Ao, H. Chan, A. DelaTorre, M. A. Fontelos,  M. Gonz\'alez, J. Wei,  On higher dimensional singularities for the fractional Yamabe problem: a non-local Mazzeo-Pacard program, Duke Math. J., 168(2019),  No. 17, 3297-3411.

\bibitem{au} T. Aubin,  Espaces de Sobolev sur les vari\'et\'es Riemanniennes. Bull. Sci. Math.,  100(1976) 149-173.

\bibitem{ba}V. Banica, M.d.M. Gonz\'alez,  M. S\'aez, Some constructions for the fractional
Laplacian on noncompact manifolds, Rev. Mat. Iberoam. 31 (2015), no. 2, 681-712.

\bibitem{Be0} W. Beckner, Sharp Sobolev inequalities on the sphere and the Moser-Trudinger inequality. Ann. of Math. (2) 138 (1993), no. 1, 213-242.


\bibitem{be}  W. Beckner, On the Grushin operator and hyperbolic symmetry, Proc. Amer. Math. Soc., 129(2001), 1233-1246.

\bibitem{be1} W. Beckner,   On Lie groups and hyperbolic symmetry--from Kunze-Stein phenomena to Riesz potentials. Nonlinear Anal. 126 (2015), 394-414.


\bibitem{bfl}R.D. Benguria, R.L. Frank, M. Loss, The sharp constant in the Hardy-Sobolev-Maz'ya inequality in the three dimensional upper half space, Math. Res. Lett. 15 (2008) 613-622.


\bibitem{blu} R.M. Blumenthal, R.K. Getoor, D.B. Ray, On the distribution of first hits for the symmetric stable processes, Trans.
Amer. Math. Soc. 99 (1961) 540-554.



\bibitem{buc} C.  Bucur,  Some observations for the Green function for the ball in the fractional Laplace framework.
Commun. Pure Appl. Anal. 15 (2016),  657-699.

\bibitem{c}L. Caffarelli, L. Silvestre,  An extension problem related to the fractional Laplacian,  Comm. Partial.
Diff. Equ. 32 (2007), 1245-1260.

\bibitem{chg}S.-Y. A. Chang, M. Gonz\'alez, Fractional Laplacian in conformal geometry, Adv.
Math. 226 (2011), 1410-1432.

\bibitem{ch} W. Chen, Y. Fang, R. Yang, Liouville theorems involving the fractional Laplacian on a half space, Adv. Math. 274 (2015), 167-198.



\bibitem{ct}A. Cotsiolis, N. K. Tavoularis, Best constants for Sobolev inequalities for higher
order fractional derivatives, J. Math. Anal. Appl. 295 (2004), 225-236.

\bibitem{dip}S. Dipierro, H. Grunau, Boggio's formula for fractional polyharmonic Dirichlet problems, Ann. Mat. Pura Appl. (4)
196 (4) (2017) 1327-1344.


\bibitem{er}A. Erd\'elyi, W. Magnus, F. Oberhettinger, F. G. Tricomi, Higher transcendental
functions. Vols. I Based, in part, on notes left by Harry Bateman. McGraw-Hill
Book Company, Inc., New York-Toronto-London 1953.


 \bibitem{FeffermanGr2}  Fefferman, C.; Graham, C. R.  \textit{ Conformal invariants}. The mathematical heritage of \'Elie Cartan (Lyon, 1984). Ast\'erisque 1985, Num\'ero Hors S\'erie, 95-116.

\bibitem{FeffermanGr} C. Fefferman, C. R. Graham, The ambient metric, Annals of Mathematics Studies, Number 178, Princeton University Press, 2012.

\bibitem{fe3}C. Fefferman, C. R. Graham, Juhl's formulae for GJMS operators and Q-curvatures, J. Amer.Math. Soc.
26 (2013), 1191-1207.




\bibitem{FlynnLuYang} J. Flynn, G. Lu, Q. Yang,
Sharp Hardy-Sobolev-Maz'ya, Adams and Hardy-Adams inequalities on quaternionic hyperbolic spaces and the Cayley hyperbolic plane, arXiv:2106.06055,
published online in Revista Matematica Iberoamericana.

\bibitem{ge}V. Georgiev. Semilinear hyperbolic equations.  MSJ Memoirs, 7. Mathematical Society
of Japan, Tokyo, 2000.


\bibitem{ge}V. Georgiev. Semilinear hyperbolic equations.  MSJ Memoirs, 7. Mathematical Society
of Japan, Tokyo, 2000.

\bibitem{go}A.R. Gover. Laplacian operators and Q-curvature on conformally Einstein manifolds. Math. Ann. 336 (2006), 311-334.

\bibitem{gr}I. S. Gradshteyn, L. M. Ryzhik, Table of Integrals, Series, and Products. 7th edition.
Translation edited and with a preface by Alan Jeffrey and Daniel Zwillinger. Academic Press, Inc., San Diego, CA, 2007. Reproduction in P.R.China authorized by
Elsevier (Singapore) Pte Ltd.


\bibitem{GJMS}  C.R. Graham, R. Jenne, L.J. Mason, and G.A.J. Sparling, Conformally invariant powers of the Laplacian. I. Existence. Journal of the London Mathematical Society, (2)46 (1992), 557-565.

    \bibitem{grz}C. R. Graham, M. Zworski, Scattering matrix in conformal geometry, Invent.
Math. 152 (2003), 89-118.







\bibitem{he} S. Helgason, Groups and geometric analysis. Integral geometry, invariant differential operators,
and spherical functions. Pure and Applied Mathematics. 113  Academic Press, 1984.

\bibitem{he2}S. Helgason,  Geometric analysis on symmetric spaces. Second edition. Mathematical
Surveys and Monographs, 39. American Mathematical Society, Providence,
RI, 2008.

\bibitem{Hong} Q. Hong,   Sharp constant in third-order Hardy-Sobolev-Maz'ya inequality in the half space of dimension seven. Int. Math. Res. Not. IMRN 2021, no. 11, 8322-8336.



\bibitem{hu} L. K. Hua, Starting with the Unit Circle, Springer-Verlag, Heidelberg, 1981.



\bibitem{j}A. Juhl, Explicit formulas for GJMS-operators and Q-curvatures, Geom. Funct. Anal. 23(2013),
1278-1370.

\bibitem{k} T. Kulczycki, Properties of Green function of symmetric stable processes, Probab. Math. Statist.
17 (1997),  339-364.


\bibitem{laml} N. Lam, G. Lu, A new approach to sharp Moser-Trudinger and Adams type inequalities: a rearrangement-free argumnet,
    J. Diff. Equa. 255(2013), 298-325.




\bibitem{LiLuYang1} J. Li, G. Lu, Q. Yang, Fourier analysis and optimal Hardy-Adams inequalities on hyperbolic spaces of any even dimension, Adv. Math.,
 333(2018),  350-385.

\bibitem{LiLuYang2}
J. Li, G. Lu, Q. Yang,
Sharp Adams and Hardy-Adams inequalities of any fractional order on hyperbolic spaces of all dimensions. Trans. Amer. Math. Soc. 373 (2020), no. 5, 3483-3513.



\bibitem{LiLuYang-BN} J. Li, G. Lu, Q. Yang,
Higher order Brezis-Nirenberg problem on hyperbolic spaces: Existence, nonexistence and symmetry of solutions, Adv. Math. 399(2022), 108259.



\bibitem{lie} E. H. Lieb, Sharp constants in the Hardy-Littlewood-Sobolev and related inequalities. Ann. of
Math. (2) 118 (1983),  349-374.



\bibitem{liup} C. Liu, L. Peng, Generalized Helgason-Fourier transforms associated to variants of the Laplace-Beltrami operators on the unit ball in $\mathbb{R}^{n}$, Indiana Univ. Math. J., 58, No. 3, (2009), 1457-1492.

\bibitem{lius}    C. Liu, J. Shi, Invariant mean-value property and $\mathcal{M}$-harmonicity in the unit ball of $\mathbb{R}^{n}$,
Acta Math. Sin., 19(2003), 187-200.



\bibitem{LuYang2} G. Lu, Q. Yang, Sharp Hardy-Adams inequalities for bi-Laplacian on hyperbolic space of dimension four, Adv. Math. 319(2017) 567-598.

\bibitem{LuYang3}  G. Lu, Q. Yang,   Paneitz operators on hyperbolic spaces and high order Hardy-Sobolev-Maz'ya inequalities  on half spaces,
Amer. J. Math., 141 (2019), no. 6, 1777-1816.

\bibitem{ly4}G. Lu, Q. Yang, Green's functions of Paneitz and GJMS operators on hyperbolic spaces and sharp Hardy-Sobolev-Maz'ya inequalities on half spaces, Adv. Math. 398(2022), 108156.

\bibitem{LuYang-Complex} G. Lu, Q. Yang,
Sharp Hardy-Sobolev-Maz'ya, Adams and Hardy-Adams inequalities on the Siegel domains and complex hyperbolic spaces, Adv. Math. 405(2022), 108512.

\bibitem{loh} N. Lohou\'e, Estim\'ees de type Hardy pour l'op\'erateur $\Delta+\lambda$ d'un espace sym\'edtrique, C.R. Acad.
Sci. Paris S\'er I. 308 (1989), 11-14.



\bibitem{m1}    G. Mancini, K. Sandeep, On a semilinear elliptic equation in $\mathbb{H}^{n}$ ,
Ann. Scoula Norm. Sup. Pisa Cl. Sci. (5) Vol VII (2008), 635-671.


\bibitem{ma}R. R. Mazzeo, R. B. Melrose, Meromorphic extension of the resolvent on
complete spaces with asymptotically constant negative curvature, J. Funct. Anal.
75 (1987), 260-310.



\bibitem{maz}V.G. Maz'ya, Sobolev Spaces, Springer-Verlag, Berlin, 1985.


\bibitem{pa} R. B. Paris, D. Kaminski, Asymptotics and mellin-barnes integrals, Combridge University Press, 2001.

\bibitem{pr} A. P. Prudnikov, Y. A. Brychkov,  O. I. Marichev,  Integrals and Series:
more special functions, Volume 3,  Gordon and Breach Sci. Publ., New York, 1990.

\bibitem{ta} G. Talenti,  Best constant in Sobolev inequality,  Ann. Mat. Pura Appl.,  110(1976), 353-372.





\bibitem{y}Q. Yang, Hardy-Sobolev-Maz'ya inequalities for polyharmonic operators, Annali di Matematica Pura ed Applicata, 200(2021), 2561-2587.




\end{thebibliography}
\end{document}